\pgfplotsset{compat=1.17}
\newtheoremstyle{noparens}%
{}{}%
{\itshape}{}%
{\bfseries}{.}%
{ }%
{\thmname{#1}\thmnumber{ #2}\mdseries\thmnote{ #3}}
\theoremstyle{noparens}
\newtheorem{Def}[subsubsection]{Definition}
\newtheorem{example}[subsubsection]{Example}
\newtheorem{rem}[subsubsection]{Remark}
\newtheorem{prop}[subsubsection]{Proposition}
\newtheorem{thm}[subsubsection]{Theorem}
\newtheorem{lem}[subsubsection]{Lemma}
\newtheorem{cor}[subsubsection]{Corollary}
\newcommand{\mrm}{\mathrm}
\newcommand{\Q}{\mathbb Q}
\newcommand{\p}{ {\rm p}}
\newcommand{\ro}{\mrm{ro}}
\newcommand{\co}{\mrm{co}}
\newenvironment{eq}
{
	\begin{equation*}
		\begin{aligned}
		}
		{
		\end{aligned}
	\end{equation*}
}
\newcommand{\End}{\mrm{End}}
\newcommand{\nc}{\newcommand}
\nc{\redtext}[1]{\textcolor{red}{#1}}
\nc{\bluetext}[1]{\textcolor{blue}{#1}}
\nc{\greentext}[1]{\textcolor{green}{#1}}
\nc{\yl}[1]{\redtext{From yq: #1}}
\nc{\zb}[1]{\redtext{From zb: #1}}
\newcommand\enc[1]{
	\tikz[baseline=(X.base)] 
	\node (X) [draw, shape=circle, inner sep=0] {$ #1 $};}
\newcommand\senc[1]{
	\tikz[baseline=(X.base)] 
	\node (X) [scale=0.7, draw, shape=circle, inner sep=0] {$ #1 $};}
\title[Geometric approach to Mirabolic Schur-Weyl Duality of type $A$]{Geometric approach to Mirabolic Schur-Weyl Duality of type A}
\author{Zhaobing Fan}
\address{College of Mathematical Sciences\\ Harbin Engineering University, Harbin, China 150001}
\email{fanzhaobing@hrbeu.edu.cn}
\author{Zhicheng Zhang}
\address{College of Mathematical Sciences\\ Harbin Engineering University, Harbin, China 150001}
\email{zhichengzhang@hrbeu.edu.cn}
\author{Haitao Ma}
\address{College of Mathematical Sciences\\ Harbin Engineering University, Harbin, China 150001}
\email{hmamath@hrbeu.edu.cn}
\date{}
\keywords{ mirabolic Hecke algebra, mirabolic flag variety, Schur-Weyl duality }
\subjclass{17B37, 14L35, 20G43}
\begin{document}
	
	\begin{abstract}
		We commence by constructing the mirabolic quantum Schur algebra, utilizing the convolution algebra defined on the variety of triples of two $n$-step partial flags and a vector. Subsequently, we employ a stabilization procedure to derive the mirabolic quantum $\mathfrak{gl}_n$. Then we present the geometric approach of the mirabolic Schur-Weyl duality of type $A$.
	\end{abstract}

	\maketitle
	\tableofcontents
	
	\section{Introduction}
	For the general linear algebraic group $G=GL_d(\mathbb{F}_q)$, Beilinson, Lusztig and MacPherson introduced a geometric realization of the quantum group $U_q(\mathfrak{gl}_n)$, utilizing the convolution algebra on the variety of pairs of partial flags in a vector space $V$ over finite field\cite{BLM}. This construction is known as the BLM realization.
	As a consequence, Grojnowski and Lusztig realized the quantum Schur-Weyl duality geometrically\cite{GL}, while the algebraic perspective was elucidated by Jimbo \cite{Jim}.
	
	A natural extension in the exploration of quantum groups involves venturing into the `mirabolic' setting, where the term `mirabolic' originates from the mirabolic subgroup. This subgroup $M$ is defined as a subgroup of $GL_{d}(\mathbb{C})$ that fixes a non-zero vector in $V$. Broadly, for a $GL_{d}(\mathbb{C})$-variety $X$, the $M$-orbits on $X$ exhibit a one-to-one correspondence with the $GL_{d}(\mathbb{C})$-orbits on $X\times V\setminus\{0\}$. 
	
	One compelling reason for the prominence of the mirabolic analogy lies in the category of mirabolic $\mathscr{D}$-modules, a category of $\mathscr{D}$-modules on $GL_{d}(\mathbb{C})\times V$, which bears a close relationship with the spherical trigonometric Cherednik algebra\cite{FG}. 
	Moreover, the study of the category of mirabolic $\mathscr{D}$-modules is intertwined with the category of $\mathscr{D}$-modules on $\mathscr{Y}\times \mathscr{Y} \times \mathbb{P}^{d}$, where $\mathscr{Y}$ is the complete flag variety and $\mathbb{P}^{d}$ is the projective space \cite{FG1}. 
	
	Travkin delved into the convolution product on $\mathscr{Y}\times \mathscr{Y}\times V$ to yield a bimodule $\mathcal{MH}$ of Hecke algebra and defined its Kazhdan-Lusztig basis\cite{T}. Moreover, Rosso studied $\mathcal{MH}$ as a convolution algebra, providing a geometric realization of the mirabolic Hecke algebra\cite{R14}, which was originally defined by Solomon in \cite{S}. This geometric realization also serves to recover the classification of irreducible representations of the mirabolic Hecke algebra as outlined in \cite{Si}. 
	
	Extending the scope from $\mathscr{Y}$ to the $n$-step partial flag varieties $\mathscr{X}$, one can construct a convolution algebra called mirabolic quantum Schur algebra $\mathcal{MS}_{n,d}$. Rosso's work on the algebra $\mathcal{MS}_{2,d}$ in \cite{R18} led to the definition of the mirabolic quantum $\mathfrak{sl}_2$, denoted as $MU_2$. He further classified finite dimension irreducible representations of $MU_2$ and provided an algebraic proof of the mirabolic Schur-Weyl duality.
	
	The aim of this paper is to construct the algebra $\mathcal{MS}_{n,d}$, featuring more intricate structure. The orbits of $G$-action of $\mathscr{X}\times \mathscr{X}\times V$ are parametrized by the decorated matrices $(A,\Delta)$ (see Definition \ref{def 3.1.1}). For $n>2$, there are some $\Delta$ that never appear when $n=2$ and the elements in $\Delta$ are interwoven in calculations. Both of them cause the multiplication formulas are more complicated. See Remark \ref{rem 4.1.2} for more details. To our best knowledge, this phenomenon never appears before in BLM type approach for various quantum type algebra. Besides, this phenomenon also causes a more complicated way to generate arbitrary element in $\mathcal{MS}_{n,d}$, see Proposition \ref{prop 4.2.1}. Subsequently, we proceed to construct the mirabolic quantum group. Differing from Rosso's work, we employ the stabilization procedure to derive the mirabolic quantum group. In our setting, the procedure will involve the new generator of $\mathcal{MS}_{n,d}$, then we will obtain a new algebra $\mathcal{K}$ and a surjective algebra homomorphism form $\mathcal{K}$ to $\mathcal{MS}_{n,d}$. In addition, we present a geometric approach to the mirabolic Schur-Weyl duality, while Rosso established the algebraic perspective of it. Unlike the classical works, the straightforward verification of double centralizer property through the theorem \cite[Theorem 2.1]{P} is unattainable, since the theorem only involves two flag varieties while mirabolic setting has an additional vector space. To establish the double centralizer theorem in the mirabolic setting, we introduce the algebra $MS$, defined as the space of $V \rtimes G$-invariant $\mathbb{Z}[v,v^{-1}]$-valued functions on $(\mathscr{X}\times V) \times (\mathscr{X} \times V)$ with the convolution product as multiplication. By replacing $\mathscr{X}$ with $\mathscr{Y}$, we construct the algebra $MH$. We demonstrate that $\mathscr{X}\times V$ and $\mathscr{Y}\times V$ satisfy the conditions outlined in the theorem in \cite[Theorem 2.1]{P} as $V\rtimes G$-set. Continuing, we establish algebra isomorphisms $\mathcal{MS}_{n,d}\cong MS$ and $\mathcal{MH}\cong MH$. Finally, we prove the double centralizer theorem for $\mathcal{MS}_{n,d}$ and $\mathcal{MH}$.
	
	The foundational works \cite{BLM} and \cite{GL} also served as wellspring of inspiration for various generalizations. 
	For instance, 
	Bao and Wang generalized the Schur-Jimbo duality to establish a new duality 
	between $\imath$quantum group and Iwahori-Hecke algebra of type $B/C$ in\cite{BW13}. 
	Their subsequent work in \cite{BKLW} presents a geometric construction of the $\imath$quantum groups $\mathbf{U}^{\imath}$ and $\mathbf{U}^{\jmath}$, providing a geometric realization of this new duality. 
	In a related vein, Fan and Li leveraged the geometry of partial flag varieties of type $D$ 
	to construct the quantum algebras $\mathcal{K}$ and $\mathcal{K}^m$, subsequently verifying the type-$D$ duality \cite{FL}. Additional insights into the BLM realization of affine quantum groups can be found in \cite{DF15}, \cite{DF19}, \cite{DF22}, \cite{FLLLW}, \cite{GV}, \cite{Lu99} and \cite{Lu00}. In our forthcoming work, we aim to generalize the mirabolic setting to the $\imath$quantum group and the affine case.
	
	The paper is organized as follows. In Section 2, we present the main result in our paper.
	In Section 3, we describe the $G$-orbits of $\mathscr{X}\times \mathscr{X}\times V$ and compute the dimension of the orbits. Concurrently, we construct mirabolic quantum Schur algebra $\mathcal{MS}_{n,d}$. Additionally, we provide a review of the geometric realization of the mirabolic Hecke algebra $\mathbf{MH}$ and the space $\mathcal{MV}$ defined in \cite{R18}.
	In Section 4, we delve into the computation of the multiplication formulas for the convolution product on $\mathcal{MS}_{n,d}$ and demonstrate the generators of the algebra. We then present explicit formulas for both the left $\mathcal{MS}_{n,d}$-action and the right $\mathcal{MH}$-cation on $\mathcal{MV}$. 
	In Section 5, our focus shifts to the construction of the algebra $\mathcal{K}$ 
	through the stabilization procedure, and gain the mirabolic quantum $\mathfrak{gl}_n$. In Section 6, we give the geometric approach of the mirabolic Schur-Weyl duality of type $A$. 
	\subsection*{Acknowledgement} Z. Fan was partially supported by the NSF of China grant 12271120 and the Fundamental Research Funds for the central universities.
	
	\section{Mirabolic Schur-Weyl Duality}
	\subsection{Mirabolic setting}
	We first recall the mirabolic Hecke algebra, which is introduced in \cite{S}. 
	\begin{Def}
		The mirabolic Hecke algebra $\mathbf{MH}$ is an associative $\mathbb{Q}(v)$-algebra with generators $\{\tau_i\ |\ i=0, \cdots, d-1\}$ and the following relations.
		\begin{align*}
			\tau^2_0&=(v^2-2)\tau_0+(v^2-1);\\
			\tau^2_i&=(v^2-1)\tau_i+v^{2} \quad i\geq 1;\\
			\tau_i \tau_{i+1}\tau_i&=\tau_{i+1}\tau_{i}\tau_{i+1} \quad i\geq 1;\\
			\tau_0\tau_{1}\tau_0\tau_1&=(v^{2}-1)(\tau_{1}\tau_{0}\tau_{1}+\tau_1\tau_0)-\tau_0\tau_1\tau_0;\\
			\tau_1\tau_{0}\tau_1\tau_0&=(v^{2}-1)(\tau_{1}\tau_{0}\tau_{1}+\tau_0\tau_1)-\tau_0\tau_1\tau_0;\\
			\tau_i \tau_{j}&=\tau_{j}\tau_{i} \quad |i-j|\geq 2.
		\end{align*}
	\end{Def}
	Let $[a,b]$ be the set of integers between $a$ and $b$. 
	\begin{Def}\label{def 2.1.2}
		The mirabolic quantum group $\mathbf{MU}$ is a $\mathbb{Q}(v)$-algebra generated by $L$, $E_i$, $F_i$ and $H^{\pm}_a$ for $i\in [1, n-1]$ and $a\in [1, n]$, with the following relations.
		\begin{align*}
			& H_{a}H^{-1}_{a}=1, H_{a}H_{b}=H_{b} H_{a};\\
			& E_{i}^2E_{j}+E_{j}E_{i}^2=(v+v^{-1})E_{i}E_{j}E_{i} \quad |i-j|=1;\\
			& F_{i}^{2}F_{j}+F_{j}F_{i}^2=(v+v^{-1})F_{i}F_{j}F_{i} \quad |i-j|=1;\\
			& E_{i}E_{j}=E_{j}E_{i},  F_{i}F_{j}=F_{j}F_{i} \quad |i-j|>1;\\
			& H_{a}E_{i}=v^{\delta_{a i}-\delta_{a,i+1}}E_{i}H_{a};\\
			& H_{a}F_{i}=v^{-\delta_{a i}+\delta_{a,i+1}}F_{i}H_{a};\\
			& E_{i}F_{j}-F_{j}E_{i}=\delta_{i j}\frac{H_{i}H^{-1}_{i+1}-H^{-1}_{i}H_{i+1}}{(v-v^{-1})};\\
			& H_{a}L=LH_{a}, L^2=L;\\
			& LE_{i}=LE_{i}L, LF_{i}=LF_{i}L;\\
			& (v+v^{-1}) E_{i}LE_{i}=v^{-1}E_{i}^{2}L+vLE_{i}^{2};\\
			& (v+v^{-1}) F_{i}LF_{i}=vF_{i}^{2}L+v^{-1}LF_{i}^{2}.
		\end{align*}
	\end{Def}
	
	\subsection{The duality}
	For fixed positive integers $n$ and $d$, we define a $\mathbb{Q}(v)$-linear space $\mathbf{MV}$ with basis 
	$$\{m_{\mathbf{r}, \mathbf{j}} \ |\ \mathbf{r}=(r_1,\ldots, r_d)\in S_{row}, \mathbf{j}\in J_{\mathbf{r}}\},$$
	where 
	$$S_{row}=\{ \mathbf{r}=(r_1,\ldots, r_d)\ |\ 1 \leq r_i\leq n \text{ for all }i\in [1,d]\},$$
	$$ J_{\mathbf{r}} = \{(j_1, j_2, \cdots, j_k)\ | \ 0\leq k\leq n, 1\leq j_k< \cdots< j_1\leq d, r_{j_1}< r_{j_2}< \cdots< r_{j_k}\}.$$ 
	Moreover, by\cite[Lemma 6.3]{R18} the dimension of this space is 
	$$\sum^{\min \{n,d\}}_{k=1} \binom{n}{k}\binom{d}{k}n^{d-k}.$$
	For $\mathbf{j}=(j_1, \cdots, j_k)$, let $\mathbf{j}\setminus(j_l)=(j_1, \cdots, j_{l-1}, j_{l+1}, \cdots, j_k)$. For a sequence $\mathbf{r}=(r_1, \cdots, r_d)$ and a fixed $p\in [1, n]$, we define sequences $\mathbf{r}'_p$ and $\mathbf{r}''_p$ by 
	\begin{align*}
		(\mathbf{r}'_p)_j=\left\{
		\begin{array}{cc}
			r_j  & j\neq p, \\
			r_j+1 & j=p,
		\end{array}
		\right.
		\text{ and }
		(\mathbf{r}''_p)_j=\left\{
		\begin{array}{cc}
			r_j  & j\neq p, \\
			r_j-1 & j=p.
		\end{array}
		\right.
	\end{align*}
	\begin{lem}\label{lem 2.3.1}
		For any $m_{\mathbf{r}, \mathbf{j}}\in \mathbf{MV}$, where $\mathbf{r}=(r_1, \cdots, r_d)$ and $\mathbf{j}=(j_1, \cdots, j_k)$, the following formulas give a well-defined left $\mathbf{MU}$-action on $\mathbf{MV}$.
		
		\noindent$(a)$ For $a\in [1,n]$
		\begin{align*}
			H^{\pm}_{a}m_{\mathbf{r}, \mathbf{j}}=v^{\mp\sum_{1\leq j \leq d} \delta_{a r_j}}m_{\mathbf{r}, \mathbf{j}}.
		\end{align*}
		
		\noindent$(b)$ If $\mathbf{j}=0$, we have 
		\begin{eq}
			\relax L m_{\mathbf{r}, \mathbf{j}}=& 
			v^{-\sum_{j \in [1, d]}\delta_{1 r_{j}}}(m_{\mathbf{r}, \mathbf{j}} +\sum_{p \in [1, d], r_p>0}m_{\mathbf{r}, (p)}).
		\end{eq}
		
		\noindent$(c)$ If $r_{j_1}=1$, then 
		\begin{eq}
			\relax L m_{\mathbf{r}, \mathbf{j}}=& 
			v^{-2\sum_{j \in [1, d]}\delta_{1 r_{j}}+2\sum_{j<j_1}\delta_{1 r_j}}(v^{2}-1)m_{\mathbf{r}, \mathbf{j}\setminus{\{j_1\}}}\\
			& + v^{-2\sum_{j \in [1, d]}\delta_{1 r_{j}}}\sum_{p>j_1, \mathbf{r}_p>0}v^{2\sum_{j<j_1}\delta_{1 r_j}}(v^{2}-1)m_{\mathbf{r},\mathbf{j}_p} \\
			& + v^{-2\sum_{j \in [1, d]}\delta_{1 r_{j}}}(v^{2\sum_{j \leq j_1}\delta_{1 r_j}}-1)m_{\mathbf{r}, \mathbf{j}}, 
		\end{eq}
		where $\mathbf{j}_p=(p, j_2, \cdots, j_k)$.
		
		\noindent$(d)$ If $r_{j_1}>1$, we have
		\begin{eq}
			\relax L m_{\mathbf{r}, \mathbf{j}}= &
			v^{-\sum_{j \in [1, d]}\delta_{1 r_{j}}}v^{2\sum_{j \leq j_1}\delta_{1 r_j}}m_{\mathbf{r}, \mathbf{j}} \\
			& +v^{-\sum_{j \in [1, d]}\delta_{1 r_{j}}}\sum_{p>j_1, \mathbf{r}_{p}>0}v^{2\sum_{j \leq j_1}\delta_{1 r_j}}m_{\mathbf{r}, \mathbf{j}_p},
		\end{eq}
		where $\mathbf{j}_p=(p, j_1, j_2, \cdots, j_k)$.
		
		\noindent$(e)$ If for any $j_m$, $r_{j_m}\neq i, i+1$, then
		\begin{align*}
			E_{i}m_{\mathbf{r}, \mathbf{j}} = & v^{-\sum_{j \in [1, d]}\delta_{i r_j}}\sum\limits_{p \in [1, d], p\neq j_m r_p=i+1}v^{2\sum_{j>p}\delta_{i r_{j}}}m_{\mathbf{r}''_p, \mathbf{j}};\\
			F_{i}m_{\mathbf{r}, \mathbf{j}} = & v^{-\sum_{j \in [1, d]}\delta_{i+1, r_j}}\sum\limits_{p \in [1, d], r_p=i}v^{2\sum_{j<p}\delta_{i+1, r_{j}}}m_{\mathbf{r}'_p, \mathbf{j}}.\\
		\end{align*}
		
		\noindent$(f)$ If there is $j_m\in \mathbf{j}$ such that $r_{j_m}=i$ and $r_{j_{m+1}}\neq i+1$, then
		\begin{align*}
			E_{i}m_{\mathbf{r}, \mathbf{j}}= & v^{-\sum_{j \in [1, d]}\delta_{i r_j}}\sum\limits_{p \in [1, j_{m+1}]\cup [j_m+1, d], r_p=i+1}v^{2\sum_{j>p}\delta_{i r_{j}}}m_{\mathbf{r}''_p, \mathbf{j}}\\
			& + v^{-\sum_{j \in [1, d]}\delta_{i r_j}}\sum\limits_{p \in [j_{m+1}+1, j_{m}-1], r_p=i+1}v^{2\sum_{j>p}\delta_{i, r_{j}}-1}m_{\mathbf{r}''_p, \mathbf{j}};\\
			F_{i}m_{\mathbf{r}, \mathbf{j}}= & v^{-\sum_{j \in [1, d]}\delta_{i+1, r_j}}\sum\limits_{p \in [1, d], p \neq j_m r_p=i}v^{2\sum_{j<p}\delta_{i+1, r_{j}}}m_{\mathbf{r}'_p, \mathbf{j}}\\
			& + v^{-\sum_{j \in [1, d]}\delta_{i+1, r_j}}\sum\limits_{p= j_m}v^{2\sum_{j<p}\delta_{i+1, r_{j}}}m_{\mathbf{r}'_p, \mathbf{j}\setminus(j_m)}\\
			& + v^{-\sum_{j \in [1, d]}\delta_{i+1, r_j}}\sum\limits_{t \in [j_{m+1}+1, j_m-1], r_t=i} v^{2\sum_{j\leq j_{m+1}}\delta_{i+1,r_{j}}}m_{\mathbf{r}'_t, \mathbf{j}_t}\\
			& + v^{-\sum_{j \in [1, d]}\delta_{i+1, r_j}}v^{2\sum_{j\leq j_{m+1}}\delta_{i+1,r_{j}}}m_{\mathbf{r}'_{j_m}, \mathbf{j}},
		\end{align*}
		where $\mathbf{j}_t= (j_1, \cdots, j_m, t, j_{m+1} , \cdots, j_{k-1}, j_k)$.
		
		\noindent$(g)$ If there is $j_m\in \mathbf{j}$ such that $r_{j_m}=i+1$ and $r_{j_{m-1}}\neq i$, then
		\begin{align*}
			E_{i}m_{\mathbf{r}, \mathbf{j}} = & v^{-\sum_{j \in [1, d]}\delta_{i r_j}}\sum\limits_{p \in [1, d], p \neq j_m,  r_p=i+1}v^{2\sum_{j>p}\delta_{i r_{j}}}m_{\mathbf{r}''_p, \mathbf{j}}\\
			& + v^{-\sum_{j \in [1, d]}\delta_{i r_j}}\sum\limits_{p=j_m}v^{2\sum_{j>p}\delta_{i r_{j}}}m_{\mathbf{r}''_p, \mathbf{j}\setminus(j_m)}\\
			& + v^{-\sum_{j \in [1, d]}\delta_{i r_j}}\sum\limits_{p = j_m}v^{2\sum_{j>p}\delta_{i r_{j}}}m_{\mathbf{r}''_p, \mathbf{j}}\\
			& + v^{-\sum_{j \in [1, d]}\delta_{i r_j}}\sum\limits_{\substack{p = j_m,  r_t=i+1,\\t \in [j_{m+1}+1, j_m-1]}} v^{2\sum_{j>p}\delta_{i r_{j}}}m_{\mathbf{r}''_{j_{m+1}}, \mathbf{j}_t},
		\end{align*}
		where $\mathbf{j}_t= (j_1, \cdots, j_m, t, j_{m+1} , \cdots, j_{k-1}, j_k)$;
		\begin{align*}
			F_{i}m_{\mathbf{r}, \mathbf{j}} =v^{-\sum_{j \in [1, d]}\delta_{i+1, r_j}}\sum\limits_{p \neq j_m, r_p=i}v^{2\sum_{j<p}\delta_{i+1, r_{j}}}m_{\mathbf{r}'_p, \mathbf{j}}.
		\end{align*}
		
		\noindent$(h)$ If there are $j_{m}, j_{m+1}\in \mathbf{j}$ such that $r_{j_m}=i$ and $r_{j_{m+1}}=i+1$, then
		\begin{align*}
			E_{i}m_{\mathbf{r}, \mathbf{j}} = & v^{-\sum_{j \in [1, d]}\delta_{i r_j}}\sum\limits_{p \in [1, j_{m+1}]\cup [j_m+1, d], r_p=i+1}v^{2\sum_{j>p}\delta_{i r_{j}}}m_{\mathbf{r}''_p, \mathbf{j}}\\
			& + v^{-\sum_{j \in [1, d]}\delta_{i r_j}}\sum\limits_{p \in [j_{m+1}+1, j_{m}-1], r_p=i+1}v^{2\sum_{j>p}\delta_{i r_{j}}-1}m_{\mathbf{r}''_p, \mathbf{j}}\\
			& + v^{-\sum_{j \in [1, d]} \delta_{i r_j}}\sum\limits_{p = j_{m+1}}v^{2(\sum_{j<j_{m+1}}\delta_{i+1, r_{j}}-1)} (v^2-1)m_{\mathbf{r}''_p, \mathbf{j}\setminus{j_{m+1}}}\\
			& + v^{-\sum_{j \in [1, d]} \delta_{i r_j}}\sum\limits_{\substack{p = j_{m+1}, \\ j_{m+2}<t<j_{m+1}, r_{t}=h+1}} v^{2(\sum_{j<j_{m+1}}\delta_{i+1, r_{j}}-1)} (v^2-1)m_{\mathbf{r}''_p, \mathbf{j}_t},
		\end{align*}
		where $\mathbf{j}_t=(j_1, \cdots, j_{m+2}, t, j_{m}, \cdots, j_k)$;
		\begin{align*}
			F_{i}m_{\mathbf{r}, \mathbf{j}} = & v^{-\sum_{j \in [1, d]}\delta_{i+1, r_j}}\sum\limits_{p \neq j_m+1, r_p=i}v^{2\sum_{j<p}\delta_{i+1, r_{j}}} m_{\mathbf{r}'_p, \mathbf{j}}\\
			& + v^{-\sum_{j \in [1, d]} \delta_{i+1, r_j}}\sum\limits_{p = j_m}v^{2\sum_{j<j_{m+1}}\delta_{i+1, r_{j}}}(v^2-1)m_{\mathbf{r}'_p, \mathbf{j}_{j_m}}\\
			& + v^{-\sum_{j \in [1, d]} \delta_{i+1, r_j}}\sum\limits_{t \in [j_{m+1}+1, j_m-1], r_t=i} v^{2\sum_{j<j_{m+1}}\delta_{i+1, r_{j}}} (v^2-1)m_{\mathbf{r}'_t, \mathbf{j}_t},
		\end{align*}
		where $\mathbf{j}_t=(j_1, \cdots, j_{m+2}, t, j_{m}, \cdots, j_k)$.
	\end{lem}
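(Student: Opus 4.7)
The plan is to obtain the action geometrically and then match it against the explicit formulas. In Section 3 the authors introduce a $\mathbb{Q}(v)$-linear space $\mathcal{MV}$ whose basis is parametrized by $G$-orbits on $\mathscr{X}\times V$, and the convolution construction endows $\mathcal{MV}$ with a natural left $\mathcal{MS}_{n,d}$-module structure that is well-defined by construction. The first step is to identify $\mathbf{MV}$ with a suitable specialization of $\mathcal{MV}$, matching the basis $\{m_{\mathbf{r},\mathbf{j}}\}$ with orbit indicator functions, where $\mathbf{r}$ records the type of the flag and $\mathbf{j}$ records the decoration arising from the extra vector.

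Next, by the stabilization procedure carried out in Section 5, there is an algebra homomorphism from $\mathbf{MU}$ to $\mathcal{MS}_{n,d}$ (factoring through the stabilization algebra $\mathcal{K}$), sending each of the abstract generators $L$, $E_i$, $F_i$, $H^{\pm}_a$ to an explicit element of $\mathcal{MS}_{n,d}$. Pulling the $\mathcal{MS}_{n,d}$-action back along this homomorphism yields a well-defined $\mathbf{MU}$-action on $\mathbf{MV}$; the only remaining task is to check that the action of each generator, computed as a convolution product against the corresponding orbit class, produces precisely the combinatorial formulas $(a)$--$(h)$. For $H^{\pm}_a$ this is immediate from the definition; for $E_i$ and $F_i$ restricted to the stratum $\mathbf{j}=\emptyset$ it reduces to the classical BLM/Grojnowski--Lusztig computation as in \cite{GL}; the genuinely new contributions come from convolving against the idempotent $L$ and tracking how the decoration $\mathbf{j}$ shifts.

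The main obstacle will be the sheer case analysis underlying $(c)$--$(d)$ and $(f)$--$(h)$: the position of the entry being modified relative to the indices $j_1, \ldots, j_k$ produces different powers of $v$ and different correction terms, in particular the $(v^2-1)$-corrections coming from the mirabolic multiplication formulas of Remark 4.1.2. I expect the cleanest route to be to first establish the multiplication formulas of Section 4 in full generality, then deduce the formulas of this lemma by specializing to the elements of $\mathcal{MS}_{n,d}$ that represent $L$, $E_i$, $F_i$, $H_a$, applied to a generic orbit representative $m_{\mathbf{r},\mathbf{j}}$. Should one prefer to bypass the geometric construction, direct verification of the relations of Definition 2.1.2 on $m_{\mathbf{r},\mathbf{j}}$ is also possible; in that route the mirabolic Serre-type relations $(v+v^{-1})E_iLE_i = v^{-1}E_i^2L + vLE_i^2$ and its $F$-analogue are the nontrivial ones, while all remaining relations follow from the classical quantum $\mathfrak{gl}_n$ computation together with the easy idempotency $L^2=L$.
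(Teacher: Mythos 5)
Your proposal is correct and follows essentially the same route as the paper: the explicit formulas are exactly those of Proposition \ref{prop 4.3.1} for the convolution action of $\mathcal{MS}_{n,d}$ on $\mathcal{MV}\cong\mathbf{MV}$, and well-definedness of the $\mathbf{MU}$-action is obtained by pulling back along the surjection $\mathbf{MU}\to\mathbb{Q}(v)\otimes_{\mathcal{A}}\mathcal{MS}_{n,d}$ of Proposition \ref{prop 4.2.6}, which the paper derives directly from the relation check of Proposition \ref{prop 4.2.5} rather than through the stabilization algebra $\mathcal{K}$. The only slip is cosmetic: the basis of $\mathcal{MV}$ is indexed by $G$-orbits on $\mathscr{X}\times\mathscr{Y}\times V$ (the set $\Xi^1$), not on $\mathscr{X}\times V$.
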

	\begin{proof}
		The lemma follows from Proposition \ref{prop 4.3.1} and Proposition \ref{prop 4.2.6}.
	\end{proof}
	
	We define the symmetric group $S_d$ actions on $S_{row}$ and $J_{\mathbf{r}}$ as $\sigma\mathbf{r}$$=(r_{\sigma(1)}, \cdots, r_{\sigma(d)})$ and $\sigma\mathbf{j}$$=\{\sigma(j_1), \cdots, \sigma(j_k)\}$.
	\begin{lem}
		Assume $1\leq j \leq d-1$. Let $\mathbf{r}=(r_1, \cdots, r_d)$ and $\mathbf{j}=(j_1, \cdots, j_k)$, the following formulas give a well-defined right $\mathbf{MH}$-action on $\mathbf{MV}$.
		
		\noindent$(a)$ If $j,j+1\notin \mathbf{j}$, then
		\begin{align*}
			m_{\mathbf{r}, \mathbf{j}} \tau_j=\left\{\begin{array}{ll}
				m_{s_j\mathbf{r}, \mathbf{j}} & \text { if } r_j<r_{j+1}, \\
				v^2 m_{\mathbf{r},\mathbf{j}} & \text { if } r_j=r_{j+1}, \\
				\left(v^2-1\right) m_{\mathbf{r}, \mathbf{j}}+v^2 m_{s_j\mathbf{r},\mathbf{j}} & \text { if } r_j>r_{j+1}.
			\end{array}\right.
		\end{align*}
		
		\noindent$(b)$ If $j\in \mathbf{j}$ and $j+1\notin \mathbf{j}$, then
		\begin{align*}
			m_{\mathbf{r}, \mathbf{j}} \tau_j=\left\{\begin{array}{ll}
				m_{s_j\mathbf{r}, s_j\mathbf{j}}+m_{s_j\mathbf{r}, \mathbf{j}\setminus (j)} & \text { if } r_j<r_{j+1}, \\
				m_{\mathbf{r}, s_j\mathbf{j}} & \text { if } r_j=r_{j+1}, \\
				m_{\mathbf{r}, \mathbf{j}\cup \{j+1\}}+m_{s_j\mathbf{r}, s_j\mathbf{j}}+ m_{s_j\mathbf{r}, \mathbf{j}\setminus (j)} & \text { if } r_j>r_{j+1}.
			\end{array}\right.
		\end{align*}
		
		\noindent$(c)$ If $j\notin \mathbf{j}$ and $j+1\in \mathbf{j}$, then
		\begin{align*}
			m_{\mathbf{r}, \mathbf{j}} \tau_j=\left\{\begin{array}{ll}
				2m_{s_j\mathbf{r}, \mathbf{j}\setminus (j+1)}+m_{s_j\mathbf{r}, s_j\mathbf{j}}+m_{s_j\mathbf{r}, \mathbf{j}\cup\{j\}}+m_{\mathbf{r}, \mathbf{j}\setminus (j+1)} & \text { if } r_j<r_{j+1}, \\
				v^2m_{\mathbf{r}, s_j\mathbf{j}}+v^2m_{\mathbf{r}, \mathbf{j}\setminus (j+1)}+(v^2-1)m_{\mathbf{r}, \mathbf{j}} & \text { if } r_j=r_{j+1}, \\
				(v^2-1)m_{\mathbf{r}, \mathbf{j}}+ v^2(2m_{\mathbf{r}, \mathbf{j}\setminus (j+1)}+m_{s_j\mathbf{r}, s_j\mathbf{j}}+m_{s_j\mathbf{r}, \mathbf{j}\setminus (j+1)}) & \text { if } r_j>r_{j+1}.
			\end{array}\right.
		\end{align*}
		
		\noindent$(d)$ If $j, j+1\in \mathbf{j}$, then 
		\begin{align*}
			m_{\mathbf{r}, \mathbf{j}} \tau_j & =
			2(v^2-1)m_{\mathbf{r}, \mathbf{j}}+(2v^2-1)m_{\mathbf{r}, \mathbf{j}\setminus (j+1)}\\
			& + (2v^2-1)m_{s_j\mathbf{r}, \mathbf{j}\setminus (j, j+1)}+(2v^2-1)m_{r_1 \ldots r_{j+1} r_j\ldots r_d, \mathbf{j}\setminus (j)}.
		\end{align*}
		
		\noindent$(e)$ 
		\begin{align*}
			m_{\mathbf{r}, \mathbf{j}} \tau_0=\left\{\begin{array}{ll}
				(v^{2(\sum_{i\leq i_k} \delta_{r_{1} i})}-1)m_{\mathbf{r}, \mathbf{j}}+ \sum_{i'>i_k}\delta_{r_1 i'}m_{\mathbf{r}, \mathbf{j}'},  &  \text{ if } j_k>1,\\
				(v^{2}-2)m_{\mathbf{r}, \mathbf{j}}  &  \text{ if } j_k=1,
			\end{array}\right.
		\end{align*}
		where $\mathbf{j}'=(j_1, \cdots, j_k, 1)$.
	\end{lem}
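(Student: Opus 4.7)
The plan is to realize $\mathbf{MV}$ geometrically as the space $\mathcal{MV}$ of $G$-invariant $\mathbb{Q}(v)$-valued functions on the product $\mathscr{X}\times V\times \mathscr{Y}$, and to read off the right $\mathbf{MH}$-action from convolution on the complete flag factor $\mathscr{Y}$. Under this identification each basis vector $m_{\mathbf{r},\mathbf{j}}$ is the characteristic function of the $G$-orbit whose invariants are $\mathbf{r}$ (the positions of an adapted basis of the complete flag relative to the $n$-step flag) and $\mathbf{j}$ (the positions at which the adapted basis vectors lie on the line through $v$). The right action of $\mathbf{MH}\cong\mathcal{MH}$ on $\mathbf{MV}\cong\mathcal{MV}$ is then the one coming from the $\mathscr{Y}\times\mathscr{Y}\times V$ picture of the mirabolic Hecke algebra constructed in Section 3, and formulas (a)--(e) become instances of the multiplication formulas of Section 4 applied to $\mathcal{MV}$, in the same way that Lemma \ref{lem 2.3.1} is deduced.

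For $j\geq 1$, the generator $\tau_j$ is supported on the $s_j$-orbit in $\mathscr{Y}\times\mathscr{Y}$, so the four cases (a)--(d) are dictated by the possible interactions between $\{j,j+1\}$ and $\mathbf{j}$, i.e.\ whether the positions being swapped are marked by $v$. Case (a) (no mark) is the classical Iwahori--Hecke computation recovering the Jimbo formulas. Cases (b) and (c) (only one position marked) require, in addition, tracking how the marked line is carried by the local $\mathrm{GL}_2$-action, which produces the extra summands with $\mathbf{j}\setminus (j)$, $s_j\mathbf{j}$, and $\mathbf{j}\cup\{j\}$ or $\mathbf{j}\cup\{j+1\}$. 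Case (d) is the most intricate: two adjacent marked positions collapse a higher-dimensional convolution fibre, so the image splits into four terms with the weights $2(v^{2}-1)$ and $(2v^{2}-1)$ that appear in the statement. In each case I would fix a representative adjacent pair of complete flags and enumerate the $G$-orbits contributing to the convolution, computing the fibre dimensions (which give the powers of $v$) by the same method used to compute dimensions of mirabolic orbits in Section 3.

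The main obstacle is case (e), the mirabolic generator $\tau_0$, which has no analogue in the ordinary Hecke algebra. Geometrically, $\tau_0$ corresponds to the characteristic function of the orbit in $\mathscr{Y}\times\mathscr{Y}\times V$ where the two complete flags agree past the first step, together with a non-trivial condition relating $v$ to the first line. To compute $m_{\mathbf{r},\mathbf{j}}\,\tau_0$ I would stratify by whether the smallest marked index $j_k$ equals $1$. When $j_k=1$ the vector $v$ already aligns with the bottom of the complete flag, so the convolution collapses via the quadratic relation $\tau_0^{2}=(v^{2}-2)\tau_0+(v^{2}-1)$ and produces only the scalar $v^{2}-2$. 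When $j_k>1$, one must count the compatible choices of first line in the new complete flag; this breaks into a diagonal term weighted by $v^{2\sum_{i\leq i_k}\delta_{r_{1}i}}-1$ and a sum of new orbits $m_{\mathbf{r},\mathbf{j}'}$ with $\mathbf{j}'=(j_1,\ldots,j_k,1)$, exactly matching the claimed formula.

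Finally, once the formulas (a)--(e) are established, the fact that they assemble into a genuine right $\mathbf{MH}$-module structure is automatic from the associativity of convolution together with the isomorphism $\mathbf{MH}\cong\mathcal{MH}$ established earlier in the paper, so one does not need to verify the defining relations of $\mathbf{MH}$ by a separate case-by-case check.
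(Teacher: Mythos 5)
Your overall framework is the same as the paper's: identify $\mathbf{MV}$ with $\mathcal{MV}$ and $\mathbf{MH}$ with $\mathcal{MH}$ (Theorem \ref{mh}), compute the convolution action, and get well-definedness for free from associativity of convolution. Where you diverge is in how the convolution is actually evaluated. You propose to fix representative pairs of complete flags and redo the orbit/fibre counts from scratch for each of the cases (a)--(e), in the style of the dimension counts of Section 3. The paper instead proves Proposition \ref{prop 4.4.1} by embedding $\mathcal{MH}$ into $\mathcal{MS}_{d,d}$ via $\tau_{s_j,\emptyset}=e_{C_j,\emptyset}\ast e_{B_j,\emptyset}-e_{I,\emptyset}$ and then applying the transpose anti-automorphism $e_{A,\Delta}\mapsto e_{{}^tA,{}^t\Delta}$, which converts the right action into left multiplications that are already computed in Propositions \ref{prop 4.1.1}, \ref{prop 4.1.2} and \ref{prop 4.1.3}; no new orbit enumeration is needed. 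Your route is legitimate (it is essentially Rosso's method in the $n=2$ case) but substantially more laborious, and the delicate coefficients in case (d), such as $2(v^2-1)$ and $(2v^2-1)$, arise in the paper automatically from composing an $E$-type and an $F$-type formula rather than from a single hard fibre count. One small imprecision: in case (e) with $j_k=1$ the coefficient $v^2-2$ does not literally ``collapse via the quadratic relation $\tau_0^2=(v^2-2)\tau_0+(v^2-1)$'' (that relation has an extra constant term); it comes from counting the vectors $\mu$ in the prescribed orbit with $\omega-\mu$ a nonzero element of the first line, which excludes two values and yields $q-2$. This does not affect the viability of your plan, but the count must be done directly rather than quoted from the defining relation.
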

	\begin{proof}
		This lemma follows from Theorem \ref{mh} and  Proposition \ref{prop 4.4.1}.
	\end{proof}
	Then we conclude the main theorem.
	\begin{thm}\label{thm 2.3.3}
		The left $\mathbf{MU}$-action and the right $\mathbf{MH}$-action on $\mathbf{MV}$ are commuting. 
		They form a double centralizer for $n\geq d$, i.e., 
		$$\End_{\mathbf{MU}}(\mathbf{MV}) \cong \mathbf{MH}, \quad
		\mathbf{MU}\to \End_{\mathbf{MH}}(\mathbf{MV}) \text{ is surjective}.$$ 
	\end{thm}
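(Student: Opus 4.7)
The plan is to reduce the theorem to a geometric double centralizer statement at the Schur-algebra level and then transfer it to $\mathbf{MU}$. First I would realize $\mathbf{MV}$ as the geometric bimodule $\mathcal{MV}$ of Section~3, so that by Proposition~\ref{prop 4.3.1} and the other formulas cited in the proofs of Lemmas~\ref{lem 2.3.1} and~2.3.2, the left $\mathbf{MU}$-action on $\mathbf{MV}$ is the restriction of the left $\mathcal{MS}_{n,d}$-action along the composite $\mathbf{MU} \to \mathcal{K} \twoheadrightarrow \mathcal{MS}_{n,d}$ built in Section~5, and the right $\mathbf{MH}$-action is the right $\mathcal{MH}$-action under the geometric realization of $\mathcal{MH}$. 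Once both actions are expressed as convolutions on the appropriate triple products, the commutativity
\[
(a \cdot m) \cdot h = a \cdot (m \cdot h), \qquad a \in \mathcal{MS}_{n,d},\ m \in \mathcal{MV},\ h \in \mathcal{MH},
\]
is the standard associativity of convolution on $\mathscr{X} \times \mathscr{X} \times V \times \mathscr{Y}$, so commutativity of the two actions reduces to a diagram chase.

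Next I would execute the double centralizer argument outlined in the introduction. Introduce the group $V \rtimes G$ acting on $\mathscr{X} \times V$ by $(u,g) \cdot (F,v) = (gF, gv+u)$, and similarly on $\mathscr{Y} \times V$, and define $MS$, $MH$ as the convolution algebras of $V \rtimes G$-invariant $\mathbb{Z}[v,v^{-1}]$-valued functions on $(\mathscr{X} \times V)^2$ and $(\mathscr{Y} \times V)^2$ respectively; then $\mathbf{MV}$ is realized as the space of $V \rtimes G$-invariant functions on $(\mathscr{X} \times V) \times (\mathscr{Y} \times V)$. Using the translation by $u \in V$ to normalize the second vector to zero, I would check that $V \rtimes G$-orbits on $(\mathscr{X} \times V)^2$ biject with $G$-orbits on $\mathscr{X} \times \mathscr{X} \times V$, and that this bijection is compatible with convolution; this gives algebra isomorphisms $\mathcal{MS}_{n,d} \cong MS$ and $\mathcal{MH} \cong MH$, together with compatible module isomorphisms for $\mathcal{MV}$.

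With these identifications in place, I would verify the hypotheses of \cite[Theorem 2.1]{P} for the two $V \rtimes G$-sets $\mathscr{X} \times V$ and $\mathscr{Y} \times V$, namely that each has finitely many orbits and the stabilizer data are compatible in the sense required by that theorem. The theorem then yields, in the range $n \geq d$, the double centralizer at the Schur level:
\[
\mathrm{End}_{MS}(\mathbf{MV}) \;\cong\; MH, \qquad MS \;\longrightarrow\; \mathrm{End}_{MH}(\mathbf{MV}) \ \text{surjective}.
\]
Transporting along the isomorphisms $\mathcal{MS}_{n,d} \cong MS$, $\mathcal{MH} \cong MH$, and precomposing the left action with the surjection $\mathbf{MU} \to \mathcal{K} \twoheadrightarrow \mathcal{MS}_{n,d}$ of Section~5 (whose image contains all convolution generators needed to saturate $\mathrm{End}_{\mathcal{MH}}(\mathcal{MV})$), yields the theorem as stated.

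The main obstacle I anticipate is the verification step in the previous paragraph: \cite[Theorem 2.1]{P} is designed for pairs of ordinary $G$-sets, and the naive $G$-action on $\mathscr{X} \times V$ has infinitely many orbits with non-compatible stabilizers, which is precisely why we must enlarge $G$ to $V \rtimes G$. Showing that this enlargement does not break the hypotheses of the theorem — that the $V \rtimes G$-orbit decomposition of the product indexed by the decorated matrices $(A,\Delta)$ of Definition~\ref{def 3.1.1} gives rise to precisely the right multiplicity structure, and that the convolution formulas computed in Section~4 match the algebraic actions of Lemmas~\ref{lem 2.3.1} and 2.3.2 term by term — is the technical heart of the argument and is where the new "interwoven" combinatorics of $\Delta$ noted in Remark~\ref{rem 4.1.2} must be handled carefully.
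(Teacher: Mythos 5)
Your proposal follows essentially the same route as the paper: realize the actions geometrically, pass to the semidirect product $V\rtimes G$ acting on $\mathscr{X}\times V$ and $\mathscr{Y}\times V$, verify the hypotheses of \cite[Theorem 2.1]{P} (the decomposition of $\mathscr{X}\times V$ by compositions of $d$ and the surjections from $\mathscr{Y}\times V$, with $n\geq d$ supplying the required bijection), transport the double centralizer back through $\mathcal{MS}_{n,d}\cong MS$ and $\mathcal{MH}\cong MH$, and conclude via the surjection $\mathbf{MU}\to\mathbb{Q}(v)\otimes_{\mathcal{A}}\mathcal{MS}_{n,d}$. This is exactly the argument assembled in the paper from Lemmas 2.2.1--2.2.2, Propositions \ref{prop 4.2.5}--\ref{prop 4.2.6} and Theorem \ref{thm 6.2.8}.
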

	\begin{proof}
		The theorem follows from the previous two lemmas, Proposition \ref{prop 4.2.5}, Proposition \ref{prop 4.2.6} and Theorem \ref{thm 6.2.8}.
	\end{proof}
	
	\section{Geometric Setting}
	\subsection{$GL_d(\mathbb{F}_q)$-orbits on mirabolic flag varieties}\ 
	Let $v$ be an indeterminate and $\mathcal{A}=\mathbb{Z}[v,v^{-1}]$. Take $\mathbb{F}_q$ as a finite field with $q$ elements, 
	and $V=\mathbb{F}^d_q$ as a vector space of dimension $d$. 
	For the fixed positive integers $n$ and $d$, define $\mathscr{X}$ as the set of all $n$-step partial flags in $V$ 
	$$
	\mathscr{X}=\{f=(0=V_0\subset V_1\subset \cdots \subset V_{n-1}\subset V_n=V)\}.
	$$
	Similarly, let $\mathscr{Y}$ be the set of all complete flags in $V$
	$$\mathscr{Y}=\{f=(0=V_0\subset V_1\subset \cdots \subset V_{d-1}\subset V_d=V) \ |\ \dim{V_{i}/V_{i-1}}=1 \}.$$ 
	The group $G=GL_d(\mathbb{F}_q)$ acts naturally on $\mathscr{X}$ and $\mathscr{Y}$. Thus $\mathscr{X} \times \mathscr{X}\times V$, $\mathscr{Y} \times \mathscr{Y}\times V$ and $\mathscr{X} \times \mathscr{Y}\times V$ are endowed with the diagonal action of $G$.
	
	Let $\mathcal{MH}=\mathcal{A}_{G}(\mathscr{Y}\times \mathscr{Y}\times V)$, the set of $G$-invariant $\mathcal{A}$-valued functions over $\mathscr{Y}\times \mathscr{Y}\times V$. When $v$ is specialized to $\sqrt{q}$, Rosso defined the convolution product on $\mathcal{MH}$ as follows.
	For any $g, h \in \mathcal{MH}$
	$$
	g\ast h(f, f', \omega)=\sum\limits_{f''\in \mathscr{Y}, \ \mu\in V}g(f, f'', \mu)h(f'', f', \omega-\mu)
	.$$
	Therefore, $\mathcal{MH}$ is an associative $\mathcal{A}$-algebra. The $G$-orbits of $\mathscr{Y} \times \mathscr{Y} \times V$ are parameterized by pairs $(\sigma, \beta)$ where 
	$\sigma$ is an element in the symmetric group $S_d$ and $\beta\in \{(j_1, j_2, \cdots, j_k)\ |\ 1\leq j_i\leq d\text{ for } i\in [1, k], j_1> j_2> \cdots> j_k\}$ such that $\sigma(j_1)< \cdots< \sigma(j_k)$.
	We denote the characteristic functions of these orbits by $\tau_{\sigma, \beta}$.
	\begin{thm}[\cite{R14}]\label{mh}
		The mirabolic Hecke algebra $\mathbf{MH}$ is isomorphic to the $\Q(v)\otimes_{\mathcal{A}}\mathcal{MH}$. 
		The isomorphism is given by $\tau_0 \mapsto \tau_{id, \{1\}}$ and $\tau_i\mapsto \tau_{s_i, \emptyset}$ where $s_i$ denotes the permutation $(i, i+1)$.
	\end{thm}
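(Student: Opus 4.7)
The plan is to realize $\Phi:\mathbf{MH}\to \Q(v)\otimes_{\mathcal{A}}\mathcal{MH}$ defined by $\tau_0\mapsto\tau_{id,\{1\}}$ and $\tau_i\mapsto \tau_{s_i,\emptyset}$ as an algebra isomorphism by first verifying that $\Phi$ respects the defining relations of $\mathbf{MH}$, and then showing bijectivity through a generating argument combined with a dimension count.

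For the purely Iwahori-type relations on $\{\tau_i\}_{i\geq 1}$, each $\tau_{s_i,\emptyset}$ is supported on triples with vector $0$, so convolutions among these collapse the $V$-sum to $\mu=0$ and reduce to the classical Iwahori-Hecke convolution on $\mathscr{Y}\times\mathscr{Y}$. The quadratic, braid, and commutation relations for $i\geq 1$ then follow from the standard Bruhat-cell point counts over $\mathbb{F}_q$ and the specialization $v=\sqrt q$. For the mirabolic quadratic relation $\tau_0^2=(v^2-2)\tau_0+(v^2-1)$, I would compute $\tau_{id,\{1\}}*\tau_{id,\{1\}}$ at a fixed representative $(f,f,\omega)$: the flag data force the intermediate flag $f''=f$, and the vector sum splits into cases according to whether $\mu$ or $\omega-\mu$ lies in the first filtration step $V_1$, with the $\mathbb{F}_q$-point counts producing the asserted coefficients.

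The main obstacle will be the two mixed length-$4$ relations between $\tau_0$ and $\tau_1$. Each side is a fourfold convolution, and to match them I would expand both products into the $\tau_{\sigma,\beta}$ basis by iterated application of the convolution formula, carefully tracking which orbits with $\sigma\in\langle s_1\rangle$ and $\beta\subseteq\{1,2\}$ appear and with what weights. Organizing this by a case analysis on the positions of the intermediate vectors in the filtration steps, and recording how the constraint $\sigma(j_1)<\cdots<\sigma(j_k)$ restricts the admissible $\beta$ after each convolution, is the most delicate piece of bookkeeping and is where a direct verification is genuinely harder than in the ordinary Hecke case.

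For bijectivity, I would argue surjectivity by induction on the pair $(\ell(\sigma),|\beta|)$: the $\tau_{\sigma,\emptyset}$ are generated from $\{\tau_{s_i,\emptyset}\}$ via a reduced-expression argument as in the Iwahori case, and each $\tau_{\sigma,\beta}$ with $\beta\neq\emptyset$ is obtained from basis elements of smaller length by multiplying with $\tau_{id,\{1\}}$ and suitable Hecke generators, subtracting off the lower-order terms produced by the convolution. Injectivity then follows by comparing dimensions, since Solomon's presentation of $\mathbf{MH}$ in \cite{S} exhibits a basis indexed by exactly the pairs $(\sigma,\beta)$ that parametrize the $G$-orbits of $\mathscr{Y}\times\mathscr{Y}\times V$.
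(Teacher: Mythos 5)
The paper does not prove this statement at all: it is imported verbatim from Rosso's work, cited as \cite{R14}, and no argument is given in the text. Your proposal is therefore not competing with an in-paper proof but with a citation, and what you outline is essentially the strategy Rosso himself follows: check that the assignment respects Solomon's defining relations, then combine a generation argument with the fact that the $G$-orbits on $\mathscr{Y}\times\mathscr{Y}\times V$ are indexed by the same pairs $(\sigma,\beta)$ that index Solomon's basis of $\mathbf{MH}$, so that a surjection between spaces of equal dimension is an isomorphism. The individual ingredients you identify are sound: the reduction of the relations among $\tau_{s_i,\emptyset}$ ($i\geq 1$) to the classical Iwahori--Hecke computation because these functions are supported on $\omega=0$; the computation of $\tau_{id,\{1\}}^{2}$ at a pair of equal flags, where the correct case split is on whether $\omega\in V_1\setminus\{0\}$ or $\omega=0$ (giving coefficients $q-2$ and $q-1$ respectively, matching $(v^2-2)\tau_0+(v^2-1)$ at $v=\sqrt q$); and the dimension count via Solomon's presentation.

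Two places in your sketch remain genuinely unverified rather than merely routine. First, the mixed length-four relations between $\tau_0$ and $\tau_1$: you correctly flag these as the delicate point, but a proof needs the explicit expansion of both fourfold convolutions in the $\tau_{\sigma,\beta}$ basis, and the bookkeeping of which decorated orbits survive after each convolution (the constraint $\sigma(j_1)<\cdots<\sigma(j_k)$ can kill or create terms at each step) is exactly where sign and coefficient errors occur; asserting that the two sides match is not yet an argument. Second, your surjectivity induction requires that multiplying $\tau_{\sigma,\emptyset}$ by $\tau_{id,\{1\}}$ and Hecke generators produces $\tau_{\sigma,\beta}$ with an invertible (e.g.\ unitriangular) transition matrix against the orbit basis; this triangularity is the substantive content of the generation step and must be established, not assumed. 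There is also a background point you pass over: the structure constants of $\mathcal{MH}$ are defined by point counts over $\mathbb{F}_q$, and one must know they are polynomial in $q$ for the specialization $v=\sqrt q$ and the comparison over $\mathbb{Q}(v)\otimes_{\mathcal{A}}\mathcal{MH}$ to make sense uniformly in $q$. None of these is a wrong turn, but until they are carried out your text is an outline of \cite{R14} rather than an independent proof.
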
 
	
	\begin{Def}\label{def 3.1.1}
		A decorated matrix is a pair $(A, \Delta)$, where $A=(a_{i j})\in Mat_{n\times n}(\mathbb{N})$ 
		and $\Delta =\{(i_1, j_1), \cdots, (i_k, j_k)\}$ is a set that satisfies
		$$
		1\leq  i_1< \cdots < i_k \leq  n , \ 1\leq j_k< \cdots< j_1 \leq n, 
		$$
		with the additional condition that for all $(i, j)\in \Delta$, the entry $a_{i j}>0$. We denote by $\Xi_{n|n}$ the set of $n\times n$ decorated matrices and 
		$\Xi_{n|n,d}$ is defined as $\{(A, \Delta)\in \Xi_{n|n} | \sum_{i j}a_{i j}=d \}$.
	\end{Def}
	
	We can write down a decorated matrix by circling the entries of the matrix corresponding to $\Delta$.
	\begin{example}
		Let
		$
		A=
		\begin{pmatrix}
			1 & 4 & 0 \\
			3 & 7 & 1\\
			4 & 5 & 2\\
		\end{pmatrix}
		$
		and $\Delta=\{(2, 2), (3, 1)\}$, we can write $(A, \Delta)$ as
		$$
		(A, \Delta)=
		\begin{pmatrix}
			1 & 4 & 0 \\
			3 & \enc{7} & 1\\
			\enc{4} & 5 & 2\\
		\end{pmatrix}
		.$$
	\end{example}
	
	By \cite{MWZ}, we have the bijection between the set of $G$-orbits of $\mathscr{X} \times \mathscr{X}\times V$ and $\Xi_{n|n,d}$, i.e., 
	$$
	G \backslash  \mathscr{X}\times \mathscr{X}\times V \longleftrightarrow \Xi_{n|n,d}
	.$$
	The orbit corresponding to $(A, \Delta)$ is denoted by $\mathcal{O}_{A, \Delta}$. We define $\mathcal{MS}_{n,d}=\mathcal{A}_{G}(\mathscr{X}\times \mathscr{X}\times V)$ to be the space of $G$-invariant $\mathcal{A}$-valued functions on $\mathscr{X}\times \mathscr{X}\times V$. The convolution product makes $\mathcal{MS}_{n,d}$ an associative algebra over $\mathcal{A}$. Its basis is given by $\{e_{A, \Delta}\ |\ (A, \Delta)\in \Xi_{n|n,d} \}$, where $e_{A, \Delta}$ is the characteristic function of the orbit $\mathcal{O}_{A, \Delta}$. The unit element of $\mathcal{MS}_{n,d}$ is
	$$
	\sum\limits_{D}e_{D, \emptyset},
	$$
	where $D$ runs over all diagonal matrices in $Mat_{n\times n}(\mathbb{N})$ such that $(D,\emptyset)\in \Xi_{n|n,d}$.
	
	\begin{Def}
		The algebra $\mathcal{MS}_{n,d}$ is called mirabolic quantum Schur algebra. 
	\end{Def}
	Let $\mathcal{MV}= \mathcal{A}_{G}(\mathscr{X}\times \mathscr{Y}\times V)$ be the space of $G$-invariant $\mathcal{A}$-valued functions on $\mathscr{X}\times \mathscr{Y}\times V$. The space $\mathcal{MV}$ is endowed with a left $\mathcal{MS}_{n,d}$-action and a right $\mathcal{MH}$-action by convolution.
	
	Let
	$$\Xi^1 =\{(A,\Delta)\in \Xi_{n|m,d}\ |\ \text{ for any } j\in [1,d], \sum_{i\in [1,n]}a_{ij}=1\}.$$ 
	By \cite{R18}, there is a bijection 
	\begin{align*}
		G\backslash \mathscr{X}\times \mathscr{Y}\times V \leftrightarrow \Xi^1.
	\end{align*}
	For any $(A,\Delta)\in \Xi^{1}$ where $\Delta=\{(i_1,j_1), \cdots, (i_k,j_k)\}$, we associate $(A,\Delta)$ a pair of sequence
	$(\mathbf{r}=(r_1,\cdots, r_d),\mathbf{j})$, where $r_c$ is the unique number in $[1, n]$ such that $a_{r_c c}=1$ for any $c\in [1, d]$ and $ \mathbf{j}=(j_1, \cdots, j_k)$. Then Rosso showed that there is a bijection 
	$$
	\Xi^{1}\leftrightarrow \{(\mathbf{r},\mathbf{j})\ |\ \mathbf{r}=(r_1, r_2, \ldots ,r_d), \mathbf{j}\in J_{\mathbf{r}}\}.
	$$
	Let $e_{\mathbf{r}, \mathbf{j}}=e_{A,\Delta}$ be the characteristic function of the $\mathcal{O}_{A, \Delta}$. Then $\{e_{\mathbf{r},\mathbf{j}}| \mathbf{r}=(r_1,\cdots, r_d), \mathbf{j}\in J_{\mathbf{r}}\}$ form a basis of $\mathcal{MV}$.
	\begin{rem}
		By the two bijections above, we have isomorphism $\mathbf{MV} \cong \mathbb{Q}(v)\otimes_{\mathcal{A}}\mathcal{MV}$.
	\end{rem}
	\subsection{Dimension of orbits}
	In this subsection, we assume the ground field is an algebraic closure $\overline{\mathbb{F}}_q$ of $\mathbb{F}_q$ whenever we refer to the dimension of a $G$-orbit or its stabilizer.
	We first define a partial order on $\Delta$.
	\begin{Def}
		We say $\Delta' \leq \Delta$ if for any $(i, j)\in \Delta'$, there is a pair $(k,l)\in \Delta$ such that $i\leq k$ and $j\leq l$.
		We say $\Delta' < \Delta$ if $\Delta'\neq \Delta$.
	\end{Def}
	\begin{lem}[\cite{MWZ}]
		Let $(f, f', f'')\in \mathscr{X}\times \mathscr{X}\times \mathbb{P}(V)$, where $\mathbb{P}(V)$ is the projective space of $V$. 
		The dimension of $\mathcal{O}_{(f,f',f'')}$, the $G$-orbit of $(f,f',f'')$, is 
		$$\sum\limits_{i<k\  or\  j< l}a_{i j}a_{kl}+\sum\limits_{\{(i, j)\}\leq \Delta}  a_{i j}-1.$$
	\end{lem}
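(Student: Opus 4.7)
The plan is to use the orbit--stabilizer theorem. Set $P:=\mrm{Stab}_{G}(f,f')$ and pick a lift $v\in V\setminus\{0\}$ of $f''$. Since the scalar matrices $\mathbb{G}_{m}\cdot 1\subset P$ act on $v$ by scaling, the map $\mrm{Stab}_{G}(f,f',f'')\twoheadrightarrow\mathbb{G}_{m}$ sending $g$ to its eigenvalue on $\langle v\rangle$ is surjective with fiber $\mrm{Stab}_{G}(f,f',v)$, so
\begin{equation*}
\dim\mrm{Stab}_{G}(f,f',f'')=\dim\mrm{Stab}_{G}(f,f',v)+1=\dim P-\dim(P\cdot v)+1.
\end{equation*}
Subtracting from $d^{2}=\dim G$ and using $\dim\mathcal{O}_{(f,f')}=d^{2}-\dim P$ gives
\begin{equation*}
\dim\mathcal{O}_{(f,f',f'')}=\dim\mathcal{O}_{(f,f')}+\dim(P\cdot v)-1.
\end{equation*}
A direct count of the ``block lower triangular'' structure of $P$ yields $\dim P=\sum_{i\geq k,\,j\geq l}a_{ij}a_{kl}$, and combined with $d^{2}=\sum_{i,j,k,l}a_{ij}a_{kl}$ this produces $\dim\mathcal{O}_{(f,f')}=\sum_{i<k\text{ or }j<l}a_{ij}a_{kl}$, matching the first term of the stated formula.

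It remains to show $\dim(P\cdot v)=\sum_{\{(i,j)\}\leq\Delta}a_{ij}$. I would fix a splitting $V=\bigoplus V_{ij}$ compatible with both flags, so that $V_{r}\cap V'_{s}=\bigoplus_{i\leq r,\,j\leq s}V_{ij}$ and $\dim V_{ij}=a_{ij}$. Set $U:=\sum_{(k,l)\in\Delta}V_{k}\cap V'_{l}$; because $\Delta$ is an anti-chain, $U=\bigoplus_{\{(i,j)\}\leq\Delta}V_{ij}$ and hence $\dim U=\sum_{\{(i,j)\}\leq\Delta}a_{ij}$. Since $P$ preserves each $V_{k}\cap V'_{l}$, we have $P\cdot v\subseteq U$. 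Conversely, every $w\in U$ whose projection to each top cell $V_{i_{m}j_{m}}$, $(i_{m},j_{m})\in\Delta$, is nonzero produces a triple $(f,f',[w])$ lying in the same orbit $\mathcal{O}_{A,\Delta}$ as $(f,f',f'')$, by the Magyar--Weyman--Zelevinsky description of the orbit attached to $(A,\Delta)$. Since $G$ acts transitively on $\mathcal{O}_{A,\Delta}$ and $(f,f')$ is fixed, such a $w$ must equal $g\cdot v$ for some $g\in P$ up to an overall scalar that can be absorbed into $\mathbb{G}_{m}\subset P$. Thus $P\cdot v$ is the open dense subset $U^{\circ}\subset U$ cut out by the nonvanishing of the top components, and in particular $\dim(P\cdot v)=\dim U$. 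Substituting into the display above yields the asserted formula.

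The main obstacle will be the transitivity step $P\cdot v=U^{\circ}$. A direct structural proof would decompose $P$ into its Levi $\prod GL(V_{ij})$ and unipotent radical (whose elements $1+n$ satisfy $n(V_{kl})\subseteq\bigoplus_{i<k,\,j<l}V_{ij}$, strict in both coordinates) and verify by induction on the ``height'' of $\Delta$ that their combined action reaches every point of $U^{\circ}$; the anti-chain structure of $\Delta$ and the hypothesis $a_{i_{m}j_{m}}>0$ for each $(i_{m},j_{m})\in\Delta$ are both essential to avoid obstructions. The cleaner route sketched above is to invoke the MWZ classification of $G$-orbits on $\mathscr{X}\times\mathscr{X}\times\mathbb{P}(V)$ by decorated matrices, which packages this transitivity into the bijection itself.
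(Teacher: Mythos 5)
The paper offers no proof of this lemma at all: it is imported verbatim from Magyar--Weyman--Zelevinsky (the bracketed citation is the entire justification), so there is no internal argument to compare yours against. Your derivation is correct and self-contained modulo the MWZ orbit description, which the paper itself relies on elsewhere. The fibration step $\dim\mathrm{Stab}_G(f,f',f'')=\dim\mathrm{Stab}_G(f,f',v)+1$ via the surjection onto $\mathbb{G}_m$ is sound, the block count $\dim P=\sum_{i\le k,\,j\le l}a_{ij}a_{kl}$ yields the first summand, and your identification of $P\cdot v$ with the open dense locus $U^{\circ}\subset U=\bigoplus_{\{(i,j)\}\leq\Delta}V_{ij}$ is precisely the vector condition the paper uses later as condition (4) in the proof of Proposition \ref{prop 4.1.1}, combined with the standard fact that $P$ acts transitively on the fibre of a $G$-orbit over a fixed pair $(f,f')$. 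Two minor points. First, the antichain property of $\Delta$ is not what gives $U=\bigoplus_{\{(i,j)\}\leq\Delta}V_{ij}$ (that identity is automatic from $V_k\cap V'_l=\bigoplus_{i\le k,\,j\le l}V_{ij}$); where it actually enters is in guaranteeing that the $V_{i_mj_m}$-component of $\omega=\sum_{(i,j)\in\Delta}\omega_{ij}$ is contributed only by $\omega_{i_mj_m}$, so that $U^{\circ}$ is exactly the set of admissible vectors. Second, in your aside sketching a direct structural proof, the unipotent radical of $P$ satisfies $n(V_{kl})\subseteq\bigoplus_{i\le k,\,j\le l,\,(i,j)\neq(k,l)}V_{ij}$, i.e.\ weak inequalities with at least one strict, not strict in both coordinates; this does not affect your main route, which correctly outsources the transitivity to the MWZ classification.
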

	\begin{cor}
		The dimension of the orbit $\mathcal{O}_{A,\Delta}$ denoted by $d(A,\Delta)$ is 
		$$\sum\limits_{i<k\  or\  j< l}a_{i j}a_{kl}+\sum\limits_{\{(i, j)\}\leq \Delta}  a_{i j}.$$
	\end{cor}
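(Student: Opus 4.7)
The plan is to deduce this Corollary from the preceding Lemma by comparing orbits on $\mathscr{X}\times \mathscr{X}\times V$ with those on $\mathscr{X}\times \mathscr{X}\times \mathbb{P}(V)$. The key observation is that under the parametrization of $G$-orbits from \cite{MWZ}, the condition $\Delta=\emptyset$ corresponds exactly to orbits with vector component $v=0$: indeed, $\Delta$ records those pairs $(i,j)$ for which $v$ has nonzero image in $V_i\cap V'_j/(V_{i-1}\cap V'_j+V_i\cap V'_{j-1})$, so $\Delta$ is empty precisely when $v=0$.

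First I would handle the generic case $\Delta\neq\emptyset$. The natural $G$-equivariant map $\pi:\mathscr{X}\times \mathscr{X}\times (V\setminus\{0\})\to \mathscr{X}\times \mathscr{X}\times \mathbb{P}(V)$ sending $(f,f',w)$ to $(f,f',[w])$ is a principal $\mathbb{G}_m$-bundle: the nonzero scalars act freely on $V\setminus\{0\}$, and this action commutes with $G$, so $\pi$ is $G$-equivariant with every fiber a single $\mathbb{G}_m$-orbit. The image of $\mathcal{O}_{A,\Delta}$ under $\pi$ is the corresponding orbit $\widetilde{\mathcal{O}}_{A,\Delta}$ in $\mathscr{X}\times \mathscr{X}\times \mathbb{P}(V)$, and $\mathcal{O}_{A,\Delta}$ is a $\mathbb{G}_m$-fibration over it, hence $\dim \mathcal{O}_{A,\Delta}=\dim \widetilde{\mathcal{O}}_{A,\Delta}+1$. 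Inserting the Lemma, the $-1$ appearing there is absorbed by the extra $+1$, giving precisely the formula asserted in the Corollary.

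Next I would dispose of the degenerate case $\Delta=\emptyset$, where $\mathcal{O}_{A,\emptyset}=\mathcal{O}'_A\times\{0\}$ for $\mathcal{O}'_A\subseteq \mathscr{X}\times \mathscr{X}$ the usual $G$-orbit of flag pairs in relative position $A$. By the classical BLM dimension formula, $\dim \mathcal{O}'_A=\sum_{i<k\text{ or }j<l} a_{ij}a_{kl}$, and the sum $\sum_{\{(i,j)\}\leq \emptyset} a_{ij}$ is vacuous, so the Corollary's formula reduces to exactly this quantity.

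I do not anticipate a substantive obstacle: the content is pure dimensional bookkeeping. The only care needed is to verify the identification $\Delta=\emptyset \Leftrightarrow v=0$ and to confirm that $\pi$ restricted to each orbit is a genuine $\mathbb{G}_m$-bundle rather than having larger stabilizers, which follows from the fact that scalars centralize $G$ and act freely on nonzero vectors.
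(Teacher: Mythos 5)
Your proposal is correct and follows essentially the same route as the paper: the case $\Delta=\emptyset$ is reduced to the classical BLM dimension formula, and for $\Delta\neq\emptyset$ the paper likewise uses the free scalar action of $\overline{\mathbb{F}}_q^{*}$ on the orbit (your $\mathbb{G}_m$-bundle over the corresponding orbit in $\mathscr{X}\times\mathscr{X}\times\mathbb{P}(V)$) to get $\dim\mathcal{O}_{A,\Delta}=\dim\mathcal{O}_{(f,f',\overline{\mathbb{F}}_q^{*}\nu)}+1$, which absorbs the $-1$ in the Lemma. Your additional remarks verifying $\Delta=\emptyset\Leftrightarrow v=0$ and the freeness of the scalar action are exactly the points the paper asserts without elaboration.
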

	\begin{proof}
		When $\Delta=\emptyset$, $d(A,\Delta)=\sum\limits_{i<k\  or\  j< l}a_{i j}a_{kl}$ has been shown in the classical work\cite{BLM}. For $\Delta \neq \emptyset$, let $(f, f', \nu)\in \mathcal{O}_{A,\Delta}$. We consider $\overline{\mathbb{F}}_q^{*}$ acts on $\mathcal{O}_{A,\Delta}$ by scalar multiplication. It is easy to see that this $\overline{\mathbb{F}}_q^{*}$-action is free. The orbit of $(f, f', \nu)$ under this action is just $(f, f', \overline{\mathbb{F}}_q^{*} \nu)$. 
		Then by the previous lemma we have the following identity.
		\begin{align*}
			\dim {\mathcal{O}_{A,\Delta}}=\dim{\mathcal{O}_{(f,f',\overline{\mathbb{F}}_q^{*} \nu)}}+1. 
		\end{align*}
		Thus the corollary follows.
	\end{proof}
	
	Let $r(A, \Delta)$ denote the dimension of the image of $\mathcal{O}_{A, \Delta}$ 
	under the first projection $pr_1: \mathscr{X}\times \mathscr{X}\times V\to \mathscr{X}$. Let $B$ be a diagonal matrix with $b_{ii}=\sum_{j}a_{ij}$.
	We can see that $r(A, \Delta)=\dim \mathcal{O}_{B,\emptyset}=\sum\limits_{i<k}b_{ii}b_{kk}=\sum\limits_{i<k}a_{i j}a_{k l}$ and 
	\begin{equation*}
		d(A, \Delta)-r(A, \Delta)=
		\sum\limits_{i\geq k, j< l}a_{i j}a_{k l}+\sum\limits_{\{(i, j)\}\leq \Delta}a_{i j}.
	\end{equation*}
	
	\section{Calculus of the algebra $\mathcal{MS}_{n,d}$}
	\subsection{Multiplication formulas}\ 
	For any two integers $N, t$, let $$[N, t]_v=\prod_{i\in [1, t]}\frac{v^{2(N-i+1)}-1}{v^{2i}-1}.$$
	For any $(A,\Delta)\in \Xi_{n|n,d}$, we define
	$$
	\ro(A)=(\sum\limits_{1\leq j \leq n}a_{1j}, \cdots, \sum\limits_{1\leq j \leq n}a_{nj}),
	\ 
	\co(A)=(\sum\limits_{1\leq i \leq n}a_{i1}, \cdots, \sum\limits_{1\leq i \leq n}a_{in}).
	$$
	
	\begin{prop}\label{prop 4.1.1}
		Assume that $h\in [1, n-1]$. For $(B,\emptyset)$, $(A, \Delta)\in \Xi_{n|n,d}$ satisfied $\ro(A) = \co(B)$ where $B - E_{h, h+1}$ is a diagonal matrix
		and $\Delta = \{(i_1, j_1), \cdots, (i_k, j_k)\}$.
		
		\noindent$(a)$ If for any $t\in [1, k]$, we have $i_t \neq h, h+1$, then 
		$$e_{B, \emptyset}\ast e_{A, \Delta} = \sum\limits_{p\in[1, n], a_{h+1, p} \geq 1} v^{2\sum_{j>p}a_{h, j}}\frac{v^{2(a_{h, p}+1)}-1}{v^2 - 1}e_{A+E_{h, p}-E_{h+1, p}, \Delta}.$$
		
		\noindent$(b)$ If there exists $m$ such that $i_m = h$ and $i_{m+1} \neq h+1$, then
		\begin{eq}
			e_{B, \emptyset}\ast e_{A, \Delta}  = & \sum\limits_{p \leq j_{m+1}, a_{h+1, p}\geq 1}v^{2\sum_{j> p}a_{h, j}}\frac{v^{2(a_{h, p}+1)}-1}{v^2 - 1}e_{A+E_{h, p}-E_{h+1, p}, \Delta}\\
			& +\sum\limits_{p \in[j_{m+1}+1, j_m - 1], a_{h+1, p} \geq 1} v^{2(\sum_{j> p}a_{h, j}-1)}\frac{v^{2(a_{h, p}+1)}-1}{v^2 - 1}e_{A+E_{h, p}-E_{h+1, p}, \Delta} \\
			& + \sum\limits_{p = j_m, a_{h+1, p} \geq 1} v^{2\sum_{j> p}a_{h, j}}\frac{v^{2a_{h, p}}-1}{v^2 - 1}e_{A+E_{h, p}-E_{h+1, p}, \Delta} \\
			& + \sum\limits_{p \in[j_m+1, n], a_{h+1, p} \geq 1} v^{2\sum_{j>p}a_{h, j}}\frac{v^{2(a_{h, p}+1)}-1}{v^2 - 1}e_{A+E_{h, p}-E_{h+1, p}, \Delta}.\\
		\end{eq}
		
		\noindent$(c)$ If there exists $m$ such that $i_{m-1} \neq h$ and $i_{m} = h+1$, then
		\begin{align*}
			e_{B, \emptyset}\ast e_{A, \Delta} = & \sum\limits_{p \in[1, n], a_{h+1, p} \geq 1} v^{2\sum_{j>p}a_{h, j}}\frac{v^{2(a_{h, p}+1)}-1}{v^2 - 1}e_{A+E_{h, p}-E_{h+1, p}, \Delta} \\
			& + \sum\limits_{p=j_m, a_{h+1, p} \geq 1} v^{2\sum_{j\geq p}a_{h, j}}e_{A+E_{h, p}-E_{h+1, p}, \Delta_c}\\
			& +\sum\limits_{p= j_m, a_{h+1, p} \geq 1, t \in [j_{m+1}+1, j_{m}-1]} v^{2\sum_{j\geq p}a_{h, j}}e_{A+E_{h, p}-E_{h+1, p}, \Delta_t}, 
		\end{align*}
		where
		\begin{eq}
			& \Delta_c = \{(i_1, j_1), \cdots, (i_{m-1}, j_{m-1}), (h, j_m), (i_{m+1}, j_{m+1}), \cdots, (i_k, j_k)\},\\
			& \Delta_t = \{(i_1, j_1), \cdots, (i_{m-1}, j_{m-1}), (h, j_m), (h+1, t), (i_{m+1}, j_{m+1}), \cdots, (i_k, j_k)\}.
		\end{eq}
		
		\noindent$(d)$ If there exists $m$ such that $i_m=  h$ and $i_{m+1} = h+1$, then
		\begin{align*}
			e_{B, \emptyset}\ast e_{A, \Delta}  = & \sum\limits_{p \in[1, j_{m+1}]\cup[j_{m}+1, n], a_{h+1, p} \geq 1} v^{2\sum_{j>p}a_{h, j}}\frac{v^{2(a_{h, p}+1)}-1}{v^2 - 1}e_{A+E_{h, p}-E_{h+1, p}, \Delta} \\
			& + \sum\limits_{p \in[j_{m+1} + 1, j_{m}-1], a_{h+1, p} \geq 1} v^{2(\sum_{j>p}a_{h, j} - 1)}\frac{v^{2(a_{h, p}+1)}-1}{v^2 - 1}e_{A+E_{h, p}-E_{h+1, p}, \Delta} \\
			& +  \sum\limits_{p=j_m, a_{h+1, p} \geq 1} v^{2\sum_{j>p}a_{h, j}}\frac{v^{2a_{h, p}}-1}{v^2 - 1}e_{A+E_{h, p}-E_{h+1, p}, \Delta} \\
			& + \sum\limits_{p=j_{m+1}, a_{h+1, p} \geq 1} (v^{2\sum_{j\geq p}a_{h, j}} - v^{2(\sum_{j > p}a_{h, j}- 1)})e_{A+E_{h, p}-E_{h+1, p}, \Delta_s}\\
			& +\sum\limits_{\substack{p=j_{m+1}, a_{h+1, p} \geq 1,\\ t \in [j_{m+2}+1, j_{m+1}-1]}} (v^{2\sum_{j\geq p}a_{h, j}} - v^{2(\sum_{j > p}a_{h, j}- 1)})e_{A+E_{h, p}-E_{h+1, p}, \Delta_t},
		\end{align*}
		where
		\begin{eq}
			& \Delta_s = \{(i_1, j_1), \cdots, (i_{m-1}, j_{m-1}), (h, j_m), (i_{m+2}, j_{m+2}), \cdots, (i_k, j_k)\}, \\
			& \Delta_t = \{(i_1, j_1), \cdots, (i_{m-1}, j_{m-1}), (h, j_m), (h+1, t), (i_{m+2}, j_{m+2}), \cdots, (i_k, j_k)\}.
		\end{eq}
	\end{prop}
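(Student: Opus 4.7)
The plan is to evaluate the convolution product orbit by orbit. For a target orbit $\mathcal{O}_{C,\Sigma}$ fix a representative $(f,f',\omega)$ and compute
$$(e_{B,\emptyset}\ast e_{A,\Delta})(f,f',\omega)=\sum_{f'',\mu}e_{B,\emptyset}(f,f'',\mu)\,e_{A,\Delta}(f'',f',\omega-\mu),$$
so the coefficient of $e_{C,\Sigma}$ is the number of admissible pairs $(f'',\mu)$. Two observations simplify this sum drastically. First, any orbit of the form $(A',\emptyset)$ with empty decoration consists of triples with zero vector part: by the dimension formula of Section~3.2 the $V$-direction contributes $0$, so $\mathcal{O}_{A',\emptyset}$ is the $G$-orbit of some $(f,f',0)$, forcing $\mu=0$ and collapsing the $\mu$-summation. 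Second, $B-E_{h,h+1}$ being diagonal with row/column types matching $A$ means $f''$ agrees with $f$ away from step~$h$, with $V''_h$ any hyperplane of $V_h$ containing $V_{h-1}$. The coefficient of $e_{C,\Sigma}$ therefore reduces to the number of such hyperplanes $V''_h$ for which $(f'',f',\omega)\in\mathcal{O}_{A,\Delta}$.

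Choose a basis of $V$ adapted to both $f$ and $f'$, giving a block decomposition $V=\bigoplus_{i,j}V_{ij}$ with $\dim V_{ij}=a_{ij}$. The line $\ell=V_h/V''_h$ lives in $\bigoplus_j V_{h,j}$, and the largest $p$ for which the image of $\ell$ in $V_{h,p}$ is nonzero determines the output matrix $C=A+E_{h,p}-E_{h+1,p}$; the condition $a_{h+1,p}\ge 1$ is necessary since one unit in column $p$ must be transferred from row $h+1$ to row $h$. A direct point count over $\mathbb{F}_q$ of lines with prescribed $p$ yields $q^{\sum_{j>p}a_{h,j}}(1+q+\cdots+q^{a_{h,p}})$, which after specialization $v^2=q$ becomes the factor $v^{2\sum_{j>p}a_{h,j}}\cdot\tfrac{v^{2(a_{h,p}+1)}-1}{v^2-1}$ appearing throughout. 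In case~(a), the decoration $\Delta$ has no row~$h$ or~$h+1$ entries, $\omega$ has no component in the affected blocks, $\Sigma=\Delta$, and this is exactly the classical BLM count.

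In cases (b)--(d), $\omega$ has components in blocks $V_{h,*}$ or $V_{h+1,*}$ that interact with $V''_h$, so $\Sigma$ can differ from $\Delta$. The strategy is to partition the admissible hyperplanes by two criteria: the column $p$ of the quotient line $\ell$, and whether $\ell$ contains each affected component of $\omega$. When $p$ lies in the interval $(j_{m+1},j_m)$, an extra linear condition forcing $\ell$ to avoid a prescribed direction cuts the count by $1/q$, producing the $v^{-2}$ correction seen in~(b) and~(d). When $p=j_m$ and $(h+1,j_m)\in\Delta$, the $(h+1,j_m)$-component of $\omega$ is swept into row~$h$, producing the decoration $\Delta_c$ in case~(c); allowing an auxiliary index $t\in[j_{m+1}+1,j_m-1]$ to record a newly emerged row-$h+1$ component then yields $\Delta_t$. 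The main obstacle is case~(d), where both $(h,j_m)$ and $(h+1,j_{m+1})$ lie in $\Delta$: the two components of $\omega$ interact through a single choice of $V''_h$, and the admissible hyperplanes must split cleanly into the four subclasses giving $\Delta$, $\Delta_s$, and $\Delta_t$. The subtraction $v^{2\sum_{j\ge p}a_{h,j}}-v^{2(\sum_{j>p}a_{h,j}-1)}$ in the last two summands of~(d) reflects inclusion-exclusion between hyperplanes that contain the $(h+1,j_{m+1})$-component of $\omega$ and those that do not. Once the partition is set up, each summand matches a point-count on a locally closed stratum of the Grassmannian of hyperplanes in $V_h/V_{h-1}$, and all four cases follow by a uniform enumeration.
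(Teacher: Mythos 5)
Your outline follows essentially the same route as the paper's proof: since an empty decoration forces the vector part to be $0$, the $\mu$-sum collapses and the coefficient reduces to counting hyperplanes $V''_h$ of $V_h$ containing $V_{h-1}$ for which $(f'',f',\omega)$ lies in $\mathcal{O}_{A,\Delta}$, stratified by the column index $p$ and by whether $V''_h$ contains the relevant components of $\omega$, with the inclusion--exclusion in case $(d)$ exactly as in the paper. One small imprecision: the index $p$ is characterized as the \emph{smallest} $j$ with $V_h\cap V'_j\not\subseteq V''_h$ (a condition on the hyperplane itself, not on the projection of a chosen complementary line, which is not well defined), but your point counts $q^{\sum_{j>p}a_{h,j}}(1+q+\cdots+q^{a_{h,p}})$ and the $1/q$ corrections on the strata $j_{m+1}<p<j_m$ are the correct ones.
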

	
	\begin{proof}\
		We show the results at the specialization of $v$ to $\sqrt{q}$. By an argument similar to \cite[Proposition 3.1]{R18}, we have the following identity.
		\begin{align*}
			e_{B, \emptyset}\ast e_{A, \Delta}=\sum\limits_{p\in [1,n], a_{h+,p}>0}\sharp Z_p e_{A+E_{h,p}-E_{h+1,p},\Delta'},
		\end{align*}
		where $Z_p$ is the set of all subspaces $U\subset V$ satisfying the following conditions.
		\begin{enumerate}
			\item $(f,f',w)$ is a fixed triple in $\mathscr{X}$ lying in the orbit $\mathcal{O}_{A+E_{h,p}-E_{h+1,p},\Delta'}$,
			\item $U\subset V_h$ and $\dim{U}=\dim{V_h}-1$,
			\item $ V_h\cap V'_j=U\cap V'_j (j<p), \ V_h\cap V'_{j}\neq U\cap V'_j (j\geq p)$,
			\item $\omega\in \sum_{(i, j)\in \Delta} V_{i}\cap V'_{j}\setminus V_{i-1}\cap V_{j}+V_{i}\cap V'_{j-1}$.
		\end{enumerate}
		So we only need to compute the number $\sharp Z_p$.
		
		For any $p\in [1, n]$ and $U\in Z_p$, there exists a vector $y_U\in V_h$ such that $V_h=U\oplus \mathbb{F}_q y_U$. 
		Let $\omega=\sum_{(i, j)\in \Delta'} \omega_{i j}$ where $\omega_{i j}\in V_i\cap V'_{j}\setminus ((V_{i-1}\cap V'_{j})+(V_{i}\cap V'_{j-1}))$.
		
		$(a)$ In this case, $i_t \neq h, h+1$ for any $(i_t, j_t)\in \Delta$. We see $\Delta'=\Delta$, then for any $p\in [1, n]$,  
		\begin{equation*}
			\begin{aligned}
				\sharp Z_p & =  \sharp \{U|V_{h-1}+(V_{h}\cap V'_{p-1})\subset U \subset V_{h}\}
				-\sharp \{U|V_{h-1}+(V_{h}\cap V'_{p})\subset U \subset V_{h}\}\\
				& =  \frac{q^{\sum_{j\geq p}a'_{hj}}-1}{q-1}-\frac{q^{\sum_{j> p}a'_{hj}}-1}{q-1}\\
				& =  q^{\sum_{j>p}a_{hj}} \frac{q^{a_{hp}+1}-1}{q-1},
			\end{aligned}
		\end{equation*}
		so $(a)$ follows.
		
		$(b)$ In this case, there exists $(i_m, j_m)\in \Delta $ such that $i_m=h$ and $i_{m+1}\neq h+1$. For any $U$ such that $V_{h-1}\subset U\subset V_h$ and $\dim U+1=\dim V_h$, we have $U\in Z$ if and only if 
		$\omega\in \sum_{(i, j)\in \Delta, i\neq h}, V_{i}\cap V'_{j}+U\cap V'_{j_m}$. 
		We have $\Delta'=\Delta$ due to $U\cap V'_{j_m}\subset V_h\cap V'_{j_m}$. 
		
		For $p>j_m$, we have $U\cap V'_{j_m}=V_{h}\cap V'_{j_m}$, which implies $\sharp Z_p$ is the same as $(a)$.
		
		For $j_{m+1} < p \leq j_m$, we have $U\cap V'_{j_m}\neq V_{h}\cap V'_{j_m}$, then $U\in Z_p$ if $\omega_{h j_m}\in U$. If $p=j_m$, then
		\begin{align*}
			\sharp Z_{j_m}= & \sharp {\{U|V_{h-1}+(V_{h}\cap V'_{j_m-1})\subset U \subset V_{h}, \omega_{h j_m}\in U\}}\\
			& -\sharp {\{U|V_{h-1}+(V_{h}\cap V'_{j_m})\subset U \subset V_{h}, \omega_{h j_m}\in U\}}\\
			= & \frac{q^{\sum_{j\geq j_m}a_{h, j}}-1}{q-1}-\frac{q^{\sum_{j> j_m}a_{h, j}}-1}{q-1}\\
			= & q^{\sum_{j>j_m}a_{h, j}} \frac{q^{a_{h, j_m}}-1}{q-1},
		\end{align*}
		and if $j_{m+1}<p<j_m$, then
		\begin{align*}
			\sharp Z_p  = & \sharp {\{U|V_{h-1}+(V_{h}\cap V'_{p-1})\subset U \subset V_{h}, \omega_{h j_m}\in U\}}\\
			& -\sharp {\{U|V_{h-1}+(V_{h}\cap V'_{p})\subset U \subset V_{h}, \omega_{h j_m} \in U\}}\\
			= & \frac{q^{\sum_{j\geq p}a_{h, j}}}{q-1}-\frac{q^{\sum_{j> p}a_{h, j}-1}-1}{q-1}\\
			= & q^{\sum_{j>p}a_{h, j}-1} \frac{q^{a_{hp}+1}-1}{q-1}.
		\end{align*}
		
		For $p\leq j_{m+1}$ and $U\in Z_p$, we have $y_U\in V_{i_{m+1}}\cap V'_{j_{m+1}}$.
		Since $\omega_{h j_m}\in \mathbb{F}_q y_U + U\cap V'_{j_m}$, we see  
		\begin{align*}
			\omega \in \sum\limits_{\substack{(i, j)\in \Delta\\i \neq h}} V_i\cap V'_{j}+U \cap V'_{j_m}.
		\end{align*}
		Therefore, we conclude $\sharp Z_p$ is the same as $(a)$.
		Then $(b)$ follows.
		
		$(c)$ In this case, there exists $(i_m, j_m)\in \Delta$ such that $i_m=h+1$ and $i_{m-1}\neq h$. 
		For $p\neq j_m$, we find $\Delta'=\Delta$. For $p=j_m$, since $(h+1,j_m)\in \Delta$ and 
		$$V_{h+1}\cap V'_{j_m} \setminus V_h\cap V'_{j_m}+V_{h+1}\cap V'_{j_m-1}\subset V_{h+1}\cap V'_{j_m} \setminus U\cap V'_{j_m}+V_{h+1}\cap V'_{j_m-1},$$
		we have the following choices of $\Delta'$m
		\begin{align*}
			\Delta' & =\Delta,\\
			\Delta' & =\Delta_c = \{(i_1, j_1), \cdots, (h, j_m), \cdots, (i_k, j_k)\}, \\
			\Delta' & =\Delta_t = \Delta_c\cup \{(h+1, t)\}, t \in [j_{m+1}+1, j_{m}-1].
		\end{align*}
		
		For $\Delta' = \Delta$, 
		We see $(U\cap V'_{j_m})+(V_{h+1}\cap V'_{j_{m}-1}) \subset (V_h \cap V'_{j_m})+(V_{h+1}\cap V'_{j_{m}-1})$. Therefore, for any $p\in [1, n]$, we conclude $\sharp Z_p$ is the same as $(a)$.
		
		For $\Delta'=\Delta_c$ and $\Delta'=\Delta_t$, 
		let $\omega_{h j_m}=\omega_1+\omega_2$ where $\omega_1 \in U\cap V'_{j_m}$ and $\omega_{2}\in V_{h}\cap V'_{j_{m}}$. The subspace $U\in Z_{j_m}$ 
		if and only if $\omega_2\notin U\cap V'_{j_m}$.
		It follows that
		\begin{align*}
			\sharp {Z_p} = & \sharp {\{U|V_{h-1}+(V_{h}\cap V'_{p-1})\subset U \subset V_{h}\}}-\sharp {\{U|V_{h-1}+(V_{h}\cap V'_{p})\subset U \subset V_{h}\}}\\
			& - \sharp {\{U|V_{h-1}+(V_{h}\cap V'_{p-1})\subset U \subset V_{h}, \omega_{2}\in U\}}\\
			& + \sharp {\{U|V_{h-1}+(V_{h}\cap V'_{p})\subset U \subset V_{h}, \omega_2\in U\}}\\
			= & \frac{q^{\sum_{j\geq p}a_{h, j}+1}-1}{q-1}-\frac{q^{\sum_{j\geq p}a_{h, j}}-1}{q-1}\\
			= & q^{\sum_{j\geq p}a_{h, j}}, 
		\end{align*}
		then $(c)$ follows.
		
		$(d)$ In this case, there exists $(i_m, j_m), (i_{m+1}, j_{m+1})\in \Delta$ such that $i_m=h, i_{m+1}=h+1$. 
		The $\Delta'$ is similar to the case $(c)$, i.e., 
		for $p\neq j_{m+1}$, we have $\Delta'=\Delta$ and for $p=j_{m+1}$ there are other $\Delta'$, 
		\begin{align*}
			\Delta'& =\Delta,\\
			\Delta' & =\Delta_s = \Delta\setminus \{(h+1, j_{m+1})\}, \\
			\Delta' & =\Delta_t= \{(i_1, j_1), \cdots, (h, j_m), (h+1, t), \cdots, (i_k, j_k)\}, t \in [j_{m+2}+1, j_{m+1}-1].
		\end{align*}
		
		For $\Delta'=\Delta$ and any $U$ such that $V_{h-1}\subset U\subset V_h$ and $\dim U+1=\dim V_h$, 
		we know $U\cap V'_{j_{m+1}}\subset V_h\cap V'_{j_{m+1}}$, so
		$$\omega_{h+1, j_{m+1}}\in V_{h+1}\cap V'_{j_{m+1}}\setminus{(U\cap V'_{j_{m+1}})+(V_{h+1}\cap V'_{j_{m+1}-1}}).$$ 
		Then $U\in Z$ if $\omega_{h j_m} \in U\cap V'_{j_m}$, hence the calculation is the same as $(b)$.
		
		For $\Delta'=\Delta_s$ and $\Delta=\Delta_t$, we have a decomposition $\omega_{h j_m}=\omega_{1}+\omega_{2}$, where $\omega_{1}\in U\cap V'_{j_{m}}$ and $\omega_{2}\in V_{h}\cap V'_{j_{m}}$. Since $p=j_{m+1}$, we see $y_U\in V_{h+1}\cap V'_{j_{m+1}}\setminus U\cap V'_{j_{m+1}}$. Then $U\in Z$ if $\omega_{2}\notin U$. So 
		\begin{align*}
			\sharp  Z_p = & \sharp  \{U|V_{h-1}+(V_{h}\cap V'_{p-1})\subset U \subset V_{h}\}-\sharp \{U|V_{h-1}+(V_{h}\cap V'_{p})\subset U \subset V_{h}\}\\
			& - \sharp  \{U|V_{h-1}+(V_{h}\cap V'_{p-1})\subset U \subset V_{h}, \omega_2\in U\}\\
			& + \sharp  \{U|V_{h-1}+(V_{h}\cap V'_{p})\subset U \subset V_{h}, \omega_2 \in U\}\\
			= & \frac{q^{\sum_{j\geq p}a_{h, j}+1}-q^{\sum_{j>p}a_{h, j}}}{q-1}-\frac{q^{\sum_{j\geq p}a_{h, j}}-q^{\sum_{j>p}a_{h, j}-1}}{q-1}\\
			= & q^{\sum_{j\geq p}a_{h, j}}-q^{\sum_{j>p}a_{h, j}-1},
		\end{align*}
		then $(d)$ follows.
	\end{proof}
	\begin{rem}\label{rem 4.1.2}
		When $n>2$, we need to consider more complicated $\Delta$, so the multiplication formulas are more complicated than those for $n=2$. For example, let 
		$$(A,\Delta)=
		\begin{pmatrix}
			0 & 0 & \enc{1}\\
			\enc{2} & 1 & 1 \\
			1 & 1 & 0 \\
		\end{pmatrix},
		(A,\Delta')=
		\begin{pmatrix}
			0 & 0 & 1\\
			2 & 1 & \enc{1} \\
			1 & \enc{1} & 0 \\
		\end{pmatrix},
		(B,\emptyset)=    
		\begin{pmatrix}
			1 & 1 &0 \\
			0 & 3 &0 \\
			0 & 0 & 2 \\
		\end{pmatrix}.$$
		\begin{align*}
			e_{B,\emptyset}\ast e_{A,\Delta}=& v^2e_{A+E_{1,1}-E_{2,1},\Delta}+e_{A+E_{1,2}-E_{2,2},\Delta}\\
			&+e_{A+E_{1,3}-E_{2,3},\Delta} +v^2e_{A+E_{1,1}-E_{2,1},\Delta\setminus\{(2,1)\}}.
		\end{align*}
		\begin{align*}
			e_{B,\emptyset}\ast e_{A,\Delta}=& v^2e_{A+E_{1,1}-E_{2,1},\Delta}+v^2e_{A+E_{1,2}-E_{2,2},\Delta}\\
			&+(v^2+1)e_{A+E_{1,3}-E_{2,3},\Delta} +v^2e_{A+E_{1,1}-E_{2,1},\Delta''},
		\end{align*}
		where $\Delta''=\{(1,3).(3,2)\}$. We shall see this phenomenon a few times in the later calculation.
	\end{rem}
	
	\begin{prop}\label{prop 4.1.2} 
		Assume $h\in [1,n-1]$. For $(C,\emptyset), (A,\Delta)\in \Xi_{n|n,d}$ satisfied $\ro(A) = \co(C)$ where $C - E_{h+1, h}$ is a diagonal matrix, 
		and $\Delta = \{(i_1, j_1), \cdots, (i_k, j_k)\}$.
		
		\noindent$(a)$ If for any $t$, we have $i_t \neq h, h+1$, then
		$$e_{C, \emptyset}\ast e_{A, \Delta} = \sum\limits_{p\in[1, n], a_{h, p} \geq 1} v^{2\sum_{j<p}a_{h+1, j}}\frac{v^{2(a_{h+1, p}+1)}-1}{v^2 - 1}e_{A - E_{h, p}+E_{h+1, p}, \Delta}.$$
		
		\noindent$(b)$ If there exists  $m$ such that $i_{m} = h$ and $i_{m+1} \neq h$, then
		\begin{align*}
			e_{C, \emptyset}\ast e_{A, \Delta} = & \sum\limits_{p \in[1, n], a_{h, p} \geq 1} v^{2\sum_{j<p}a_{h+1, j}}\frac{v^{2(a_{h+1, p}+1)}-1}{v^2 - 1}e_{A-E_{h, p}+E_{h+1, p}, \Delta} \\  
			& +\sum\limits_{t \in [j_{m+1}+1, j_m-1]} v^{2\sum_{j \leq j_{m+1}}a_{h+1,j}}e_{A-E_{h, t}+E_{h+1, t}, \Delta_t}\\
			& + v^{2\sum_{j \leq j_{m+1}}a_{h+1,j}}e_{A-E_{h, j_m}+E_{h+1, j_m}, \Delta_b},
		\end{align*}
		where
		\begin{eq}
			\Delta_b& = \{(i_1, j_1), \cdots, (i_{m-1}, j_{m-1}), (h+1, j_m), (i_{m+1}, j_{m+1}), \cdots, (i_k, j_k)\}, \\
			\Delta_t &= \{(i_1, j_1), \cdots, (i_{m-1}, j_{m-1}), (h, j_m), (h+1, t), (i_{m+1}, j_{m+1}), \cdots, (i_k, j_k)\}.
		\end{eq}
		
		\noindent$(c)$ If there exists  $m$ such that $i_{m-1} \neq h$ and $i_{m} = h+1$, then
		\begin{eq}
			e_{C, \emptyset}\ast e_{A, \Delta}  = & \sum\limits_{p \neq j_m, a_{h, p} \geq 1} v^{2\sum_{j<p}a_{h+1, j}}\frac{v^{2(a_{h+1, p}+1)}-1}{v^2 - 1}e_{A-E_{h, p}+E_{h+1, p}, \Delta} \\
			& + \sum\limits_{p = j_m, a_{h, p} \geq 1} v^{2(\sum_{j< p}a_{h + 1, j}+1)}\frac{v^{2(a_{h+1, p})}-1}{v^2 - 1}e_{A - E_{h, p} + E_{h+1, p}, \Delta}.
		\end{eq}
		
		\noindent$(d)$ If there exists $m$ such that $i_m= h$ and $i_{m+1} = h+1$, then
		\begin{align*}
			e_{C, \emptyset}\ast e_{A, \Delta}  = & \sum\limits_{p \in[1, j_{m+1}-1]\cup[j_{m+1}+1, n], a_{h, p} \geq 1} v^{2\sum_{j<p}a_{h+1, j}}\frac{v^{2(a_{h+1, p}+1)}-1}{v^2 - 1}e_{A-E_{h, p}+E_{h+1, p}, \Delta} \\
			& +  \sum\limits_{p=j_{m+1}, a_{h, p} \geq 1} v^{2(\sum_{j<p}a_{h+1, j} + 1)}\frac{v^{2a_{h+1, p}}-1}{v^2 - 1}e_{A-E_{h, p}+E_{h+1, p}, \Delta} \\
			& + \sum\limits_{p=j_{m}, a_{h, p} \geq 1} (v^{2\sum_{j \leq j_{m+1}}a_{h+1, j}} - v^{2\sum_{j < j_{m+1}}a_{h+1, j}})e_{A-E_{h, p}+E_{h+1, p}, \Delta_s}\\
			& + \sum\limits_{p=t\in[j_{m+1} + 1, j_m - 1], a_{h, p} \geq 1}(v^{2\sum_{j \leq j_{m+1}}a_{h+1, j}} - v^{2\sum_{j < j_{m+1}}a_{h+1, j}})e_{A-E_{h, p}+E_{h+1, p}, \Delta_t},
		\end{align*}
		where
		\begin{eq}
			\Delta_s &= \{(i_1, j_1), \cdots, (i_{m-1}, j_{m-1}), (h+1, j_m), (i_{m+2}, j_{m+2}), \cdots, (i_k, j_k)\}, \\
			\Delta_t &=\{(i_1, j_1), \cdots, (i_{m-1}, j_{m-1}), (h, j_m), (h+1, t), (i_{m+2}, j_{m+2}), \cdots, (i_k, j_k)\}.\\
		\end{eq}
	\end{prop}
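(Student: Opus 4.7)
The plan is to run a proof parallel to that of Proposition~\ref{prop 4.1.1}, reversing the direction of the ``step'' from $V_h$. Specializing $v$ to $\sqrt{q}$ and expanding the convolution yields
$$
e_{C,\emptyset}\ast e_{A,\Delta}(f,f',\omega)=\sum_{(f'',\mu)}e_{C,\emptyset}(f,f'',\mu)\,e_{A,\Delta}(f'',f',\omega-\mu).
$$
Since $C-E_{h+1,h}$ is diagonal, the first factor forces $\mu=0$, $V''_i=V_i$ for $i\neq h$, and $V_h\subset V''_h\subset V_{h+1}$ with $\dim V''_h=\dim V_h+1$. Collecting triples by the target orbit, one writes
$$
e_{C,\emptyset}\ast e_{A,\Delta}=\sum_{p}\sum_{\Delta'}\sharp Z_{p,\Delta'}\cdot e_{A-E_{h,p}+E_{h+1,p},\,\Delta'},
$$
where, for a fixed representative $(f,f',\omega)\in\mathcal{O}_{A-E_{h,p}+E_{h+1,p},\Delta'}$, the set $Z_{p,\Delta'}$ parameterizes the admissible $U=V''_h$ satisfying $U\cap V'_j=V_h\cap V'_j$ for $j<p$ and $U\cap V'_j\supsetneq V_h\cap V'_j$ for $j\geq p$, compatibly with the source decomposition $\omega=\sum_{(i,j)\in\Delta}\omega'_{ij}$ dictated by $\mathcal{O}_{A,\Delta}$.

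Case (a) is the warm-up: no index in $\Delta$ equals $h$ or $h+1$, so $\Delta'=\Delta$ is automatic, and counting lines $U/V_h\subset V_{h+1}/V_h$ meeting the image of $V'_p$ but not of $V'_{p-1}$ gives
$$
\sharp Z_p=\frac{q^{\sum_{j\leq p}a^T_{h+1,j}}-q^{\sum_{j<p}a^T_{h+1,j}}}{q-1}=q^{\sum_{j<p}a_{h+1,j}}\cdot\frac{q^{a_{h+1,p}+1}-1}{q-1},
$$
after substituting $a^T_{h+1,p}=a_{h+1,p}+1$ for the target matrix. Cases (b) and (c) proceed by the same geometric count, refined by a compatibility with the decorated vector. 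In (b), I track whether $\omega_{h,j_m}$ remains in $U\cap V'_{j_m}$ after enlarging $V_h$ to $U$: a case distinction on $p$ relative to $j_{m+1}<j_m$ yields the generic summand with $\Delta'=\Delta$, the family of summands with $\Delta_t$ from choices $p=t\in[j_{m+1}+1,j_m-1]$, and the summand with $\Delta_b$ from $p=j_m$. Case (c) is simpler since $(h+1,j_m)$ already lies in $\Delta$, so the decoration never shifts; the only novelty is the count at $p=j_m$, where the new direction of $U$ is absorbed by $V'_{j_m}$ and the coefficient is trimmed by one power of $q$.

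The main obstacle is case (d), where both $(h,j_m)$ and $(h+1,j_{m+1})$ lie in $\Delta$. For $p\neq j_{m+1}$ the analysis mirrors (b). For the critical value $p=j_{m+1}$, the new direction $U/V_h$ lies inside $V'_{j_{m+1}}$, and the decorated vector $\omega_{h+1,j_{m+1}}$ admits a decomposition $\omega_{h+1,j_{m+1}}=\omega_1+\omega_2$ with $\omega_1\in U\cap V'_{j_{m+1}}$ and $\omega_2\in V_{h+1}\cap V'_{j_{m+1}}$; the source decoration remains $\Delta$ exactly when $\omega_2\notin U$, whereas forcing $\omega_2\in U$ produces either $\Delta_s$ (the decoration at $(h+1,j_{m+1})$ is dropped outright) or one of the $\Delta_t$ (that decoration is moved down to some $(h+1,t)$ with $t\in[j_{m+2}+1,j_{m+1}-1]$). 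Enumerating subspaces with/without the containment $\omega_2\in U$ by inclusion--exclusion in the Grassmannian of lines in $V_{h+1}/V_h$ produces precisely the difference
$$
q^{\sum_{j\geq p}a_{h,j}}-q^{\sum_{j>p}a_{h,j}-1}
$$
appearing in the statement. The hardest part of the argument is the bookkeeping in this case: isolating the cells of $Z_p$ that produce each admissible $\Delta'$, and verifying that every $U$ is accounted for with the correct multiplicity in exactly one $Z_{p,\Delta'}$. Once the fibre counts are established, the identity at $v=\sqrt{q}$ lifts to the generic algebra $\mathcal{MS}_{n,d}$ by the standard polynomial-interpolation argument used throughout the BLM programme.
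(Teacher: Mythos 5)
Your overall strategy --- rerun the fibre count of Proposition~\ref{prop 4.1.1} with the step reversed, so that the intermediate flag satisfies $V_h\subset U=V''_h\subset V_{h+1}$ with $\dim U=\dim V_h+1$, then count the admissible $U$ for each target orbit --- is exactly what the paper intends (its proof of this proposition is the one-line remark that it is similar to Proposition~\ref{prop 4.1.1}), and your descriptions of cases $(a)$, $(b)$ and $(c)$ are consistent with the stated formulas.

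Case $(d)$, however, is mis-transposed from Proposition~\ref{prop 4.1.1}$(d)$ rather than genuinely adapted, and as written it would not produce the stated identity. First, you attribute the terms with decorations $\Delta_s$ and $\Delta_t$ to the single value $p=j_{m+1}$; but in the statement being proved the $\Delta_s$ term occurs at $p=j_m$ (matrix $A-E_{h,j_m}+E_{h+1,j_m}$) and the $\Delta_t$ terms at $p=t\in[j_{m+1}+1,j_m-1]$. The correct mechanism is the one from your own case $(b)$: the new direction $y_U\in U\setminus V_h$ can push the row-$h$ decoration $(h,j_m)$ down to $(h+1,p)$, and this new entry then absorbs the pre-existing $(h+1,j_{m+1})$; at $p=j_{m+1}$ the only effect is the modified coefficient of the $\Delta$-term, as in case $(c)$. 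Second, the coefficient you derive, $q^{\sum_{j\geq p}a_{h,j}}-q^{\sum_{j>p}a_{h,j}-1}$, is the one appearing in Proposition~\ref{prop 4.1.1}$(d)$; the coefficient required here is $q^{\sum_{j\leq j_{m+1}}a_{h+1,j}}-q^{\sum_{j<j_{m+1}}a_{h+1,j}}$, which involves row $h+1$ and carries no shift by $1$, so your count cannot be reconciled with the claim. Third, the decomposition $\omega_{h+1,j_{m+1}}=\omega_1+\omega_2$ with $\omega_1\in U\cap V'_{j_{m+1}}$ and $\omega_2\in V_{h+1}\cap V'_{j_{m+1}}$ is carried over verbatim from the setting $U\subset V_h$ of Proposition~\ref{prop 4.1.1} and is vacuous here, since now $U\cap V'_{j_{m+1}}\subset V_{h+1}\cap V'_{j_{m+1}}$; what you actually need to decompose is the row-$h$ component $\omega_{h,j_m}$ into a part lying in $V_h\cap V'_{j_m}$ plus a multiple of $y_U$. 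Until case $(d)$ is redone along these lines, the proof is incomplete.
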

	
	\begin{proof}\ 
		The proof is similar to Proposition \ref{prop 4.1.1}.
	\end{proof}
	Let $(A,\Delta), (A,\Delta')\in \Xi_{n|n,d}$, for any $(i_p,j_p)\in \Delta$, we define 
	\begin{align*}
		\theta_{A}(i_p, j_p)= \left\{ \begin{array}{ll}
			v^{2\sum_{i_{p-1}<i< i_p, j\leq j_p}a_{i, j}}(v^{2\sum_{j\leq j_p}a_{i_p, j}}-2)  & \text{ if } (i_p, j_p)\in \Delta', \\
			v^{2(\sum_{i_{p-1}<i< i_p,\ j\leq j_p}a_{i, j}+\sum_{j<j_p} a_{i_p, j})}(v^{2a_{i_p, j_p}}-1)  & \text{ otherwise }.\\
		\end{array}
		\right.
	\end{align*}
	Then we have the following proposition.
	\begin{prop}\label{prop 4.1.3}
		Assume $h\in [1, n]$. For $(D,\{(h, h)\}), (A, \Delta)\in \Xi_{n|n,d}$ such that $\ro(A)=\co(D)$, where $D$ is a diagonal matrix, and $\Delta = \{(i_1, j_1), \cdots, (i_k, j_k)\}$.
		
		\noindent$(a)$ If there is $(i_m, j_m)\in \Delta$ such that $i_{m-1}<h<i_{m}$, then
		\begin{align*}
			e_{D,\{(h, h)\}} \ast e_{A, \Delta}= \sum_{\Delta'}\prod^{m-1}_{p=1}\theta_{A}(i_p, j_p)\rho(A,\Delta') e_{A, \Delta'},
		\end{align*}
		where $\Delta'$ runs over the set
		$$
		\left\{ \Delta'=\{(i'_1, j'_1), \cdots, (i'_{l}, j'_{l})\}\ \bigg| \ \begin{array}{l}
			(1)  (i_{t}, j_{t})\in \Delta'\text{ for all } t \in [m, k] \\
			(2)  \text{ if } (i', j')\in \Delta'\setminus \Delta, \text{ then } i'\leq h
		\end{array}\right\},$$
		\begin{align*}
			\rho(A,\Delta')= \left\{ \begin{array}{ll}
				v^{2\sum_{i_{m-1}<i\leq h, j\leq j_m} a_{i, j}}  & \text{ if there exist } (h, j)\in \Delta', \\
				v^{2\sum_{i_{m-1}<i<h, j\leq j_m} a_{i, j}}(v^{2\sum_{j\leq j_m}a_{h, j}}-1)  & \text{ otherwise }.\\
			\end{array}
			\right.
		\end{align*}
		
		\noindent$(b)$ If there is $i_m=h$, we have
		\begin{align*}
			e_{D,\{(h, h)\}} \ast e_{A, \Delta}= \sum_{\Delta'} \prod^{m}_{k=1}\theta_{A}(i_p, j_p)e_{A, \Delta'},
		\end{align*}
		where $\Delta'$ runs over the set
		$$
		\left\{ \Delta'=\{(i'_1, j'_1), \cdots, (i'_{l}, j'_{l})\}\ \bigg| \ \begin{array}{l}
			(1)  (i_{t}, j_{t})\in \Delta'\text{ for all } t \in [m+1, k] \\
			(2)  \text{ if } (i', j')\in \Delta'\setminus \Delta, \text{ then } i'\leq h
		\end{array}\right\}.$$
		
		\noindent$(c)$ If $i_k<h$, then
		\begin{eq}
			e_{D, \{(h, h)\}} \ast e_{A, \Delta}= \sum_{\Delta'}\prod^{k}_{p=1}\theta_{A}(i_p, j_p) e_{A, \Delta'},
		\end{eq}
		where $\Delta'$ runs over the set $\{\Delta'=\{(i'_1, j'_1), \cdots, (i'_{l}, j'_{l})\}\ |\ i'_{l}\leq h\}$. 
		
		\noindent$(d)$ If $\Delta=\emptyset$, then
		\begin{align*}
			e_{D,\{(h, h)\}} \ast e_{A, \emptyset}=\sum_{t\in [1,n], a_{h, t}>0}e_{A,\{(h,t)\}}.
		\end{align*}
	\end{prop}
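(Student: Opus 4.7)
The plan is to follow the strategy used in Propositions 4.1.1 and 4.1.2: specialize $v$ to $\sqrt{q}$ and interpret each structure constant as the cardinality of an explicit finite set. Because $D$ is diagonal, the orbit $\mathcal{O}_{D,\{(h,h)\}}$ consists precisely of triples $(f, f, \mu)$ with $\mu \in V_h \setminus V_{h-1}$, so in the convolution product the intermediate flag must coincide with the first. Fixing a representative $(f, f', \omega) \in \mathcal{O}_{A, \Delta'}$, the convolution formula reduces to
$$(e_{D,\{(h,h)\}} \ast e_{A, \Delta})(f, f', \omega) = \#\{\mu \in V_h \setminus V_{h-1} \mid (f, f', \omega - \mu) \in \mathcal{O}_{A, \Delta}\},$$
and one sees immediately that the underlying matrix $A$ is preserved throughout; only the decoration shifts from $\Delta$ to $\Delta'$.

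Next I would identify the admissible $\Delta'$. Write the canonical decomposition $\omega = \sum_{(i', j') \in \Delta'} \omega'_{i' j'}$ with $\omega'_{i' j'} \in V_{i'} \cap V'_{j'} \setminus ((V_{i'-1} \cap V'_{j'}) + (V_{i'} \cap V'_{j'-1}))$. Since $\mu \in V_h$, subtracting $\mu$ can only alter components supported in rows $\leq h$, which forces every pair $(i_t, j_t) \in \Delta$ with $i_t > h$ to appear in $\Delta'$ at the same position, and every pair in $\Delta' \setminus \Delta$ to satisfy $i' \leq h$. These are exactly the summation constraints appearing in cases $(a)$--$(c)$, while case $(d)$ with $\Delta = \emptyset$ is degenerate and $\mu$ directly produces a single new pair $(h, t)$ determined by the leftmost $V'_t$ containing $\mu$. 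For each position $(i_p, j_p)$ with $p < m$ in case $(a)$, the projection of $\mu$ to the stratum $V_{i_p} \cap V'_{j_p}$ modulo $(V_{i_p-1} \cap V'_{j_p}) + (V_{i_p} \cap V'_{j_p-1})$ either is generic (when $(i_p, j_p) \in \Delta'$, giving the $v^{2 a_{i_p, j_p}} - 2$ branch of $\theta_A(i_p, j_p)$) or lands in the degenerate subspace (giving $v^{2 a_{i_p, j_p}} - 1$); the prefactors $v^{2\sum \cdots}$ absorb the free choices in columns to the left of $j_p$. Multiplying these local contributions yields $\prod_{p < m} \theta_A(i_p, j_p)$, while $\rho(A, \Delta')$ records the analogous dichotomy at row $h$ itself, distinguishing whether a new pair $(h, j_m)$ appears.

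The main obstacle is the bookkeeping in case $(a)$: independence of the choices across the low-row positions must be carefully justified, and the prefactors $v^{2\sum \cdots}$ must be tracked through the nested filtration $V_h \cap V'_j$ so that each stratum contributes the correct power of $q$. Once case $(a)$ is established, cases $(b)$ and $(c)$ drop out as the special positions $i_m = h$ and $i_{m-1} \neq h, i_m = h+1$ respectively (with the product $\prod_{p=1}^{m}$ in $(b)$ reflecting that $\theta_A$ now also contributes at the actual row $h = i_m$), and case $(d)$ is immediate. Finally, the standard polynomial identity argument extends the formula from $v = \sqrt{q}$ to generic $v$.
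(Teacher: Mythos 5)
Your reduction is the right first step, and it is what the paper intends (its own proof of this proposition is only the remark that it is ``similar to Proposition \ref{prop 4.1.1}''): since $D$ is diagonal the intermediate flag must equal $f$, the matrix $A$ is unchanged, and the coefficient of $e_{A,\Delta'}$ is $\#\{\mu\in V_h\setminus V_{h-1}\mid \omega-\mu\in \mathcal{O}_{A,\Delta}\}$ for a fixed $(f,f',\omega)\in\mathcal{O}_{A,\Delta'}$; the constraint that $\Delta$ and $\Delta'$ must agree in rows $>h$ also follows as you say. The gap is that everything after this point --- which is the actual content of the proposition --- is asserted rather than derived. The factorization of the count into $\prod_{p}\theta_A(i_p,j_p)\cdot\rho(A,\Delta')$ requires (i) a proof that the choices of the components of $\mu$ in the different boxes are independent, which is not automatic because the boxes interact through the subspaces $V_{i-1}\cap V'_{j}+V_i\cap V'_{j-1}$, and (ii) an inclusion--exclusion count at each box producing the ``$-2$'' versus ``$-1$'' dichotomy in $\theta_A$. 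Note also that the first branch of $\theta_A$ carries the factor $v^{2\sum_{j\le j_p}a_{i_p,j}}-2$, not $v^{2a_{i_p,j_p}}-2$ as in your sketch, so a naive box-by-box ``generic versus degenerate'' dichotomy does not reproduce the stated exponents without further work. You explicitly defer this bookkeeping, but it is the whole proof.

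Moreover, your justification of case $(d)$ is incorrect as stated: a vector $\mu\in V_h\setminus V_{h-1}$ is not in general supported in a single box $(h,t)$; its Magyar--Weyman--Zelevinsky staircase may contain further pairs in rows $<h$. For instance, with $n=d=2$, $f=(0\subset\langle e_1\rangle\subset V)$, $f'=(0\subset\langle e_2\rangle\subset V)$ and $h=2$, the vector $\mu=e_1+e_2$ lies in $V_2\setminus V_1$ but belongs to the stratum $\{(1,2),(2,1)\}$, not to any stratum $\{(2,t)\}$. Consequently the count $\#\{\mu\in V_h\setminus V_{h-1}\mid \omega=\mu\}$ equals $1$ on orbits such as $\mathcal{O}_{A,\{(1,2),(2,1)\}}$ as well, so your argument actually produces terms $e_{A,\Delta'}$ with $|\Delta'|>1$ that are absent from the right-hand side of $(d)$. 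You would need either to show that these terms vanish (they do not in the example above) or to enlarge the index set of the sum; in either case the step as written does not close, and this discrepancy has to be confronted rather than passed over.
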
 
	
	\begin{proof}
		The proof is similar to Proposition \ref{prop 4.1.1}. 
	\end{proof}
	
	Let $[A]_{\Delta}=v^{-d(A, \Delta)+r(A, \Delta)}e_{A, \Delta}$ and we define a bar involution on $\mathcal{A}$ by $\overline{v}=v^{-1}$. We have the following propositions.
	\begin{prop}\label{prop 4.1.4}
		Assume $h\in [1, n-1]$. For $(B, \emptyset), (A,\Delta) \in \Xi_{n|n,d}$ satisfied $\ro(A) = \co(B)$, where $B - E_{h, h+1}$ is a diagonal matrix and $\Delta = \{(i_1, j_1), \cdots, (i_k, j_k)\}$, let $\beta_{A}(p)=\sum_{j\geq p}a_{h, j}-\sum_{j>p}a_{h+1, j}$.
		
		\noindent$(a)$ If for any $t$, we have $i_t \neq h, h+1$, then
		$$[B]_{\emptyset}\ast [A]_\Delta = \sum\limits_{p\in[1, n], a_{h+1, p} \geq 1}
		v^{\beta_{A}(p)}\overline{[a_{h, p}+1, 1]}_v[A+E_{h, p}-E_{h+1, p}]_{\Delta}.
		$$
		
		\noindent$(b)$ If there exists $m$ such that $i_m = h$ and $i_{m+1} \neq h+1$, then
		\begin{eq}
			\relax [B]_\emptyset\ast [A]_{\Delta}  = &\sum\limits_{p \in [j_{m+1}+1, j_m-1], a_{h+1, p} \geq 1}v^{\beta_{A}(p)-1}\overline{[a_{h, p}+1, 1]}_v[A+E_{h, p}-E_{h+1, p}]_{\Delta} \\
			& + \sum\limits_{p = j_m, a_{h+1, p} \geq 1} v^{\beta_{A}(p)-1}\overline{[a_{h, p}, 1]}_v[A+E_{h, p}-E_{h+1, p}]_{\Delta}\\
			&+ \sum\limits_{p \notin [j_{m+1}+1, j_m], a_{h+1, p} \geq 1}v^{\beta_{A}(p)}\overline{[a_{h, p}+1, 1]}_v[A+E_{h, p}-E_{h+1, p}]_{\Delta}. \\
		\end{eq}
		
		\noindent$(c)$ If there exists $m$ such that $i_{m-1} \neq h$ and $i_{m} = h+1$, then
		\begin{eq}
			\relax [B]_\emptyset\ast [A]_{\Delta} = &\sum\limits_{p \in [1, n], a_{h+1, p} \geq 1} v^{\beta_{A}(p)}\overline{[a_{h, p}+1, 1]}_v[A+E_{h, p}-E_{h+1, p}]_{\Delta} \\
			& + \sum\limits_{p=j_m, a_{h+1, p} \geq 1} v^{\beta_{A}(p)-\sum_{j_{m+1}< j \leq p}a_{h+1, j}+1}[A+E_{h, p}-E_{h+1, p}]_{\Delta_c}\\
			& +\sum\limits_{p= j_m, a_{h+1, p}\geq 1, j_{m+1}<t< j_m}v^{\beta_{A}(p)-\sum_{t<j \leq p}a_{h+1, j}+1}[A+E_{h, p}-E_{h+1, p}]_{\Delta_t},\\
		\end{eq}
		where
		\begin{eq}
			\Delta_c&= \{(i_1, j_1), \cdots, (i_{m-1}, j_{m-1}), (h, j_m), (i_{m+1}, j_{m+1}), \cdots, (i_k, j_k)\},\\
			\Delta_t& = \{(i_1, j_1), \cdots, (i_{m-1}, j_{m-1}), (h, j_m), (h+1, t), (i_{m+1}, j_{m+1}), \cdots, (i_k, j_k)\}.
		\end{eq}
		
		\noindent$(d)$ If there exists $m$ such that $i_m=  h$ and $i_{m+1} = h+1$, then
		\begin{align*}
			&    \relax [B]_\emptyset\ast [A]_{\Delta} \\
			= & \sum\limits_{p \notin[j_{m+1}+1, j_m], a_{h+1, p} \geq 1}v^{\beta_{A}(p)}\overline{[a_{h, p}+1, 1]}_v[A+E_{h, p}-E_{h+1, p}]_{\Delta} \\
			& +  \sum\limits_{j_{m+1}<p<j_{m}, a_{h+1, p} \geq 1}v^{\beta_{A}(p)-1}\overline{[a_{h, p}+1, 1]}_v[A+E_{h, p}-E_{h+1, p}]_{\Delta} \\
			& +  \sum\limits_{p=j_m, a_{h+1, p} \geq 1} v^{\beta_{A}(p)-1}\overline{[a_{h, p}, 1]}_v[A+E_{h, p}-E_{h+1, p}]_{\Delta}\\
			& + \sum\limits_{\substack{p=j_{m+1},\\ a_{h+1, p} \geq 1}}v^{\beta_{A}(p)-\sum_{j_{m+2}<j \leq p}a_{h+1, j}+1}(1-v^{-2})\overline{[a_{h, p}+1, 1]}_v[A+E_{h, p}-E_{h+1, p}]_{\Delta_s}\\
			& + \sum\limits_{\substack{p=j_{m+1}, a_{h+1, p} \geq 1, \\j_{m+2}<t<j_{m+1}}} v^{\beta_{A}(p)-\sum_{t<j \leq p}a_{h+1, j}+1}(1-v^{-2})\overline{[a_{h, p}+1, 1]}_v[A+E_{h, p}-E_{h+1, p}]_{\Delta_t},
		\end{align*}
		where
		\begin{eq}
			\Delta_s&= \{(i_1, j_1), \cdots, (i_{m-1}, j_{m-1}), (h, j_m), (i_{m+2}, j_{m+2}), \cdots, (i_k, j_k)\}, \\
			\Delta_t &= \{(i_1, j_1), \cdots, (i_{m-1}, j_{m-1}), (h, j_m), (h+1, t), (i_{m+2}, j_{m+2}), \cdots, (i_k, j_k)\}.\\
		\end{eq}
	\end{prop}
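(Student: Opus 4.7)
The plan is to derive Proposition \ref{prop 4.1.4} directly from Proposition \ref{prop 4.1.1} by rescaling both sides via the normalization $[A']_{\Delta'} = v^{-d(A',\Delta')+r(A',\Delta')}e_{A',\Delta'}$. Explicitly, a term $c(v)\,e_{A',\Delta'}$ on the right-hand side of Proposition \ref{prop 4.1.1} (with $A'=A+E_{h,p}-E_{h+1,p}$) contributes $c(v)\,v^{\delta}\,[A']_{\Delta'}$ to $[B]_\emptyset\ast[A]_\Delta$, where
\[
\delta \;=\; \bigl(d(A',\Delta')-r(A',\Delta')\bigr) - \bigl(d(A,\Delta)-r(A,\Delta)\bigr) - \bigl(d(B,\emptyset)-r(B,\emptyset)\bigr).
\]
First I would compute $d(B,\emptyset)-r(B,\emptyset)=b_{hh}=\sum_j a_{h,j}$, which follows because $B$ is diagonal plus $E_{h,h+1}$ so the double sum $\sum_{i\geq k,\,j<l}b_{ij}b_{kl}$ collapses to a single cross term.

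Next I would split $\delta$ into a \emph{matrix part} (the change in $\sum_{i\geq k,\,j<l}a_{ij}a_{kl}$) and a \emph{decoration part} (the change in $\sum_{\{(i,j)\}\leq \Delta}a_{ij}$). A direct perturbation calculation under $a_{h,p}\mapsto a_{h,p}+1$, $a_{h+1,p}\mapsto a_{h+1,p}-1$ gives the matrix part $\delta^{\mathrm{mat}}=\sum_{j<p}a_{h,j}-\sum_{l>p}a_{h+1,l}$. Combining this with the prefactor $v^{2\sum_{j>p}a_{h,j}}$ in Proposition \ref{prop 4.1.1}(a) and with the identity $\frac{v^{2(N+1)}-1}{v^2-1} = v^{2N}\,\overline{[N+1,1]}_v$ (which itself follows from $\overline{[N,1]}_v=v^{2-2N}[N,1]_v$), the total exponent in case $(a)$ telescopes to $\sum_{j\geq p}a_{h,j}-\sum_{j>p}a_{h+1,j}=\beta_A(p)$, matching the claim. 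Cases $(b)$--$(d)$ follow the same template: in $(b)$ the exponent shift by $-1$ for $p\in[j_{m+1}+1,j_m-1]$ and the Gaussian factor $\overline{[a_{h,p},1]}_v$ at $p=j_m$ both come from the decoration part (the pair $(h,j_m)\in\Delta$ causes a shift in $\sum_{\{(i,j)\}\leq\Delta}a_{ij}$ after converting to the normalized basis); in $(c)$ and $(d)$ the decoration part changes because $(h+1,j_m)$ is replaced by $(h,j_m)$ (possibly adjoined with $(h+1,t)$), which produces the shift $-\sum_{j_{m+1}<j\leq p}a_{h+1,j}+1$ (resp.\ $-\sum_{t<j\leq p}a_{h+1,j}+1$). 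The conspicuous factor $(1-v^{-2})$ in case $(d)$ arises by combining the two summands $v^{2\sum_{j\geq p}a_{h,j}}-v^{2(\sum_{j>p}a_{h,j}-1)}$ from Proposition \ref{prop 4.1.1}(d), pulling out $\overline{[a_{h,p}+1,1]}_v$ and factoring $(1-v^{-2})$.

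The main obstacle will be the case-by-case bookkeeping in $(c)$ and $(d)$: for each admissible position $p$ and each choice of new decoration $\Delta_c$, $\Delta_s$, or $\Delta_t$, one must verify that the increment $\sum_{\{(i,j)\}\leq \Delta'}a'_{ij}-\sum_{\{(i,j)\}\leq \Delta}a_{ij}$ contributes precisely the exponent shift stated in the proposition. Since the partial order on $\Delta$ and the running constraints $(h,j_m)$ or $(h+1,t)$ interact nontrivially with the entries $a_{h,\bullet}$ and $a_{h+1,\bullet}$, this reduces to a careful but elementary enumeration modelled on the proof of Proposition \ref{prop 4.1.1}. Once the decoration contribution is verified in each sub-case, the claimed formulas of Proposition \ref{prop 4.1.4} follow by assembling matrix part, decoration part, $B$-normalization, and the $\overline{[\cdot,1]}_v$ identity.
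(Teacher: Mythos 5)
Your proposal matches the paper's own proof: the paper likewise derives Proposition \ref{prop 4.1.4} from Proposition \ref{prop 4.1.1} by computing $d(X,\Delta')-r(X,\Delta')-\bigl(d(A,\Delta)-r(A,\Delta)\bigr)-\bigl(d(B,\emptyset)-r(B,\emptyset)\bigr)$ for $X=A+E_{h,p}-E_{h+1,p}$, splitting it into the matrix part and the decoration part $\sum_{\{(i,j)\}\leq \Delta'}a_{i,j}$, and matching exponents (the paper writes out case $(b)$ and declares the rest similar). The only quibble is your attribution in case $(b)$: the factor $\overline{[a_{h,p},1]}_v$ at $p=j_m$ is already present in the unnormalized coefficient $\frac{v^{2a_{h,p}}-1}{v^2-1}$ of Proposition \ref{prop 4.1.1}$(b)$, and the $-1$ exponent shift for $j_{m+1}<p<j_m$ combines the decoration increment $+1$ with the $v^{-2}$ coming from that coefficient, rather than coming from the decoration part alone; this does not affect the validity of the argument.
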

	\begin{proof}
		We will show the statement $(b)$ is as specified, then the rest of formulas are similar. 
		For any $p\in[1, n]$, let $(X, \Delta')=(A+E_{h, p}-E_{h+1, p}, \Delta')$, we have the following identities.
		\begin{align*}
			& d(A, \Delta)-r(A, \Delta) = \sum \limits_{i\geq k, j<l} a_{i, j}a_{k, l}+\sum\limits_{\{(i, j)\}\leq \Delta}a_{i, j}, \\
			& d(B, \emptyset)-r(B, \emptyset)=\sum \limits_{j \in [1, n]} a_{h, j},\\
			& d(X, \Delta')-r(X, \Delta')=  \sum \limits_{i\geq k, j<l} a_{i, j}a_{k, l}+\sum\limits_{j<p}a_{h, p}- \sum\limits_{j>p}a_{h+1, p}\\
			& \qquad\qquad\qquad\qquad\quad + \sum\limits_{\{(i, j)\}\leq \Delta'}(a_{i, j}+\delta_{i h}\delta_{j p}-\delta_{i, h+1}\delta_{j p}),
		\end{align*}
		Hence
		\begin{eq}
			\quad & d(X, \Delta')-r(X, \Delta')-(d(A, \Delta)-r(A, \Delta))-(d(B, \emptyset)-r(B, \emptyset))+ 2\sum\limits_{j\geq p}a_{h, j}\\
			= & \sum\limits_{j\geq p}a_{h, j}-\sum\limits_{j> p}a_{h+1, j}
			+\sum\limits_{\{(i, j)\}\leq \Delta'}(a_{i, j}+\delta_{i h}\delta_{j p}-\delta_{i, h+1}\delta_{j p}))-\sum\limits_{\{(i, j)\}\leq \Delta}a_{i j}.\\
		\end{eq}
		
		In $(b)$, we have $\Delta'=\Delta$, so if $p\notin [j_{m+1}, j_m]$, then
		\begin{align*}
			&d(X, \Delta')-r(X, \Delta')-(d(A, \Delta)-r(A, \Delta)) -(d(B, \emptyset)-r(B, \emptyset))+ 2\sum\limits_{j\geq p}a_{h, j}\\
			= & \sum\limits_{j\geq p}a_{h, j}-\sum\limits_{j> p}a_{h+1, j}.
		\end{align*}
		Then the exponent of $v$ in this case is $\sum\limits_{j\geq p}a_{h, j}-\sum\limits_{j> p}a_{h+1, j}$.
		
		If $p\in [j_{m+1}+1, j_m]$, then
		\begin{align*}
			\quad & d(X, \Delta')-r(X, \Delta')-(d(A, \Delta)-r(A, \Delta))-(d(B, \emptyset)-r(B, \emptyset))+ 2\sum\limits_{j\geq p}a_{h, j}\\
			= & \sum\limits_{j\geq p}a_{h, j}-\sum\limits_{j> p}a_{h+1, j}+1.
		\end{align*}
		Therefore we have the exponent of $v$ in this case is $\sum\limits_{j\geq p}a_{h, j}-\sum\limits_{j> p}a_{h+1, j}-1$. Thus $(b)$ follows.
	\end{proof}
	
	\begin{prop}\label{prop 4.1.5}
		Assume $h\in [1, n-1]$. For $(C,\emptyset)\in \Xi_{d}, (A, \Delta) \in \Xi_{n|n, d}$ satisfied $\ro(A) = \co(C)$ and $C - E_{h+1, h}$ is a diagonal matrix, 
		where $\Delta = \{(i_1, j_1), \cdots, (i_k, j_k)\}$, let $\beta_{A}'(p)=\sum_{j \leq p}a_{h+1, j}-\sum_{j<p}a_{h, j}$ for $p\in [1, n]$.
		
		\noindent$(a)$ If for any $t$, we have $i_t \neq h, h+1$, then
		$$[C]_{\emptyset}\ast [A]_\Delta = \sum\limits_{p\in[1, n], a_{h, p} \geq 1} v^{\beta_{A}'(p)}\overline{[a_{h+1, p}+1, 1]}_v[A+E_{h+1, p}-E_{h, p}]_{\Delta}.$$
		
		\noindent$(b)$ If there exists $m$ such that $i_m= h$ and $i_{m+1} \neq h+1$, we have
		\begin{align*}
			\relax [C]_\emptyset\ast [A]_{\Delta}= & \sum\limits_{p \in[j_{m+1}+1, j_m], a_{h, p} \geq 1} v^{\beta_{A}'(p)-1}\overline{[a_{h+1, p}+1, 1]}_v[A+E_{h+1, p}-E_{h, p}]_{\Delta} \\
			& +\sum\limits_{p \notin [j_{m+1}+1, j_m], a_{h, p} \geq 1} v^{\beta_{A}'(p)}\overline{[a_{h+1, p}+1, 1]}_v[A+E_{h+1, p}-E_{h, p}]_{\Delta} \\  
			& + \sum\limits_{p = j_m, a_{h, p} \geq 1}v^{\sum_{j\leq j_{m+1}}a_{h+1, j}-\sum_{j< j_{m}}a_{h, j}}[A+E_{h+1, p}-E_{h, p}]_{\Delta_b} \\
			& + \sum\limits_{j_{m+1}<p = t<j_{m}, a_{h, p} \geq 1}v^{\sum_{j\leq j_{m+1}}a_{h+1, j}-\sum_{j< p}a_{h, j}}[A+E_{h+1, p}-E_{h, p}]_{\Delta_t},
		\end{align*}
		where
		\begin{eq}
			\Delta_b &= \{(i_1, j_1), \cdots, (i_{m-1}, j_{m-1}), (h+1, j_m), (i_{m+1}, j_{m+1}), \cdots, (i_k, j_k)\}, \\
			\Delta_t &= \{(i_1, j_1), \cdots, (i_{m-1}, j_{m-1}), (h, j_m), (h+1, t), (i_{m+1}, j_{m+1}), \cdots, (i_k, j_k)\}.
		\end{eq}
		
		\noindent$(c)$ If there exists $m$ such that $i_{m-1} \neq h$ and $i_{m} = h+1$, then
		\begin{eq}
			\relax [C]_\emptyset\ast [A]_{\Delta} = & \sum\limits_{p \neq j_m, a_{h, p} \geq 1} v^{\beta_{A}'(p)}\overline{[a_{h+1, p}+1, 1]}_v[A+E_{h+1, p}-E_{h, p}]_{\Delta} \\
			& + \sum\limits_{p=j_m, a_{h, p} \geq 1} v^{\beta_{A}'(p)}\overline{[a_{h+1, p}, 1]}_v[A+E_{h+1, p}-E_{h, p}]_{\Delta}.\\
		\end{eq}
		
		\noindent$(d)$ If there exists  $m$ such that $i_m=  h$ and $i_{m+1} = h+1$, we have
		\begin{align*}
			& \relax [C]_\emptyset\ast [A]_{\Delta} \\
			= & \sum\limits_{p=j_{m+1}, a_{h, p} \geq 1}v^{\beta_{A}'(p)}\overline{[a_{h+1, p}, 1]}_v[A+E_{h+1, p}-E_{h, p}]_{\Delta} \\
			& +  \sum\limits_{p \neq j_{m+1}, a_{h, p} \geq 1} v^{\beta_{A}'(p)}\overline{[a_{h+1, p}+1, 1]}_v[A+E_{h+1, p}-E_{h, p}]_{\Delta} \\
			& + \sum\limits_{p=j_{m}, a_{h, p} \geq 1} v^{\beta_{A}'(j_{m+1})-\sum_{j_{m+1}\leq j< p}a_{h, j}}(1-v^{-2})\overline{[a_{h+1, j_{m+1}}, 1]}_v[A+E_{h+1, p}-E_{h, p}]_{\Delta_s} \\ 
			& + \sum\limits_{j_{m+1}<p=t<j_m, a_{h, p} \geq 1} v^{\beta_{A}'(j_{m+1})-\sum_{j_{m+1}\leq j < p}a_{h, j}}(1-v^{-2})\overline{[a_{h+1, j_{m+1}}, 1]}_v[A+E_{h+1, p}-E_{h, p}]_{\Delta_t},
		\end{align*}
		where
		\begin{eq}
			\Delta_s &= \{(i_1, j_1), \cdots, (i_{m-1}, j_{m-1}), (h+1, j_m), (i_{m+2}, j_{m+2}), \cdots, (i_k, j_k)\},\\
			\Delta_t &=\{(i_1, j_1), \cdots, (i_{m-1}, j_{m-1}), (h, j_m), (h+1, t), (i_{m+2}, j_{m+2}), \cdots, (i_k, j_k)\}.
		\end{eq}
	\end{prop}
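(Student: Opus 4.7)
The plan is to deduce Proposition \ref{prop 4.1.5} directly from the unnormalized multiplication formulas in Proposition \ref{prop 4.1.2} by substituting $e_{A,\Delta} = v^{d(A,\Delta)-r(A,\Delta)}[A]_{\Delta}$ on both sides of each identity and then reading off the new exponents and coefficients. This parallels the way Proposition \ref{prop 4.1.4} was obtained from Proposition \ref{prop 4.1.1}, so the whole argument reduces to a symbolic bookkeeping of exponents of $v$, with no new geometric input required.

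First, for $X = A - E_{h,p} + E_{h+1,p}$ and each admissible $\Delta'$ produced by a case of Proposition \ref{prop 4.1.2}, I would compute
\[
\delta(X,\Delta') \;=\; \bigl(d(X,\Delta')-r(X,\Delta')\bigr) - \bigl(d(A,\Delta)-r(A,\Delta)\bigr) - \bigl(d(C,\emptyset)-r(C,\emptyset)\bigr).
\]
The formula $d(A,\Delta)-r(A,\Delta) = \sum_{i \geq k,\, j < l} a_{ij} a_{kl} + \sum_{\{(i,j)\} \leq \Delta} a_{ij}$ recalled before the proposition, together with the observation $d(C,\emptyset)-r(C,\emptyset) = \sum_{j} a_{h+1,j}$ (since $C - E_{h+1,h}$ is diagonal and $\ro(A)=\co(C)$ forces $c_{h+1,h+1}=\sum_j a_{h+1,j}$), reduces $\delta(X,\Delta')$ to a linear combination of $a_{h,j}$ and $a_{h+1,j}$ entries plus a correction supported on the change $\Delta \to \Delta'$. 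Rewriting the quantum-integer coefficients in Proposition \ref{prop 4.1.2} via the identity $\overline{[N,1]}_v = v^{-2(N-1)}[N,1]_v$ then converts each unnormalized term into a normalized term of the form asserted in the statement.

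Second, I would treat the four cases in parallel. Cases (a) and (c) have $\Delta' = \Delta$, so the $\Delta$-correction vanishes and the exponent of $v$ collapses cleanly to $\beta'_A(p) = \sum_{j \leq p} a_{h+1,j} - \sum_{j<p} a_{h,j}$; the only subtlety in (c) is at $p = j_m$, where the unnormalized coefficient supplies $[a_{h+1,p},1]_v$ in place of $[a_{h+1,p}+1,1]_v$, and this matches the $\overline{[a_{h+1,p},1]}_v$ appearing in the statement. In (b), terms with $p \in [j_{m+1}+1, j_m]$ pick up a $-1$ shift in the exponent because after the move the pair $(h+1,p)$ is no longer dominated by $\Delta$, so $\sum_{\{(i,j)\} \leq \Delta}a_{ij}$ drops by one at $(h+1,p)$; meanwhile the $\Delta_b$- and $\Delta_t$-contributions come from the unnormalized coefficients $v^{2\sum_{j \leq j_{m+1}}a_{h+1,j}}$ with no quantum-integer factor attached, and produce exactly the coefficients in the statement after the $d - r$ shift is absorbed.

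The main obstacle is case (d). Here the $\Delta_s$- and $\Delta_t$-contributions of Proposition \ref{prop 4.1.2}(d) carry the unnormalized coefficient $v^{2\sum_{j \geq p}a_{h+1,j}} - v^{2(\sum_{j>p}a_{h+1,j}-1)}$, which factors as $v^{2\sum_{j \geq p}a_{h+1,j}}(1 - v^{-2})$ and yields the $(1-v^{-2})$ prefactor in the statement once the overline on $[a_{h+1,j_{m+1}},1]_v$ is inserted. The delicate point is matching the remaining exponent to $\beta'_A(j_{m+1}) - \sum_{j_{m+1} \leq j < p} a_{h,j}$: this requires carefully tracking how $\sum_{\{(i,j)\} \leq \Delta'}a_{ij}$ changes when $\Delta$ is replaced by $\Delta_s$ (for $p = j_m$) or by $\Delta_t$ (for $p \in [j_{m+1}+1, j_m - 1]$), and then verifying that the asymmetry between the fixed upper index $j_{m+1}$ in $\beta'_A$ and the variable lower index $p$ in the summation comes precisely from this $\Delta$-correction. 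This step mirrors line-by-line the analogous computation in the proof of Proposition \ref{prop 4.1.4}(d), so once that template is in hand the verification is mechanical but intricate.
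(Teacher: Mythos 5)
Your proposal is correct and follows essentially the same route as the paper: the paper's proof of Proposition \ref{prop 4.1.5} is simply "similar to Proposition \ref{prop 4.1.4}", i.e.\ renormalize the formulas of Proposition \ref{prop 4.1.2} via $e_{A,\Delta}=v^{d(A,\Delta)-r(A,\Delta)}[A]_{\Delta}$ and track the shift $d(X,\Delta')-r(X,\Delta')-(d(A,\Delta)-r(A,\Delta))-(d(C,\emptyset)-r(C,\emptyset))$, which is exactly your computation (including the identification $d(C,\emptyset)-r(C,\emptyset)=\sum_j a_{h+1,j}$ and the $-1$ shift for $p\in[j_{m+1}+1,j_m]$ coming from the change in $\sum_{\{(i,j)\}\leq\Delta'}a_{ij}$).
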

	
	\begin{proof}
		The proof is similar to Proposition \ref{prop 4.1.4}.
	\end{proof}
	Let $(A,\Delta),(A,\Delta')\in \Xi_{n|n,d}$. for any $(i_p,j_p)\in \Delta$, we define
	\begin{align*}
		\theta_{A}'(i_p, j_p)= \left\{ \begin{array}{ll}
			v^{\gamma_{A}(p)}(1-2v^{-2\sum_{j\leq j_p}a_{i_{p}, j}})  & \text{ if } (i_p, j_p)\in \Delta',\\
			v^{\gamma_{A}(p)}(1-v^{-2a_{i_p, j_p}})  & \text{ otherwise },\\
		\end{array}
		\right.
	\end{align*}
	where $\gamma_{A}(p)=\sum_{i_{p-1}<i\leq i_p,\ j\leq j_p}a_{i, j}-\sum_{i_{p-1}<i\leq i_p, j>j_p}a_{i, j}$.
	\begin{prop}\label{prop 4.1.6}
		Assume $h\in [1, n]$. For $(D,\{h, h\}), (A, \Delta) \in \Xi_{n|n,d}$ satisfied $\ro(A)$$= \co(D)$ and $D$ is a diagonal matrix, 
		where $\Delta = \{(i_1, j_1), \cdots, (i_k, j_k)\}$.
		
		\noindent$(a)$ If there exists $(i_m,j_m)\in \Delta$ such that $i_{m-1}<h<i_m$, 
		\begin{align*}
			\relax [D]_{\{(h,h)\}}\ast [A]_{\Delta}=\sum\limits_{\Delta'} v^{\sum_{\{(i,j)\}\leq \Delta'}a_{i, j}-\sum_{\{(i,j)\}\leq \Delta}a_{i, j}}\prod^{m-1}_{p=1}\theta_{A}'(i_{p}, j_{p})\rho'(A,\Delta') [A]_{\Delta'},
		\end{align*}
		where $\Delta'$ runs over the set
		$$
		\left\{ \Delta'=\{(i'_1, j'_1), \cdots, (i'_{l}, j'_{l})\}\ \bigg| \ \begin{array}{l}
			(1)  (i_{t}, j_{t})\in \Delta'\text{ for all } t \in [m, k] \\
			(2)  \text{ if } (i', j')\in \Delta'\setminus \Delta, \text{ then } i'\leq h
		\end{array}\right\},$$
		\begin{align*}
			\rho'(A,\Delta')= \left\{ \begin{array}{ll}	
				v^{\sum_{i_{m-1}<i\leq h, j\leq j_m} a_{i, j}-\sum_{i_{m-1}<i\leq h, j> j_m} a_{i, j}}  & \text{ if } (h, j')\in \Delta',\\
				v^{\sum_{i_{m-1}<i\leq h, j\leq j_m} a_{i, j}-\sum_{i_{m-1}<i\leq h, j> j_m} a_{i, j}}(1-v^{-2\sum_{j\leq j_m}a_{h, j}})  & \text{ otherwise }.\\
			\end{array}
			\right.
		\end{align*}
		
		\noindent$(b)$ If there is $i_m=h$, then
		\begin{eq}
			\relax [D]_{\{(h,h)\}}\ast [A]_{\Delta}=\sum\limits_{\Delta'}v^{\sum_{\{(i,j)\}\leq \Delta'}a_{i, j}-\sum_{\{(i,j)\}\leq \Delta}a_{i, j}}\prod^{m}_{p=1}\theta_{A}'(i_{p}, j_{p})[A]_{\Delta'},
		\end{eq}
		where $\Delta'$ runs over the set
		$$
		\left\{ \Delta'=\{(i'_1, j'_1), \cdots, (i'_{l}, j'_{l})\}\ \bigg| \ \begin{array}{l}
			(1)  (i_{t}, j_{t})\in \Delta'\text{ for all } t \in [m+1, k] \\
			(2)  \text{ if } (i', j')\in \Delta'\setminus \Delta, \text{ then } i'\leq h
		\end{array}\right\}.$$
		
		\noindent$(c)$ If $i_k<h$, then
		\begin{eq}
			\relax [D]_{\{(h,h)\}}\ast [A]_{\Delta}=\sum\limits_{\Delta'} v^{\sum_{\{(i,j)\}\leq \Delta'}a_{i, j}-\sum_{\{(i,j)\}\leq \Delta}a_{i, j}-\sum_{i_k<i\leq h, j}a_{i j}}\prod^{k}_{p=1}\theta_{A}'(i_{p}, j_{p}) [A]_{\Delta'},
		\end{eq}
		where $\Delta'$ runs over the set $\{\Delta'=\{(i'_1, j'_1), \cdots, (i'_{l}, j'_{l})\}\ |\ i'_{l}\leq h\}$. 
		
		\noindent$(d)$ If $\Delta=\emptyset$, 
		\begin{align*}
			\relax [D]_{\{(h,h)\}}\ast [A]_{\emptyset}=\sum\limits_{t\in [1,n], a_{h, t}>0} v^{-\sum_{i\leq h,j>t}a_{i, j}} [A]_{\{(h,t)\}}.
		\end{align*}
	\end{prop}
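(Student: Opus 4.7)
The plan is to deduce Proposition \ref{prop 4.1.6} from Proposition \ref{prop 4.1.3} by the same normalization/dimension-counting trick that converts Proposition \ref{prop 4.1.1} into Proposition \ref{prop 4.1.4}. Write $[A]_\Delta = v^{-d(A,\Delta)+r(A,\Delta)}e_{A,\Delta}$ on both sides of the formula from Proposition \ref{prop 4.1.3}. Because the underlying matrix $A$ is unchanged on the right-hand side (only $\Delta$ becomes $\Delta'$), the bilinear term $\sum_{i\geq k,\,j<l}a_{ij}a_{kl}$ in $d(A,\Delta)-r(A,\Delta)$ cancels against the same term in $d(A,\Delta')-r(A,\Delta')$. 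Using $\ro(A)=\co(D)$ and the diagonal shape of $D$ one checks that
\[
d(D,\{(h,h)\})-r(D,\{(h,h)\})=\sum_{i\leq h,\,j}a_{ij},
\]
so the global exponent shift attached to each term $e_{A,\Delta'}$ on the right is
\[
\Lambda(\Delta')=-\sum_{i\leq h,\,j}a_{ij}+\sum_{\{(i,j)\}\leq \Delta'}a_{ij}-\sum_{\{(i,j)\}\leq \Delta}a_{ij}.
\]

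The second step is to rewrite the coefficients $\theta_A(i_p,j_p)$ and $\rho(A,\Delta')$ from Proposition \ref{prop 4.1.3} in terms of $\theta'_A(i_p,j_p)$ and $\rho'(A,\Delta')$. A direct expansion (treating the two branches of each definition separately) yields the uniform identities
\[
\theta_A(i_p,j_p)=v^{\sum_{i_{p-1}<i\leq i_p,\,j}a_{ij}}\,\theta'_A(i_p,j_p),\qquad
\rho(A,\Delta')=v^{\sum_{i_{m-1}<i\leq h,\,j}a_{ij}}\,\rho'(A,\Delta').
\]
Taking the product over $p\in[1,m-1]$ and including the $\rho$-factor, the telescoping sum collapses to $v^{\sum_{i\leq h,\,j}a_{ij}}$, which is exactly what cancels the first summand of $\Lambda(\Delta')$. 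What remains is precisely $v^{\sum_{\{(i,j)\}\leq \Delta'}a_{ij}-\sum_{\{(i,j)\}\leq \Delta}a_{ij}}\prod_{p=1}^{m-1}\theta'_A(i_p,j_p)\,\rho'(A,\Delta')$, matching case $(a)$.

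For the remaining cases one repeats the same bookkeeping. In $(b)$ the product runs to $p=m$ with no $\rho$-factor; in $(c)$ the product runs to $p=k$ and there is an extra correction $v^{-\sum_{i_k<i\leq h,\,j}a_{ij}}$ coming from the fact that the telescoping of the $\theta_A$ factors only reaches $i_k<h$, so the residual part of $-\sum_{i\leq h,\,j}a_{ij}$ survives; in $(d)$ the empty-$\Delta$ case follows from direct substitution into Proposition \ref{prop 4.1.3}$(d)$ together with $\sum_{\{(i,j)\}\leq \{(h,t)\}}a_{ij}=\sum_{i\leq h,\,j\leq t}a_{ij}$, producing the exponent $-\sum_{i\leq h,\,j>t}a_{ij}$.

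The main obstacle is the case $(a)$ bookkeeping: one has to verify the two identities $\theta_A/\theta'_A = v^{\sum_{i_{p-1}<i\leq i_p,j}a_{ij}}$ and $\rho/\rho'=v^{\sum_{i_{m-1}<i\leq h,j}a_{ij}}$ separately on the "$(i_p,j_p)\in\Delta'$" and "$(i_p,j_p)\notin\Delta'$" branches, and to see that the $v$-shifts in $\theta'_A$ and $\rho'$ (i.e., $\gamma_A(p)$ and the analogous expression in $\rho'$) are precisely $2\sum_{i_{p-1}<i\leq i_p,\,j\leq j_p}a_{ij}$ minus $\sum_{i_{p-1}<i\leq i_p,\,j}a_{ij}$. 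Once this algebraic identity is in place, the rest of the proof is mechanical.
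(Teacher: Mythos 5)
Your proposal is correct and follows essentially the same route as the paper: the paper proves Proposition \ref{prop 4.1.6} by the same renormalization argument used for Proposition \ref{prop 4.1.4}, i.e.\ rewriting the unnormalized formulas of Proposition \ref{prop 4.1.3} in terms of $[A]_\Delta=v^{-d(A,\Delta)+r(A,\Delta)}e_{A,\Delta}$ and tracking the exponent shifts via $d(A,\Delta)-r(A,\Delta)$. Your explicit identities $\theta_A(i_p,j_p)=v^{\sum_{i_{p-1}<i\leq i_p,\,j}a_{ij}}\theta'_A(i_p,j_p)$ and $\rho(A,\Delta')=v^{\sum_{i_{m-1}<i\leq h,\,j}a_{ij}}\rho'(A,\Delta')$, together with the telescoping against $d(D,\{(h,h)\})-r(D,\{(h,h)\})=\sum_{i\leq h,\,j}a_{ij}$, check out in all four cases.
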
  
	\begin{proof}
		The proof is similar to Proposition \ref{prop 4.1.4}.
	\end{proof}
	\begin{prop}\label{prop 4.1.7}
		Assume $h\in [1, n-1]$ and $R\geq 0$ is an integer. Take $P_R=\{\mathbf{p}=(q_{12},\cdots, p_n)\in \mathbb{N}^n\ |\ \sum_{i}p_i=R \}$. For $(B, \emptyset), (A,\Delta) \in \Xi_{n|n,d}$ satisfied $\ro(A) = \co(B)$, where $B - RE_{h, h+1}$ is a diagonal matrix and $\Delta = \{(i_1, j_1), \cdots, (i_k, j_k)\}$, let $\kappa_{A}(\mathbf{p})=\sum_{j\geq u}a_{h, j}p_u-\sum_{j>u}a_{h+1, j}p_u+\sum_{u<u'}p_{u}p_{u'}$, $X_{\mathbf{p}}=A+\sum_{u}p_u(E_{h, u}-E_{h+1, u})$ where $\mathbf{p}\in P_R$.
		
		\noindent$(a)$ If for any $t\in [1,k]$, we have $i_t \neq h, h+1$, then
		$$[B]_{\emptyset}\ast [A]_\Delta = \sum\limits_{\mathbf{p}\in P_R, a_{h+1, u} \geq p_u}
		v^{\kappa_{A}(\mathbf{p})}\prod^{n}_{u=1}\overline{[a_{h, u}+p_u, p_u]}_v[X_{\mathbf{p}}]_{\Delta}.
		$$
		
		\noindent$(b)$ If there exists $m$ such that $i_m = h$ and $i_{m+1} \neq h+1$, then
		\begin{eq}
			\relax [B]_\emptyset\ast [A]_{\Delta}  = &\sum\limits_{\mathbf{p} \in P_R, a_{h+1, u} \geq p_u}v^{\kappa_{A}(\mathbf{p})-\sum_{j_{m+1}<u\leq j_m}p_u}\prod^{n}_{u=1}\overline{[a_{h, u}+p_u-\delta_{u j_m}, p_u]}_v[X_{\mathbf{p}}]_{\Delta}.
		\end{eq}
		
		\noindent$(c)$ If there exists $m$ such that $i_{m-1} \neq h$ and $i_{m} = h+1$, then
		\begin{align*}
			& \relax [B]_\emptyset\ast [A]_{\Delta}\\
			= &\sum\limits_{\mathbf{p}\in P_R, a_{h+1, u} \geq p_u}
			v^{\kappa_{A}(\mathbf{p})}\prod^{n}_{u=1}\overline{[a_{h, u}+p_u, p_u]}_v[X_{\mathbf{p}}]_{\Delta} \\
			& + \sum\limits_{\substack{\mathbf{p}\in P_R, p_{j_m}>0,\\ a_{h+1, u} \geq p_u}} v^{\kappa_{A}(\mathbf{p})-\sum_{j_{m+1}< j \leq j_m}(a_{h+1, j}-p_j)}\prod^{n}_{u=1}\overline{[a_{h, u}+p_u-\delta_{uj_m}, p_u-\delta_{uj_m}]}_v[X_{\mathbf{p}}]_{\Delta_c}\\
			& +\sum\limits_{\substack{\mathbf{p}\in P_R, a_{h+1, u} \geq p_u,\\ p_{j_m}>0, j_{m+1}<t< j_m}}v^{\kappa_{A}(\mathbf{p})-\sum_{t< j \leq j_m}(a_{h+1, j}-p_j)}\prod^{n}_{u=1}\overline{[a_{h, u}+p_u-\delta_{uj_m}, p_u-\delta_{uj_m}]}_v[X_{\mathbf{p}}]_{\Delta_t},
		\end{align*}
		where
		\begin{eq}
			\Delta_c&= \{(i_1, j_1), \cdots, (i_{m-1}, j_{m-1}), (h, j_m), (i_{m+1}, j_{m+1}), \cdots, (i_k, j_k)\},\\
			\Delta_t& = \{(i_1, j_1), \cdots, (i_{m-1}, j_{m-1}), (h, j_m), (h+1, t), (i_{m+1}, j_{m+1}), \cdots, (i_k, j_k)\}.
		\end{eq}
		
		\noindent$(d)$ If there exists $m$ such that $i_m=  h$ and $i_{m+1} = h+1$, then
		\begin{align*}
			&    \relax [B]_\emptyset\ast [A]_{\Delta} \\
			= &\sum\limits_{\mathbf{p} \in P_R, a_{h+1, u} \geq p_u}v^{\kappa_{A}(\mathbf{p})-\sum_{j_{m+1}<u\leq j_m}p_u}\prod^{n}_{u=1}\overline{[a_{h, u}+p_u-\delta_{uj_m}, p_u}_v[X_{\mathbf{p}}]_{\Delta}\\
			& + \sum\limits_{\substack{\mathbf{p} \in P_R, p_{j_{m+1}>0}\\a_{h+1, u} \geq p_u}}v^{\kappa_{A}(\mathbf{p})-\sum_{j_{m+2}<j\leq j_{m+1}}(a_{h+1, j}-p_j)}(1-v^{-2p_{j_{m+1}}})\prod^{n}_{u=1}\overline{[a_{h, u}+p_u, p_u]}_v[X_{\mathbf{p}}]_{\Delta_s}\\
			& + \sum\limits_{\substack{\mathbf{p} \in P_R, p_{j_{m+1}>0},\\a_{h+1, u} \geq p_u, j_{m+2}<t<j_{m+1}}} v^{\kappa_{A}(\mathbf{p})-\sum_{t<j\leq j_{m+1}}(a_{h+1, j}-p_j)}(1-v^{-2p_{j_{m+1}}})\prod^{n}_{u=1}\overline{[a_{h, u}+p_u, p_u]}_v[X_{\mathbf{p}}]_{\Delta_t},
		\end{align*}
		where
		\begin{eq}
			\Delta_s&= \{(i_1, j_1), \cdots, (i_{m-1}, j_{m-1}), (h, j_m), (i_{m+2}, j_{m+2}), \cdots, (i_k, j_k)\}, \\
			\Delta_t &= \{(i_1, j_1), \cdots, (i_{m-1}, j_{m-1}), (h, j_m), (h+1, t), (i_{m+2}, j_{m+2}), \cdots, (i_k, j_k)\}.\\
		\end{eq}
	\end{prop}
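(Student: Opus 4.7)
The plan is to generalize Proposition \ref{prop 4.1.4}, which is the $R = 1$ case, by a direct geometric count analogous to that of Proposition \ref{prop 4.1.1}, now working with codimension-$R$ subspaces of $V_h$ rather than codimension one. Specifically, fix $(f, f', \omega)$ in the target orbit $\mathcal{O}_{X_{\mathbf{p}}, \Delta'}$ for some $\mathbf{p} \in P_R$ and some $\Delta'$ to be determined. The convolution formula reduces the computation to counting flags $f'' = (V''_i)$ with $(f, f'') \in \mathcal{O}_{B, \emptyset}$ and $(f'', f', \omega) \in \mathcal{O}_{A, \Delta}$. Since $B - R E_{h, h+1}$ is diagonal, $f''$ differs from $f$ only at step $h$, and $V''_h \subset V_h$ is a codimension-$R$ subspace; the tuple $(p_u) = (\dim(V_h \cap V'_u) - \dim(V''_h \cap V'_u))_u$ then records how the $R$ missing dimensions distribute across the steps of $f'$.

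Next, I would count admissible $V''_h$ by splitting the problem into a generic subspace count and a decoration correction. The generic count of codimension-$R$ subspaces with prescribed intersection profile against $f'$ is a standard Gaussian binomial calculation, yielding $\prod_u \overline{[a_{h, u} + p_u, p_u]}_v$ once the conversion from $e_{-, -}$ to $[-]_{-}$ is carried out. The exponent $\kappa_A(\mathbf{p})$ is extracted from the dimension identity $d(X_{\mathbf{p}}, \Delta') - r(X_{\mathbf{p}}, \Delta') - (d(A, \Delta) - r(A, \Delta)) - (d(B, \emptyset) - r(B, \emptyset)) + 2\sum_{u, j \geq u} a_{h, j} p_u$, expanded along the same lines as in the proof of Proposition \ref{prop 4.1.4}; the new quadratic term $\sum_{u < u'} p_u p_{u'}$ encodes the cross-column interactions that were trivial when $R = 1$.

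The determination of $\Delta'$ then proceeds case by case, following Proposition \ref{prop 4.1.1}. In cases (a) and (b), $\Delta' = \Delta$, and the only change from $R = 1$ is the replacement of a single-step count by its $R$-step Gaussian analogue, with the shift $-\delta_{u j_m}$ in the binomial already appearing in Proposition \ref{prop 4.1.4}(b). Case (c) permits $\omega_{h+1, j_m}$ to be absorbed into $V''_h$, producing the new decorations $\Delta_c$ and $\Delta_t$, with the exponent correction $-\sum_{j_{m+1} < j \leq j_m}(a_{h+1, j} - p_j)$ counting the positions of this absorption consistent with the chosen $\mathbf{p}$. Case (d) is the most intricate: both $\omega_{h, j_m}$ and $\omega_{h+1, j_{m+1}}$ participate, and the absorption of $\omega_{h+1, j_{m+1}}$ now occurs inside the $p_{j_{m+1}}$-dimensional drop in column $j_{m+1}$, yielding the factor $(1 - v^{-2 p_{j_{m+1}}})$ in place of the $(1 - v^{-2})$ of Proposition \ref{prop 4.1.4}(d).

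The principal obstacle is precisely case (d), where the factor $(1 - v^{-2 p_{j_{m+1}}})$ demands a careful binary split of the count of admissible $V''_h$ into those whose intersection with $V'_{j_{m+1}}$ contains $\omega_{h+1, j_{m+1}}$ and those for which it does not; this replaces the one-dimensional complementary-subspace count of the $R = 1$ argument by a comparison of two Gaussian binomials differing in a single argument. An alternative route by induction on $R$, writing $[B]_\emptyset$ as a suitably normalized product of $R$ copies of the $R = 1$ element, is available but seems to merely trade the geometric count for an equally delicate bookkeeping of decoration flow through the intermediate steps; the direct geometric approach therefore appears cleaner.
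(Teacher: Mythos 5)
Your route is genuinely different from the paper's. The paper proves this proposition by induction on $R$: the base case is Proposition \ref{prop 4.1.4}, and the inductive step writes $[B]_{\emptyset}=\frac{v-v^{-1}}{v^{l}-v^{-l}}[B'']_{\emptyset}\ast[B']_{\emptyset}$ with $B'-(l-1)E_{h,h+1}$ and $B''-E_{h,h+1}$ diagonal, then verifies by a six-fold case analysis (three cases each for the coefficients of $[X_{\mathbf p}]_{\Delta_c}$ and $[X_{\mathbf p}]_{\Delta_t}$) that the coefficients telescope to the claimed closed form. You instead propose a single direct orbit count with codimension-$R$ subspaces, generalizing the proof of Proposition \ref{prop 4.1.1}. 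If carried out, your approach would avoid the bookkeeping of how decorations flow through the intermediate product and would make the origin of $\kappa_A(\mathbf p)$ and of the Gaussian binomials transparent; the paper's induction, by contrast, only ever needs the already-established $R=1$ counts. Your identification of $\kappa_A(\mathbf p)$ via the dimension identity, and of the exponent corrections $-\sum(a_{h+1,j}-p_j)$ as coming from the change in $\sum_{\{(i,j)\}\leq\Delta'}a_{ij}$ when $\Delta'$ differs from $\Delta$, is the right mechanism. (Minor point: your $p_u$ should be the successive difference of $\dim(V_h\cap V'_u)-\dim(V''_h\cap V'_u)$, i.e.\ that quantity is $\sum_{j\leq u}p_j$, not $p_u$.)

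There is, however, a concrete gap at exactly the place you flag. In case $(d)$ you assert that the factor $(1-v^{-2p_{j_{m+1}}})$ comes from ``a comparison of two Gaussian binomials differing in a single argument,'' i.e.\ from splitting the subspaces $U$ of prescribed profile into those containing $\omega_2$ and those not. The natural reading of that split gives, in the relevant column, $\binom{N}{p}_q-\binom{N-1}{p}_q=q^{N-p}\binom{N-1}{p-1}_q$ with $N=a_{h,j_{m+1}}+p_{j_{m+1}}$ and $p=p_{j_{m+1}}$, which is \emph{not} $(1-q^{-p})\binom{N}{p}_q$; the two agree only for $p=1$, which is why the $R=1$ computation in Proposition \ref{prop 4.1.1}$(d)$ produces $(1-q^{-1})$. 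To get the stated factor you must use the finer filtration structure (the count is a difference of counts of $U$ containing $F_{u-1}=V_{h-1}+(V_h\cap V'_{u-1})$ with prescribed drops, intersected with the condition $\omega_2\notin U$), and you must also explain why the resulting factor depends on $p_{j_{m+1}}$ at all, given that $\omega_2$ lives in the column-$j_m$ graded piece while the decoration being created or destroyed sits at column $j_{m+1}$. As written, the key identity of case $(d)$ is asserted rather than derived, and its most direct interpretation is false; either perform this refined count explicitly or fall back on the induction on $R$ that the paper uses.
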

	\begin{proof}
		We use mathematical induction on $R$ to prove $(c)$, the remaining identities are similar. 
		
		If $R=1$, the formula follows from Proposition \ref{prop 4.1.4}. 
		Suppose for $R=l-1$, the identity holds. For $R=l$. Take $(B,\emptyset)\in \Xi_{n|n,d}$ such that $\co(B)=\ro(A)$ and $B-lE_{i,i+1}$ is a diagonal matrix. Let $(B',\emptyset), (B'',\emptyset)\in \Xi_{n|n,d}$ satisfy $\co(B')=\ro(A)$, $\co(B'')=\ro(B')$ and $B'-(l-1)E_{i,i+1}$, $B''-E_{i,i+1}$ are diagonal matrices. Then we have the following identity.
		\begin{align*}
			& [B]_{\emptyset}\ast [A]_{\Delta} = \frac{v-v^{-1}}{v^{l}-v^{-l}}[B'']_{\emptyset}\ast [B']_{\emptyset}\ast [A]_{\Delta}\\
			= & \frac{v-v^{-1}}{v^{l}-v^{-l}}(\sum_{\mathbf{p}\in P_{l-1}}v^{\kappa_{A}(\mathbf{p})}\prod^{n}_{u=1}\overline{[a_{h,u}+p_u,p_u]}_v[B'']_{\emptyset}\ast [X_{\mathbf{p}}]_{\Delta}\\
			& + \sum_{\mathbf{p}\in P_{l-1}}v^{\kappa_{A}(\mathbf{p})-\sum_{j_{m+1}<j\leq j_m}(a_{h+1,j}-p_j)}\prod^{n}_{u=1}\overline{[a_{h,u}+p_u-\delta_{uj_m}, p_u-\delta_{uj_m}]}_v[B'']_{\emptyset}\ast [X_{\mathbf{p}}]_{\Delta_c}\\
			& + \sum_{\substack{\mathbf{p}\in P_{l-1}\\j_{m+1<t<j_m}}}v^{\kappa_{A}(\mathbf{p})-\sum_{t<j\leq j_m}(a_{h+1,j}-p_j)}\prod^{n}_{u=1}\overline{[a_{h,u}+p_u-\delta_{uj_m}, p_u-\delta_{uj_m}]}_v[B'']_{\emptyset}\ast [X_{\mathbf{p}}]_{\Delta_t}).
		\end{align*}
		
		For any $\mathbf{p}\in P_{l-1}$ and an integer $q>j_{m+1}$, we define $\kappa_{A}(\mathbf{p},q)=\kappa_{A}(\mathbf{p})-\sum_{q<j\leq j_m}(a_{h+1,j}-p_j)$.
		For any fixed $\mathbf{p}\in P_{l-1}$ and $r\in [1,n]$, we define sequences $\mathbf{p}''\in P_{l}$ and $\mathbf{p}'_s\in P_{l-1}$ as follows.
		$$
		(\mathbf{p}'')_{i}=\left\{
		\begin{array}{ll}
			p_r+1   & i=r, \\
			p_i  & \text{otherwise},
		\end{array}
		\right.
		\text{ and }
		(\mathbf{p}'_{s})_{i}=\left\{
		\begin{array}{lll}
			p_r+1   & i=r, \\
			p_s-1  &  i=s, \\
			p_i  & \text{otherwise},
		\end{array}
		\right.
		$$
		where $s\in [1, r-1]\cup[r+1, n]$. 
		The coefficient of $[X_\mathbf{p}+E_{h,r}-E_{h+1,r}]_{\Delta}$ is the same as \cite[Lemma 3.4]{BLM}. So we only need to compute the coefficients of $[X_\mathbf{p}+E_{h,r}-E_{h+1,r}]_{\Delta_c}$(denoted as $\mathbf{c}_c$) and $[X_\mathbf{p}+E_{h,r}-E_{h+1,r}]_{\Delta_t}$(denoted as $\mathbf{c}_t$). 
		
		Firstly, we compute $\mathbf{c}_t$ in the following three cases.
		
		Case 1: $r=j_m$.
		\begin{align*}
			&\mathbf{c}_t =v^{\kappa_{A}(\mathbf{p})}\prod^{n}_{u=1}\overline{[a_{h,u}+p_u-\delta_{uj_m}, p_u-\delta_{uj_m}]}_v
			v^{\beta_{A}(r)-\sum_{t<j\leq j_m}(a_{h+1,j}-p_j)+1}\\
			&+ v^{\kappa_{A}(\mathbf{p},t)}\prod^{n}_{u=1}\overline{[a_{h,u}+p_u-\delta_{uj_m}, p_u-\delta_{uj_m}]}_v
			(v^{\beta_{A}(r)-1}\overline{[a_{h,r}+p_r, 1]}_v)\\
			& +\sum_{t<s<j_m}v^{\kappa_{A}(\mathbf{p'}_s, t)}\prod^{n}_{u=1}\overline{[a_{h,u}+p'_u-\delta_{uj_m}, p'_u-\delta_{uj_m}]}_v(v^{\beta_{A}(s)-1}\overline{[a_{h,s}+p_s, 1]}_v)\\
			& +\sum_{s\notin [t+1,j_m]}v^{\kappa_{A}(\mathbf{p'}_s, t)}\prod^{n}_{u=1}\overline{[a_{h,u}+p'_u-\delta_{uj_m}, p'_u-\delta_{uj_m}]}_v(v^{\beta_{A}(s)}\overline{[a_{h,s}+p_s, 1]}_v)\\
			& +\sum_{t<s< j_m}v^{\kappa_{A}(\mathbf{p'}_s, t)}\prod^{n}_{u=1}\overline{[a_{h,u}+p'_u-\delta_{uj_m}, p'_u-\delta_{uj_m}]}_v
			(v^{\beta_{A}(s)}(1-v^{-2})\overline{[a_{h,r}+p_s, 1]}_v)\\
			= & v^{\kappa_{A}(\mathbf{p''})-\sum_{t<j\leq j_m}(a_{h+1,j}-p''_j)}\prod^{n}_{u=1}\overline{[a_{h,u}+p''_u-\delta_{uj_m}, p''_u-\delta_{uj_m}]}_v\\
			&(v^{\sum_{j\geq r}p_j-\sum_{j< r}p_j}(\frac{v^{-2(p_r+1)}-v^{-2}}{v^{-2}-1}+1)+\sum_{ s\neq r}v^{\sum_{j\geq s}p'_j-\sum_{j< s}p'_j}\frac{v^{-2p_s}-1}{v^{-2}-1})\\
			= & v^{\kappa_{A}(\mathbf{p''})-\sum_{t<j\leq j_m}(a_{h+1,j}-p''_j)}\prod^{n}_{u=1}\overline{[a_{h,u}+p''_u-\delta_{uj_m}, p''_u-\delta_{uj_m}]}_v\frac{v^{-l-1}-v^{l-1}}{v^{-2}-1}.
		\end{align*}
		
		Case 2: $r\notin [t+1, j_m-1]$.
		\begin{align*}
			& \mathbf{c}_t=v^{\kappa_{A}(\mathbf{p'},t)}\prod^{n}_{u=1}\overline{[a_{h,u}+p_u-\delta_{uj_m}, p_u-\delta_{uj_m}]}_v
			v^{\beta_{A}(j_m)-\sum_{t<j\leq j_m}(a_{h+1,j}-p_j)+1}\\
			& + v^{\kappa_{A}(\mathbf{p},t)}\prod^{n}_{u=1}\overline{[a_{h,u}+p_u-\delta_{uj_m}, p_u-\delta_{uj_m}]}_v
			(v^{\beta_{A}(r)}\overline{[a_{h,r}+p_r+1, 1]}_v)\\
			& +\sum_{t<s\leq j_m}v^{\kappa_{A}(\mathbf{p'}_s, t)}\prod^{n}_{u=1}\overline{[a_{h,u}+p'_u-\delta_{uj_m}, p'_u-\delta_{uj_m}]}_v(v^{\beta_{A}(s)-1}\overline{[a_{h,s}+p_s-\delta_{sj_m}, 1]}_v)\\
			& +\sum_{s\notin [t+1,j_m]}v^{\kappa_{A}(\mathbf{p'}_s,t)}\prod^{n}_{u=1}\overline{[a_{h,u}+p'_u-\delta_{uj_m}, p'_u-\delta_{uj_m}]}_v(v^{\beta_{A}(s)}\overline{[a_{h,s}+p_s, 1]}_v)\\
			& +\sum_{t<s< j_m}v^{\kappa_{A}(\mathbf{p'}_s,t)}\prod^{n}_{u=1}\overline{[a_{h,u}+p'_u-\delta_{uj_m}, p'_u-\delta_{uj_m}]}_v
			(v^{\beta_{A}(s)}(1-v^{-2})\overline{[a_{h,r}+p_s, 1]}_v)\\
			= & v^{\kappa_{A}(\mathbf{p''})-\sum_{t<j\leq j_m}(a_{h+1,j}-p''_j)}\prod^{n}_{u=1}\overline{[a_{h,u}+p''_u-\delta_{uj_m}, p''_u-\delta_{uj_m}]}_v\\
			& (v^{\sum_{j\geq r}p_j-\sum_{j< r}p_j}\frac{v^{-2(p_r+1)}-1}{v^{-2}-1}+\sum_{ s\neq r}v^{\sum_{j\geq s}p'_j-\sum_{j< s}p'_j}\frac{v^{-2p_s}-1}{v^{-2}-1})\\
			= & v^{\kappa_{A}(\mathbf{p''})-\sum_{t<j\leq j_m}(a_{h+1,j}-p''_j)}\prod^{n}_{u=1}\overline{[a_{h,u}+p''_u-\delta_{uj_m}, p''_u-\delta_{uj_m}]}_v\frac{v^{-l-1}-v^{l-1}}{v^{-2}-1}.
		\end{align*}
		
		Case 3: $t<r <j_m$.
		\begin{align*}
			& \mathbf{c}_t=v^{\kappa_{A}(\mathbf{p'},t)}\prod^{n}_{u=1}\overline{[a_{h,u}+p_u-\delta_{uj_m}, p_u-\delta_{uj_m}]}_v
			v^{\beta_{A}(j_m)-\sum_{t<j\leq j_m}(a_{h+1,j}-p_j)+1}\\
			& v^{\kappa_{A}(\mathbf{p},t)}\prod^{n}_{u=1}\overline{[a_{h,u}+p_u-\delta_{uj_m}, p_u-\delta_{uj_m}]}_v
			(v^{\beta_{A}(r)-1}\overline{[a_{h,r}+p_r+1, 1]}_v)\\
			& +\sum_{\substack{t<s\leq j_m\\ s\neq r}}v^{\kappa_{A}(\mathbf{p'}_s,t)}\prod^{n}_{u=1}\overline{[a_{h,u}+p'_u-\delta_{uj_m}, p'_u-\delta_{uj_m}]}_v(v^{\beta_{A}(s)-1}\overline{[a_{h,s}+p_s-\delta_{sj_m}, 1]}_v)\\
			& +\sum_{s\notin [t+1,j_m]}v^{\kappa_{A}(\mathbf{p'}_s, t)}\prod^{n}_{u=1}\overline{[a_{h,u}+p'_u-\delta_{uj_m}, p'_u-\delta_{uj_m}]}_v(v^{\beta_{A}(s)}\overline{[a_{h,s}+p_s, 1]}_v)\\
			&+ v^{\kappa_{A}(\mathbf{p}, t)}\prod^{n}_{u=1}\overline{[a_{h,u}+p_u-\delta_{uj_m}, p_u-\delta_{uj_m}]}_v
			(v^{\beta_{A}(r)}(1-v^{-2})\overline{[a_{h,r}+p_r+1, 1]}_v)\\
			& +\sum_{\substack{t<s<j_m\\ s\neq r}}v^{\kappa_{A}(\mathbf{p'}_s, t)}\prod^{n}_{u=1}\overline{[a_{h,u}+p'_u-\delta_{uj_m}, p'_u-\delta_{uj_m}]}_v
			(v^{\beta_{A}(s)}(1-v^{-2})\overline{[a_{h,r}+p_s, 1]}_v)\\
			= & v^{\kappa_{A}(\mathbf{p''})-\sum_{t<j\leq j_m}(a_{h+1,j}-p''_j)}\prod^{n}_{u=1}\overline{[a_{h,u}+p''_u-\delta_{uj_m}, p''_u-\delta_{uj_m}]}_v\\
			&(v^{\sum_{j\geq r}p_j-\sum_{j< r}p_j}\frac{v^{-2(p_r+1)}-1}{v^{-2}-1}+\sum_{ s\neq r}v^{\sum_{j\geq s}p'_j-\sum_{j< s}p'_j}\frac{v^{-2p_s}-1}{v^{-2}-1})\\
			= & v^{\kappa_{A}(\mathbf{p''})-\sum_{t<j\leq j_m}(a_{h+1,j}-p''_j)}\prod^{n}_{u=1}\overline{[a_{h,u}+p''_u-\delta_{uj_m}, p''_u-\delta_{uj_m}]}_v\frac{v^{-l-1}-v^{l-1}}{v^{-2}-1}.
		\end{align*}
		
		Secondly, we compute $\mathbf{c}_c$ is the following three cases.
		
		Case 1: $r=j_m$.
		\begin{align*}
			& \mathbf{c}_c=v^{\kappa_{A}(\mathbf{p},j_m)}\prod^{n}_{u=1}\overline{[a_{h,u}+p_u-\delta_{uj_m}, p_u-\delta_{uj_m}]}_v
			v^{\beta_{A}(r)-\sum_{j_{m+1}<j\leq j_m}(a_{h+1,j}-p_j)+1}\\
			&+ v^{\kappa_{A}(\mathbf{p},j_m)}\prod^{n}_{u=1}\overline{[a_{h,u}+p_u-\delta_{uj_m}, p_u-\delta_{uj_m}]}_v
			(v^{\beta_{A}(r)-1}\overline{[a_{h,r}+p_r, 1]}_v)\\
			& +\sum_{j_m+1<s< j_m}v^{\kappa_{A}(\mathbf{p'}_s, j_m)}\prod^{n}_{u=1}\overline{[a_{h,u}+p'_u-\delta_{uj_m}, p'_u-\delta_{uj_m}]}_v(v^{\beta_{A}(s)-1}\overline{[a_{h,s}+p_s, 1]}_v)\\
			& +\sum_{s\notin [j_{m+1}+1,j_m]}v^{\kappa_{A}(\mathbf{p'}_s, j_m)}\prod^{n}_{u=1}\overline{[a_{h,u}+p'_u-\delta_{uj_m}, p'_u-\delta_{uj_m}]}_v(v^{\beta_{A}(s)}\overline{[a_{h,s}+p_s, 1]}_v)\\
			& +\sum_{j_m+1<s< j_m}v^{\kappa_{A}(\mathbf{p'}_s, j_m)}\prod^{n}_{u=1}\overline{[a_{h,u}+p'_u-\delta_{uj_m}, p'_u-\delta_{uj_m}]}_v
			(v^{\beta_{A}(s)+1}(1-v^{-2})\overline{[a_{h,r}+p_s, 1]}_v)\\
			= & v^{\kappa_{A}(\mathbf{p''})-\sum_{j_{m+1}<j\leq j_m}(a_{h+1,j}-p''_j)}\prod^{n}_{u=1}\overline{[a_{h,u}+p''_u-\delta_{uj_m}, p''_u-\delta_{uj_m}]}_v\\
			&(v^{\sum_{j\geq r}p_j-\sum_{j< r}p_j}(\frac{v^{-2(p_r+1)}-v^{-2}}{v^{-2}-1}+1)+\sum_{ s\neq r}v^{\sum_{j\geq s}p'_j-\sum_{j< s}p'_j}\frac{v^{-2p_s}-1}{v^{-2}-1})\\
			= & v^{\kappa_{A}(\mathbf{p''})-\sum_{j_{m+1}<j\leq j_m}(a_{h+1,j}-p''_j)}\prod^{n}_{u=1}\overline{[a_{h,u}+p''_u-\delta_{uj_m}, p''_u-\delta_{uj_m}]}_v\frac{v^{-l-1}-v^{l-1}}{v^{-2}-1}.
		\end{align*}
		
		Case 2: $r\notin [j_{m+1}+1, j_m-1]$.
		\begin{align*}
			& \mathbf{c}_c=v^{\kappa_{A}(\mathbf{p'},j_m)}\prod^{n}_{u=1}\overline{[a_{h,u}+p_u-\delta_{uj_m}, p_u-\delta_{uj_m}]}_v
			v^{\beta_{A}(j_m)-\sum_{j_{m+1}<j\leq j_m}(a_{h+1,j}-p_j)+1}\\
			& + v^{\kappa_{A}(\mathbf{p},j_m)}\prod^{n}_{u=1}\overline{[a_{h,u}+p_u-\delta_{uj_m}, p_u-\delta_{uj_m}]}_v
			(v^{\beta_{A}(r)}\overline{[a_{h,r}+p_r+1, 1]}_v)\\
			& +\sum_{j_{m+1}<s\leq j_m}v^{\kappa_{A}(\mathbf{p'}_s, j_m)}\prod^{n}_{u=1}\overline{[a_{h,u}+p'_u-\delta_{uj_m}, p'_u-\delta_{uj_m}]}_v(v^{\beta_{A}(s)-1}\overline{[a_{h,s}+p_s-\delta_{sj_m}, 1]}_v)\\
			& +\sum_{s\notin [j_{m+1}+1,j_m]}v^{\kappa_{A}(\mathbf{p'}_s,j_m)}\prod^{n}_{u=1}\overline{[a_{h,u}+p'_u-\delta_{uj_m}, p'_u-\delta_{uj_m}]}_v(v^{\beta_{A}(s)}\overline{[a_{h,s}+p_s, 1]}_v)\\
			& +\sum_{j_{m+1}<s<j_m}v^{\kappa_{A}(\mathbf{p'}_s,j_m)}\prod^{n}_{u=1}\overline{[a_{h,u}+p'_u-\delta_{uj_m}, p'_u-\delta_{uj_m}]}_v
			(v^{\beta_{A}(s)+1}(1-v^{-2})\overline{[a_{h,r}+p_s, 1]}_v)\\
			= & v^{\kappa_{A}(\mathbf{p''})-\sum_{j_{m+1}<j\leq j_m}(a_{h+1,j}-p''_j)}\prod^{n}_{u=1}\overline{[a_{h,u}+p''_u-\delta_{uj_m}, p''_u-\delta_{uj_m}]}_v\\
			& (v^{\sum_{j\geq r}p_j-\sum_{j< r}p_j}\frac{v^{-2(p_r+1)}-1}{v^{-2}-1}+\sum_{ s\neq r}v^{\sum_{j\geq s}p'_j-\sum_{j< s}p'_j}\frac{v^{-2p_s}-1}{v^{-2}-1})\\
			= & v^{\kappa_{A}(\mathbf{p''})-\sum_{j_{m+1}<j\leq j_m}(a_{h+1,j}-p''_j)}\prod^{n}_{u=1}\overline{[a_{h,u}+p''_u-\delta_{uj_m}, p''_u-\delta_{uj_m}]}_v\frac{v^{-l-1}-v^{l-1}}{v^{-2}-1}.
		\end{align*}
		
		Case 3: $j_{m+1}<r <j_m$.
		\begin{align*}
			& \mathbf{c}_c=v^{\kappa_{A}(\mathbf{p'},j_m)}\prod^{n}_{u=1}\overline{[a_{h,u}+p_u-\delta_{uj_m}, p_u-\delta_{uj_m}]}_v
			v^{\beta_{A}(j_m)-\sum_{j_{m+1}<j\leq j_m}(a_{h+1,j}-p_j)+1}\\
			& v^{\kappa_{A}(\mathbf{p},j_m)}\prod^{n}_{u=1}\overline{[a_{h,u}+p_u-\delta_{uj_m}, p_u-\delta_{uj_m}]}_v
			(v^{\beta_{A}(r)-1}\overline{[a_{h,r}+p_r+1, 1]}_v)\\
			& +\sum_{\substack{j_{m+1}<s\leq j_m\\ s\neq r}}v^{\kappa_{A}(\mathbf{p'}_s,j_m)}\prod^{n}_{u=1}\overline{[a_{h,u}+p'_u-\delta_{uj_m}, p'_u-\delta_{uj_m}]}_v(v^{\beta_{A}(s)-1}\overline{[a_{h,s}+p_s-\delta_{sj_m}, 1]}_v)\\
			& +\sum_{s\notin [j_{m+1}+1,j_m]}v^{\kappa_{A}(\mathbf{p'}_s, j_m)}\prod^{n}_{u=1}\overline{[a_{h,u}+p'_u-\delta_{uj_m}, p'_u-\delta_{uj_m}]}_v(v^{\beta_{A}(s)}\overline{[a_{h,s}+p_s, 1]}_v)\\
			&+ v^{\kappa_{A}(\mathbf{p}, j_m)}\prod^{n}_{u=1}\overline{[a_{h,u}+p_u-\delta_{uj_m}, p_u-\delta_{uj_m}]}_v
			(v^{\beta_{A}(r)+1}(1-v^{-2})\overline{[a_{h,r}+p_r+1, 1]}_v)\\
			& +\sum_{\substack{j_{m+1}<s< j_m\\ s\neq r}}v^{\kappa_{A}(\mathbf{p'}_s, j_m)}\prod^{n}_{u=1}\overline{[a_{h,u}+p'_u-\delta_{uj_m}, p'_u-\delta_{uj_m}]}_v
			(v^{\beta_{A}(s)+1}(1-v^{-2})\overline{[a_{h,r}+p_s, 1]}_v)\\
			= & v^{\kappa_{A}(\mathbf{p''})-\sum_{j_{m+1}<j\leq j_m}(a_{h+1,j}-p''_j)}\prod^{n}_{u=1}\overline{[a_{h,u}+p''_u-\delta_{uj_m}, p''_u-\delta_{uj_m}]}_v\\
			&(v^{\sum_{j\geq r}p_j-\sum_{j< r}p_j}\frac{v^{-2(p_r+1)}-1}{v^{-2}-1}+\sum_{ s\neq r}v^{\sum_{j\geq s}p'_j-\sum_{j< s}p'_j}\frac{v^{-2p_s}-1}{v^{-2}-1})\\
			= & v^{\kappa_{A}(\mathbf{p''})-\sum_{j_{m+1}<j\leq j_m}(a_{h+1,j}-p''_j)}\prod^{n}_{u=1}\overline{[a_{h,u}+p''_u-\delta_{uj_m}, p''_u-\delta_{uj_m}]}_v\frac{v^{-l-1}-v^{l-1}}{v^{-2}-1}.
		\end{align*}
		Therefore, we conclude that $(c)$ holds by mathmetical induction. 
	\end{proof}
	
	\begin{prop}\label{prop 4.1.8}
		Assume $h\in [1, n-1]$ and $R\geq 0$ is an integer. Take $P_R=\{\mathbf{p}=(p_1,\cdots, p_n)\in \mathbb{N}^n\ |\ \sum_{i}p_i=R \}$. For $(C, \emptyset), (A,\Delta) \in \Xi_{n|n,d}$ satisfied $\ro(A) = \co(C)$, where $C - RE_{h+1, h}$ is a diagonal matrix and $\Delta = \{(i_1, j_1), \cdots, (i_k, j_k)\}$, let $\kappa_{A}'(\mathbf{p})=\sum_{j\leq u}a_{h+1, j}p_u-\sum_{j<u}a_{h, j}p_u+\sum_{u<u'}p_{u}p_{u'}$ and $X'_{\mathbf{p}}=A+\sum_{u}p_u(E_{h+1, u}-E_{h, u})$ where $\mathbf{p}\in P_R$.
		
		\noindent$(a)$ If for any $t\in [1,n]$, we have $i_t \neq h, h+1$, then
		$$[C]_{\emptyset}\ast [A]_\Delta = \sum\limits_{\mathbf{p}\in P_R, a_{h, u} \geq p_u} v^{\kappa_{A}'(\mathbf{p})}\prod^{n}_{u=1}\overline{[a_{h+1, u}+p_u, p_u]}_v[X_{\mathbf{p}}]_{\Delta}.$$
		
		\noindent$(b)$ If there exists $m$ such that $i_m= h$ and $i_{m+1} \neq h+1$, we have
		\begin{align*}
			& \relax [C]_\emptyset\ast [A]_{\Delta}\\
			= & \sum\limits_{\mathbf{p}\in P_R, a_{h, u} \geq p_u} v^{\kappa_{A}'(\mathbf{p})-\sum_{j_{m+1}<u\leq j_m}p_u}\prod^{n}_{u=1}\overline{[a_{h+1, u}+p_u, p_u]}_v[X'_{\mathbf{p}}]_{\Delta} \\
			& +\sum\limits_{\substack{\mathbf{p}\in P_R, a_{h, u} \geq p_u\\p_{j_m}>0}} v^{\kappa_{A}'(\mathbf{p})-\sum_{j_{m+1}<j\leq j_m}a_{h+1,j}} \prod^{n}_{u=1}\overline{[a_{h+1, u}+p_u-\delta_{u j_m}, p_u-\delta_{u j_m}]}_v[X'_{\mathbf{p}}]_{\Delta_b} \\  
			& + \sum\limits_{\substack{\mathbf{p}\in P_R, a_{h, u} \geq p_u\\ j_{m+1}<t<j_m, p_t>0}}v^{\kappa_{A}'(\mathbf{p})-\sum_{j_{m+1}<j\leq t}a_{h+1,j}}\prod^{n}_{u=1}\overline{[a_{h+1, u}+p_u-\delta_{u t}, p_u-\delta_{u t}]}_v[X'_{\mathbf{p}}]_{\Delta_t},
		\end{align*}
		where
		\begin{eq}
			\Delta_b &= \{(i_1, j_1), \cdots, (i_{m-1}, j_{m-1}), (h+1, j_m), (i_{m+1}, j_{m+1}), \cdots, (i_k, j_k)\}, \\
			\Delta_t &= \{(i_1, j_1), \cdots, (i_{m-1}, j_{m-1}), (h, j_m), (h+1, t), (i_{m+1}, j_{m+1}), \cdots, (i_k, j_k)\}.
		\end{eq}
		
		\noindent$(c)$ If there exists $m$ such that $i_{m-1} \neq h$ and $i_{m} = h+1$, then
		\begin{eq}
			\relax [C]_\emptyset\ast [A]_{\Delta}= & \sum\limits_{\mathbf{p}\in P_R, a_{h, u} \geq p_u} v^{\kappa_{A}'(\mathbf{p})}\prod^{n}_{u=1}\overline{[a_{h+1, u}+p_u-\delta_{u j_m}, p_u]}_v[X'_{\mathbf{p}}]_{\Delta}.
		\end{eq}
		
		\noindent$(d)$ If there exists  $m$ such that $i_m=  h$ and $i_{m+1} = h+1$, we have
		\begin{align*}
			& \relax [C]_\emptyset\ast [A]_{\Delta} \\
			= & \sum\limits_{\mathbf{p}\in P_R, a_{h, u} \geq p_u} v^{\kappa_{A}'(\mathbf{p})}\prod^{n}_{u=1}\overline{[a_{h+1, u}+p_u-\delta_{u j_{m+1}}, p_u]}_v[X_{\mathbf{p}}]_{\Delta} \\
			& + \sum\limits_{\substack{\mathbf{p}\in P_R, a_{h, u} \geq p_u\\p_{j_m}>0}} v^{\kappa_{A}'(\mathbf{p},j_{m+1})}(1-v^{-2a_{h+1,j_{m+1}}})\prod^{n}_{u=1}\overline{[a_{h+1, u}+p_u-\delta_{u j_m}, p_u-\delta_{u j_m}]}_v[X_{\mathbf{p}}]_{\Delta_s} \\ 
			& + \sum\limits_{\substack{\mathbf{p}\in P_R, a_{h, u} \geq p_u\\j_{m+1}<t<j_m, p_t>0}} v^{\kappa_{A}'(\mathbf{p},t)}(1-v^{-2a_{h+1,j_{m+1}}})\prod^{n}_{u=1}\overline{[a_{h+1, u}+p_u-\delta_{u t}, p_u-\delta_{u t}]}_v[X'_{\mathbf{p}}]_{\Delta_t},
		\end{align*}
		where $\kappa_{A}'(\mathbf{p},q)=\kappa_{A}'(\mathbf{p})-\sum_{q<j\leq j_m}a_{h+1, j}$ and 
		\begin{eq}
			\Delta_s &= \{(i_1, j_1), \cdots, (i_{m-1}, j_{m-1}), (h+1, j_m), (i_{m+2}, j_{m+2}), \cdots, (i_k, j_k)\},\\
			\Delta_t &=\{(i_1, j_1), \cdots, (i_{m-1}, j_{m-1}), (h, j_m), (h+1, t), (i_{m+2}, j_{m+2}), \cdots, (i_k, j_k)\}.
		\end{eq}
	\end{prop}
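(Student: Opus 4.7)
The proof plan is to induct on $R$, paralleling the strategy used for Proposition~\ref{prop 4.1.7}. The base case $R=1$ is exactly Proposition~\ref{prop 4.1.5}, so there is nothing to prove there. For the inductive step with $R=l$, I would pick diagonal-plus-one-entry matrices $(C',\emptyset),(C'',\emptyset)\in\Xi_{n|n,d}$ with $\co(C')=\ro(A)$, $\co(C'')=\ro(C')$, and $C'-(l-1)E_{h+1,h}$, $C''-E_{h+1,h}$ diagonal. Using the standard $q$-binomial identity
\[
[C]_{\emptyset}=\frac{v-v^{-1}}{v^{l}-v^{-l}}\,[C'']_{\emptyset}\ast [C']_{\emptyset},
\]
I would first apply the induction hypothesis to rewrite $[C']_{\emptyset}\ast[A]_{\Delta}$ as a sum over $\mathbf{p}\in P_{l-1}$ of terms indexed by the four possible decorations $\Delta,\Delta_b,\Delta_s,\Delta_t$, then apply Proposition~\ref{prop 4.1.5} to each resulting basis element $[X'_{\mathbf{p}}]_{\Delta'}$.

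The main task is to show that, after this double expansion, the coefficient of each $[X'_{\mathbf{p}''}]_{\Delta'}$ with $\mathbf{p}''\in P_l$ collapses to the claimed closed form. The coefficient of $[X'_{\mathbf{p}''}]_{\Delta}$ (the undecorated case, (a)) matches the computation in \cite[Lemma 3.4]{BLM} and requires no new work. For (c), the only novelty over (a) is the shift $\overline{[a_{h+1,p}+1,1]}_v \rightsquigarrow \overline{[a_{h+1,p},1]}_v$ at the distinguished column $p=j_m$; propagating this through the inductive step produces the single $\delta_{u j_m}$ adjustment in the $q$-binomial product, and the powers of $v$ telescope just as in the BLM case.

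For (b) and (d) the bookkeeping is more delicate: I would separately track each target decoration $\Delta_b,\Delta_t$ (resp.\ $\Delta_s,\Delta_t$) and split the induction's summands according to whether the last $E_{h+1,h}$ application comes from Proposition~\ref{prop 4.1.5}(a), (b), (c), or (d). Writing $\mathbf{p}''=\mathbf{p}+e_r$ (and, where needed, $\mathbf{p}'_s=\mathbf{p}''-e_s$), I would mimic the three-case analysis ($r=j_m$, $r\notin[j_{m+1}+1,j_m-1]$, $j_{m+1}<r<j_m$) carried out in the proof of Proposition~\ref{prop 4.1.7}. Each case reduces, after factoring out the common prefactor $v^{\kappa'_{A}(\mathbf{p}'')-\sum_{j_{m+1}<j\leq j_m}a_{h+1,j}}\prod_u \overline{[a_{h+1,u}+p''_u-\delta_{u j_m},\,p''_u-\delta_{u j_m}]}_v$, to the identity
\[
v^{\sum_{j\geq r}p_j-\sum_{j<r}p_j}\frac{v^{-2(p_r+1)}-1}{v^{-2}-1}+\sum_{s\neq r}v^{\sum_{j\geq s}p'_j-\sum_{j<s}p'_j}\frac{v^{-2p_s}-1}{v^{-2}-1}=\frac{v^{-l-1}-v^{l-1}}{v^{-2}-1},
\]
which, after multiplying by the overall $\frac{v-v^{-1}}{v^{l}-v^{-l}}$, yields the divided power $\overline{[a_{h+1,r}+p_r+1,\,p_r+1]}_v$ needed to replace $p_r$ by $p''_r=p_r+1$.

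The main obstacle, as in Proposition~\ref{prop 4.1.7}, is the combinatorial bookkeeping in (d): there the decoration $\Delta$ can either remain $\Delta$ or morph into $\Delta_s$ or $\Delta_t$, and both Proposition~\ref{prop 4.1.5}(a) and (d) can feed into the same target $\Delta_s$ or $\Delta_t$ via different intermediate $\mathbf{p}$. The extra factor $(1-v^{-2p_{j_{m+1}}})$ in (d) only arises when the final $E_{h+1,h}$-move picks column $j_{m+1}$, and one has to check that the induction hypothesis's coefficient $(1-v^{-2a_{h+1,j_{m+1}}})$ combines correctly with the new contribution so that the induction's output has the $p''$-shifted exponent. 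I expect that the verification, although lengthy, will parallel the Case~1/Case~2/Case~3 dichotomy of Proposition~\ref{prop 4.1.7}'s proof step by step, with (c) mirroring the unadorned case (a) of that proposition and (b), (d) mirroring its (c), (d).
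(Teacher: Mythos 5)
Your proposal is correct and follows essentially the same route as the paper: the paper proves Proposition \ref{prop 4.1.8} by declaring it "similar to Proposition \ref{prop 4.1.7}," whose proof is exactly the induction on $R$ you describe — base case from the single-step multiplication formulas, the factorization $[C]_{\emptyset}=\frac{v-v^{-1}}{v^{l}-v^{-l}}[C'']_{\emptyset}\ast[C']_{\emptyset}$, the three-case analysis on the position of the incremented column $r$ relative to $[j_{m+1}+1,j_m-1]$, and the collapse of each coefficient via the identity you quote (which is the one appearing verbatim in the paper's computation of $\mathbf{c}_c$ and $\mathbf{c}_t$). Nothing in your plan diverges from the paper's argument.
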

	\begin{proof}
		The proof is similar to Proposition \ref{prop 4.1.7}
	\end{proof}
	
	\subsection{Generators}\ 
	We introduce a partial order on $\Xi_{n|n,d}$.
	\begin{Def}
		Let $(A, \Delta), (A', \Delta')\in \Xi_{n|n,d}$, such that $\co(A)=\co(A')$, $\ro(A)=\ro(A')$. If the following conditions are satisfied, then we say $A'\leq A$.
		\begin{enumerate}[ $(a)$ ]
			\item for any $i<j$ in $[1,n]$ we have
			$$
			\sum\limits_{r \leq i, s\geq j}a'_{r, s}\leq \sum\limits_{r \leq i, s\geq j}a_{r, s},
			$$
			\item for any $i>j$ in $[1,n]$ we have
			$$
			\sum\limits_{r\geq i, s \leq j}a'_{r, s}\leq \sum\limits_{r\geq i, s \leq j}a_{r, s}.
			$$
		\end{enumerate}
		Then we say $(A', \Delta')\leq (A,\Delta)$ if $A'\leq A$ or $A'=A$ and $\Delta'\leq \Delta$.
		We say $(A', \Delta')<(A, \Delta)$ if $(A', \Delta')\leq (A, \Delta)$ and $(A',\Delta') \neq  (A,\Delta)$.
	\end{Def}
	
	Let $[A]_{\Delta}+lower\; term $ denote an element in $\mathcal{MS}_{n,d}$ which is equal to $[A]_{\Delta}$ plus 
	an $\mathcal{A}$-linear combination of elements $[A']_{\Delta'}$ with $(A', \Delta')<(A, \Delta)$. 
	
	\begin{lem}
		Assume $t\in [1, n-1]$. For $(D,\{(t, t)\}), (D,\{(t+1, t+1)\}) \in \Xi_{n|n,d}$, where $D=\textrm{diag}(d_1, \cdots, d_n)$ is a diagonal matrix, then 
		\begin{eq}
			\relax [D]_{\{(t+1, t+1)\}}= & v^{d_t}[D'+E_{t+1, t}]_{\emptyset}\ast [D'+E_{t, t}]_{\{(t, t)\}} \ast [D'+E_{t, t+1}]_{\emptyset}\\
			& - v^{d_t}[D-E_{t, t}+E_{t, t+1}]_{\emptyset}\ast [D-E_{t, t}+E_{t+1, t}]_{\emptyset} \ast [D]_{\{(t, t)\}}\\
			& - v^{d_t}[D]_{\{(t, t)\}} \ast [D'+E_{t+1, t}]_{\emptyset} \ast [D'+E_{t, t+1}]_{\emptyset}\\
			& + v^{d_t}[D-E_{t, t}+E_{t, t+1}]_{\emptyset} \ast [D+E_{t+1, t+1}-E_{t, t}]_{\{(t, t)\}} \ast [D-E_{t, t}+E_{t+1, t}]_{\emptyset}\\
			& - v^{2d_t-d_{t+1}-1}\overline{[d_t, 1]}(2[D]_{\{(t, t)\}}+[D]_{\emptyset}) - v^{d_{t+1}-2}\overline{[d_{t+1}, 1]}[D]_{\emptyset},\\
		\end{eq}
		where $D'=diag(d_1, \cdots, d_{t+1}-1, \cdots, d_n)$ is a diagonal matrix.
	\end{lem}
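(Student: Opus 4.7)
The plan is to establish the identity by directly expanding each of the four triple products on the right-hand side using the multiplication formulas of Propositions \ref{prop 4.1.4}, \ref{prop 4.1.5}, and \ref{prop 4.1.6}, and then verifying that the sum of the four triples equals $[D]_{\{(t+1,t+1)\}}$ plus a specific combination of $[D]_{\{(t,t)\}}$ and $[D]_{\emptyset}$ which is precisely what is subtracted by the two correction lines.

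I would proceed triple by triple, computing each from the inside out. For the first triple $[D'+E_{t+1,t}]_{\emptyset}\ast [D'+E_{t,t}]_{\{(t,t)\}}\ast [D'+E_{t,t+1}]_{\emptyset}$, I first multiply the leftmost lowering factor against the decorated middle factor using Proposition \ref{prop 4.1.5}(b) with $h=t$, noting that the decoration $\{(t,t)\}$ has $i_1=t$; because $D'+E_{t,t}$ has almost all entries zero off the diagonal, only a small number of summands survive, each indexed by $p\in\{t,t+1\}$. The resulting intermediate expression is then multiplied on the left by $[D'+E_{t,t+1}]_{\emptyset}$ using Proposition \ref{prop 4.1.4}. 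The remaining three triples are treated analogously, where the second and third triples involve the middle factor $[D]_{\{(t,t)\}}$ acted on by a raising/lowering pair, and the fourth triple involves the middle factor $[D+E_{t+1,t+1}-E_{t,t}]_{\{(t,t)\}}$.

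The key observation is that a summand decorated by $\{(t+1,t+1)\}$ can only arise from the fourth triple. Indeed, starting from $[D+E_{t+1,t+1}-E_{t,t}]_{\{(t,t)\}}$ and multiplying on the left by $[D-E_{t,t}+E_{t+1,t}]_{\emptyset}$, Proposition \ref{prop 4.1.5}(c) with $h=t$ allows the decoration to be shifted from $(t,t)$ to position $(t+1,t+1)$; the subsequent multiplication by $[D-E_{t,t}+E_{t,t+1}]_{\emptyset}$ via Proposition \ref{prop 4.1.4} then returns the underlying matrix to $D$. Keeping track of the $v$-exponents from $\beta_A$, $\beta_A'$, and the normalization $[A]_\Delta = v^{-d(A,\Delta)+r(A,\Delta)}e_{A,\Delta}$ shows that this contribution is $v^{-d_t}[D]_{\{(t+1,t+1)\}}$, exactly balanced by the overall prefactor $v^{d_t}$ in front of the fourth triple. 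The contributions carrying the decorations $\{(t,t+1)\}$ or $\{(t+1,t)\}$, which can appear in several of the four triples via Propositions \ref{prop 4.1.4}(c) and \ref{prop 4.1.5}(b), cancel pairwise because of the specific $\pm$-signs with which the four triples are combined. All that remains are certain $[D]_{\{(t,t)\}}$- and $[D]_{\emptyset}$-coefficients, which are cancelled by the last two correction terms.

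The main obstacle will be the bookkeeping of the numerous $v$-powers. Specifically, one must verify that the coefficient of $[D]_{\{(t,t)\}}$ accumulated across the four triple products equals $2v^{2d_t-d_{t+1}-1}\overline{[d_t,1]}_v$, and that the coefficient of $[D]_{\emptyset}$ sums to $v^{2d_t-d_{t+1}-1}\overline{[d_t,1]}_v+v^{d_{t+1}-2}\overline{[d_{t+1},1]}_v$. This relies on careful use of the exponent functions $\beta_A(p)$, $\beta_A'(p)$, $\rho'(A,\Delta')$, and $\theta_A'(i_p,j_p)$ defined in Section 4.1, together with the equality $d(A,\Delta)-r(A,\Delta)=\sum_{i\geq k,j<l}a_{ij}a_{kl}+\sum_{\{(i,j)\}\leq\Delta}a_{ij}$. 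The verification is lengthy but mechanical; once the arithmetic of the $v$-powers is matched, the identity follows.
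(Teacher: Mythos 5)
Your overall strategy coincides with the paper's: the paper proves the lemma by expanding each of the four triple products with the multiplication formulas of Propositions \ref{prop 4.1.4}--\ref{prop 4.1.6} (explicitly for $t=1$, then inducting on $t$, the inductive step being the same local computation), and checking that the leftover $[D]_{\{(t,t)\}}$- and $[D]_{\emptyset}$-terms are exactly the two correction lines. Omitting the induction on $t$ is harmless since the computation only involves rows and columns $t,t+1$.

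However, your stated ``key observation'' is wrong, and it is the load-bearing part of your plan. The $\{(t+1,t+1)\}$-decorated term does \emph{not} come from the fourth triple; it comes from the \emph{first} one. In $[D'+E_{t+1,t}]_{\emptyset}\ast[D'+E_{t,t}]_{\{(t,t)\}}\ast[D'+E_{t,t+1}]_{\emptyset}$, the middle decorated diagonal factor acts on $[D'+E_{t,t+1}]_{\emptyset}$ via Proposition \ref{prop 4.1.6}$(d)$ and, since row $t$ of $D'+E_{t,t+1}$ has a nonzero entry in column $t+1$, produces the summand $[D'+E_{t,t+1}]_{\{(t,t+1)\}}$; the lowering factor $[D'+E_{t+1,t}]_{\emptyset}$ then hits this with Proposition \ref{prop 4.1.5}$(b)$, whose $\Delta_b$-term moves the decoration's row index from $t$ to $t+1$ while sending the matrix back to $D$, yielding $v^{-d_t}[D]_{\{(t+1,t+1)\}}$ (this is exactly the $v^{-d_1}[D]_{\{(2,2)\}}$ visible in the paper's $t=1$ computation). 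By contrast, in the fourth triple the middle factor $[D+E_{t+1,t+1}-E_{t,t}]_{\{(t,t)\}}$ acts on $[D-E_{t,t}+E_{t+1,t}]_{\emptyset}$, whose row $t$ has no entry in column $t+1$, so the decoration can never reach column $t+1$ there; the fourth triple only contributes $\{(t,t)\}$-type terms. Your cited mechanism is also internally inconsistent: Proposition \ref{prop 4.1.5}$(c)$ is the case $i_m=h+1$, which does not apply to a decoration at $(t,t)$ with $h=t$, and none of the raising/lowering formulas can change the \emph{column} index of a decoration --- only the $[D]_{\{(h,h)\}}$-type factors can do that. If you carry out the expansion expecting the main term from the fourth triple, the bookkeeping will not close; the rest of your plan (pairwise cancellation of the $\{(t,t+1)\}$, $\{(t+1,t)\}$ terms and matching of the correction coefficients) is sound once this is fixed.
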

	
	\begin{proof}\
		We do mathematical induction on $t$. 
		If $t=1$, we have 
		\begin{align*}
			& [D'+E_{2, 1}]_{\emptyset}\ast [D'+E_{1, 1}]_{\{(1, 1)\}} \ast [D'+E_{1, 2}]_{\emptyset}\\
			= & [D'+E_{2, 1}]_{\emptyset} \ast ([D'+E_{1, 2}]_{\{(1, 1)\}}+[D'+E_{1, 2}]_{\{(1, 2)\}})\\
			= & v^{-1}[D'-E_{1, 1}+E_{1, 2}+E_{2, 1}]_{\{(1, 1)\}}+[D'-E_{1, 1}+E_{1, 2}+E_{2, 1}]_{\{(2, 1)\}}+v^{-d_1}[D]_{\{(2, 2)\}}\\
			& +  [D'-E_{1, 1}+E_{1, 2}+E_{2, 1}]_{\{(1, 2)\}} + v^{d_2-d_1-1}\overline{[d_2, 1]}(v^{-1}[D]_{\emptyset}+[D]_{\{(1, 1)\}})\\
			= & v^{-d_1}[D]_{\{(2, 2)\}}+(v^{-1}[D'-E_{1, 1}+E_{1, 2}+E_{2, 1}]_{\{(1, 1)\}}+[D'-E_{1, 1}+E_{1, 2}+E_{2, 1}]_{\{(2, 1)\}})\\
			& + v^{d_2-d_1-1}\overline{[d_2, 1]}([D]_{\{(1, 1)\}}+v^{-1}[D]_{\emptyset})+[D'-E_{1, 1}+E_{1, 2}+E_{2, 1}]_{\{(1, 2)\}};\\
			& [D-E_{1, 1}+E_{1, 2}]_{\emptyset}\ast [D-E_{1, 1}+E_{2, 1}]_{\emptyset} \ast [D]_{\{(1, 1)\}}\\
			= & [D-E_{1, 1}+E_{1, 2}]_{\emptyset}\ast ( v^{-1}[D-E_{1, 1}+E_{2, 1}]_{\{(1, 1)\}}+ [D-E_{1, 1}+E_{2, 1}]_{\{(2, 1)\}})\\
			= & v^{d_1-d_2-3}\overline{[d_1-1, 1]}[D]_{\{(1, 1)\}}+v^{-1}[D'-E_{1, 1}+E_{1, 2}+E_{2, 1}]_{\{(1, 1)\}}\\
			& + v^{d_1-d_2-1}\overline{[d_1, 1]}[D]_{\emptyset}+ v^{d_1-d_2-1}[D]_{\{(1, 1)\}}+[D'-E_{1, 1}+E_{1, 2}+E_{2, 1}]_{\{(2, 1)\}}\\
			= & v^{d_1-d_2-1}\overline{[d_1, 1]}([D]_{\{(1, 1)\}}+[D]_{\emptyset}) + v^{-1}[D'-E_{1, 1}+E_{1, 2}+E_{2, 1}]_{\{(1, 1)\}}\\
			& +[D'-E_{1, 1}+E_{1, 2}+E_{2, 1}]_{\{(2, 1)\}};\\
			& [D]_{\{(1, 1)\}} \ast [D'+E_{2, 1}]_{\emptyset} \ast [D'+E_{1, 2}]_{\emptyset}\\
			= & [D'-E_{1, 1}+E_{1, 2}+ E_{2, 1}]_{\{(1, 1)\}}+ [D'-E_{1, 1}+E_{1, 2}+ E_{2, 1}]_{\{(1, 2)\}}+ 
			v^{d_2-d_1-1}\overline{[d_2, 1]}[D']_{\{(1, 1)\}};\\
			& [D-E_{1, 1}+E_{1, 2}]_{\emptyset} \ast [D+E_{2, 2}-E_{1, 1}]_{\{(1, 1)\}} \ast [D-E_{1, 1}+E_{2, 1}]_{\emptyset}\\
			= & v^{d_1-d_2-1}\overline{[d_1, 1]}[D]_{\{(1, 1)\}} + [D'-E_{1, 1}+E_{1, 2}+E_{2, 1}]_{\{(1, 1)\}}, 
		\end{align*}
		then we obtain the identity for $t=1$. Suppose when $t=n-1$, the identity holds.
		Then by doing the same calculation, we have the identity holds when $t=n$. Therefore, by mathematical induction we prove the lemma.
	\end{proof}
	\begin{prop}\label{prop 4.2.1}
		The algebra $\mathcal{MS}_{n,d}$ can be generated by the elements $[B]_{\emptyset}$, $[C]_{\emptyset}$, $[D]_{\emptyset}$ and $[D']_{\{(1,1)\}}$ such that $D$, $D'$, $B-E_{i, i+1}$ and $C-E_{i+1,i}$ are diagonal matrices for some $i\in [1,n-1]$ and $d'_{11}> 0$.
	\end{prop}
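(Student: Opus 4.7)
My plan is to proceed by a double induction. The outer induction is on the partial order $\leq$ defined on $\Xi_{n|n,d}$, and the inner induction is on the cardinality $k = |\Delta|$. The goal is to show, for every $(A,\Delta) \in \Xi_{n|n,d}$, that $[A]_{\Delta}$ lies in the subalgebra $\mathcal{G}$ generated by the listed elements, by exhibiting a product of generators equal to $[A]_{\Delta} + \text{lower term}$ in the sense introduced just above the proposition; triangularity then lets me subtract lower-order terms, which are in $\mathcal{G}$ by the inductive hypothesis.

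First I would dispose of the case $\Delta = \emptyset$. Every $[A]_{\emptyset}$ lies in $\mathcal{G}$ by the classical BLM argument applied to the $\Delta = \emptyset$ case of Proposition \ref{prop 4.1.4} and Proposition \ref{prop 4.1.5}: iterated products of the $[B]_{\emptyset}$'s, $[C]_{\emptyset}$'s and the diagonal $[D]_{\emptyset}$'s produce $[A]_{\emptyset}$ plus an $\mathcal{A}$-linear combination of $[A']_{\emptyset}$ with $A' < A$, and we induct on the BLM partial order. Next I would show that every $[D]_{\{(h,h)\}}$ for diagonal $D$ with $d_{hh}>0$ lies in $\mathcal{G}$, by induction on $h$: the case $h=1$ is a generator, and the preceding lemma expresses $[D]_{\{(t+1,t+1)\}}$ as a polynomial in $[D]_{\{(t,t)\}}$ and elements of the form $[B]_{\emptyset}$, $[C]_{\emptyset}$, which by the inductive hypothesis all belong to $\mathcal{G}$.

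The core step is to produce arbitrary $[A]_{\Delta}$ with $\Delta = \{(i_1,j_1),\ldots,(i_k,j_k)\} \neq \emptyset$. Write $A_0$ for the diagonal matrix with the same row sums as $A$, so that $[A_0]_{\emptyset} \in \mathcal{G}$ is the identity-type element in the appropriate block. I would build $[A]_{\Delta}$ by first applying $[D_0]_{\{(i_1,i_1)\}}$ to introduce a single decoration in the $i_1$-th row of a diagonal matrix (which by Proposition \ref{prop 4.1.6}(d) on the $\Delta = \emptyset$ side gives a sum $\sum_{t} v^{\ast}[D_0]_{\{(i_1,t)\}}$); then using the F-type and E-type generators via Propositions \ref{prop 4.1.4} and \ref{prop 4.1.5}, I would redistribute row-masses to create the off-diagonal entries of $A$. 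The triangularity of those multiplication formulas—each $[B]_{\emptyset}\ast[A]_{\Delta}$ or $[C]_{\emptyset}\ast[A]_{\Delta}$ has $[A+E_{h,p}-E_{h+1,p}]_{\Delta}$ (with $\Delta$ unchanged, up to the controlled cases (c) and (d)) as a leading term and the additional terms $[X]_{\Delta_c},[X]_{\Delta_t},[X]_{\Delta_s}$ are strictly lower in the partial order because they adjoin only pairs $(h,j)$ or $(h+1,t)$ with $j,t$ constrained—guarantees that the product equals $[A']_{\{(i_1,j_1)\}} + \text{lower term}$ for some $A'$ realizing the desired column sums. Iterating with further applications of $[D]_{\{(i_s,i_s)\}}$ for $s = 2,\ldots,k$ (already in $\mathcal{G}$) and E/F-generators increases $|\Delta|$ by one at a time, producing $[A]_{\Delta} + \text{lower term}$.

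The principal obstacle, as flagged in Remark \ref{rem 4.1.2}, is controlling the extra terms $[X]_{\Delta_c}, [X]_{\Delta_s}, [X]_{\Delta_t}$ generated when an E- or F-generator is applied to an element whose $\Delta$ already contains an entry in row $h$ or $h+1$, since these modify $\Delta$ in a way that could a priori break the triangularity argument. The plan to handle this is to choose the order in which decorations are inserted and the order in which E/F-generators are applied carefully: process the pairs $(i_s,j_s)$ in a prescribed order (say, by increasing $i_s$ and then by the column ordering forced by the definition of $\Xi_{n|n,d}$), so that at every stage the decorations already present sit in rows disjoint from the one being modified by the current generator, putting us into case (a) of Propositions \ref{prop 4.1.4} and \ref{prop 4.1.5} whenever possible. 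The remaining cases do produce decorated sets $\Delta' \neq \Delta$, but those $\Delta'$ satisfy $\Delta' < \Delta$ in the order of Definition on $\Delta$ (since they either drop an element of $\Delta$ or replace one by a pair with smaller indices), so the corresponding $[X]_{\Delta'}$ are strictly lower and fall under the inductive hypothesis. Once this bookkeeping is verified, the triangular principle closes the induction and establishes the proposition.
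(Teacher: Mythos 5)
Your skeleton matches the paper's: dispose of $\Delta=\emptyset$ by the classical BLM argument, use the preceding lemma to put every $[D]_{\{(h,h)\}}$ into the subalgebra, and then run a BLM-style procedure whose leading coefficient is an invertible power of $v$, closing with triangularity in the partial order. The gap is in your final paragraph, where you assert that every extra term with a modified decoration set satisfies $\Delta'<\Delta$ because it "either drops an element of $\Delta$ or replaces one by a pair with smaller indices." That is true for the $\Delta_c$ and $\Delta_s$ terms produced by raising operators (Proposition \ref{prop 4.1.4}(c),(d)), but it is false for the $\Delta_b$ and $\Delta_t$ terms produced by lowering operators (Proposition \ref{prop 4.1.5}(b), Proposition \ref{prop 4.1.8}(b)): there $(h,j_m)$ is replaced by $(h+1,j_m)$, a pair with a \emph{larger} row index, and by the definition of the order on decorations $\{(h+1,j_m)\}\not\leq\{(h,j_m)\}$. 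So whenever you apply $F_h$ to an element carrying a decoration in row $h$, you generate terms that are not lower-order, and your induction does not absorb them.

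This is not a removable nuisance but the heart of the construction. A decoration's column is fixed at the moment it is created ($E$/$F$ generators only move its row, and $[D]_{\{(h,h)\}}\ast[A]_\Delta$ can only plant a decoration at the rightmost occupied column of row $h$, under the vanishing hypotheses of the paper's case $(b)$), so a lower-triangular decoration such as $(4,1)$ cannot be inserted directly at row $4$ in general: the intermediate matrix would have to have its entire upper-left block beyond column $1$ equal to zero. The paper instead creates the decoration at $(1,1)$ and then \emph{uses} the $\Delta_b/\Delta_t$ terms as the new leading terms — its verification $(a)$ shows $[C_t]_{\emptyset}\ast[A]_{\Delta}=v^{a}[M_t]_{\Delta_t}+\text{lower term}$ with the \emph{modified} decoration in the leading position — thereby pushing the decoration down row by row with controlled coefficients. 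Your proposal needs either this push-down mechanism made explicit (with the accompanying check that the unmodified-$\Delta$ terms are then the lower ones), or a genuinely different device for reaching decorations $(i_s,j_s)$ with $j_s<i_s$; as written, neither is supplied.
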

	\begin{proof}
		Let $\co(A)_k=\sum^n_{i=1}a_{i,k}$ and $\ro(A)_k=\sum^n_{j=1}a_{k,j}$ where $k\in [1,n]$.
		We show the procedure by an example.
		Assume $n=4$ and $\Delta=\{(1, 4), (2, 3), (4, 1)\}$, i.e., 
		$$
		A=
		\begin{pmatrix}
			a_{11} & a_{12}& a_{13} & \enc{a_{14}}\\
			a_{21} & a_{22} & \enc{a_{23}} &a_{24}\\
			a_{31} & a_{32} & a_{33} &a_{34}\\
			\enc{a_{41}} & a_{42} & a_{43} & a_{44}\\
		\end{pmatrix}.
		$$
		Let
		\begin{flalign*}
			& D=
			\begin{psmallmatrix}
				\senc{\co(A)_1} & 0 & 0 & 0\\
				0 &\co(A)_2 & 0 & 0\\
				0 & 0 & \co(A)_3& 0\\
				0 & 0 & 0 & \co(A)_4\\
			\end{psmallmatrix},&
			& A_1=
			\begin{psmallmatrix}
				\co(A)_1 & 0 & 0 & 0\\
				0 &\co(A)_2 & 0 & 0\\
				0 & 0 &  \co(A)_3-a_{43}& 0\\
				0 & 0 &  a_{43}& \co(A)_4\\
			\end{psmallmatrix}, \\
			& A_2=
			\begin{psmallmatrix}
				\co(A)_1 & 0 & 0 & 0\\
				0 &\co(A)_2-a_{42} & 0 & 0\\
				0 & a_{42} & \co(A)_3-a_{43}& 0\\
				0 & 0 & 0& \co(A)_4+a_{43}\\
			\end{psmallmatrix}, &
			& A_3=
			\begin{psmallmatrix}
				\co(A)_1 & 0 & 0 & 0\\
				0 &\co(A)_2-a_{42} & 0 & 0\\
				0 & 0 & \co(A)_3-a_{43}& 0\\
				0 & 0 & a_{42}& \co(A)_4+a_{43}\\
			\end{psmallmatrix}, \\
			& A_4=
			\begin{psmallmatrix}
				\co(A)_1-a_{41} & 0 & 0 & 0\\
				a_{41} &\co(A)_2-a_{42} & 0 & 0\\
				0 & 0 &  \co(A)_3-a_{43}& 0\\
				0 & 0 & 0& a_4\\
			\end{psmallmatrix},&
			& A_5=
			\begin{psmallmatrix}
				\co(A)_1-a_{41} & 0 & 0 & 0\\
				0 &\co(A)_2-a_{42} & 0 & 0\\
				0 & a_{41} &  \co(A)_3-a_{43}& 0\\
				0 & 0 & 0& a_4\\
			\end{psmallmatrix},\\
			& A_6=
			\begin{psmallmatrix}
				\co(A)_1-a_{41} & 0 & 0 & 0\\
				0&\co(A)_2-a_{42} & 0 & 0\\
				0 & 0 &  \co(A)_3-a_{43}& 0\\
				0 & 0 & a_{41}& a_4\\
			\end{psmallmatrix},&
			&  A_7=
			\begin{psmallmatrix}
				\co(A)_1-a_{41} & 0 & 0 & 0\\
				0 &a_{12}+a_{22} & 0 & 0\\
				0 & a_{32} & \co(A)_3-a_{43}& 0\\
				0 & 0 & 0 & a_4+a_{41}\\
			\end{psmallmatrix},\\
			& A_8=
			\begin{psmallmatrix}
				a_{11}+a_{21} & 0 & 0 & 0\\
				a_{31} &a_{12}+a_{22} & 0 & 0\\
				0 & 0 & \co(A)_3-a_{43}+a_{32}& 0\\
				0 & 0 & 0 & a_4+a_{41}\\
			\end{psmallmatrix},&
			&  A_9=
			\begin{psmallmatrix}
				a_{11}+a_{21} & 0 & 0 & 0\\
				0 &a_{12}+a_{22} & 0 & 0\\
				0 & a_{31} & \co(A)_3-a_{43}+a_{32}& 0\\
				0 & 0 & 0 & a_4+a_{41}\\
			\end{psmallmatrix},\\
			& A_{10}=
			\begin{psmallmatrix}
				a_{11} & 0 & 0 & 0\\
				a_{21} &a_{12}+a_{22} & 0 & 0\\
				0 & 0 & \co(A)_3+\sum^{2}_{j=1}a_{3,j}-a_{43}& 0\\
				0 & 0 & 0 & a_4+a_{41}\\
			\end{psmallmatrix},&
			& A_{11}=
			\begin{psmallmatrix}
				a_{11} & a_{12} & 0 & 0\\
				0 &a_{21}+a_{22} & 0 & 0\\
				0 & 0 &  \co(A)_3+\sum^{2}_{j=1}a_{3,j}-a_{43}& 0\\
				0 & 0 & 0 & a_4+a_{41}\\
			\end{psmallmatrix},\\
			& A_{12}=
			\begin{psmallmatrix}
				a_{11}+a_{12} & 0 & 0 & 0\\
				0 &a_{21}+a_{22} & a_{13}+a_{23} & 0\\
				0 & 0 &  \ro(A)_3-a_{34}& 0\\
				0 & 0 & 0 & a_4+a_{41}\\
			\end{psmallmatrix},&
			&  A_{13}=
			\begin{psmallmatrix}
				a_{11}+_{12} & a_{13}+a_{23}  & 0 & 0\\
				0 &a_{21}+a_{22} & 0 & 0\\
				0 & 0 &  \ro(A)_3-a_{34}& 0\\
				0 & 0 & 0 & a_4+a_{41}\\
			\end{psmallmatrix},\\
			& A_{14}=
			\begin{psmallmatrix}
				\ro(A)_1+a_{23}-a_{14} &  0 & 0 & 0\\
				0 &a_{21}+a_{22} & 0 & 0\\
				0 & 0 &  \ro(A)_3-a_{34}& a_{14}\\
				0 & 0 & 0 & a'_4\\
			\end{psmallmatrix},&
			& A_{15}=
			\begin{psmallmatrix}
				\ro(A)_1+a_{23}-a_{14} &  0 & 0 & 0\\
				0 &a_{21}+a_{22} & a_{14} & 0\\
				0 & 0 &  \ro(A)_3-a_{34}& 0\\
				0 & 0 & 0 & a'_{4}\\
			\end{psmallmatrix},\\
			& A_{16}=
			\begin{psmallmatrix}
				\ro(A)_1+a_{23}-a_{14} &  a_{14} & 0 & 0\\
				0 &a_{21}+a_{22} & 0 & 0\\
				0 & 0 &  \ro(A)_3-a_{34}& 0\\
				0 & 0 & 0 & a'_{4}\\
			\end{psmallmatrix},&
			& D'=
			\begin{psmallmatrix}
				\senc{\ro(A)_1+a_{23}} & 0 & 0 & 0\\
				0 &a_{21}+a_{22} & 0 & 0\\
				0 & 0 &  \ro(A)_3-a_{34}& 0\\
				0 & 0 & 0 & a'_{4}\\
			\end{psmallmatrix},\\
			& A_{17}=
			\begin{psmallmatrix}
				\ro(A)_4 & 0 & 0 & 0\\
				a_{23} &a_{21}+a_{22} & 0 & 0\\
				0 & 0 & \ro(A)_3-a_{34}& 0\\
				0 & 0 & 0 & a'_{4}\\
			\end{psmallmatrix},&
			& A_{18}=
			\begin{psmallmatrix}
				\ro(A)_4 & 0 & 0 & 0\\
				0 &\ro(A)_2-a_{24} & 0 & 0\\
				0 & 0 & \ro(A)_3-a_{34}& a_{24}\\
				0 & 0 & 0 & \ro(A)_4+a_{34}\\
			\end{psmallmatrix},\\
			& A_{19}=
			\begin{psmallmatrix}
				\ro(A)_4 & 0 & 0 & 0\\
				0 &\ro(A)_2-a_{24} & a_{24} & 0\\
				0 & 0 & \ro(A)_3-a_{34}& 0\\
				0 & 0 & 0 & \ro(A)_4+a_{34}\\
			\end{psmallmatrix},&
			& A_{20}=
			\begin{psmallmatrix}
				\ro(A)_4 & 0 & 0 & 0\\
				0 &\ro(A)_2 & 0 & 0\\
				0 & 0 & \ro(A)_3-a_{34}& a_{34}\\
				0 & 0 & 0 & \ro(A)_4\\
			\end{psmallmatrix},\\
		\end{flalign*}
		where $a_4=\co(A)_4+a_{42}+a_{43}$ and $a'_4=\ro(A)_4+a_{34}+a_{24}$.
		Then
		\begin{align*}
			&v^{a_{23}(a_{11}+a_{12}-a_{21}-a_{22})+a_{22}-a_{11}} \prod^{20}_{i=17}[A_i]_{\emptyset}\ast [D']_{\{(1,1)\}} \ast \prod^{16}_{i=1}[A_i]_{\emptyset} \ast [D]_{\{(1,1)\}}= [A]_{\Delta}+lower\, term.      
		\end{align*}
		For any $(A,\Delta)\in \Xi_{n|n,d}$, where $\Delta=\{(i_1, j_i), \cdots, (i_k, j_k)\}$, let 
		\begin{align*}
			\Delta_{u}&=\{(i_p, j_p)\in \Delta \ |\ i_p<j_p\}, \\
			\Delta_{l}&=\{(i_p, j_p)\in \Delta \ |\ i_p\geq j_p\}.
		\end{align*}
		
		The methodology employed to acquire the elements of $A$ closely follows the approach detailed in \cite[Proposition 3.9]{BLM}, with the exception of $a_{i, j}$ for $(i,j)\in \Delta$.
		For $(i_p, j_p)\in \Delta_{l}$, the procedural steps mirrors how we gain $(4, 1)$ in the example. Likewise, the process for obtaining $(i_p,j_p)\in \Delta_{u}$ is analogous to obtaining $(1,4)$ and $(2,3)$ in the example. It is imperative to verify that the coefficient of the leading term is a power of $v$.
		Assume $h\in [1, n-1]$ and $\in [1,n]$.
		
		$(a)$ Consider $(A,\Delta)\in \Xi_{n|n,d}$ where $\Delta=\{(i_1, j_1), \cdots, (h, j_m), \cdots, (i_k,j_k)\}$ and $a_{h+1, t}=0$ for $j_{m+1}<t\leq j_m$. For $j_{m+1}<t\leq j_m$, let $(M_t, \Delta_t)\in \Xi_{n|n,d}$ such that $m_{h+1, t}=a_{h, t}$ , $m_{h,t}=0$ and $m_{i, j}=a_{i,j}$ for other $i,j$, where 
		$$\Delta_t=\left\{\
		\begin{array}{cc}
			\{(i_1, j_1), \cdots, (i_{m-1}, j_{m-1}), (h+1, j_m), (i_{m+1}, j_{m+1}), \cdots, (i_k, j_k)\}   &  t=j_m, \\
			\{(i_1, j_1), \cdots, (i_{m-1}, j_{m-1}), (h, j_m), (h+1, t), (i_{m+1}, j_{m+1}), \cdots, (i_k, j_k)\   & \text{otherwise}.
		\end{array}
		\right.$$ 
		Let $(C_t,\emptyset)\in \Xi_{n|n,d}$ be defined by $\co(C_t)=\ro(A)$ and $C-a_{h,t}E_{h+1,h}$ is a diagonal matrix. Due to \cite[Lemma 3.7] {BLM} and Proposition \ref{prop 4.1.8} $(b)$, we have the following identity.
		$$
		[C_t]_{\emptyset}\ast [A]_{\Delta}= v^{a}[M_t]_{\Delta_t}+lower\, term
		,$$
		where $a=a_{h, t}(\sum_{j\leq t}a_{h+1, j}-\sum_{j< t}a_{h, j})-\sum_{j_{m+1}<j\leq t}a_{h+1,j}$.
		
		$(b)$ For any fixed integer $r>h$, let $(A,\Delta)\in \Xi_{n|n,d}$ be defined by $a_{i,j}=0$ for $i\leq h, j>r$ and $a_{h, r}>0$, where $\Delta=\{(i_1, j_1), \dotsc, (i_k,j_k)\}$ such that $i_1>h$.
		Take $(D, \{(h,h)\})\in \Xi_{n|n,d}$ such that $\co(D)=\ro(A)$ and $D$ is a diagonal matrix. Then due to Proposition \ref{prop 4.1.6} $(a)$, we have the following identity.
		$$
		[D]_{\{(h, h)\}} \ast [A]_{\Delta} = v^{a'}[A]_{\Delta\cup \{(h, r)\}}+ lower\, term,
		$$
		where $a'=\sum_{i\leq h, j\leq j_1}a_{i, j}-\sum_{i\leq h, j>j_1}a_{i, j}$.
		Therefore, we conclude that the coefficient of the leading term in our procedure is a power of $v$.
	\end{proof}
	
	For $i\in [1, n-1]$ and $a\in [1,n]$, we define
	$$E_i=\sum[B]_{\emptyset},\ F_i=\sum[C]_{\emptyset},\ H_a^{\pm}=\sum v^{\mp d_{a, a}}[D]_{\emptyset},\  L=\sum v^{-2d_{1,1}}[D]_{\emptyset}+\sum v^{-d'_{1,1}}[D']_{\{(1,1)\}},$$
	where $(B,\emptyset)$, $(C,\emptyset)$ $(D,\emptyset)$ and $(D',\{(1,1)\})$ run over all matrices in $\Xi_{n|n,d}$ such that $D$, $D'$, $B-E_{i,i+1}$, and $C-E_{i+1,i}$ are diagonal matrices, respectively. Then we have the following corollary.
	
	\begin{cor}\label{cor 4.2.1}
		The elements $E_i, F_i, H^{\pm}_{a}$ and $L$ are the generators of $\mathbb{Q}(v)\otimes_{\mathcal{A}} \mathcal{MS}_{n,d}$, where $i\in [1, n-1]$ and $a\in [1, n]$.
	\end{cor}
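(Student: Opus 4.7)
The plan is to show that from the sums $E_i$, $F_i$, $H^{\pm}_a$, $L$ we can recover, by products and $\mathbb{Q}(v)$-linear combinations, every individual element $[B]_{\emptyset}$, $[C]_{\emptyset}$, $[D]_{\emptyset}$, $[D']_{\{(1,1)\}}$ listed in Proposition \ref{prop 4.2.1}; then Proposition \ref{prop 4.2.1} finishes the proof.

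First I would extract the diagonal idempotents $[D]_{\emptyset}$. The characteristic functions $\{[D]_{\emptyset} : D \text{ diagonal in }\Xi_{n|n,d}\}$ form a system of mutually orthogonal idempotents summing to the unit, and by definition
\[
H^{+}_a H^{-}_a = \sum_{D} [D]_{\emptyset} = 1, \qquad (H^{+}_a)^{k} = \sum_{D} v^{-k d_{a,a}}[D]_{\emptyset}.
\]
Therefore for any tuple $(k_1,\dots,k_n)\in\mathbb{Z}^n$ the product $\prod_a (H^{+}_a)^{k_a}$ lies in the subalgebra generated by the $H^{\pm}_a$ and equals $\sum_{D} v^{-\sum_a k_a d_{a,a}}[D]_{\emptyset}$. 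Since the diagonal matrices $D\in\Xi_{n|n,d}$ are indexed by distinct integer tuples $(d_{1,1},\dots,d_{n,n})$ with $\sum_a d_{a,a}=d$, a Vandermonde / Lagrange-interpolation argument (carried out over $\mathbb{Q}(v)$, where the scalars $v^{-\sum_a k_a d_{a,a}}$ are pairwise distinguishable as $k_a$ varies) produces an explicit $\mathbb{Q}(v)$-linear combination of these products equal to $[D]_{\emptyset}$ for each fixed diagonal $D$.

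Next I would recover each $[B]_{\emptyset}$ with $B - E_{i,i+1}$ diagonal. By orthogonality of the $[D]_{\emptyset}$ under convolution (together with the matching condition $\co(B)=\ro(D)$), we have $[B]_{\emptyset}\ast [D]_{\emptyset}=[B]_{\emptyset}$ precisely when $D$ is the diagonal with diagonal entries $\co(B)$, and is $0$ otherwise. Hence
\[
E_i \ast [D]_{\emptyset} = [B]_{\emptyset}
\]
for the unique $B$ (if any) with $B-E_{i,i+1}$ diagonal and $\co(B)=\ro(D)$. The same procedure with $F_i$ and diagonals $D$ satisfying $D_{i,i}\geq 1$ yields every $[C]_{\emptyset}$ with $C-E_{i+1,i}$ diagonal.

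Finally, since every $[D]_{\emptyset}$ is already in the subalgebra generated by the $H^{\pm}_a$, the difference
\[
L - \sum_{D} v^{-2 d_{1,1}}[D]_{\emptyset} \;=\; \sum_{D'} v^{-d'_{1,1}}[D']_{\{(1,1)\}}
\]
lies in our subalgebra, and multiplying left and right by the appropriate idempotents $[D]_{\emptyset}$ isolates each $[D']_{\{(1,1)\}}$ with $d'_{1,1}>0$. Thus all four families of generators of Proposition \ref{prop 4.2.1} lie in the subalgebra of $\mathbb{Q}(v)\otimes_{\mathcal{A}}\mathcal{MS}_{n,d}$ generated by $\{E_i,F_i,H^{\pm}_a,L\}$, and Proposition \ref{prop 4.2.1} completes the proof. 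The only genuinely non-formal step is the Vandermonde separation of the idempotents $[D]_{\emptyset}$; this is where working over $\mathbb{Q}(v)$ (rather than $\mathcal{A}$) is essential, since the denominators produced by the inversion are Laurent polynomials in $v$ that are not units in $\mathcal{A}$.
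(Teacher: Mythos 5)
Your proposal is correct and follows essentially the same route as the paper's own proof: separating the diagonal idempotents $[D]_{\emptyset}$ from products of the $H^{\pm}_a$ by a Vandermonde argument over $\mathbb{Q}(v)$, then convolving $E_i$, $F_i$, $L$ with these idempotents to isolate the individual generators $[B]_{\emptyset}$, $[C]_{\emptyset}$, $[D']_{\{(1,1)\}}$ of Proposition \ref{prop 4.2.1}. No substantive differences or gaps.
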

	\begin{proof}
		Let $T$ be the subalgebra of $\mathbb{Q}(v)\otimes_{\mathcal{A}} \mathcal{MS}_{n,d}$ generated by $E_i, F_i, H^{\pm}_{a}$ and $L$, where $i\in [1, n-1]$ and $a\in [1, n]$. For any diaganol matrix $D=diag(d_1,\cdots, d_n)$ such that $(D, \emptyset)\in \Xi_{n|n,d}$.
		Firstly, we show that $[D]_\emptyset \in T$. For any $\mathbf{\mu}=(\mu_1,\cdots, \mu_n)\in \mathbb{Z}^{n}$, we have
		$$
		\prod\limits_{a \in [1,n]}K^{\mu_a}_a=\sum\limits_{([D]_\emptyset)\in \Xi_{n|n, d}}v^{-\mathbf{\mu}\cdot (d_1, \cdots, d_n)}[D]_{\emptyset},
		$$
		where $\mathbf{\mu}\cdot (d_1, \cdots, d_n)$ is the standard inner product of two vectors. Then we only need to show that there is $\mathbf{\mu}$ such that $\mathbf{\mu}\cdot (d_1, \cdots, d_n)\neq \mathbf{\mu}\cdot(d'_1, \cdots, d'_n)$ for any $D\neq D'$. This is because once we have such $\mathbf{\mu}$, we can form a linear system from the identity above by considering the vectors $c\mathbf{\mu}$ for $c\in \mathbb{N}$ with
		$$1=\sum_{D}[D]_\emptyset,$$
		where $D$ runs over all diagonal matrices in $Mat_{n\times n}(\mathbb{N})$ such that $(D,\emptyset)\in \Xi_{n|n,d}$.
		It is clear that the coefficient matrix of this linear system is the Vandermonde matrix and it is invertible due to the choice of $\mathbf{\mu}$ and $c$. Therefore $[D]_\emptyset \in T$.
		
		The existence of such $\mathbf{\mu}$ is equivalent with finding a $\mathbf{\mu}$ such that $\mathbf{\mu}\cdot (d_1-d'_1, \cdots, d_n-d'_n)$ for any $D\neq D'$. Since $\{[D]_\emptyset\ |\ \ (D, \emptyset)\in \Xi_{n|n, d}\}$ is a finite set, the collection of vectors $(d_1-d'_1, \cdots, d_n-d'_n)$ is finite, let $A$ be the matrix that the row vectors runs over this collection. Then $\mathbf{\mu}$ is the vector that not a solution of linear system $Ax=0$ and any element of $\mathbf{\mu}$ is $0$.
		
		The proofs of generators $[B]_\emptyset$, $[C]_\emptyset$ and $[D]_{\{(1,1)\}}\in T$ are now follows from $E_i\ast [D]_\emptyset$, $F_i\ast [D]_\emptyset$ and $L\ast [D]_\emptyset$ for all $(D, \emptyset)\in \Xi_{n|n, d}$.
		Therefore $T=\mathbb{Q}(v)\otimes_{\mathcal{A}} \mathcal{MS}_{n,d}$.
	\end{proof}
	\begin{prop}\label{prop 4.2.5}
		For $i\in [1, n-1]$ and $a\in [1, n]$, we have the following relations in $\mathbb{Q}(v)\otimes_{\mathcal{A}} \mathcal{MS}_{n,d}$. 
		\begin{enumerate}[ $(a)$ ]
			\item $H_aH_{-a}=1$, $H_aH_b=H_bH_a$;
			\item $E_{i}^{d+1}=0$, $F_{i}^{d+1}=0$;
			\item $E_{i}^2E_{i+1}+E_{i+1}E_{i}^2=(v+v^{-1})E_{i}E_{i+1}E_{i}$;
			\item $E_{i+1}^2E_{i}+E_{i}E_{i+1}^2=(v+v^{-1})E_{i+1}E_{i}E_{i+1}$;
			\item $F_{i}^2F_{i+1}+F_{i+1}F_{i}^2=(v+v^{-1})F_{i}F_{i+1}F_{i}$;
			\item $F_{i+1}^2F_{i}+F_{i}F_{i+1}^2=(v+v^{-1})F_{i+1}F_{i}F_{i+1}$;
			\item $H_a E_{i}=v^{\delta_{ah}-\delta_{a, h+1}}E_{i} H_a$;
			\item $H_a F_{i}=v^{-\delta_{ah}+\delta_{a, h+1}}F_{i} H_a$;
			\item $E_{i}F_{j}-F_{j}E_{i}=\delta_{i j}\frac{H_iH_{i+1}^{-1}-H^{-1}_iH_{i+1}}{v-v^{-1}}$;
			\item $H_a L=L H_a$;
			\item $L^2=L$;
			\item $L E_{i}=L E_{i} L$;
			\item $L F_{i}=L F_{i} L$;
			\item $(v+v^{-1}) E_{i} L E_{i}=v^{-1}E_{i}^{2}L+vLE_{i}^{2}$;
			\item $(v+v^{-1}) F_{i} L F_{i}=vF_{i}^2 L+v^{-1}LF_{i}^{2}$.
		\end{enumerate} 
	\end{prop}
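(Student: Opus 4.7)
The plan is to verify each of the fifteen relations by applying both sides as convolution operators to an arbitrary basis element $[A]_{\Delta}$ and comparing coefficients, using the explicit multiplication formulas of Propositions \ref{prop 4.1.4}, \ref{prop 4.1.5}, \ref{prop 4.1.6}, \ref{prop 4.1.7} and \ref{prop 4.1.8}. The relations split naturally into two groups: the \emph{classical} relations (a)--(i), which involve only $E_{i}$, $F_{i}$, $H_{a}^{\pm}$, and the \emph{mirabolic} relations (j)--(o), which involve the new generator $L$.

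For the classical relations, I would first handle (a), (g), (h): these follow immediately from the fact that each $H_{a}^{\pm}$ acts as a scalar on $[A]_{\Delta}$ (determined by $\mathrm{ro}(A)$) and that in Propositions \ref{prop 4.1.4} and \ref{prop 4.1.5} the row sums $\mathrm{ro}(X_{\mathbf{p}})$ change by exactly $\pm\mathbf{p}$ in the $(h,h+1)$-components, exactly as in the BLM case. Relation (b) is the nilpotency from $\mathrm{ro}(A)\in\mathbb{N}^{n}$ being bounded by $d$. For the Serre relations (c)--(f) and the commutator (i) restricted to basis elements $[A]_{\emptyset}$ with $\Delta=\emptyset$, these reduce to the relations proved in \cite{BLM}. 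The additional content is to show the same identities hold on $[A]_{\Delta}$ for arbitrary $\Delta$: since $E_{i}$, $F_{j}$ do \emph{not} alter the decoration in cases (a)--(d) of Propositions \ref{prop 4.1.4}--\ref{prop 4.1.5} (and when they do alter it, the modifications in the two sides of each Serre relation cancel by the involution that swaps the roles of the two flags), the verification reduces, case by case on the shape of $\Delta$ relative to rows $h$ and $h+1$, to the classical identity plus a bookkeeping check that the $v$-exponents $\beta_{A}(p)$, $\beta_{A}'(p)$, $\kappa_{A}(\mathbf{p})$ agree on both sides.

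For the mirabolic relations, (j), (k) follow directly from Propositions \ref{prop 4.1.3} and \ref{prop 4.1.6}: $L$ acts by changing the decoration set while preserving the underlying matrix $A$, so it commutes with the scalar action of $H_{a}$, and $L^{2}=L$ amounts to showing that for a diagonal $D$ the idempotent $\sum_{D}(v^{-2d_{11}}[D]_{\emptyset}+v^{-d_{11}'}[D']_{\{(1,1)\}})$ squares to itself, which is a direct application of Proposition \ref{prop 4.1.6}(b) and (d) with $h=1$. Relations (l), (m) say that the $\mathcal{A}$-span of $\{[A]_{\Delta}\mid (1,t)\in\Delta\text{ for some }t\}\cup\{[A]_{\emptyset}\}$ is closed under left multiplication by $L$ and stable under $E_{i}$, $F_{i}$: again this is visible from the explicit formulas, since the only way to create or destroy a pair $(1,t)$ in $\Delta$ is via $L$ itself (Propositions \ref{prop 4.1.4}(c), \ref{prop 4.1.5}(b) only introduce pairs $(h,j)$ or $(h+1,t)$ with $h\geq 1$, and these coincide with $(1,t)$ exactly when $h=1$, but then $L$ has already been applied).

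The main obstacle is the quartic mirabolic Serre relations (n) and (o). Here the plan is to compute $E_{i}LE_{i}$, $E_{i}^{2}L$, $LE_{i}^{2}$ applied to a basis element $[A]_{\Delta}$ using Propositions \ref{prop 4.1.4} and \ref{prop 4.1.7} (with $R=2$) together with Proposition \ref{prop 4.1.6}, then collect coefficients by the type of the resulting pair $(X_{\mathbf{p}},\Delta')$. One splits into cases according to whether the row $h=i+1$ meets $\Delta$: in the simplest case ($\Delta=\emptyset$) the computation reduces to a two-variable quantum binomial identity of the form $[N+1,2]-(v+v^{-1})[N,1]\cdot v^{\alpha}+v^{2\alpha}[N-1,2]=0$ that encodes the classical Serre relation, and the factor $(v+v^{-1})$ on the left exactly matches $v^{-1}+v$ on the right after factoring out the $L$-insertion. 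The cases where $\Delta$ contains pairs in rows $i$ or $i+1$ are more delicate, because the $\Delta_{c}, \Delta_{s}, \Delta_{t}$ modifications of Propositions \ref{prop 4.1.4}(c)(d) and \ref{prop 4.1.7}(c)(d) produce terms absent from the classical setting; the key is that the \emph{same} modified decorations $\Delta'$ appear with matching $v$-coefficients on the two sides of (n), since the exponents $\kappa_{A}(\mathbf{p})-\sum_{t<j\leq j_{m}}(a_{h+1,j}-p_{j})$ interact with $\beta_{A}(p)$ in a way dictated purely by the combinatorics of $(h,j_{m})$ and $(h+1,t)$, and these interactions are symmetric under the involution implicit in (n). Relation (o) follows from (n) by the standard $E\leftrightarrow F$ duality (interchanging the two flags). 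Once this case analysis is assembled, each relation reduces to a finite check of $v$-polynomial identities which can be carried out directly.
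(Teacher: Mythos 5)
Your overall strategy---verifying each relation by direct computation with the multiplication formulas of Section 4.1---is the same as the paper's, but you set the computation up in a way that is both heavier than necessary and, at two points, not actually justified. The relations are identities of elements of $\mathbb{Q}(v)\otimes_{\mathcal{A}}\mathcal{MS}_{n,d}$, so it suffices to multiply out the generators themselves; there is no need to apply both sides to an arbitrary decorated basis element $[A]_{\Delta}$. Since $E_i$, $F_i$, $H_a^{\pm}$ are sums of undecorated elements $[B]_{\emptyset}$, $[C]_{\emptyset}$, $[D]_{\emptyset}$, and by Propositions \ref{prop 4.1.4}$(a)$, \ref{prop 4.1.5}$(a)$ and \ref{prop 4.1.7}$(a)$ (applied with $\Delta=\emptyset$) their mutual products remain in the span of $\{[A]_{\emptyset}\}$ with exactly the structure constants of \cite{BLM}, relations $(a)$--$(i)$ are literally the BLM relations; this is all the paper says about them. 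Your alternative route of checking the Serre relations on arbitrary $[A]_{\Delta}$ forces you through the $\Delta_c,\Delta_s,\Delta_t$ cases, and your assertion that these extra terms ``cancel by the involution that swaps the roles of the two flags'' is nowhere substantiated---this is precisely the delicate bookkeeping the paper's formulation avoids. Likewise, for $(j)$--$(o)$ the paper computes products such as $[B]_{\emptyset}\ast L\ast[B]_{\emptyset}$ directly; the only decorations that arise are $\{(1,1)\}$ and $\{(1,2)\}$, so only the simplest cases of Propositions \ref{prop 4.1.4}--\ref{prop 4.1.6} are needed, and $(n)$ reduces to a short explicit identity rather than the binomial identity plus case analysis you describe.

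The second genuine problem is your reading of $(l)$ and $(m)$: the identity $LE_i=LE_iL$ is not the statement that the span of basis elements carrying a pair $(1,t)$ in the decoration is closed under multiplication by $L$ and stable under $E_i$, $F_i$. It is equivalent to $LE_i(1-L)=0$, and the paper's sample computation shows that $[B]_{\emptyset}\ast L$ and $L\ast[B]_{\emptyset}$ genuinely differ (an extra term $v^{-b_1-1}[B]_{\{(1,2)\}}$ appears in the latter), so a real cancellation must be exhibited; your stability argument would not produce it. Your derivation of $(o)$ from $(n)$ via the transpose anti-automorphism is workable in principle (note ${}^tL=L$ since transposition fixes diagonal matrices and the pair $(1,1)$), provided you check compatibility with the normalization $[A]_{\Delta}=v^{-d(A,\Delta)+r(A,\Delta)}e_{A,\Delta}$; the paper instead simply notes that the computation for $(o)$ is analogous to that for $(n)$.
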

	\begin{proof}
		The relations $(a)-(i)$ are hold due to \cite{BLM}. One can check the remaining relations by using Proposition \ref{prop 4.1.4}, Proposition \ref{prop 4.1.5} and Proposition \ref{prop 4.1.6}. We calculate $(m)$ for $i=1$ and the others are similar. 
		Let $[B]_{\emptyset}\in \Xi_{n|n,d}$ such that $B-E_{1, 2}=diag(b_1, \cdots, b_n)$ is a diagonal matrix, we have
		\begin{align*}
			[B]^{2}_{\emptyset}= \frac{v^{2}-v^{-2}}{v-v^{-1}}[B+E_{1,2}]_{\emptyset}.
		\end{align*}
		Then we have 
		\begin{align*}
			&\frac{L\ast [B]^{2}_{\emptyset}}{(v+v^{-1})}\\
			=& \sum\limits_{Z}v^{-2z_1}[Z]_{\emptyset}\ast [B+E_{1,2}]_{\emptyset}+\sum\limits_{Z',z'_1>0}v^{-z'_1}[Z']_{\{(1,1)\}}\ast [B+E_{1,2}]_{\emptyset}\\
			= & v^{-4-2b_1}[B+E_{1,2}]_{\emptyset}+v^{-4-b_1}[B+E_{1,2}]_{\{(1,1)\}}+v^{-2-b_1}[B+E_{1,2}]_{\{(1,2)\}}.
		\end{align*}
		Besides, 
		\begin{align*}
			&[B]_{\emptyset}\ast L\\
			=&[B]_{\emptyset}\ast \sum\limits_{Z}v^{-2z_1}[Z]_{\emptyset}+[B]_{\emptyset}\ast \sum\limits_{Z',z'_1>0}v^{-z_1}[Z]_{\{(1,1)\}}\\
			= &v^{-2b_1}[B]_{\emptyset}+v^{-b_1}[B]_{\{(1,1)\}}.
		\end{align*}
		So 
		\begin{align*}
			\frac{[B]^{2}_{\emptyset}\ast L}{v+v^{-1}}=v^{-2b_1}[B+E_{1,2}]_{\emptyset}+v^{-b_1}[B+E_{1,2}]_{\{(1,1)\}}.
		\end{align*}
		Finally, we have
		\begin{align*}
			& L \ast [B]_{\emptyset}\\
			=&\sum\limits_{Z, z_1>0}v^{-2z_1}[Z]_{\emptyset}\ast [B]_{\emptyset}\\
			&+\sum\limits_{Z',z'_1>0}v^{-z'_1}[Z']_{\{(1,1)\}}\ast [B]_{\emptyset}\\
			= &v^{-2b_1-2}[B]_{\emptyset}+v^{-b_1-2}[B]_{\{(1,1)\}}+v^{-b_1-1}[B]_{\{(1,2)\}}.
		\end{align*}
		Hence
		\begin{align*}
			&[B]_{\emptyset}\ast L \ast [B]_{\emptyset}\\
			= & v^{-2b_1-2}[B]_{\emptyset}\ast [B]_{\emptyset}+v^{-b_1-2}[B]_{\emptyset}\ast [B]_{\{(1,1)\}}+v^{-b_1-1}[B]_{\emptyset}\ast [B]_{\{(1,2)\}}\\
			= &v^{-2b_1-2}(v+v^{-1})[B+E_{1,2}]_{\emptyset}+v^{-b_1-2}(v+v^{-1})[B+E_{1,2}]_{\{(1,1)\}}+v^{-b_1-1}[B+E_{1,2}]_{\{(1,2)\}}\\
			= & v^{-2b_1-1}[B+E_{1,2}]_{\emptyset}+v^{-b_1-1}[B+E_{1,2}]_{\{(1,1)\}}\\
			& + v^{-2b_1-3}[B+E_{1,2}]_{\emptyset}+v^{-b_1-3}[B+E_{1,2}]_{\{(1,1)\}}+v^{-b_1-1}[B+E_{1,2}]_{\{(1,2)\}}\\
			= & v^{-1}\frac{[B]^{2}_{\emptyset}\ast L}{v+v^{-1}}+v\frac{L\ast [B]^{2}_{\emptyset}}{(v+v^{-1})}.
		\end{align*}
		Then by linearity, we conclude that the relation $(m)$ holds.
	\end{proof}
	Upon comparing the generators and the relations of the algebra $\mathbf{MU}$ as defined in Definition \ref{def 2.1.2} with relations of algebra $\mathbb{Q}(v)\otimes_{\mathcal{A}}\mathcal{MS}_{n, d}$ in Proposition \ref{prop 4.2.5}, we establish the following proposition.
	\begin{prop}\label{prop 4.2.6}
		There is a surjective algebra homomorphism $\mathbf{MU} \to \mathbb{Q}(v)\otimes_{\mathcal{A}}\mathcal{MS}_{n, d}$ which sends $E_i\mapsto E_i$, $F_i\mapsto F_i$, $L\mapsto L$ and $H^{\pm}_a\mapsto H^{\pm}_a$ for any $i\in [1,n-1]$ and $a\in [1,n]$.
	\end{prop}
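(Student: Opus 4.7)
The plan is to build the algebra homomorphism generator by generator and then invoke the two main ingredients already established: Proposition 4.1.5 (together with its sibling propositions) supplies all the required relations among the geometric generators, while Corollary 4.2.1 supplies surjectivity. Concretely, I would first define the $\mathbb{Q}(v)$-linear map $\Phi \colon \mathbf{MU} \to \mathbb{Q}(v)\otimes_{\mathcal{A}}\mathcal{MS}_{n,d}$ on the generating symbols of $\mathbf{MU}$ by the formulas indicated in the statement, and then verify that $\Phi$ descends to the quotient by the defining relations of $\mathbf{MU}$ listed in Definition \ref{def 2.1.2}.

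For the well-definedness, I would go through the relations of $\mathbf{MU}$ in the order they appear in Definition \ref{def 2.1.2} and match each one with the corresponding identity in Proposition \ref{prop 4.2.5}. The purely quantum $\mathfrak{gl}_n$ relations --- those involving only $H_a^\pm$, $E_i$, $F_i$ --- are matched item by item with parts (a), (c)--(i) of Proposition \ref{prop 4.2.5}, together with the Serre-type commutativity $E_iE_j=E_jE_i$ and $F_iF_j=F_jF_i$ for $|i-j|>1$, which follow from the classical BLM calculation since $L$ plays no role there. The mirabolic relations $H_aL=LH_a$, $L^2=L$, $LE_i=LE_iL$, $LF_i=LF_iL$, $(v+v^{-1})E_iLE_i=v^{-1}E_i^2L+vLE_i^2$, and $(v+v^{-1})F_iLF_i=vF_i^2L+v^{-1}LF_i^2$ are exactly the parts (j)--(o) of Proposition \ref{prop 4.2.5}. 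Since Proposition \ref{prop 4.2.5} has already verified each of these identities inside $\mathbb{Q}(v)\otimes_{\mathcal{A}}\mathcal{MS}_{n,d}$, the universal property of $\mathbf{MU}$ yields the algebra homomorphism $\Phi$.

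For surjectivity, the image of $\Phi$ contains $E_i$, $F_i$, $H_a^\pm$ and $L$ in $\mathbb{Q}(v)\otimes_{\mathcal{A}}\mathcal{MS}_{n,d}$, so it contains the subalgebra generated by these elements. By Corollary \ref{cor 4.2.1} this subalgebra is all of $\mathbb{Q}(v)\otimes_{\mathcal{A}}\mathcal{MS}_{n,d}$, hence $\Phi$ is surjective.

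The main obstacle is purely bookkeeping: one must make sure the mapping of the generator $L$ is compatible with both its interaction with the Cartan part (the commutation $H_aL=LH_a$) and with the subtler mixed identities involving $E_i$ and $F_i$. In principle there is nothing further to verify beyond what Proposition \ref{prop 4.2.5} already gives, but one should double-check the relation $E_i^{d+1}=0=F_i^{d+1}$ of part (b) of Proposition \ref{prop 4.2.5} is a consequence of the $\mathbf{MU}$ relations (or equivalently observe that it need not be checked, since it is an extra identity satisfied in the finite-dimensional quotient). No genuine new calculation is required beyond pointing to Corollary \ref{cor 4.2.1} and Proposition \ref{prop 4.2.5}.
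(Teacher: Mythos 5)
Your proposal is correct and matches the paper's argument: the paper likewise deduces the homomorphism by comparing the defining relations of $\mathbf{MU}$ in Definition \ref{def 2.1.2} with the identities verified in Proposition \ref{prop 4.2.5}, and obtains surjectivity from the generation statement of Corollary \ref{cor 4.2.1}. Your side remarks (that $E_i^{d+1}=0$ is an extra relation in the target needing no check, and that the $|i-j|>1$ commutations come from the classical BLM computation) are accurate and consistent with what the paper leaves implicit.
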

	\subsection{$\mathcal{MS}_{n,d}$-action on $\mathcal{MV}$}
	Recall the $\mathbf{r}'_p$ and $\mathbf{r}''_p$ in 2.2. 
	
	\begin{prop}\label{prop 4.3.1}
		Assume $i \in [1, n-1]$ and $a\in [1,n]$. The left $\mathcal{MS}_{n,d}$-action on $\mathcal{MV}$ is given as follows.
		
		\noindent$(a)$ For $a\in [1,n]$, we have 
		\begin{eq}
			\relax H^{\pm}_a \ast e_{\mathbf{r}, \mathbf{j}}=v^{\mp \sum_{j\in [1,d]}\delta_{ar_{j}}}e_{\mathbf{r}, \mathbf{j}}.
		\end{eq}
		
		\noindent$(b)$ If $\mathbf{j}=\emptyset$, we have 
		\begin{eq}
			\relax L\ast e_{\mathbf{r}, \emptyset}=& 
			v^{-\sum_{j \in [1, d]}\delta_{1 r_{j}}}(e_{\mathbf{r},\emptyset} +\sum_{p \in [1, d], r_p>0}e_{\mathbf{r}, \{p\}}).
		\end{eq}
		
		\noindent$(c)$ If $r_{j_1}=1$, then 
		\begin{eq}
			\relax L \ast e_{\mathbf{r}, \mathbf{j}}=& 
			v^{-2\sum_{j \in [1, d]}\delta_{1 r_{j}}+2\sum_{j<j_1}\delta_{1 r_j}}(v^{2}-1)e_{\mathbf{r}, \mathbf{j}\setminus{\{j_1\}}}\\
			& + v^{-2\sum_{j \in [1, d]}\delta_{1 r_{j}}}\sum_{p>j_1, \mathbf{r}_p>0}v^{2\sum_{j<j_1}\delta_{1 r_j}}(v^{2}-1)e_{\mathbf{r},\mathbf{j}_p} \\
			& + v^{-2\sum_{j \in [1, d]}\delta_{1 r_{j}}}(v^{2\sum_{j \leq j_1}\delta_{1 r_j}}-1)e_{\mathbf{r}, \mathbf{j}}, 
		\end{eq}
		where $\mathbf{j}_p=(p, j_2, \cdots, j_k)$.
		
		\noindent$(d)$ If $r_{j_1}>1$, we have
		\begin{eq}
			\relax L \ast e_{\mathbf{r}, \mathbf{j}}= &
			v^{-\sum_{j \in [1, d]}\delta_{1 r_{j}}}v^{2\sum_{j \leq j_1}\delta_{1 r_j}}e_{\mathbf{r}, \mathbf{j}} \\
			& +v^{-\sum_{j \in [1, d]}\delta_{1 r_{j}}}\sum_{p>j_1, \mathbf{r}_{p}>0}v^{2\sum_{j \leq j_1}\delta_{1 r_j}}e_{\mathbf{r}, \mathbf{j}_p},
		\end{eq}
		where $\mathbf{j}_p=(p, j_1, j_2, \cdots, j_k)$.
		
		\noindent$(e)$ If for any $j_m$, $r_{j_m}\neq h, h+1$, we have
		\begin{eq}
			\relax E_i \ast e_{\mathbf{r}, \mathbf{j}}& = v^{-\sum_{j \in [1, d]}\delta_{h r_j}}\sum\limits_{p \in [1, d], p \neq j_m, r_p=h+1}v^{2\sum_{j>p}\delta_{h r_{j}}}e_{\mathbf{r}''_p, \mathbf{j}};\\
			\relax F_i \ast e_{\mathbf{r}, \mathbf{j}} & = v^{-\sum_{j \in [1, d]}\delta_{h+1, r_j}}\sum\limits_{p \in [1, d], r_p=h}v^{2\sum_{j<p}\delta_{h+1, r_{j}}}e_{\mathbf{r}'_p, \mathbf{j}}.\\
		\end{eq}
		
		\noindent$(f)$ If there is $j_m\in \mathbf{j}$ such that $r_{j_m}=h$ and $r_{j_{m+1}}\neq h+1$, then
		\begin{align*}
			\relax E_i \ast e_{\mathbf{r}, \mathbf{j}} = & v^{-\sum_{j \in [1, d]}\delta_{h r_j}}\sum\limits_{p \in [1, j_{m+1}]\cup [j_m+1, d], r_p=h+1}v^{2\sum_{j>p}\delta_{h r_{j}}}e_{\mathbf{r}''_p, \mathbf{j}}\\
			& + v^{-\sum_{j \in [1, d]}\delta_{h r_j}}\sum\limits_{p \in [j_{m+1}+1, j_{m}-1], r_p=h+1}v^{2\sum_{j>p}\delta_{h, r_{j}}-1}e_{\mathbf{r}''_p, \mathbf{j}};\\
			\relax F_i \ast e_{\mathbf{r}, \mathbf{j}} = & v^{-\sum_{j \in [1, d]}\delta_{h+1, r_j}}\sum\limits_{p \in [1, d], p \neq j_m,  r_p=h}v^{2\sum_{j<p}\delta_{h+1, r_{j}}}e_{\mathbf{r}'_p, \mathbf{j}}\\
			& + v^{-\sum_{j \in [1, d]}\delta_{h+1, r_j}}\sum\limits_{p= j_m, r_{j_m}=h}v^{2\sum_{j<p}\delta_{h+1, r_{j}}}e_{\mathbf{r}'_p, \mathbf{j}\setminus(j_m)}\\
			& + v^{-\sum_{j \in [1, d]}\delta_{h+1, r_j}}v^{2\sum_{j\leq j_{m+1}}\delta_{ir_{j}}}e_{\mathbf{r}'_{j_m}, \mathbf{j}}\\
			& + v^{-\sum_{j \in [1, d]}\delta_{h+1, r_j}}\sum\limits_{t \in [j_{m+1}+1, j_m-1], r_t=h} v^{2\sum_{j\leq j_{m+1}}\delta_{ir_{j}}}e_{\mathbf{r}'_t, \mathbf{j}_t},
		\end{align*}
		where $\mathbf{j}_t=(j_1, \cdots, j_m, t, j_{m+1}, \cdots, j_k)$.
		
		\noindent$(g)$ If there is $j_m\in \mathbf{j}$ such that $r_{j_m}=h+1$ and $r_{j_{m-1}}\neq h$, then
		\begin{eq}
			\relax E_i\ast e_{\mathbf{r}, \mathbf{j}} = & v^{-\sum_{j \in [1, d]}\delta_{h r_j}}\sum\limits_{p \in [1, d], p \neq j_m, r_p=h+1}v^{2\sum_{j>p}\delta_{h r_{j}}}e_{\mathbf{r}''_p, \mathbf{j}}\\
			& + v^{-\sum_{j \in [1, d]}\delta_{h r_j}}\sum\limits_{p=j_m}v^{2\sum_{j>p}\delta_{h r_{j}}}e_{\mathbf{r}''_p, \mathbf{j}\setminus(j_m)}\\
			& + v^{-\sum_{j \in [1, d]}\delta_{h r_j}}\sum\limits_{p = j_m}v^{2\sum_{j>p}\delta_{h r_{j}}}e_{\mathbf{r}''_p, \mathbf{j}}\\
			& + v^{-\sum_{j \in [1, d]}\delta_{h r_j}}\sum\limits_{\substack{p = j_m, \\t \in [j_{m+1}+1, j_m-1], r_t=h+1}} v^{2\sum_{j>p}\delta_{h r_{j}}}e_{\mathbf{r}''_{j_m}, \mathbf{j}_t},\\
		\end{eq}
		where $\mathbf{j}_t=(j_1, \cdots, j_m, t, j_{m+1}, \cdots, j_k)$;
		\begin{eq}
			\relax F_i \ast e_{\mathbf{r}, \mathbf{j}} = & v^{-\sum_{j \in [1, d]}\delta_{h+1, r_j}}\sum\limits_{p \neq j_m, r_p=h}v^{2\sum_{j<p}\delta_{h+1, r_{j}}}e_{\mathbf{r}'_p, \mathbf{j}}.\\
		\end{eq}
		
		\noindent$(h)$ If there are $j_{m}, j_{m+1}\in \mathbf{j}$ such that $r_{j_m}=h$ and $r_{j_{m+1}}=h+1$, 
		\begin{align*}
			\relax E_i \ast e_{\mathbf{r}, \mathbf{j}} = & v^{-\sum_{j \in [1, d]}\delta_{h r_j}}\sum\limits_{p \in [1, j_{m+1}]\cup [j_m+1, d], r_p=h+1}v^{2\sum_{j>p}\delta_{h r_{j}}}e_{\mathbf{r}''_p, \mathbf{j}}\\
			& + v^{-\sum_{j \in [1, d]}\delta_{h r_j}}\sum\limits_{p \in [j_{m+1}+1, j_{m}-1], r_p=h+1}v^{2\sum_{j>p}\delta_{h r_{j}}-1}e_{\mathbf{r}''_p, \mathbf{j}}\\
			& + v^{-\sum_{j \in [1, d]} \delta_{h r_j}}\sum\limits_{p=j_{m+1}}v^{2\sum_{j<j_{m+1}}\delta_{h+1, r_{j}}} (1-v^{-2})e_{\mathbf{r}''_p, \mathbf{j}\setminus{j_{m+1}}}\\
			& + v^{-\sum_{j \in [1, d]} \delta_{h r_j}}\sum\limits_{\substack{p = j_{m+1},\\j_{m+2}<t<j_{m+1}, r_{t}=h+1}} v^{2\sum_{j<j_{m+1}}\delta_{h+1, r_{j}}} (1-v^{-2})e_{\mathbf{r}''_p, \mathbf{j}_t},
		\end{align*}
		where $\mathbf{j}_t=(j_1, \cdots, j_{m+2}, t, j_{m}, \cdots, j_k)$;
		\begin{eq}
			\relax F_i \ast e_{\mathbf{r}, \mathbf{j}} = & v^{-\sum_{j \in [1, d]}\delta_{h+1, r_j}}\sum\limits_{p \neq j_m+1,  r_p=h}v^{2\sum_{j<p}\delta_{h+1, r_{j}}} e_{\mathbf{r}'_p, \mathbf{j}}\\
			& + v^{-\sum_{j \in [1, d]} \delta_{h+1, r_j}}\sum\limits_{p = j_m}v^{2\sum_{j<j_{m+1}}\delta_{h+1, r_{j}}}(v^2-1)e_{\mathbf{r}'_p, \mathbf{j}_{j_m}}\\
			& + v^{-\sum_{j \in [1, d]} \delta_{h+1, r_j}}\sum\limits_{j_{m+1}<t<j_m, r_t=h} v^{2\sum_{j<j_{m+1}}\delta_{h+1, r_{j}}} (v^2-1)e_{\mathbf{r}'_t, \mathbf{j}_t},\\
		\end{eq}
		where $\mathbf{j}_t=(j_1, \cdots, j_{m+2}, t, j_{m}, \cdots, j_k)$.
	\end{prop}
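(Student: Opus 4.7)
The plan is to derive every formula in Proposition \ref{prop 4.3.1} by expanding each generator as an explicit $\mathcal{A}$-linear combination of basis elements $[X]_{\Delta}$ in $\mathcal{MS}_{n,d}$ and then applying the convolution formulas of Propositions \ref{prop 4.1.1}, \ref{prop 4.1.2}, \ref{prop 4.1.3}, adapted from $\mathscr{X}\times\mathscr{X}\times V$ to $\mathscr{X}\times\mathscr{Y}\times V$. Under the bijection between $\Xi^1$ and pairs $(\mathbf{r},\mathbf{j})$, the basis vector $e_{\mathbf{r},\mathbf{j}}$ corresponds to the decorated matrix $(A,\Delta)$ with $a_{r_c,c}=1$ for all $c$ and $\Delta=\{(r_{j_s},j_s)\}_s$. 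The counting arguments from the proofs of Propositions \ref{prop 4.1.1}--\ref{prop 4.1.3} transfer verbatim, because the only change is that every column of $A$ now has a single nonzero entry equal to $1$, which collapses many of the $v$-binomial coefficients $[a_{h,p}+1,1]_v$ to $1$ or $v^2+1$.

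First, I would dispose of (a): the element $H_a^{\pm}=\sum v^{\mp d_{a,a}}[D]_{\emptyset}$ is a sum over diagonal matrices, and convolution with $[D]_{\emptyset}$ preserves $e_{\mathbf{r},\mathbf{j}}$ exactly when $D$ matches the row sums of the associated $A$; since $d_{a,a}=\sum_j\delta_{a r_j}$ in that case, the scalar $v^{\mp\sum_j\delta_{a r_j}}$ pops out. For (b)--(d), I would unwind $L=\sum v^{-2 d_{1,1}}[D]_{\emptyset}+\sum v^{-d'_{1,1}}[D']_{\{(1,1)\}}$. The first sum contributes the global prefactor $v^{-\sum_j\delta_{1 r_j}}$ times $e_{\mathbf{r},\mathbf{j}}$. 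The second sum is evaluated by specializing Proposition \ref{prop 4.1.3}: part (d) handles $\mathbf{j}=\emptyset$, part (b) (taken with $h=1$, $i_m=1$) handles $r_{j_1}=1$, and part (a) handles $r_{j_1}>1$. Matching the exponents $\theta_A,\rho$ against the column-sum-$1$ condition and rewriting $\sum_{j\leq j_1}a_{1,j}$ as $\sum_{j\leq j_1}\delta_{1 r_j}$ produces the stated formulas.

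Second, I would handle (e)--(h) by expanding $E_i=\sum[B]_{\emptyset}$ (resp.\ $F_i=\sum[C]_{\emptyset}$) and applying Proposition \ref{prop 4.1.1} (resp.\ Proposition \ref{prop 4.1.2}) term by term. The four subcases (e),(f),(g),(h) correspond precisely to the subcases (a),(b),(c),(d) of Proposition \ref{prop 4.1.1}: the role of ``$i_m=h$'' becomes ``$r_{j_m}=h$,'' and the role of ``$i_{m+1}=h+1$'' becomes ``$r_{j_{m+1}}=h+1$.'' The translation $A+E_{h,p}-E_{h+1,p}\rightsquigarrow \mathbf{r}''_p$ and the various modifications of $\Delta$ (to $\Delta_c,\Delta_s,\Delta_t$) correspond to $\mathbf{j}\setminus\{j_m\}$, $\mathbf{j}$ unchanged, $\mathbf{j}_t$, or insertions thereof, exactly as the statement requires. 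The $v$-exponents coming from the $v$-powers $2\sum_{j>p}a_{h,j}$ and $2\sum_{j<p}a_{h+1,j}$ in Propositions \ref{prop 4.1.1}--\ref{prop 4.1.2} become $2\sum_{j>p}\delta_{h r_j}$ and $2\sum_{j<p}\delta_{h+1,r_j}$ under the column-sum-$1$ identification.

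The main obstacle will be the accurate bookkeeping of $v$-exponents across the two normalizations: on the one hand, the generators $E_i,F_i,L,H_a^{\pm}$ are defined with explicit $v$-power prefactors ($v^{\mp d_{a,a}}$, $v^{-2 d_{1,1}}$, $v^{-d'_{1,1}}$); on the other hand, each multiplication formula in Proposition \ref{prop 4.1.1}--\ref{prop 4.1.3} already carries its own combinatorial exponent depending on $\sum_{j}a_{h,j}$, $\sum_{j}a_{h+1,j}$, and the positions of entries of $\Delta$. Verifying that these two sources of $v$-powers combine to yield the uniform prefactor $v^{-\sum_j\delta_{h r_j}}$ (respectively $v^{-\sum_j\delta_{h+1,r_j}}$, $v^{-\sum_j\delta_{1 r_j}}$) appearing in Proposition \ref{prop 4.3.1} is the chief bookkeeping task; once this is checked in one subcase the rest follow by the same template, so the main work is purely mechanical case-matching rather than any new geometric input.
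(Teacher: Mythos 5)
Your proposal is correct and is essentially the paper's own argument: the paper's proof of Proposition \ref{prop 4.3.1} consists of exactly this specialization of the Section 4.1 multiplication formulas to column-sum-one decorated matrices, the only cosmetic difference being that the paper cites the normalized versions (Propositions \ref{prop 4.1.4}, \ref{prop 4.1.5}, \ref{prop 4.1.6}) while you work with the unnormalized ones (Propositions \ref{prop 4.1.1}--\ref{prop 4.1.3}) and absorb the normalization $v^{-d(B,\emptyset)+r(B,\emptyset)}$ of the generators separately --- an equivalent bookkeeping choice that indeed produces the uniform prefactors $v^{-\sum_j\delta_{h r_j}}$, $v^{-\sum_j\delta_{h+1,r_j}}$, $v^{-\sum_j\delta_{1 r_j}}$. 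Your case-matching (Proposition \ref{prop 4.1.3}(d),(b),(a) for parts (b),(c),(d), and the four $\Delta$-configurations of Propositions \ref{prop 4.1.1}--\ref{prop 4.1.2} for parts (e)--(h)) is the intended one.
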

	
	\begin{proof}
		The proposition follows from Proposition \ref{prop 4.1.4}, Proposition \ref{prop 4.1.5} and Proposition \ref{prop 4.1.6}.
	\end{proof}
	
	\subsection{$\mathcal{MH}$-action on $\mathcal{MV}$}
	Recall $\sigma\mathbf{r}$ and $\sigma \mathbf{j}$ in 2.2, where $\sigma$ is an element in symmetric group $S_d$.
	\begin{prop}\label{prop 4.4.1}
		Assume $j\in [1,d-1]$. Let $\mathbf{r}={r_1,\cdots, r_d}$ $\mathbf{j}=(j_1, \cdots, j_k)\in J_{\mathbf{r}}$. The $\mathcal{MH}$-action is given as follows. 
		
		\noindent$(a)$ If $j,j+1\notin \mathbf{j}$, then 
		\begin{align*}
			e_{\mathbf{r}, \mathbf{j}} \tau_{s_j,\emptyset}=\left\{\begin{array}{ll}
				e_{s_j\mathbf{r}, \mathbf{j}} & \text { if } r_j<r_{j+1}, \\
				v^2 e_{\mathbf{r},\mathbf{j}} & \text { if } r_j=r_{j+1}, \\
				\left(v^2-1\right) e_{\mathbf{r}, \mathbf{j}}+v^2 e_{s_j\mathbf{r},\mathbf{j}} & \text { if } r_j>r_{j+1}.
			\end{array}\right.
		\end{align*}
		
		\noindent$(b)$ If $j\in \mathbf{j}$ and $j+1\notin \mathbf{j}$, then
		\begin{align*}
			e_{\mathbf{r}, \mathbf{j}} \tau_{s_j,\emptyset}=\left\{\begin{array}{ll}
				e_{s_j\mathbf{r}, s_j\mathbf{j}}+e_{s_j\mathbf{r}, \mathbf{j}\setminus (j)} & \text { if } r_j<r_{j+1}, \\
				e_{\mathbf{r}, s_j\mathbf{j}} & \text { if } r_j=r_{j+1}, \\
				e_{\mathbf{r}, \mathbf{j}\cup \{j+1\}}+e_{s_j\mathbf{r}, s_j\mathbf{j}}+ e_{s_j\mathbf{r}, \mathbf{j}\setminus (j)} & \text { if } r_j>r_{j+1}.
			\end{array}\right. 
		\end{align*}
		
		\noindent$(c)$ If $j\notin \mathbf{j}$ and $j+1\in \mathbf{j}$, then
		\begin{align*}
			e_{\mathbf{r}, \mathbf{j}} \tau_{s_j,\emptyset}=\left\{\begin{array}{ll}
				2e_{s_j\mathbf{r}, \mathbf{j}\setminus (j+1)}+e_{s_j\mathbf{r}, s_j\mathbf{j}}+e_{s_j\mathbf{r}, \mathbf{j}\cup\{j\}}+e_{\mathbf{r}, \mathbf{j}\setminus (j+1)} & \text { if } r_j<r_{j+1}, \\
				v^2e_{\mathbf{r}, s_j\mathbf{j}}+v^2e_{\mathbf{r}, \mathbf{j}\setminus (j+1)}+(v^2-1)e_{\mathbf{r}, \mathbf{j}} & \text { if } r_j=r_{j+1}, \\
				(v^2-1)e_{\mathbf{r}, \mathbf{j}}+ v^2(2e_{\mathbf{r}, \mathbf{j}\setminus (j+1)}+e_{s_j\mathbf{r}, s_j\mathbf{j}}+e_{s_j\mathbf{r}, \mathbf{j}\setminus (j+1)}) & \text { if } r_j>r_{j+1}.
			\end{array}\right.
		\end{align*}
		
		\noindent$(d)$ If $j, j+1\in \mathbf{j}$, then
		\begin{align*}
			e_{\mathbf{r}, \mathbf{j}} \tau_{s_j,\emptyset} & =
			2(v^2-1)e_{\mathbf{r}, \mathbf{j}}+(2v^2-1)e_{\mathbf{r}, \mathbf{j}\setminus (j+1)}\\
			& + (2v^2-1)e_{s_j\mathbf{r}, \mathbf{j}\setminus(j, j+1)}+(2v^2-1)e_{s_j\mathbf{r}, \mathbf{j}\setminus (j)}.
		\end{align*}
		
		\noindent$(e)$ 
		\begin{align*}
			e_{\mathbf{r}, \mathbf{j}} \tau_{id,\{1\}}=\left\{\begin{array}{ll}
				(v^{2(\sum_{i\leq i_k} \delta_{r_{1} i})}-1)e_{\mathbf{r}, \mathbf{j}}+ \sum_{i'>i_k}\delta_{r_1 i'}e_{\mathbf{r}, \mathbf{j}'},  &  \text{ if } j_k>1,\\
				(v^{2}-2)e_{\mathbf{r}, \mathbf{j}}  &  \text{ if } j_k=1,
			\end{array}\right.
		\end{align*}
		where $\mathbf{j}'=(j_1, j_2, \cdots, j_k, 1)$.
	\end{prop}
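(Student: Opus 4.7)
The plan is to compute each convolution $e_{\mathbf{r}, \mathbf{j}} \ast \tau_{\sigma, \beta}$ by orbit-counting, in the same geometric style used in the proofs of Propositions \ref{prop 4.1.1}--\ref{prop 4.1.8}. Specialize $v$ to $\sqrt{q}$, fix a triple $(f, f', \omega) \in \mathscr{X} \times \mathscr{Y} \times V$ representing the orbit indexed by the expected output term, and apply the convolution formula
$$(e_{\mathbf{r}, \mathbf{j}} \ast \tau_{\sigma, \beta})(f, f', \omega) \;=\; \sum_{f'' \in \mathscr{Y},\; \mu \in V} e_{\mathbf{r}, \mathbf{j}}(f, f'', \mu)\, \tau_{\sigma, \beta}(f'', f', \omega - \mu).$$
The two factors force, respectively, that $(f, f'', \mu) \in \mathcal{O}_{\mathbf{r}, \mathbf{j}}$ and that $(f'', f', \omega - \mu)$ lies in the Hecke-orbit indexed by $(\sigma, \beta)$. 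So the task reduces in each case to enumerating the intermediate complete flags $f''$ together with the admissible vectors $\mu$.

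For the Coxeter generator $\tau_{s_j, \emptyset}$, the second factor constrains $f''$ to agree with $f'$ outside position $j$ and constrains $\omega - \mu \in V'_{j-1}$-type subspaces forced by $\beta = \emptyset$. I would split into the four sub-cases $(a)$--$(d)$ according to whether $j, j+1$ belong to $\mathbf{j}$, and within each sub-case on the comparison of $r_j$ and $r_{j+1}$; in each branch the incidence of $f$ with the altered step $V''_j$ determines the relative position encoded by the output $e_{\mathbf{r}', \mathbf{j}'}$, and the admissible one-dimensional subspaces between $V'_{j-1}$ and $V'_{j+1}$ are counted to produce the prefactor. The bookkeeping is entirely analogous to the Hecke computation on $\mathscr{Y} \times \mathscr{Y} \times V$ in \cite{R14}, but carried out with the partial-flag side $\mathscr{X}$ providing the decoration $(\mathbf{r}, \mathbf{j})$.

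For the mirabolic generator $\tau_{id, \{1\}}$, the $\sigma = id$ part forces $f'' = f'$ and the computation reduces to enumerating vectors $\mu$ for which $(f, f', \mu) \in \mathcal{O}_{\mathbf{r}, \mathbf{j}}$ and $\omega - \mu$ sits in the distinguished stratum $V \setminus V'_{j_k-1}$ selected by $\beta = \{1\}$ relative to $f'$. The split into $j_k > 1$ versus $j_k = 1$ reflects whether the last entry of $\mathbf{j}$ leaves room to record a new decoration at position $1$: in the generic branch $\mathbf{j}' = (j_1, \dots, j_k, 1)$ appears, while in the degenerate branch only the diagonal coefficient $v^2 - 2$ survives (matching the relation $\tau_0^2 = (v^2-2)\tau_0 + (v^2-1)$).

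The main technical obstacle will be case $(d)$, where $j, j+1 \in \mathbf{j}$ both: then the decoration data on the input and on the intermediate triple interact, and four distinct output terms $e_{\mathbf{r}, \mathbf{j}}$, $e_{\mathbf{r}, \mathbf{j} \setminus (j+1)}$, $e_{s_j \mathbf{r}, \mathbf{j} \setminus (j, j+1)}$, $e_{s_j \mathbf{r}, \mathbf{j} \setminus (j)}$ arise with the uniform coefficient $2v^2-1$. This redistribution of circled entries under the $s_j$-twist is precisely the phenomenon flagged in Remark \ref{rem 4.1.2}, and is not visible in the $n = d$ Hecke setting of \cite{R14}; verifying it requires carefully tracking how $\mu$ can be decomposed into a component in $V \cap V'_{j-1}$ plus a component forcing the circled datum to migrate. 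Once this case is established, the remaining cases are routine enumerations, and the general right $\mathcal{MH}$-action follows by $\mathcal{A}$-linearity together with the identification of generators in Theorem \ref{mh}.
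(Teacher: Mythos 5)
Your route—direct orbit counting of the convolution $\sum_{f''\in\mathscr{Y},\,\mu\in V}e_{\mathbf{r},\mathbf{j}}(f,f'',\mu)\,\tau_{\sigma,\beta}(f'',f',\omega-\mu)$—is genuinely different from the paper's. The paper does no new counting at all: it embeds $\mathcal{MH}$ into $\mathcal{MS}_{d,d}$, writes $\tau_{s_j,\emptyset}=e_{C_j,\emptyset}\ast e_{B_j,\emptyset}-e_{I,\emptyset}$, and then converts the right action into a left one via the transpose anti-automorphism $e_{A,\Delta}\mapsto e_{{}^tA,{}^t\Delta}$ of \cite[Remark 2.8]{R18}, so that everything follows from the already-proved left multiplication formulas (Propositions \ref{prop 4.1.1}, \ref{prop 4.1.2} for $(a)$--$(d)$ and Proposition \ref{prop 4.1.3} for $(e)$). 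Your approach buys independence from those propositions and from the anti-automorphism, at the cost of redoing from scratch the case analysis they already encode; the paper's buys economy and an automatic consistency check against Propositions \ref{prop 4.1.1}--\ref{prop 4.1.3}.

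As written, though, your plan misidentifies the two geometric inputs on which the whole count rests. First, for $\tau_{s_j,\emptyset}$ the condition $\beta=\emptyset$ does not constrain $\omega-\mu$ to lie in ``$V'_{j-1}$-type subspaces''; it forces $\omega-\mu=0$, i.e.\ $\mu=\omega$, so the only freedom is the choice of $V''_j$ strictly between $V'_{j-1}$ and $V'_{j+1}$ with $V''_j\neq V'_j$ ($q$ choices), and the count is over which of these are compatible with $(f,f'',\omega)\in\mathcal{O}_{\mathbf{r},\mathbf{j}}$. Second, for $\tau_{id,\{1\}}$ the stratum selected by $\beta=\{1\}$ is $V'_1\setminus\{0\}$ (the punctured first line of the complete flag $f''=f'$), not $V\setminus V'_{j_k-1}$; the entire computation in part $(e)$ is the enumeration of $\mu\in\omega-(V'_1\setminus\{0\})$ landing in $\mathcal{O}_{\mathbf{r},\mathbf{j}}$, and with the stratum you wrote the count comes out wrong. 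A symptom that the computation has not actually been carried out: in the degenerate branch $j_k=1$ you assert that ``only the diagonal coefficient $v^2-2$ survives,'' yet the relation $\tau_0^2=(v^2-2)\tau_0+(v^2-1)$ that you invoke as corroboration has a second term, and the orbit count at a point of $\mathcal{O}_{\mathbf{r},\mathbf{j}\setminus(1)}$ is $q-1\neq 0$, so a lower term does appear. (Your description of case $(d)$ also reports a uniform coefficient $2v^2-1$, whereas the first term carries $2(v^2-1)$.) These are not presentational slips: they are exactly the support conditions and strata that determine every coefficient in the proposition, so the plan needs them corrected before the enumeration can be trusted.
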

	
	\begin{proof}
		The algebra $\mathcal{MH}$ is naturally embedded in $\mathcal{MS}_{d,d}$ as a subalgebra. 
		Then the action of $\mathcal{MH}$ on the space $\mathcal{MV}$ can be induced from the $\mathcal{MS}_{d,d}$-action on $\mathcal{MV}$.
		Specifically, for $1 \leq i \leq d-1$, we have the following identity for $(B_i,\emptyset), (C_i,\emptyset)\in \Xi_{d|d,d}$ such that $B_i-E_{i, i+1}$ and $C_i-E_{i+1, i}$ are diagonal matrices and $\co(B)=\ro(C)$.
		\begin{align*}
			\tau_{s_i,\emptyset}=e_{C_i, \emptyset} \ast e_{B_i, \emptyset}-e_{I, \emptyset},
		\end{align*}
		where $I$ is the identity matrix.
		
		By \cite[Remark2.8]{R18}, we have an anti-automorphism 
		$$\mathcal{MS}_{d,d}\to \mathcal{MS}_{d,d},\ \ e_{A,\Delta}\mapsto e_{^tA, ^t\Delta},$$ 
		where $^tA$ denotes the transpose matrix and $^t\Delta=\{(j_k, i_k), \cdots, (j_1, i_1)\}$, then 
		$$^t(e_{A,\Delta}\ast e_{A', \Delta'})=e_{^tA', ^t\Delta'} \ast e_{^tA, ^t\Delta}.$$ 
		Therefore for $1 \leq j \leq d-1$ and $(A, \Delta)\in \Xi^1$, we have 
		\begin{align*}
			e_{A, \Delta} \tau_{s_j, \emptyset}=& e_{A, \Delta} \ast e_{C_j, \emptyset} \ast e_{B_j, \emptyset}-e_{A, \Delta}\ast e_{I, \emptyset}\\
			& = {}^t(e_{^tB_j, \emptyset} \ast e_{^tC_j, \emptyset}\ast e_{^tA, ^t\Delta})-{}^t(e_{I, \emptyset}\ast e_{^tA, ^t\Delta})\\
			& = {}^t(e_{C_j, \emptyset} \ast e_{B_j, \emptyset}\ast e_{^tA, ^t\Delta})-e_{A, \Delta}.\\
		\end{align*}
		Then by Proposition \ref{prop 4.1.1} and Proposition \ref{prop 4.1.2}, $(a)-(d)$ hold.
		
		For $\tau_{id,\{1\}}$, we have $e_{A,\Delta}\tau_{id,\{1\}}={}^t(e_{I,\{(1,1)\}}\ast e_{^tA, ^t\Delta})$, then by Proposition \ref{prop 4.1.3} the identities hold.
	\end{proof}
	\section{Algebra $\mathcal{K}$}
	\subsection{Stabilization}\
	Let $\tilde{\Xi}=\{(A, \Delta)\ |\ A=(a_{ij})\in Mat_{n\times n}(\mathbb{Z}), a_{i j}\geq 0 \text{ for } i\neq j\}$ where $\Delta$ is similar with that in Definition \ref{def 3.1.1}, the difference is we allow $(i,i)\in \Delta$ for $a_{ii}\leq 0$.
	
	For any $(A, \Delta)\in \tilde{\Xi}$ and an integer $p$, let $(_{p}A, \Delta)=(A+pI, \Delta)$ where $I$ is the identity matrix. 
	Clearly, if $p$ is large enough, we have $({}_pA, \Delta)\in \Xi_{n|n}$.
	
	Let $v'$ be an indeterminate and  $\mathfrak{R} = \mathbb{Q}(v)[v', v'^{-1}]$, we have the following proposition. 
	\begin{prop}\label{prop 5.1.1}
		For $(A_1, \Delta_1), (A_2, \Delta_2), \cdots, (A_k, \Delta_k)\in \tilde{\Xi}$ satisfied $\co(A_i)=\ro(A_{i+1})$ for $i=1, \cdots, k-1$, where $k\geq 2$.
		There exist $(Z_1, \Delta'_1), \cdots, (Z_m, \Delta'_m)\in \tilde{\Xi}$, elements $G_{\Delta'_i}(v, v')\in \mathfrak{R}$
		and an integer $p_{0}\geq 0$ such that for all $p \geq p_0$
		$$
		[_{p}A_{1}]_{\Delta_1} \ast [_{p}A_{2}]_{\Delta_2} \ast \cdots \ast [_{p}A_{k}]_{\Delta_k} =
		\sum_{i=1}^{m} G_{\Delta'_i}(v, v^{-p})[_{p}Z_{i}]_{\Delta'_i}.
		$$
	\end{prop}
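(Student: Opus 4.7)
The plan is to reduce the statement to the case $k=2$ by induction on $k$, and then handle $k=2$ by a further reduction to the ``primitive'' factors whose multiplication with an arbitrary $[_pA]_\Delta$ is given explicitly by Propositions \ref{prop 4.1.6}, \ref{prop 4.1.7}, and \ref{prop 4.1.8}. The link between the primitive case and the general case is supplied by the generator procedure of Proposition \ref{prop 4.2.1}, combined with induction on the partial order on $\Xi_{n|n,d}$.

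For the base case $(A_1,\Delta_1)$ of the form $(B,\emptyset)$ with $B-RE_{h,h+1}$ diagonal, $(C,\emptyset)$ with $C-RE_{h+1,h}$ diagonal, or $(D,\{(h,h)\})$ with $D$ diagonal, the product $[_pA_1]_{\Delta_1}\ast[_pA_2]_{\Delta_2}$ is given explicitly by Propositions \ref{prop 4.1.7}, \ref{prop 4.1.8}, and \ref{prop 4.1.6} respectively. I would verify term by term that each coefficient, after the substitution $v'=v^{-p}$, lands in $\mathfrak{R}=\mathbb{Q}(v)[v',v'^{-1}]$: the shift $A\mapsto A+pI$ alters only diagonal entries, so exponents such as $\kappa_A(\mathbf p)$, $\beta_A(p)$, and $\gamma_A(p)$ shift by integer multiples of $p$ determined by the multi-index and the row indices alone (and in many cases by zero, as contributions from $a_{h,h}$ and $a_{h+1,h+1}$ cancel), while each quantum-binomial factor of the form $\overline{[a_{h,h}+p_h+p,\,p_h]}_v$ expands as a Laurent polynomial in $v^{-p}$ with coefficients in $\mathbb{Q}(v)$. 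The threshold $p_0$ is chosen so that all matrices appearing on the right-hand side have nonnegative entries.

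For general $(A_1,\Delta_1)$ with $k=2$, I would induct on $(A_1,\Delta_1)$ with respect to the partial order on $\Xi_{n|n,d}$ restricted to the finite set of decorated matrices sharing the row and column sums of $A_1$; this restriction is automatically well-founded. The procedure of Proposition \ref{prop 4.2.1} is purely combinatorial and applies uniformly after the shift, yielding an identity $[_pA_1]_{\Delta_1}=F_p+\sum_\alpha c_\alpha(v,v^{-p})\,[_pA'_\alpha]_{\Delta'_\alpha}$, where $F_p$ is an iterated product of primitive factors of the types just handled, each $(A'_\alpha,\Delta'_\alpha)<(A_1,\Delta_1)$, and each $c_\alpha(v,v^{-p})\in\mathfrak{R}$. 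Multiplying on the right by $[_pA_2]_{\Delta_2}$, the contribution $F_p\ast[_pA_2]_{\Delta_2}$ stabilizes by iterating the base case (using closure of $\mathfrak{R}$ under addition and multiplication), while each $[_pA'_\alpha]_{\Delta'_\alpha}\ast[_pA_2]_{\Delta_2}$ stabilizes by the inductive hypothesis. The passage from $k=2$ to general $k$ is then straightforward induction: writing the $k$-fold product as $\bigl([_pA_1]_{\Delta_1}\ast\cdots\ast[_pA_{k-1}]_{\Delta_{k-1}}\bigr)\ast[_pA_k]_{\Delta_k}$, one applies the induction hypothesis to the parenthesized factor and then the $k=2$ case termwise; the final $p_0$ is the maximum of the finitely many thresholds encountered.

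The main obstacle will be verifying that iteration of the generator procedure of Proposition \ref{prop 4.2.1} applied to ${}_pA_1$ preserves membership in $\mathfrak{R}$, i.e., that every intermediate coefficient produced by the multiplications is a Laurent polynomial in $v^{-p}$ rather than a more exotic rational function. As highlighted in Remark \ref{rem 4.1.2}, the multiplications involving nontrivial $\Delta$ generate additional decorated entries that must be tracked with care, and the $\Delta$-creating terms of Propositions \ref{prop 4.1.6}--\ref{prop 4.1.8} produce coefficients of qualitatively different shape than in the classical BLM setting. The essential point is that those formulas express $p$-dependent exponents only through quantities linear in the diagonal entries, hence linear in $p$, and express $p$-dependent magnitudes only through quantum-binomial factors $\overline{[\cdot+p,\,\cdot]}_v$ that are manifestly polynomial in $v^{-p}$. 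This linearity is what keeps the induction running and yields the required form of the stabilization.
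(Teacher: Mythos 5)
Your proposal is correct and follows essentially the same route as the paper: the paper likewise reduces to the primitive factors (working out the new case $[{}_pD]_{\{(1,1)\}}\ast[{}_pA]_{\Delta}$ explicitly via Proposition \ref{prop 4.1.6} and observing that the $p$-dependence enters only through exponents linear in $p$ and quantum binomials polynomial in $v^{-p}$), and then handles general factors by the BLM-style induction using the triangular generation of Proposition \ref{prop 4.2.1}. The only cosmetic difference is that the paper replaces your direct appeal to well-foundedness of the partial order by an explicit rank function $\Psi(A,\Delta)$ compatible with that order before citing the argument of \cite[Proposition 4.2]{BLM}.
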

	\begin{proof}\ 
		The proof is similar to the one of \cite[Proposition 4.2]{BLM}, we need $p$ large enough such that $a_{ii}+p> 0$ for any $(i,i)\in \Delta$. We first check the stability of the $[_pD]_{\{(1,1)\}} \ast [_pA]_{\Delta}$ where $\Delta=\{(i_1,j_1), \cdots, (i_k,j_k)\}$. 
		By Proposition \ref{prop 4.1.6}, we have the following formulas.
		
		\noindent$(a)$ If $i_1>1$, we have 
		\begin{align*}
			\relax  [_{p}D]_{\{(1,1)\}}\ast [_{p}A]_{\Delta}= & v^{\sum_{j \leq j_1}a_{1, j}+p-\sum_{j> j_1}a_{1, j}}(1-v^{-2(\sum_{j \leq j_1}a_{1, j}+p)})[_{p}A]_{\Delta}\\
			&+ \sum_{j'>j_1,a_{1,j'}>0}v^{\sum_{j \leq j_1}a_{1, j}+p-\sum_{j>j'}a_{1, j}}[_{p} A]_{\Delta_{j'}},
		\end{align*}
		where $\Delta_{j'}=\{(1,j'), (i_1, j_1), \cdots, (i_k, j_k)\}$.
		Thus, we set $G_{\Delta'}(v,v')$ in this case as
		\begin{align*}
			G_{\Delta}(v,v') & = v^{\sum_{1<j\leq j_1}a_{1, j}-\sum_{j> j_1}a_{1, j}}(1-v^{-2\sum_{j\leq j_1}a_{1, j}}v'^{2})v^{a_{11}}v'^{-1},\\
			G_{\Delta_{j'}}(v,v') & = v^{\sum_{1<j\leq j_1}a_{1, j}-\sum_{j>j'}a_{1, j}}v^{a_{11}}v'^{-1},
		\end{align*}
		where $j'>j_1$. It is clear that $\sum_{1<j\leq j_1}a_{1, j}-\sum_{j> j_1}a_{1, j}$ and $\sum_{1<j\leq j_1}a_{1, j}-\sum_{j> j'}a_{1, j}$ do not change when $A$ is replaced by $_{p}A$. Therefore we have 
		\begin{eq}
			[_{p}D]_{\{(1,1)\}}\ast [_{p}A]_{\Delta}=G_{\Delta}(v, v^{-p})[_{p}A]_{\Delta}+\sum_{j'>j_1}G_{\Delta_{j'}}(v, v^{-p})[_{p} A]_{\Delta_{j'}}.
		\end{eq}
		
		\noindent$(b)$ If $i_1=1$, then
		\begin{align*}
			\relax [_{p}D]_{\{(1,1)\}}\ast [_{p}A]_{\Delta}=& v^{\sum_{j \leq j_1}a_{1,j}+p-\sum_{j> j_1}a_{1,j}}(1-2v^{-2(\sum_{j \leq j_1}a_{1,j}+p)})[_{p}A]_{\Delta}\\
			& +v^{\sum_{j \leq j_1}a_{1,j}+p-\sum_{j>j_2}a_{1j}}(1-v^{-2a_{1,j_1}})[_{p}A]_{\Delta\setminus\{(1,j_1)\}}\\
			& +\sum_{j'>j_2, a_{1,j'}>0}v^{\sum_{j \leq j_1}a_{1,j}+p-\sum_{j>j'}a_{1,j}}[_{p}A]_{\Delta_{j'}},
		\end{align*}
		where $\Delta_{j'}=\{(1,j'), (i_2, j_2), \cdots, (i_k, j_k)\}$. Thus we set $G_{\Delta'_i}$ as
		\begin{align*}
			G_{\Delta}(v,v') & = v^{\sum_{1<j\leq j_1}a_{1j}-\sum_{j> j_1}a_{1j}}(1-2v^{-2\sum_{j\leq j_1}a_{1j}}v'^{2})v^{a_{11}}v'^{-1},\\
			G_{\Delta\setminus\{(1,j_1)\}}(v,v') & =v^{\sum_{1<j\leq j_1}a_{1j}-\sum_{j>j_2}a_{1j}}(1-v^{-2a_{1j_1}})v^{a_{11}}v'^{-1},\\
			G_{\Delta_{j'}}(v,v') & = v^{\sum_{1<j\leq j_1}a_{1j}-\sum_{j>j'}a_{1j}-a_{11}}(1-v^{-2\sum_{j\leq j_1}a_{1j}}v'^{2})v^{a_{11} }v'^{-1}. 
		\end{align*}
		Hence, we have
		$$
		[_{p}D]_{\{(1,1)\}}\ast [_{p}A]_{\Delta}= G_{\Delta}(v,v^{-p})[_{p}A]_{\Delta}+G_{\Delta\setminus\{(1,j_1)\}}[_{p} A]_{\Delta\setminus\{(1,j_1)\}}+\sum_{j'>j_2}G_{\Delta_{j'}}(v,v^{-p})[_{p} A]_{\Delta_{j'}}.
		$$
		
		\noindent$(c)$ If $\Delta=\emptyset$, we have 
		\begin{align*}
			\relax [_{p}D]_{\{(1,1)\}}\ast [_{p}A]_{\Delta}=& \sum\limits_{\substack{t\in [1,n],\\ a_{1, t}>0 \text{ for } t>1}}v^{-\sum_{j>t}a_{1,j}}[_{p}A]_{\{(1, t)\}}
		\end{align*}
		Thus we set $G_{\{(1, t)\}}(v,v')=v^{-\sum_{j>t}a_{1,j}}$ and  
		$$
		[_{p}D]_{\{(1,1)\}}\ast [_{p}A]_{\Delta}=\sum\limits_{\substack{t\in [1,n],\\ a_{1, t}>0 \text{ for } t>1}} G_{\{(1, t)\}}(v,v^{-p})[_{p}A]_{\{(1,t)\}}.
		$$
		Therefore $[_{p}D]_{\{(1,1)\}}\ast [_{p}A]_{\Delta}$ is of the required form.
		
		We define $\Psi(A,\Delta)$ as follows. 
		$$
		\Psi(A, \Delta)=\sum_{1\leq i<j \leq n}\frac{(j-i)(j-i+1)}{2}(a_{ij}+a_{ji})+\sum_{(i,j)\in \Delta} \frac{(j-i)(j-i+1)}{2\sharp  \Delta_{i+j}}+\sharp \Delta,
		$$
		where $\Delta_{k}=\{(i,j)\in \Delta | i+j=k\}$. It is clear that
		if $(A',\Delta') < (A,\Delta)$ then $\Psi(A',\Delta')<\Psi(A, \Delta)$. Thus we can do induction on $\Psi(A,\Delta)$ and the remaining proof is the same as the one of \cite[Proposition 4.2]{BLM}.
	\end{proof}
	
	Let $v'=1$, we can obtain an associative $\mathcal{A}$-algebra $\mathcal{K}$ and $\{[A]_{\Delta}\ |\ (A,\Delta)\in \tilde{\Xi}\}$ form its basis. The product of $\mathcal{K}$ is given by
	\begin{align*}
		\prod_{i=1}^{k}[A_i]_{\Delta_i}=\left\{\begin{array}{cc}
			\sum^{m}_{i=1}G_{\Delta'_i}(v, 1)[Z_i]_{\Delta'_i}   &  \co(A_i)=\ro(A_{i+1})\text{ for } i=1, \cdots, k-1,\\
			0   & \text{ otherwise }.
		\end{array}
		\right.
	\end{align*}
	
	\subsection{Multiplication formulas in $\mathcal{K}$}\ 
	By Proposition \ref{prop 4.1.6}, Proposition \ref{prop 4.1.7}, Proposition \ref{prop 4.1.8} and Proposition \ref{prop 5.1.1}, we have the following multiplication formulas in $\mathcal{K}$.
	\begin{cor}\label{cor 5.2.1}
		Assume $h\in [1, n-1]$ and $R\geq 0$ is an integer. Take $P_R=\{\mathbf{p}=(p_1,\cdots, p_n)\in \mathbb{N}^n\ |\ \sum_{i}p_i=R \}$. For $(B, \emptyset), (A,\Delta) \in \tilde{\Xi}$ satisfied $\ro(A) = \co(B)$, where $B - RE_{h, h+1}$ is a diagonal matrix and $\Delta = \{(i_1, j_1), \cdots, (i_k, j_k)\}$, let $\kappa_{A}(\mathbf{p})=\sum_{j\geq u}a_{h, j}p_u-\sum_{j>u}a_{h+1, j}p_u+\sum_{u<u'}p_{u}p_{u'}$, $X_{\mathbf{p}}=A+\sum_{u}p_u(E_{h, u}-E_{h+1, u})$ where $\mathbf{p}\in P_R$.
		
		\noindent$(a)$ If for any $t\in [1,k]$, we have $i_t \neq h, h+1$, then
		$$[B]_{\emptyset}\cdot [A]_\Delta = \sum\limits_{\mathbf{p}}
		v^{\kappa_{A}(\mathbf{p})}\prod^{n}_{u=1}\overline{[a_{h, u}+p_u, p_u]}_v[X_{\mathbf{p}}]_{\Delta},
		$$
		where $\mathbf{p}$ runs over the sequences in $\{\mathbf{p}\in P_R\ |\ \text{for any $u\neq h+1$, } a_{h+1,u}\geq p_u\}$.
		
		\noindent$(b)$ If there exists $m$ such that $i_m = h$ and $i_{m+1} \neq h+1$, then
		\begin{align*}
			\relax [B]_\emptyset\cdot [A]_{\Delta}  = &\sum\limits_{\mathbf{p}}v^{\kappa_{A}(\mathbf{p})-\sum_{j_{m+1}<u\leq j_m}p_u}\prod_{u\neq j_m}\overline{[a_{h, u}+p_u-\delta_{u j_m}, p_u]}_v[X_{\mathbf{p}}]_{\Delta},
		\end{align*}
		where $\mathbf{p}$ runs over the sequences in $\{\mathbf{p}\in P_R\ |\ \text{for any $u\neq h+1$, } a_{h+1,u}\geq p_u\}$.
		
		\noindent$(c)$ If there exists $m$ such that $i_{m-1} \neq h$ and $i_{m} = h+1$, then
		\begin{align*}
			& \relax [B]_\emptyset\cdot [A]_{\Delta}\\
			= &\sum\limits_{\mathbf{p}}
			v^{\kappa_{A}(\mathbf{p})}\prod^{n}_{u=1}\overline{[a_{h, u}+p_u, p_u]}_v[X_{\mathbf{p}}]_{\Delta} \\
			& + \sum\limits_{\mathbf{p}, p_{j_m}>0} v^{\kappa_{A}(\mathbf{p})-\sum_{j_{m+1}< j \leq j_m}(a_{h+1, j}-p_j)}\prod^{n}_{u=1}\overline{[a_{h, u}+p_u-\delta_{uj_m}, p_u-\delta_{uj_m}]}_v[X_{\mathbf{p}}]_{\Delta_c}\\
			& +\sum\limits_{\substack{\mathbf{p}, p_{j_m}>0,\\ j_{m+1}<t< j_m}}v^{\kappa_{A}(\mathbf{p})-\sum_{t< j \leq j_m}(a_{h+1, j}-p_j)}\prod^{n}_{u=1}\overline{[a_{h, u}+p_u-\delta_{uj_m}, p_u-\delta_{uj_m}]}_v[X_{\mathbf{p}}]_{\Delta_t},
		\end{align*}
		where $\mathbf{p}$ runs over the sequences in $\{\mathbf{p}\in P_R\ |\ \text{for any $u \neq h+1$, } a_{h+1,u}\geq p_u\}$ and 
		\begin{eq}
			\Delta_c&= \{(i_1, j_1), \cdots, (i_{m-1}, j_{m-1}), (h, j_m), (i_{m+1}, j_{m+1}), \cdots, (i_k, j_k)\},\\
			\Delta_t& = \{(i_1, j_1), \cdots, (i_{m-1}, j_{m-1}), (h, j_m), (h+1, t), (i_{m+1}, j_{m+1}), \cdots, (i_k, j_k)\}.
		\end{eq}
		
		\noindent$(d)$ If there exists $m$ such that $i_m=  h$ and $i_{m+1} = h+1$, then
		\begin{align*}
			&    \relax [B]_\emptyset \cdot [A]_{\Delta} \\
			= &\sum\limits_{\mathbf{p}}v^{\kappa_{A}(\mathbf{p})-\sum_{j_{m+1}<u\leq j_m}p_u}\prod_{u\neq j_m}\overline{[a_{h, u}+p_u, p_u]}_v\overline{[a_{h, u}+p_{j_m}-1, p_{j_m}]}_v[X_{\mathbf{p}}]_{\Delta}\\
			& + \sum\limits_{\mathbf{p},p_{j_{m+1}}>0}v^{\kappa_{A}(\mathbf{p})-\sum_{j_{m+2}<j\leq j_{m+1}}(a_{h+1, j}-p_j)}(1-v^{-2p_{j_{m+1}}})\prod^{n}_{u=1}\overline{[a_{h, u}+p_u, p_u]}_v[X_{\mathbf{p}}]_{\Delta_s}\\
			& + \sum\limits_{\substack{\mathbf{p} \in P_R, p_{j_{m+1}}>0, \\j_{m+2}<t<j_{m+1}}} v^{\kappa_{A}(\mathbf{p})-\sum_{t<j\leq j_{m+1}}(a_{h+1, j}-p_j)}(1-v^{-2p_{j_{m+1}}})\prod^{n}_{u=1}\overline{[a_{h, u}+p_u, p_u]}_v[X_{\mathbf{p}}]_{\Delta_t},
		\end{align*}
		where $\mathbf{p}$ runs over the sequences in $\{\mathbf{p}\in P_R\ |\ \text{for any $u \neq h+1$, } a_{h+1,u}\geq p_u\}$ and
		\begin{eq}
			\Delta_s&= \{(i_1, j_1), \cdots, (i_{m-1}, j_{m-1}), (h, j_m), (i_{m+2}, j_{m+2}), \cdots, (i_k, j_k)\}, \\
			\Delta_t &= \{(i_1, j_1), \cdots, (i_{m-1}, j_{m-1}), (h, j_m), (h+1, t), (i_{m+2}, j_{m+2}), \cdots, (i_k, j_k)\}.\\
		\end{eq}
	\end{cor}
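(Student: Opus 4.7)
The plan is to derive the formulas in $\mathcal{K}$ directly from those of Proposition \ref{prop 4.1.7} by applying the stabilization machinery of Proposition \ref{prop 5.1.1}. Concretely, for $p \ge p_0$ sufficiently large, $({}_pB, \emptyset), ({}_pA, \Delta) \in \Xi_{n|n, d+np}$, so Proposition \ref{prop 4.1.7} computes $[{}_pB]_{\emptyset} \ast [{}_pA]_{\Delta}$ explicitly in $\mathcal{MS}_{n, d+np}$. On the other hand, Proposition \ref{prop 5.1.1} guarantees that this product has the form $\sum_i G_{\Delta'_i}(v, v^{-p})[{}_pZ_i]_{\Delta'_i}$ with $G_{\Delta'_i} \in \mathfrak{R}$, and that the product in $\mathcal{K}$ is obtained by the specialization $v' = v^{-p} \mapsto 1$. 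Hence the corollary reduces to rewriting the coefficients of Proposition \ref{prop 4.1.7}, evaluated at ${}_pA$, as polynomials in $v' = v^{-p}$ and then setting $v' = 1$.

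Two invariance observations make the rewriting clean. First, the exponent $\kappa_A(\mathbf{p})$ is unchanged under $A \mapsto {}_pA$: the stabilization shifts $a_{h,h}$ and $a_{h+1,h+1}$ each by $p$, contributing $p\sum_u p_u(\mathbbm{1}[h\ge u] - \mathbbm{1}[h{+}1 > u])$, and both indicators equal $1$ exactly when $u \le h$, so the contributions cancel. Second, among the factors $\overline{[a_{h,u}+p_u-\delta_{u,j_m}, p_u]}_v$, only the $u = h$ factor is sensitive to the shift: it becomes $\prod_{i=1}^{p_h} \frac{(v')^2 v^{-2(a_{h,h}+p_h-i+1)} - 1}{v^{-2i}-1}$, which is manifestly in $\mathfrak{R}$ and specializes at $v' = 1$ to the quantum binomial with the stabilized (possibly non-positive) $a_{h,h}$. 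An analogous statement handles the auxiliary sums $\sum_{j_{m+1}<j\le j_m}(a_{h+1,j} - p_j)$ appearing in cases (c) and (d): the shift at $a_{h+1,h+1}$ contributes a factor $v^{-p\cdot \mathbbm{1}[j_{m+1} < h+1 \le j_m]}$ to the overall coefficient, which again specializes trivially at $v'=1$. Finally, the sum-range constraint $({}_pa)_{h+1,u} \ge p_u$ for $u = h+1$ reads $a_{h+1,h+1}+p \ge p_{h+1}$, automatic once $p \ge R - a_{h+1,h+1}$; hence the restriction at $u = h+1$ is dropped in the stabilized limit, explaining the modified summation range stated in the corollary.

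With these observations in place, one proceeds case by case: in each of (a)--(d), substitute $A \mapsto {}_pA$ into the corresponding formula of Proposition \ref{prop 4.1.7}, rewrite each appearance of a shifted diagonal entry as $v'$-dependent data, and specialize $v' = 1$. The uniqueness of the expansion in Proposition \ref{prop 5.1.1} (the coefficients $G_{\Delta'_i}$ are determined by the stabilized expression) then identifies the resulting sum with $[B]_\emptyset \cdot [A]_{\Delta}$ in $\mathcal{K}$.

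The main obstacle is case (d), where the third and fourth terms of Proposition \ref{prop 4.1.7}(d) both carry the factor $(1 - v^{-2p_{j_{m+1}}})$, and the interaction between the constraint relaxation at $u = h+1$ and the $\Delta$-dependent terms $[X_{\mathbf{p}}]_{\Delta_s}$, $[X_{\mathbf{p}}]_{\Delta_t}$ requires careful bookkeeping to confirm that no cancellations or extra contributions arise from the newly admitted values of $p_{h+1}$ after stabilization. Once this case is verified, cases (a)--(c) follow by the same mechanism with strictly fewer moving parts.
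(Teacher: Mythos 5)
Your proposal is correct and follows exactly the route the paper intends: the paper gives no written proof of Corollary \ref{cor 5.2.1}, merely asserting that it follows from Proposition \ref{prop 4.1.7} together with the stabilization of Proposition \ref{prop 5.1.1}, and your derivation (substituting $A\mapsto {}_pA$, checking that $\kappa_A(\mathbf{p})$ and the off-diagonal quantum binomials are $p$-independent, isolating the $v'=v^{-p}$ dependence in the $u=h$ and $u=h+1$ diagonal entries, and noting that the constraint $a_{h+1,h+1}+p\geq p_{h+1}$ becomes vacuous, which explains the relaxed summation range) is precisely the omitted argument. The invariance checks you record are accurate, so this fills in the paper's implicit proof rather than deviating from it.
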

	
	\begin{cor}\label{cor 5.2.2}
		Assume $h\in [1, n-1]$ and $R\geq 0$ is an integer. Take $P_R=\{\mathbf{p}=(p_1,\cdots, p_n)\in \mathbb{N}^n\ |\ \sum_{i}p_i=R \}$. For $(C, \emptyset), (A,\Delta) \in \Xi_{n|n,d}$ satisfied $\ro(A) = \co(C)$, where $C - RE_{h+1, h}$ is a diagonal matrix and $\Delta = \{(i_1, j_1), \cdots, (i_k, j_k)\}$, let $\kappa_{A}'(\mathbf{p})=\sum_{j\leq u}a_{h, j}p_u-\sum_{j<u}a_{h+1, j}p_u+\sum_{u<u'}p_{u}p_{u'}$, $X'_{\mathbf{p}}=A+\sum_{u}p_u(E_{h+1, u}-E_{h, u})$ where $\mathbf{p}\in P_R$.
		
		\noindent$(a)$ If for any $t\in [1,n]$, we have $i_t \neq h, h+1$, then
		$$[C]_{\emptyset}\cdot [A]_\Delta = \sum\limits_{\mathbf{p}} v^{\kappa_{A}'(\mathbf{p})}\prod^{n}_{u=1}\overline{[a_{h+1, u}+p_u, p_u]}_v[X_{\mathbf{p}}]_{\Delta},$$
		where $\mathbf{p}$ runs over the sequences in $\{\mathbf{p}\in P_R\ |\ \text{for any $u\neq h$, } a_{h,u}\geq p_u\}$.
		
		\noindent$(b)$ If there exists $m$ such that $i_m= h$ and $i_{m+1} \neq h+1$, we have
		\begin{align*}
			& \relax [C]_\emptyset\cdot [A]_{\Delta}\\
			= & \sum\limits_{\mathbf{p}} v^{\kappa_{A}'(\mathbf{p})-\sum_{j_{m+1}<u\leq j_m}p_u}\prod^{n}_{u=1}\overline{[a_{h+1, u}+p_u, p_u]}_v[X'_{\mathbf{p}}]_{\Delta} \\
			& +\sum\limits_{\mathbf{p}, p_{j_m}>0} v^{\kappa_{A}'(\mathbf{p})-\sum_{j_{m+1}<j\leq j_m}a_{h+1,j}} \prod^{n}_{u=1}\overline{[a_{h+1, u}+p_u-\delta_{u j_m}, p_u-\delta_{u j_m}]}_v[X'_{\mathbf{p}}]_{\Delta_b} \\  
			& + \sum\limits_{\substack{\mathbf{p}\\ j_{m+1}<t<j_m, p_t>0}}v^{\kappa_{A}'(\mathbf{p})-\sum_{j_{m+1}<j\leq t}a_{h+1,j}}\prod^{n}_{u=1}\overline{[a_{h+1, u}+p_u-\delta_{u t}, p_u-\delta_{u t}]}_v[X'_{\mathbf{p}}]_{\Delta_t},
		\end{align*}
		where $\mathbf{p}$ runs over the sequences in $\{\mathbf{p}\in P_R\ |\ \text{for any $u \neq h$, } a_{h,u}\geq p_u\}$ and 
		\begin{eq}
			\Delta_b &= \{(i_1, j_1), \cdots, (i_{m-1}, j_{m-1}), (h+1, j_m), (i_{m+1}, j_{m+1}), \cdots, (i_k, j_k)\}, \\
			\Delta_t &= \{(i_1, j_1), \cdots, (i_{m-1}, j_{m-1}), (h, j_m), (h+1, t), (i_{m+1}, j_{m+1}), \cdots, (i_k, j_k)\}.
		\end{eq}
		
		\noindent$(c)$ If there exists $m$ such that $i_{m-1} \neq h$ and $i_{m} = h+1$, then
		\begin{align*}
			\relax [C]_\emptyset\cdot [A]_{\Delta}= & \sum\limits_{\mathbf{p}} v^{\kappa_{A}'(\mathbf{p})}\prod^{n}_{u=1}\overline{[a_{h+1, u}+p_u-\delta_{u j_m}, p_u]}_v[X'_{\mathbf{p}}]_{\Delta},
		\end{align*}
		where $\mathbf{p}$ runs over the sequences in $\{\mathbf{p}\in P_R\ |\ \text{for any } u\neq h, a_{h,u}\geq p_u\}$.
		
		\noindent$(d)$ If there exists  $m$ such that $i_m=  h$ and $i_{m+1} = h+1$, we have
		\begin{align*}
			& \relax [C]_\emptyset\cdot [A]_{\Delta} \\
			= & \sum\limits_{\mathbf{p}} v^{\kappa_{A}'(\mathbf{p})}\prod^{n}_{u=1}\overline{[a_{h+1, u}+p_u-\delta_{u j_{m+1}}, p_u]}_v[X_{\mathbf{p}}]_{\Delta} \\
			& + \sum\limits_{\mathbf{p}, p_{j_m}>0} v^{\kappa_{A}'(\mathbf{p},j_{m+1})}(1-v^{-2a_{h+1,j_{m+1}}})\prod^{n}_{u=1}\overline{[a_{h+1, u}+p_u-\delta_{u j_m}, p_u-\delta_{u j_m}]}_v[X_{\mathbf{p}}]_{\Delta_s} \\ 
			& + \sum\limits_{\substack{\mathbf{p}\\j_{m+1}<t<j_m, p_t>0}} v^{\kappa_{A}'(\mathbf{p},t)}(1-v^{-2a_{h+1,j_{m+1}}})\prod^{n}_{u=1}\overline{[a_{h+1, u}+p_u-\delta_{u t}, p_u-\delta_{u t}]}_v[X'_{\mathbf{p}}]_{\Delta_t},
		\end{align*}
		where $\mathbf{p}$ runs over the sequences in $\{\mathbf{p}\in P_R\ |\ \text{for any } u \neq h, a_{h,u}\geq p_u\}$, $\kappa_{A}'(\mathbf{p},q)=\kappa_{A}'(\mathbf{p})-\sum_{q<j\leq j_m}a_{h+1, j}$ and 
		\begin{eq}
			\Delta_s &= \{(i_1, j_1), \cdots, (i_{m-1}, j_{m-1}), (h+1, j_m), (i_{m+2}, j_{m+2}), \cdots, (i_k, j_k)\},\\
			\Delta_t &=\{(i_1, j_1), \cdots, (i_{m-1}, j_{m-1}), (h, j_m), (h+1, t), (i_{m+2}, j_{m+2}), \cdots, (i_k, j_k)\}.
		\end{eq}
	\end{cor}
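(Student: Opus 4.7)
The plan is to derive Corollary \ref{cor 5.2.2} from Proposition \ref{prop 4.1.8} via the stabilization procedure of Proposition \ref{prop 5.1.1}, in direct parallel with the derivation of Corollary \ref{cor 5.2.1} from Proposition \ref{prop 4.1.7}. Concretely, I would apply Proposition \ref{prop 4.1.8} to the stabilized pair $({}_P C,\emptyset)$ and $({}_P A,\Delta)$ for an integer $P$ large enough that both lie in $\Xi_{n|n}$, express each resulting coefficient in the form $G_{\Delta'}(v,v^{-P})$ afforded by Proposition \ref{prop 5.1.1}, and then set $v'=1$ to read off the multiplication in $\mathcal{K}$.

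The key observation is that the shift $A\mapsto{}_P A=A+PI$ modifies only the diagonal entries $a_{h,h}$ and $a_{h+1,h+1}$, and the net $P$-dependence cancels in every exponent of $v$ appearing in Proposition \ref{prop 4.1.8}. Indeed, in
\[
\kappa_A'(\mathbf{p})=\sum_{j\leq u}a_{h+1,j}p_u-\sum_{j<u}a_{h,j}p_u+\sum_{u<u'}p_up_{u'},
\]
the two diagonal shifts contribute $P\sum_{u\geq h+1}p_u-P\sum_{u>h}p_u=0$, so $\kappa'_{{}_P A}(\mathbf{p})=\kappa_A'(\mathbf{p})$, and the correction exponent $\sum_{j_{m+1}<j\leq j_m}a_{h+1,j}$ in cases (b) and (d) is similarly unaffected. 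The only genuine $v'$-dependence enters through the quantum binomial $\overline{[a_{h+1,u}+p_u,p_u]}_v$ at $u=h+1$, which after stabilization reads $\overline{[a_{h+1,h+1}+P+p_{h+1},p_{h+1}]}_v$; extracting the $v^{-P}$-factor and specializing $v'=1$ returns $\overline{[a_{h+1,h+1}+p_{h+1},p_{h+1}]}_v$, exactly the factor in Corollary \ref{cor 5.2.2}. The summation constraint $a_{h,u}\geq p_u$ of Proposition \ref{prop 4.1.8} becomes automatic at $u=h$ when $P$ is large, so the stable support is precisely ``$a_{h,u}\geq p_u$ for every $u\neq h$'', as stated.

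With these observations, cases (a) and (c) are immediate. Cases (b) and (d) require the further check that each $\Delta$-correction contribution--those indexed by $\Delta_b$ (resp.\ $\Delta_s$) and $\Delta_t$--individually stabilizes, with no residual $v'$-dependence surviving in the factor $(1-v^{-2a_{h+1,j_{m+1}}})$ of case (d). The main obstacle will be the bookkeeping in case (d): one must verify that the $P$-shifts inside the shifted binomials $\overline{[a_{h+1,u}+p_u-\delta_{uj_m},p_u-\delta_{uj_m}]}_v$ cancel against those in the prefactor $v^{\kappa'_A(\mathbf{p},j_{m+1})}$ so that both the $\Delta_s$- and $\Delta_t$-terms survive the specialization $v'=1$ with the stated coefficients. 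Once this is confirmed in all four cases, Proposition \ref{prop 5.1.1} delivers the multiplication formulas of Corollary \ref{cor 5.2.2}, completing the proof.
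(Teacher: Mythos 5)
Your proposal is correct and matches the paper's treatment: the paper offers no separate argument for Corollary \ref{cor 5.2.2} beyond the remark that it follows from Proposition \ref{prop 4.1.8} combined with the stabilization of Proposition \ref{prop 5.1.1}, which is precisely the derivation you carry out. Your explicit checks — that the $P$-shifts of $a_{h,h}$ and $a_{h+1,h+1}$ cancel in $\kappa_A'$ and in the correction exponents, that the only surviving $v'$-dependence sits in the binomial at $u=h+1$ and specializes correctly at $v'=1$, and that the constraint $a_{h,h}\geq p_h$ becomes vacuous so the stable support is $a_{h,u}\geq p_u$ for $u\neq h$ — supply exactly the bookkeeping the paper leaves implicit.
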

	
	Recall the definition of $\theta_{A}'(i_p, j_p)$ and $\rho'(A,\Delta')$ as presented in Proposition \ref{prop 4.1.6}, we deduce the following corollary.
	\begin{cor}\label{cor 5.2.3}
		Assume $h\in [1, n]$. For $(D,\{h, h\}), (A, \Delta) \in \tilde{\Xi}$ satisfied $\ro(A)= \co(D)$ and $D$ is a diagonal matrix, 
		where $\Delta = \{(i_1, j_1), \cdots, (i_k, j_k)\}$. 
		
		\noindent$(a)$ If there is $(i_m,j_m)\in \Delta$ such that $i_{m-1}<h<i_m$, then
		\begin{align*}
			\relax [D]_{\{(h,h)\}}\cdot [A]_{\Delta}=\sum\limits_{\Delta'} v^{\sum_{\{(i,j)\}\leq \Delta'}a_{i, j}-\sum_{\{(i,j)\}\leq \Delta}a_{i, j}}\prod^{m-1}_{p=1}\theta_{A}'(i_{p}, j_{p})\rho'(A,\Delta') [A]_{\Delta'},
		\end{align*}
		where $\Delta'$ runs over the set
		$$
		\left\{ \Delta'=\{(i'_1, j'_1), \cdots, (i'_{l}, j'_{l})\}\ \bigg| \ \begin{array}{l}
			(1)  (i_{t}, j_{t})\in \Delta'\text{ for all } t \in [m, k] \\
			(2)  \text{ if } (i', j')\in \Delta'\setminus \Delta, \text{ then } i'\leq h
		\end{array}\right\}.$$
		
		\noindent$(b)$ If there is $i_m=h$, then
		\begin{align*}
			\relax [D]_{\{(h,h)\}}\cdot [A]_{\Delta}=\sum\limits_{\Delta'}v^{\sum_{\{(i,j)\}\leq \Delta'}a_{i, j}-\sum_{\{(i,j)\}\leq \Delta}a_{i, j}}\prod^{m}_{p=1}\theta_{A}'(i_{p}, j_{p})[A]_{\Delta'},
		\end{align*}
		where $\Delta'$ runs over the set
		$$
		\left\{ \Delta'=\{(i'_1, j'_1), \cdots, (i'_{l}, j'_{l})\}\ \bigg| \ \begin{array}{l}
			(1)  (i_{t}, j_{t})\in \Delta'\text{ for all } t \in [m+1, k] \\
			(2)  \text{ if } (i', j')\in \Delta'\setminus \Delta, \text{ then } i'\leq h
		\end{array}\right\}.$$
		
		\noindent$(c)$ If $i_k<h$, we have 
		\begin{align*}
			\relax [D]_{\{(h,h)\}}\cdot [A]_{\Delta}=\sum\limits_{\Delta'} v^{\sum_{\{(i,j)\}\leq \Delta'}a_{i, j}-\sum_{\{(i,j)\}\leq \Delta}a_{i, j}-\sum_{i_k<i\leq h, j}a_{i j}}\prod^{k}_{p=1}\theta_{A}'(i_{p}, j_{p}) [A]_{\Delta'},
		\end{align*}
		where $\Delta'$ runs over the set $\{ \Delta'=\{(i'_1, j'_1), \cdots, (i'_{l}, j'_{l})\}\ |\ i'_l\leq h\}$.
		
		\noindent$(d)$ If $\Delta=\emptyset$, we have
		\begin{align*}
			\relax [D]_{\{(h,h)\}}\cdot [A]_{\emptyset}=\sum\limits_{t\in [1,n], a_{ht}>0} v^{-\sum_{i\leq h,j>t}a_{i, j}} [A]_{\{(h,t)\}}.
		\end{align*}
	\end{cor}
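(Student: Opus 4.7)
The plan is to derive the formulas by applying Proposition \ref{prop 4.1.6} inside the specialized algebras $\mathcal{MS}_{n,d}$ (for a sequence of values of $d$) and then passing to the stabilized algebra $\mathcal{K}$ via Proposition \ref{prop 5.1.1}. Concretely, for each $(A,\Delta),(D,\{(h,h)\})\in \tilde{\Xi}$ satisfying $\ro(A)=\co(D)$, I would choose $p$ large enough so that $(_{p}A,\Delta)$ and $(_{p}D,\{(h,h)\})$ lie in $\Xi_{n|n,d}$ for $d=\sum_{ij}(_{p}A)_{ij}$, and compute $[_{p}D]_{\{(h,h)\}}\ast [_{p}A]_{\Delta}$ in $\mathcal{MS}_{n,d}$ using Proposition \ref{prop 4.1.6}. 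By the stabilization result, this product is a polynomial expression in $v$ and $v^{-p}$, and the product in $\mathcal{K}$ is obtained by setting $v^{-p}=1$.

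The main technical step is to verify, case by case, that the exponents and rational expressions appearing in Proposition \ref{prop 4.1.6} split cleanly into a $p$-independent factor and factors that are manifestly powers of $v^{-p}$ (hence become $1$ in $\mathcal{K}$). Because $_{p}A=A+pI$ only alters the diagonal entries, the shift contributes to sums of the form $\sum_{i\leq h}(_{p}A)_{ii}$ or $\sum_{j\leq j_{p}}(_{p}A)_{i_{p},j}$ only when a diagonal entry falls inside the indicated range. I would bookkeep these contributions and show that, after combining with the $v^{-d(A,\Delta)+r(A,\Delta)}$ normalization implicit in $[A]_{\Delta}$, the residual $p$-dependent weights in the coefficients $\theta_{A}'(i_{p},j_{p})$ and $\rho'(A,\Delta')$ are exactly of the type absorbed by the stabilization prescription $v'\mapsto 1$. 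The four cases (a)--(d) of the corollary correspond directly to the four cases (a)--(d) of Proposition \ref{prop 4.1.6}, and the combinatorial description of the surviving $\Delta'$ is the same in both.

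The main obstacle I anticipate is the careful tracking of the exponents in cases (a) and (b), where products $\prod_{p=1}^{m-1}\theta_{A}'(i_{p},j_{p})$ (respectively $\prod_{p=1}^{m}$) appear: each factor $\theta_{A}'$ is of mixed type ($v^{\gamma_{A}(p)}(1-2v^{-2\sum a_{i_{p},j}})$ or $v^{\gamma_{A}(p)}(1-v^{-2a_{i_{p},j_{p}}})$), and one must check that the $p$-shift affects only the innermost exponent $-2\sum_{j\leq j_{p}}a_{i_{p},j}$ through a uniform $v^{-2p}$ contribution whenever the pair $(i_{p},i_{p})$ lies in the range $j\leq j_{p}$, while $\gamma_{A}$ itself is $p$-independent because it differences two sums whose $p$-contributions cancel. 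Once this is confirmed, the formulas of Proposition \ref{prop 4.1.6} specialize verbatim. Case (c) (with $i_{k}<h$) requires the additional exponent correction $-\sum_{i_{k}<i\leq h,\,j}a_{ij}$, which comes from the comparison of $r(\,\cdot\,,\Delta')$ and $r(\,\cdot\,,\Delta)$ under the stabilization; case (d) reduces to Proposition \ref{prop 4.1.6}(d) verbatim since no $\theta_{A}'$ factors appear. With these verifications, the corollary follows.
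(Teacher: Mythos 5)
Your proposal matches the paper's own (very brief) justification of this corollary: the formulas are deduced by applying Proposition \ref{prop 4.1.6} to the shifted pairs $({}_{p}D,\{(h,h)\})$, $({}_{p}A,\Delta)$ for $p$ large, invoking the stabilization of Proposition \ref{prop 5.1.1}, and specializing $v'=v^{-p}$ to $1$, so the approach is essentially identical. One minor inaccuracy in your bookkeeping: $\gamma_{A}$ is \emph{not} in general unchanged under $A\mapsto A+pI$, since each diagonal entry in the row range $(i_{p-1},i_{p}]$ contributes $+p$ or $-p$ to $\gamma_{A}$ according to whether its column index is $\leq j_{p}$ or $>j_{p}$ and these need not cancel; however, the residual factor is a pure power of $v^{\pm p}=(v')^{\mp 1}$, which is exactly of the type absorbed by the specialization $v'\mapsto 1$ as you note elsewhere, so the conclusion is unaffected.
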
 
	
	By Proposition \ref{prop 4.2.1}, Corollary \ref{cor 5.2.1}, Corollary \ref{cor 5.2.2} and Corollary \ref{cor 5.2.3} we have the following proposition.
	\begin{prop}\label{prop 5.2.4}
		The algebra $\mathcal{K}$ can be generated by elements 
		$[B]_{\emptyset}$, $[C]_{\emptyset}$, $[D]_{\emptyset}$ and $[D]_{\{(1,1)\}}$ such that $D$, $B-E_{i,i+1}$ and $C-E_{i+1,i}$ are diagonal matrices for some $i\in [1,n-1]$.
	\end{prop}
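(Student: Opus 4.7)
The plan is to reduce the statement to Proposition \ref{prop 4.2.1} via the stabilization procedure, using induction on a partial order on $\tilde{\Xi}$. First, extend the partial order of Section 4.2 to all of $\tilde{\Xi}$: for $(A,\Delta), (A',\Delta')\in \tilde{\Xi}$ with $\co(A)=\co(A')$ and $\ro(A)=\ro(A')$, declare $(A',\Delta')\le (A,\Delta)$ under the same inequalities on partial sums $\sum_{r\le i,\, s\ge j}a_{r,s}$ and $\sum_{r\ge i,\, s\le j}a_{r,s}$. Because these sums involve only off-diagonal entries when $i\neq j$, allowing the diagonal to be arbitrary in $\mathbb{Z}$ causes no difficulty, and the function $\Psi(A,\Delta)$ appearing in Proposition \ref{prop 5.1.1} continues to satisfy $\Psi(A',\Delta')<\Psi(A,\Delta)$ whenever $(A',\Delta')<(A,\Delta)$.

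Next, I proceed by induction on this order. The base cases $[D]_{\emptyset}$ and $[D]_{\{(1,1)\}}$ for $D$ diagonal are generators by hypothesis. For the inductive step, fix $(A,\Delta)\in \tilde{\Xi}$. Choose $p$ large enough that $({}_pA,\Delta)\in \Xi_{n|n,d}$ where $d = \sum_{i,j}({}_pA)_{ij}$. Proposition \ref{prop 4.2.1} produces an explicit product $\Pi_p$ of the $\mathcal{MS}_{n,d}$-generators $[B]_\emptyset, [C]_\emptyset, [D]_\emptyset, [D']_{\{(1,1)\}}$ whose expansion is $v^{c}[{}_pA]_\Delta + \text{lower terms}$ with $c\in \mathbb{Z}$ independent of $p$. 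Each matrix appearing in $\Pi_p$ is of the form ${}_p(-)$ for a fixed underlying element of $\tilde{\Xi}$; applying Proposition \ref{prop 5.1.1} to this product and specializing $v'=1$ yields the same relation in $\mathcal{K}$:
\begin{equation*}
\Pi = v^{c}[A]_\Delta + \sum_{(A',\Delta')<(A,\Delta)} g_{A',\Delta'}(v)[A']_{\Delta'},
\end{equation*}
where $\Pi$ is a product in the generators listed in the proposition. By induction, each $[A']_{\Delta'}$ in the lower terms lies in the subalgebra $T$ generated by those elements, so $[A]_\Delta\in T$ as well.

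The main obstacle is confirming that the leading-term analysis in Proposition \ref{prop 4.2.1} transfers faithfully through stabilization. Specifically, in the $\mathcal{MS}_{n,d}$-proof the coefficient of the leading term $[{}_pA]_\Delta$ comes from products of factors of the shape $v^{\sum_{i\le h,\, j\le j_1}a_{i,j}-\sum_{i\le h,\, j>j_1}a_{i,j}}$ and related expressions (see parts (a)--(b) of the procedure in Proposition \ref{prop 4.2.1}), each of which involves off-diagonal entries only, hence is unchanged by the shift $A\mapsto {}_pA$. Moreover, the multiplication formulas in $\mathcal{K}$ given by Corollaries \ref{cor 5.2.1}, \ref{cor 5.2.2}, \ref{cor 5.2.3} mirror Propositions \ref{prop 4.1.7}, \ref{prop 4.1.8}, \ref{prop 4.1.6}, so the lower-order structure is preserved. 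A secondary point is that the constraint $d'_{1,1}>0$ on the generator $[D']_{\{(1,1)\}}$ appearing in Proposition \ref{prop 4.2.1} is automatic after the shift by $p$; in the $\mathcal{K}$-statement the generators $[D]_{\{(1,1)\}}$ are allowed for all diagonal $D$ with $(D,\{(1,1)\})\in \tilde{\Xi}$, so no additional constraint is needed. The induction then closes and $T=\mathcal{K}$.
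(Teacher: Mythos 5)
Your proposal is correct and is essentially the argument the paper intends: the paper simply cites Proposition \ref{prop 4.2.1} together with Corollaries \ref{cor 5.2.1}--\ref{cor 5.2.3}, and you have supplied the natural elaboration (extend the partial order and $\Psi$ to $\tilde{\Xi}$, transfer the leading-term identity of Proposition \ref{prop 4.2.1} through the stabilization of Proposition \ref{prop 5.1.1}, and induct). The only cosmetic point is that independence of the leading exponent from $p$ is not actually needed: since the leading coefficient is a power of $v$ for every large $p$, the corresponding $G_{\Delta}(v,v')$ must be a monomial in $v$ and $v'$, hence nonzero after setting $v'=1$, which is all the induction requires.
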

	We can introduce a $\mathcal{A}$-linear map $\eta: \mathcal{K}\to \mathcal{MS}_{n,d}$ as follows.
	\begin{align*}
		\eta([A]_{\Delta})=\left\{\begin{array}{cc}
			[A]_{\Delta}  &  \text{if } (A,\Delta)\in \Xi_{n|n,d},\\
			0 & \text{otherwise}.
		\end{array}
		\right.
	\end{align*}
	\begin{prop}\label{prop 5.2.5}
		$\eta$ is a surjective algebra homomorphism.
	\end{prop}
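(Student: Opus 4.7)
Surjectivity is immediate: every basis element $[A]_\Delta$ of $\mathcal{MS}_{n,d}$ with $(A,\Delta) \in \Xi_{n|n,d}$ equals $\eta([A]_\Delta)$, and these span $\mathcal{MS}_{n,d}$.

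For the algebra homomorphism property, by Proposition \ref{prop 5.2.4} the algebra $\mathcal{K}$ is generated by the four families $[B]_{\emptyset}$, $[C]_{\emptyset}$, $[D]_{\emptyset}$, $[D]_{\{(1,1)\}}$ in which $B-E_{i,i+1}$, $C-E_{i+1,i}$, and $D$ are diagonal matrices in $\tilde{\Xi}$. Combined with associativity in both algebras and a standard induction on the length of a generator-word, it then suffices to verify $\eta(g \cdot [A]_\Delta) = \eta(g) \ast \eta([A]_\Delta)$ for each such generator $g$ and each basis element $[A]_\Delta$ of $\mathcal{K}$. If $\co(g) \neq \ro(A)$, both sides vanish. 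If $\co(g) = \ro(A)$ but one of $g$ or $[A]_\Delta$ lies outside $\Xi_{n|n,d}$ because some diagonal entry is negative, then every matrix $Z_i$ produced by Corollaries \ref{cor 5.2.1}--\ref{cor 5.2.3} inherits a negative diagonal entry (since $\ro(Z_i) = \ro(g)$ and $\co(Z_i) = \co(A)$), lies outside $\Xi_{n|n,d}$, and is killed by $\eta$, matching the vanishing right-hand side.

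The heart of the argument is the case $g, [A]_\Delta \in \Xi_{n|n,d}$, where one reads the multiplication formulas side by side. For $g = [B]_{\emptyset}$ with $B - E_{h,h+1}$ diagonal, Corollary \ref{cor 5.2.1} expresses $g \cdot [A]_\Delta$ as a sum over $\mathbf{p} \in P_R$ constrained only by $a_{h+1,u} \geq p_u$ for $u \neq h+1$, while the corresponding $\mathcal{MS}_{n,d}$-formula (Proposition \ref{prop 4.1.7}) imposes $a_{h+1,u} \geq p_u$ for \emph{all} $u$. The surplus indices, those with $p_{h+1} > a_{h+1,h+1}$, yield matrices $X_\mathbf{p}$ with a negative $(h+1,h+1)$ entry, hence outside $\Xi_{n|n,d}$; $\eta$ annihilates precisely these, and the remaining $v$-powers and Gaussian-binomial coefficients match by inspection. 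Analogous comparisons, Corollary \ref{cor 5.2.2} against Proposition \ref{prop 4.1.8} and Corollary \ref{cor 5.2.3} against Proposition \ref{prop 4.1.6}, dispose of the generators $[C]_{\emptyset}$ and $[D]_{\{(1,1)\}}$, while $[D]_{\emptyset}$ acts as an idempotent projector in both algebras and poses no difficulty.

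The main obstacle is the term-by-term bookkeeping in the subcases (c), (d) that produce the modified decorations $\Delta_c, \Delta_s, \Delta_t$: one must confirm that every surviving coefficient agrees between the two formulas, and that the terms killed by $\eta$ are exactly those whose resulting decorated matrix acquires a new negative entry and therefore leaves $\Xi_{n|n,d}$. This is a finite but delicate check across all four subcases of each generator type; once it is completed, the inductive extension yields $\eta(xy) = \eta(x) \ast \eta(y)$ on all of $\mathcal{K}$, proving that $\eta$ is a surjective algebra homomorphism.
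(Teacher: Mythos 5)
Your overall strategy coincides with the paper's: reduce via Proposition \ref{prop 5.2.4} to checking $\eta(g\cdot[A]_{\Delta})=\eta(g)\ast\eta([A]_{\Delta})$ on generators, and then argue that the terms present in $\mathcal{K}$ but absent in $\mathcal{MS}_{n,d}$ are exactly those annihilated by $\eta$. The surjectivity argument and the treatment of the ``heart'' case (both factors in $\Xi_{n|n,d}$, surplus indices $p_{h+1}>a_{h+1,h+1}$ giving a negative $(h+1,h+1)$ entry) are fine and match the paper.

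However, there is a concrete gap in your second paragraph. You claim that whenever $\co(g)=\ro(A)$ but one factor lies outside $\Xi_{n|n,d}$ because of a negative diagonal entry, \emph{every} output matrix $Z_i$ ``inherits a negative diagonal entry'' and is therefore killed by $\eta$; your parenthetical justification via $\ro(Z_i)$ and $\co(Z_i)$ does not establish anything about individual entries, and the claim itself is false in the borderline cases. If $a_{i_0,i_0}=-1$ and $B-E_{i_0,i_0+1}$ is diagonal, the term $X_{i_0,i_0}=A+E_{i_0,i_0}-E_{i_0+1,i_0}$ has $(i_0,i_0)$ entry equal to $0$ and may lie in $\Xi_{n|n,d}$; the identity survives only because the accompanying coefficient $\overline{[a_{i_0,i_0}+1,1]}_v=\overline{[0,1]}_v=0$. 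Likewise you omit the case where $[A]_{\Delta}\notin\Xi_{n|n,d}$ not because of a negative entry but because $(i_0,i_0)\in\Delta$ with $a_{i_0,i_0}=0$ (allowed in $\tilde{\Xi}$, forbidden in $\Xi_{n|n,d}$); there the relevant coefficient $\overline{[a_{i_0,i_0},1]}_v$ vanishes. The paper's proof of Proposition \ref{prop 5.2.5} splits these degenerate situations into separate subcases precisely because the vanishing mechanism alternates between ``the matrix leaves $\Xi_{n|n,d}$'' and ``the $v$-binomial coefficient is zero,'' and your argument needs the same case analysis to go through.
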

	\begin{proof}
		It is easy to see that $\eta$ is surjective. So it is enough to prove the following identity
		\begin{align*}
			\eta([B]_{\Delta'}\cdot [A]_{\Delta})=\eta([B]_{\Delta'})\ast \eta([A]_\Delta),
		\end{align*}
		where $[B]_{\Delta'}$ is one of the generators in Proposition \ref{prop 5.2.4} with $\co(B)=\ro(A)$. 
		
		If $\sum_{1\leq i,j\leq n} a_{i, j}\neq d$, then we have $\eta([B]_{\Delta'}\cdot [A]_{\Delta})=0=\eta([B]_{\Delta'})\ast \eta([A]_{\Delta})$.
		
		Firstly, we assume $\sum_{1\leq i,j\leq n} a_{i, j}=d$, $\co(B)=\ro(A)$, $B-E_{i, i+1}$ is a diagonal matrix for some $i\in [1,n-1]$ and $\Delta'=\emptyset$. Let 
		$X_{i, p}=A+E_{i, p}-E_{i+1, p}$ and $\mathfrak{D}_{i,p}=\{\Delta''| (X_{i, p}, \Delta'')\in \tilde{\Xi}\}$, we have 
		$$[B]_{\Delta'}\cdot [A]_{\Delta}=\sum\limits_{p \in [1,n], \Delta''\in \mathfrak{D}_{i,p}}c_{p, \Delta''}[X_{i, p}]_{\Delta''}.$$
		
		$(a)$ If $[B]_{\emptyset}$ and $[A]_{\Delta}\in \Xi_{n|n,d}$, then the identity follows from Proposition \ref{prop 4.1.4} and Corollary \ref{cor 5.2.1}.
		
		$(b)$ If there is $a_{i_0, i_0}< 0$ and $(i_0, i_0)\notin \Delta$, then we have $\eta([B]_{\Delta'})\ast \eta([A]_{\Delta})=0$. On the other hand, if $B-E_{i, i+1}$ is a diagonal matrix for $i\neq i_0$, then for any $p\in [1,n]$ and $\Delta''\in \mathfrak{D}_{i,p}$, $\eta([X_{i,p}]_{\Delta''})=0$. 
		
		If $B-E_{i_0, i_0+1}$ is a diagonal matrix. For any $p\neq i_0$ and $\Delta''\in \mathfrak{D}_{i_0,p}$, we have $\eta([X_{i_0, p}]_{\Delta''})=0$. For $p=i_0$ , if $a_{i_0, i_0}<-1$, then $\eta([X_{i_0, i_0}]_{\Delta''})=0$. If $a_{i_0, i_0}=-1$, then
		$$
		\eta(\overline{[a_{i_0, i_0}+1, 1]}[X_{i_{0}, i_0}]_{\Delta''})=0
		.$$
		Therefore we have $ \eta([B]_{\emptyset}\cdot [A]_{\Delta})=0=\eta([B]_{\emptyset})\ast \eta([A]_\Delta)$ by Corollary \ref{cor 5.2.1}.
		
		$(c)$ If there is $a_{i_0, i_0}\leq 0$ and $(i_0, i_0) \in \Delta$, then we have $\eta([B]_{\Delta'})\ast \eta([A]_{\Delta})=0$. If $B-E_{i_0, i_0+1}$ is a diagonal matrix, then for $p\neq i_0$ and $\Delta'' \in \mathfrak{D}_{i_0,p}$, $\eta([X_{i_0, p}]_{\Delta''})=0$. If $a_{i_0, i_0}\leq-1$, then $\eta([X_{i_0, i_0}]_{\Delta''})=0$. If $a_{i_0, i_0}=0$, then
		$$
		\eta(\overline{[a_{i_0, i_0}, 1]}[X_{i_0, i_0}]_{\Delta''})=0
		.$$
		
		If $i\neq i_0$ and $B-E_{i, i+1}$ is a diagonal matrix. For any  $p\in [1, n]$ and $\Delta''\in \mathfrak{D}_{i,p}$, we have $\eta([X_{i, p}]_{\Delta''})=0$.
		Therefore Corollary \ref{cor 5.2.1} implies $\eta([B]_{\emptyset}\cdot [A]_{\Delta})=0=\eta([B]_{\emptyset})\ast \eta([A]_\Delta)$.
		
		$(d)$ If $(A,\Delta)\in \Xi_{n|n,d}$ and $[B]_{\{1,1\}} \notin \Xi_{n|n,d}$, then there is $i_0$ such that $b_{i_0+1, i_0+1}=-1$ and $b_{i_0, i_0+1}=1$. Since $\co(B)=\ro(A)$, we have $a_{i_0+1, p}=0$ for any $\p\in [1,n]$ and $a_{i_0+1, i_0+1}-1=-1$. Therefore for any $\Delta''\in \mathfrak{D}_{i_0},p$, $\eta([X_{i_0, i_0+1}]_{\Delta''})=0$.
		This together with Corollary \ref{cor 5.2.1} elucidates $\eta([B]_{\emptyset}\cdot [A]_{\Delta})=0=\eta([B]_{\emptyset})\ast \eta([A]_{\Delta})$.
		
		Secondly, if $B-E_{i+1, i}$ is a diagonal matrix for some $i\in [1,n-1]$ and $\Delta'=\emptyset$, then similar to $B-E_{i, i+1}$, we have $\eta([B]_{\emptyset}\cdot [A]_{\Delta})=0=\eta([B]_{\emptyset})\ast \eta([A]_{\Delta})$.
		
		Thirdly, we assume $B$ is a diagonal matrix and $\Delta'=\{(1, 1)\}$. 
		
		$(a)$ If $(B,\{(1,1)\}), (A,\Delta)\in \Xi_{n|n,d}$, the identity follows from Proposition \ref{prop 4.1.6} and Corollary \ref{cor 5.2.3}.
		
		$(b)$ If $a_{i_0, i_0}<0$. For any $\Delta''$ such that $(A, \Delta'')\in \tilde{\Xi}$, we have $\eta([A]_{\Delta''})=0$ and $\eta([B]_{\{(1,1)\}})\ast \eta([A]_{\Delta})=0$. Then by Corollary \ref{cor 5.2.3}, we have $\eta([B]_{\{(1,1)\}}\cdot [A]_{\Delta})=0$. 
		
		$(c)$ If $a_{i_0, i_0}=0$ and $(i_0,i_0)\in \Delta$. Since $\eta([A]_{\Delta}=0$, we have $\eta([B]_{\{(1,1)\}})\ast \eta([A]_{\Delta})=0$. On the other hand, 
		$$
		\eta(\theta_{A}'(1,1)[A]_{\emptyset})=\eta(\theta_{A}'(1,1)[A]_{\{(1, j)\}})=0
		,$$
		where $j\in [2, n]$ such that $a_{1, j}>0$. Due to Corollary \ref{cor 5.2.3}, we have $\eta([B]_{\{1,1\}}\cdot [A]_{\Delta})=0$. 
		
		$(d)$ The last case we need to check is $(A,\Delta)\in \Xi_{n|n,d}$ but $[B]_{\{1,1\}} \notin \Xi_{n|n,d}$, which shows $b_{11}=0$ and $\eta([B]_{\{(1,1)\}})\ast \eta([A]_{\Delta})=0$. Since $\co(B)=\ro(A)$, we have $a_{1, p}=0$ for any $p\in [1, n]$. Then $\eta([B]_{\{1,1\}}\cdot [A]_{\Delta})=\eta(\rho'(A,\Delta')[A]_{\Delta})=0$. 
		
		Finally, if $(B,\emptyset)\notin \Xi_{n|n,d}$ and $\co(B)=\ro(A)$, then $(A,\Delta)\notin \Xi_{n|n,d}$. Therefore $\eta([B]_{\emptyset}\cdot [A]_{\Delta})=\eta([A]_{\Delta})=0=\eta([B]_{\emptyset})\ast \eta([A]_{\Delta})$. 
		Thus $$\eta([B]_{\{1,1\}}\cdot [A]_{\Delta})=0=\eta([B]_{\{(1,1)\}})\ast \eta([A]_{\Delta}).$$
		
		Therefore $\eta$ is a surjective algebra homomorphism.
	\end{proof}
	
	\subsection{Mirabolic quantum $\mathfrak{gl}_n$}\ 
	Let $\widehat{\mathcal{K}}$ be the vector space over $\mathbb{Q}(v)$ 
	of all formal $\mathbb{Q}(v)$-linear combinations $\sum_{(A, \Delta)\in \tilde{\Xi} }\xi_{(A, \Delta)}[A]_{\Delta}$, 
	where for any $a\in \mathbb{Z}^n$, 
	the set $\{(A, \Delta)\in\tilde{\Xi} | \xi _{(A, \Delta)}\neq 0 \ , \ro(A)=a\}$ 
	and $\{(A, \Delta)\in\tilde{\Xi} | \xi _{(A, \Delta)}\neq 0 \ , \co(A)=a\}$ are finite. 
	Then the product in $\mathcal{K}$ induces a well-defined product in $\widehat{\mathcal{K}}$  
	$$\sum_{(A, \Delta)\in \tilde{\Xi} }\xi_{(A, \Delta)}[A]_{\Delta} \cdot \sum_{(B, \Delta')\in \tilde{\Xi} }\xi_{(B, \Delta')}[B]_{\Delta'}
	= \sum_{(A, \Delta), (B, \Delta')\in \tilde{\Xi} }\xi_{(A, \Delta)}\xi_{(B, \Delta')}[A]_{\Delta}\cdot [B]_{\Delta'}.
	$$
	So $\widehat{\mathcal{K}}$ is an associative algebra over $\mathbb{Q}(v)$ and take the algebra $\mathcal{K}$ as its subalgebra.
	
	Let $\Xi^0$ be the set of all $(A, \Delta)\in \tilde{\Xi}$ such that all diagonal entries of $A$ are zero. 
	For $A\in \Xi^{0}$ and $\mathbf{j}=(j_1,\cdots, j_n)\in \mathbb{Z}^n$, we can define elements $A(\mathbf{j})_{\Delta}$ in $\widehat{\mathcal{K}}$ 
	$$
	A(\mathbf{j})_{\Delta}=\sum_{Z} v^{z_1j_1+ \cdots +z_n j_n}[A+Z]_{\Delta},
	$$
	where $Z$ runs over all diagonal matrices $diag(z_1, \cdots, z_n)$ in $Mat_{n\times n}(\mathbb{Z})$.
	
	\begin{Def}
		Take $\mathbf{j}_h=(0,\cdots, 1,\cdots, 0)$(1 on the $h$-th position). Let $\mathcal{MU}$ be the $\mathbb{Q}(v)$-subalgebra of $\widehat{\mathcal{K}}$ generated by $0(-2\mathbf{j}_1)_{\emptyset}+0(-\mathbf{j}_1)_{\{(1, 1)\}}$, $0(\mathbf{j}_a)_{\emptyset}$, $E_i=E_{i, i+1}(\mathbf{0})_{\emptyset}$ and $F_i=E_{i+1, i}(\mathbf{0})_{\emptyset}$ for $1\leq i\leq n-1$ and $1\leq a\leq n$. 
	\end{Def}
	
	\begin{prop}\label{prop 5.3.1}
		For $i\in [1, n-1]$ and $a\in [1, n]$, we have the following relations in $\mathcal{MU}$. 
		\begin{enumerate}[ $(a)$ ]
			\item $0(\mathbf{j}_a)_{\emptyset}0(-\mathbf{j}_a)_{\emptyset}=1$;
			\item $E_{i}^2E_{i+1}+E_{i+1}E_{i}^2=(v+v^{-1})E_{i}E_{i+1}E_{i}$;
			\item $E_{i+1}^2E_{i}+E_{i}E_{i+1}^2=(v+v^{-1})E_{i+1}E_{i}E_{i+1}$;
			\item $F_{i}^2F_{i+1}+F_{i+1}F_{i}^2=(v+v^{-1})F_{i}F_{i+1}F_{i}$;
			\item $F_{i+1}^2F_{i}+F_{i}F_{i+1}^2=(v+v^{-1})F_{i+1}F_{i}F_{i+1}$;
			\item $0(\mathbf{j}_a)_{\emptyset}E_{i}=v^{\delta_{ah}-\delta_{a, h+1}}E_{i} 0(\mathbf{j}_a)_{\emptyset}$;
			\item $0(\mathbf{j}_a)_{\emptyset}F_{i}=v^{-\delta_{ah}+\delta_{a, h+1}}F_{i} 0(\mathbf{j}_a)_{\emptyset}$;
			\item $E_{i}F_{j}-F_{j}E_{i}=\delta_{i j}\frac{0(\mathbf{j}_i-\mathbf{j}_{i+1})-0(-\mathbf{j}_i+\mathbf{j}_{i+1})}{v-v^{-1}}$;
			\item $0(\mathbf{j}_a)_{\emptyset}(0(-2\mathbf{j}_1)_{\emptyset}+0(-\mathbf{j}_1)_{\{(1, 1)\}})=(0(-2\mathbf{j}_1)_{\emptyset}+0(-\mathbf{j}_1)_{\{(1, 1)\}})0(\mathbf{j}_a)_{\emptyset}$;
			\item $(0(-2\mathbf{j}_1)_{\emptyset}+0(-\mathbf{j}_1)_{\{(1, 1)\}})^2=0(-2\mathbf{j}_1)_{\emptyset}+0(-\mathbf{j}_1)_{\{(1, 1)\}}$;
			\item $(0(-2\mathbf{j}_1)_{\emptyset}+0(-\mathbf{j}_1)_{\{(1, 1)\}})E_{i}=(0(-2\mathbf{j}_1)_{\emptyset}+0(-\mathbf{j}_1)_{\{(1, 1)\}})E_{i}(0(-2\mathbf{j}_1)_{\emptyset}+0(-\mathbf{j}_1)_{\{(1, 1)\}})$;
			\item $(0(-2\mathbf{j}_1)_{\emptyset}+0(-\mathbf{j}_1)_{\{(1, 1)\}})F_{i}=(0(-2\mathbf{j}_1)_{\emptyset}+0(-\mathbf{j}_1)_{\{(1, 1)\}})F_{i}(0(-2\mathbf{j}_1)_{\emptyset}+0(-\mathbf{j}_1)_{\{(1, 1)\}})$;
			\item $(v+v^{-1}) E_{i}(0(-2\mathbf{j}_1)_{\emptyset}+0(-\mathbf{j}_1)_{\{(1, 1)\}})E_{i}=v^{-1}E_{i}^{2}(0(-2\mathbf{j}_1)_{\emptyset}+0(-\mathbf{j}_1)_{\{(1, 1)\}})+v(0(-2\mathbf{j}_1)_{\emptyset}+0(-\mathbf{j}_1)_{\{(1, 1)\}})E_{i}^{2}$;
			\item $(v+v^{-1}) F_{i}(0(-2\mathbf{j}_1)_{\emptyset}+0(-\mathbf{j}_1)_{\{(1, 1)\}})F_{i}=vF_{i}^{2}(0(-2\mathbf{j}_1)_{\emptyset}+0(-\mathbf{j}_1)_{\{(1, 1)\}})+v^{-1}(0(-2\mathbf{j}_1)_{\emptyset}+0(-\mathbf{j}_1)_{\{(1, 1)\}})F_{i}^{2}$.
		\end{enumerate} 
	\end{prop}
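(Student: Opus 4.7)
The plan is to verify each of the relations (a)--(n) by pushing them forward to the Schur algebras $\mathcal{MS}_{n,d}$ via the surjective homomorphism $\eta_d:\mathcal{K}\to\mathcal{MS}_{n,d}$ of Proposition \ref{prop 5.2.5}, using the already-established relations of Proposition \ref{prop 4.2.5}, and then lifting back to $\widehat{\mathcal{K}}$ by the stabilization Proposition \ref{prop 5.1.1}. Concretely, under $\eta_d$ the generators $E_i=E_{i,i+1}(\mathbf{0})_{\emptyset}$ and $F_i=E_{i+1,i}(\mathbf{0})_{\emptyset}$ map to the corresponding Chevalley generators of $\mathcal{MS}_{n,d}$ introduced in Corollary \ref{cor 4.2.1}; the formal series $0(\pm\mathbf{j}_a)_{\emptyset}=\sum_Z v^{\pm z_a}[Z]_{\emptyset}$ restricts on the $d$-weight subspace to $H_a^{\mp}$; and $0(-2\mathbf{j}_1)_{\emptyset}+0(-\mathbf{j}_1)_{\{(1,1)\}}$ restricts to the mirabolic generator $L$. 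Hence every identity in Proposition \ref{prop 5.3.1} projects to the corresponding identity of Proposition \ref{prop 4.2.5}.

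The lift from "holds in every $\mathcal{MS}_{n,d}$" to "holds in $\widehat{\mathcal{K}}$" goes as follows. Fix a relation of the form $X-Y=0$ and expand $X-Y$ in the basis $\{[A]_{\Delta}:(A,\Delta)\in\tilde{\Xi}\}$. For any fixed $(A,\Delta)$, choose $p$ large enough that $({}_pA,\Delta)\in\Xi_{n|n,d_p}$ with $d_p=\sum a_{i,j}+np$, and large enough for Proposition \ref{prop 5.1.1} to apply to every individual product appearing in $X-Y$. Then the coefficient of $[{}_pA]_{\Delta}$ in $\eta_{d_p}(X-Y)\in\mathcal{MS}_{n,d_p}$ equals $G_{(A,\Delta)}(v,v^{-p})$ for a stabilizing polynomial $G_{(A,\Delta)}(v,v')\in\mathfrak{R}$, whose value at $v'=1$ recovers the coefficient of $[A]_{\Delta}$ in $X-Y$ computed in $\mathcal{K}$. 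Since $\eta_{d_p}(X-Y)=0$ by Proposition \ref{prop 4.2.5} for every sufficiently large $p$, we get $G_{(A,\Delta)}(v,v^{-p})=0$ for infinitely many $p$, hence $G_{(A,\Delta)}(v,v')\equiv 0$ and in particular $G_{(A,\Delta)}(v,1)=0$. As $(A,\Delta)$ was arbitrary, $X=Y$ holds in $\mathcal{K}$, and therefore in $\widehat{\mathcal{K}}$ after reassembling into the formal sums defining the generators.

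The purely quantum-group relations (a)--(h) reduce to the BLM computation applied termwise to the decorated basis, since none of the generators $E_i,F_i,0(\pm\mathbf{j}_a)_{\emptyset}$ carries a nontrivial decoration, so multiplying against $[A]_{\Delta}$ only modifies $A$ and leaves $\Delta$ untouched, as visible in Corollaries \ref{cor 5.2.1}(a), \ref{cor 5.2.2}(a) and the diagonal case of Corollary \ref{cor 5.2.3}. The main obstacle is the block (i)--(n) of projector-twisted relations, and in particular the Serre-type identities (m) and (n): a direct expansion of $E_iLE_i$ in $\widehat{\mathcal{K}}$ via Corollaries \ref{cor 5.2.1} and \ref{cor 5.2.3} mixes all four cases of each corollary, producing many contributions whenever the decoration $\{(1,1)\}$ meets row or column $h$ or $h+1$, and a line-by-line cancellation would be combinatorially heavy. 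The stabilization strategy above circumvents this by leveraging the proof of Proposition \ref{prop 4.2.5}(n)--(o) in $\mathcal{MS}_{n,d}$, where the cancellation was carried out explicitly in the simplest case $i=1$; only a routine sign/inverse bookkeeping between the $\pm$ conventions for $H_a^{\pm}$ in $\mathcal{MS}_{n,d}$ and for $0(\pm\mathbf{j}_a)_{\emptyset}$ in $\widehat{\mathcal{K}}$ needs to be tracked along the way.
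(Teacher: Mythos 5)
Your proposal is correct, but it takes a genuinely different route from the paper. The paper proves Proposition \ref{prop 5.3.1} by direct computation inside $\widehat{\mathcal{K}}$: relations $(a)$--$(h)$ are delegated to the BLM calculation, and the $L$-twisted relations are verified by expanding the products explicitly with the stabilized multiplication formulas of Corollaries \ref{cor 5.2.1}--\ref{cor 5.2.3} (the paper carries out the idempotent-absorption relation for $E_1$ in full and declares the rest similar). You instead transfer the relations from the finite-dimensional algebras: each identity already holds in $\mathcal{MS}_{n,d_p}$ by Proposition \ref{prop 4.2.5}, the coefficient of $[{}_pA]_{\Delta}$ in the relevant products there is a Laurent polynomial $G_{(A,\Delta)}(v,v')$ evaluated at $v'=v^{-p}$ by Proposition \ref{prop 5.1.1}, and vanishing at infinitely many distinct values $v^{-p}$ forces $G_{(A,\Delta)}\equiv 0$, hence vanishing at $v'=1$, which is precisely the coefficient in $\mathcal{K}$. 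This specialization argument is sound: for a fixed output weight only finitely many chains of basis elements contribute to each product of generators, and shifting by $pI$ identifies the chains in $\mathcal{MS}_{n,d_p}$ with those in $\widehat{\mathcal{K}}$, so the polynomial identification you assert does hold. What your approach buys is that the heavy cancellation for $(m)$ and $(n)$ need not be redone in $\widehat{\mathcal{K}}$; what it costs is the extra bookkeeping you flag yourself (the generator matching $0(\mp\mathbf{j}_a)_{\emptyset}\mapsto H_a^{\pm}$ and the fact that the paper's own proof of Proposition \ref{prop 4.2.5} is itself only carried out for one relation and one index, so the logical burden is shifted rather than removed). Both arguments are legitimate; yours is closer to the classical BLM transfer technique, while the paper's is self-contained at the level of the stabilized algebra.
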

	\begin{proof}
		Relations $(a)$-$(h)$ can be checked by the same calculation in \cite[Lemma 5.6]{BLM}. For the remaining relations, we can verify them by using Corollary \ref{cor 5.2.1}, Corollary \ref{cor 5.2.2} and Corollary \ref{cor 5.2.3}. We mainly prove $(l)$ for $i=1$, the calculations for others are similar. 
		
		We first compute $(0(-2\mathbf{j}_1)_{\emptyset}+0(-\mathbf{j}_1)_{\{(1, 1)\}})E_{1}$,
		\begin{align*}
			&(0(-2\mathbf{j}_1)_{\emptyset}+0(-\mathbf{j}_1)_{\{(1, 1)\}})E_{1}\\
			=& \sum\limits_{Z'}v^{-2z'_1}[Z']_{\emptyset}\sum\limits_{Z}[Z+E_{1, 2}]_{\emptyset}+\sum\limits_{Z',z'_1>0}v^{-z'_1}[Z']_{\{(1,1)\}}\sum\limits_{Z}[Z+E_{1, 2}]_{\emptyset}\\
			= & v^{-2}E_{1, 2}(-2\mathbf{j}_1)_{\emptyset}+v^{-2}E_{1, 2}(-\mathbf{j}_1)_{\{(1,1)\}}+v^{-1}E_{1,2}(-\mathbf{j}_1)_{\{(1,2)\}}.
		\end{align*}
		Besieds, 
		\begin{align*}
			& E_{1}(0(-2\mathbf{j}_1)_{\emptyset}+0(-\mathbf{j}_1)_{\{(1, 1)\}})\\
			=&\sum\limits_{Z'}[Z'+E_{1, 2}]_{\emptyset}\sum\limits_{Z}v^{-2z_1}[Z]_{\emptyset}+\sum\limits_{Z'}[Z'+E_{1, 2}]_{\emptyset}\sum\limits_{Z,z_1>0}v^{-z_1}[Z]_{\{(1,1)\}}\\
			= &E_{1, 2}(-2\mathbf{j}_1)_{\emptyset}+E_{1, 2}(-\mathbf{{j}_1})_{\{(1,1)\}}.
		\end{align*}
		So 
		\begin{align*}
			&  (0(-2\mathbf{j}_1)_{\emptyset}+0(-\mathbf{j}_1)_{\{(1, 1)\}})E_{1}(0(-2\mathbf{j}_1)_{\emptyset}+0(-\mathbf{j}_1)_{\{(1, 1)\}})\\
			=& (0(-2\mathbf{j}_1)_{\emptyset}+0(-\mathbf{j}_1)_{\{(1, 1)\}})E_{1, 2}(-2\mathbf{j}_1)_{\emptyset}\\
			& +(0(-2\mathbf{j}_1)_{\emptyset}+0(-\mathbf{j}_1)_{\{(1, 1)\}})E_{1, 2}(-\mathbf{{j}_1})_{\{(1,1)\}}\\
			= & v^{-2}E_{1, 2}(-2\mathbf{j}_1)_{\emptyset}+v^{-2}E_{1, 2}(-\mathbf{j}_1)_{\{(1,1)\}}+v^{-1}E_{1,2}(-\mathbf{j}_1)_{\{(1,2)\}}\\
			= & (0(-2\mathbf{j}_1)_{\emptyset}+0(-\mathbf{j}_1)_{\{(1, 1)\}})E_{1},
		\end{align*}
		then $(l)$ follows.
	\end{proof}
	Comparing the generators and the relations of the algebra $\mathbf{MU}$ in Definition \ref{def 2.1.2} and the algebra $\mathcal{MU}$, we have the following proposition.
	\begin{prop}\label{prop 5.3.3}
		There is a surjective algebra homomorphism from $\mathbf{MU}$ to the algebra $\mathcal{MU}$ which sends $E_i\mapsto E_i$, $F_i\mapsto F_i$, $L\mapsto 0(-2\mathbf{j}_1)_{\emptyset}+0(-\mathbf{j}_1)_{\{(1, 1)\}}$ and $H_a\mapsto 0(\mathbf{j}_a)_{\emptyset}$ for any $i\in [1,n-1]$ and $a\in [1,n]$.
	\end{prop}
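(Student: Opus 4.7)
The plan is to exploit the presentation of $\mathbf{MU}$ by generators and relations (Definition \ref{def 2.1.2}) together with the computations already packaged in Proposition \ref{prop 5.3.1}. Concretely, since $\mathbf{MU}$ is freely generated over $\mathbb{Q}(v)$ by the symbols $E_i, F_i, H_a^{\pm}, L$ modulo the listed relations, it suffices to exhibit elements of $\mathcal{MU}$ indexed by the same symbols which satisfy the same relations; the assignment then extends uniquely to an algebra homomorphism.

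First, I set the candidate images as in the statement: $E_i \mapsto E_{i,i+1}(\mathbf{0})_{\emptyset}$, $F_i \mapsto E_{i+1,i}(\mathbf{0})_{\emptyset}$, $H_a^{\pm} \mapsto 0(\pm\mathbf{j}_a)_{\emptyset}$, and $L \mapsto 0(-2\mathbf{j}_1)_{\emptyset}+0(-\mathbf{j}_1)_{\{(1,1)\}}$. I then match the relations of Definition \ref{def 2.1.2} one-for-one against items (a)--(n) of Proposition \ref{prop 5.3.1}: relations (a)--(h) of Proposition \ref{prop 5.3.1} cover the $H$/$E$/$F$ part (Cartan, $q$-Serre, Levendorskii--Soibelman, and the $EF-FE$ commutator, where $H_i H_{i+1}^{-1}$ on the right of the commutator relation translates to $0(\mathbf{j}_i-\mathbf{j}_{i+1})_\emptyset$), while relations (i)--(n) cover the five $L$-relations. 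Thus every defining relation of $\mathbf{MU}$ becomes an already-established identity in $\mathcal{MU}$, so the assignment extends to a well-defined algebra homomorphism $\varphi \colon \mathbf{MU} \to \mathcal{MU}$.

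Surjectivity is immediate from the definition of $\mathcal{MU}$: by construction, $\mathcal{MU}$ is the $\mathbb{Q}(v)$-subalgebra of $\widehat{\mathcal{K}}$ generated precisely by the elements $0(\mathbf{j}_a)_{\emptyset}$, $E_{i,i+1}(\mathbf{0})_{\emptyset}$, $E_{i+1,i}(\mathbf{0})_{\emptyset}$, and $0(-2\mathbf{j}_1)_{\emptyset}+0(-\mathbf{j}_1)_{\{(1,1)\}}$, each of which lies in $\mathrm{Im}\,\varphi$ by design; the element $0(-\mathbf{j}_a)_{\emptyset}$ is picked up as $\varphi(H_a^{-1})$, consistent with relation (a) of Proposition \ref{prop 5.3.1}.

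The only genuine content is the verification of Proposition \ref{prop 5.3.1}, which is assumed, so there is no real obstacle in the proof of Proposition \ref{prop 5.3.3} itself. The one minor point worth a line of comment is the translation between the compact generator $L$ of $\mathbf{MU}$ and the two-term expression $0(-2\mathbf{j}_1)_{\emptyset}+0(-\mathbf{j}_1)_{\{(1,1)\}}$ in $\widehat{\mathcal{K}}$: this matches the formula for $L$ at the Schur-algebra level given just before Corollary \ref{cor 4.2.1}, and is the only place where the mirabolic decoration $\{(1,1)\}$ intervenes — once this identification is made, relations (i)--(n) of Proposition \ref{prop 5.3.1} are literally the $L$-relations of Definition \ref{def 2.1.2}, and the proof is complete.
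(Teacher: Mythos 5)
Your proposal is correct and follows essentially the same route as the paper: the paper's proof is precisely the observation that, by comparing the defining relations of $\mathbf{MU}$ (Definition \ref{def 2.1.2}) with the relations established in Proposition \ref{prop 5.3.1}, the assignment on generators extends to a well-defined homomorphism, with surjectivity immediate because the images are by definition the generators of $\mathcal{MU}$. Your write-up merely spells out this comparison in more detail than the paper does.
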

	
	\section{The Geometric Approach of Mirabolic Schur-Weyl Duality}
	\subsection{Geometric approach}
	Let $G'=V\rtimes G$ act on $Y=\mathscr{Y}\times V$ and $X=\mathscr{X}\times V$ as follows.
	
	For $(\omega, g)\in G'$, $(f, \omega_1)\in X$ and $(f', \omega_2)\in Y$, 
	$$
	(\omega, g)\cdot (f, \omega_1)=(g\cdot f, g\cdot \omega_1+\omega), \  (\omega, g)\cdot(f', \omega_2)=(g\cdot f', g\cdot \omega_2+\omega).
	$$
	Let $G'$ diagonally acts on $X\times X$, $X\times Y$ and $Y \times Y$.
	
	\begin{thm}[\cite{P}]\label{thm p}
		Let $M$ and $N$ be two sets equipped with the $G$-action, if $M$ and $N$ satisfy the following conditions.
		\begin{enumerate}
			\item $ N=\bigsqcup_{i \in I} N_i $, where $I$ is a finite set,
			\item for any $i \in I$, there is a surjective $G$-equivariant map, $\phi_i: M \rightarrow N_i$, which has finite fibers of constant cardinal $m_i$, 
			\item there exists a $d \in I$ such that $\phi_{d}$ is a bijection, 
		\end{enumerate}
		then for the algebras $A=\mathbb{C}_G(N \times N)$, $B=\mathbb{C}_G(M \times M)$ and the space $C=\mathbb{C}_G(N \times M)$,
		\begin{align*}
			\operatorname{End}_B(C)\cong A, \ \operatorname{End}_A(C)\cong B.
		\end{align*}
	\end{thm}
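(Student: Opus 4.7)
The plan is to reduce the statement to a standard Morita context by passing to an idempotent picture. Using the bijection $\phi_d : M \to N_d$, I identify $M$ with the $G$-subset $N_d \subset N$. Under this identification, the characteristic function $e_d$ of the diagonal $\Delta_{N_d} \subset N_d \times N_d \subset N \times N$ becomes an idempotent in $A$, and the unit decomposes as $1_A = \sum_{i \in I} e_i$ where $e_i$ is the characteristic function of $\Delta_{N_i}$. One then has the canonical identifications
\begin{equation*}
    e_d A e_d \;\cong\; \mathbb{C}_G(N_d \times N_d) \;\cong\; \mathbb{C}_G(M \times M) \;=\; B,
\end{equation*}
and $A e_d \cong \mathbb{C}_G(N \times N_d) \cong \mathbb{C}_G(N \times M) = C$ as a left $A$-module, where the right $B$-action on $C$ corresponds to right multiplication by $e_d A e_d$ on $A e_d$.

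With this set-up in place, the isomorphism $\End_A(C) \cong B$ reduces to the standard fact $\End_A(A e_d) \cong e_d A e_d$ via $\varphi \mapsto \varphi(e_d)$.

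For the isomorphism $\End_B(C) \cong A$, I would show that $e_d$ is a full idempotent, i.e., $A e_d A = A$; once this is known, the Morita context yields $\End_{e_d A e_d}(A e_d) \cong A$ via left multiplication $a \mapsto (x \mapsto a x)$. To prove fullness, use the remaining hypotheses: for each $i \in I$, the graph of $\phi_i$ furnishes a $G$-invariant function $\psi_i \in \mathbb{C}_G(N_i \times M) \subset A e_d$ defined by $\psi_i(x,m) = 1$ iff $\phi_i(m) = x$; $G$-equivariance of $\phi_i$ guarantees $\psi_i$ is $G$-invariant. Let $\psi_i^{t} \in e_d A$ be obtained by swapping the two coordinates. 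A direct computation of the convolution, using that $\phi_i$ has finite fibers of constant cardinality $m_i$, yields $\psi_i \ast \psi_i^{t} = m_i\, e_i$; hence $e_i \in A e_d A$ for every $i$, and summing gives $1_A \in A e_d A$.

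The main obstacle is the $\End_B(C) \cong A$ direction, which needs both the bijective $\phi_d$ (to realise $B$ as the corner algebra $e_d A e_d$) and the constant-fiber surjections $\phi_i$ on the remaining blocks (to witness the fullness of $e_d$); the other direction is a formal consequence once the idempotent picture is in place.
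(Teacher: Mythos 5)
The paper does not actually prove this statement; it is quoted verbatim from Pouchin \cite[Theorem 2.1]{P}, so there is no in-paper argument to compare against. Your Morita-theoretic proof is correct and self-contained: the identifications $e_dAe_d\cong B$ and $Ae_d\cong C$ (as an $(A,B)$-bimodule) via $\phi_d$ are right, $\End_A(Ae_d)\cong e_dAe_d$ is the standard corner-algebra fact, and your fullness computation is exactly what hypotheses (1)--(3) are for: $\psi_i\ast\psi_i^{t}=m_i e_i$ with $m_i\geq 1$ by surjectivity, hence $1_A=\sum_i e_i\in Ae_dA$, and then $\End_{e_dAe_d}(Ae_d)\cong A$ follows from writing $1=\sum_k a_ke_db_k$. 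Two small points worth flagging. First, strictly speaking $\End_A(Ae_d)\cong(e_dAe_d)^{\mathrm{op}}$ and $\End_{e_dAe_d}(Ae_d)\cong A$ with the usual composition convention; the statement (like Pouchin's) suppresses the opposite, which is harmless here because transposition of correspondences gives an anti-involution identifying these convolution algebras with their opposites, but you should say which convention you use. Second, your argument silently assumes the convolution algebras are unital with unit the characteristic function of the diagonal and that all convolution sums are finite; this holds in the intended setting ($M$, $N$ finite, or finitely many $G$-orbits with finite sums), and a one-line remark would make the proof airtight. With those caveats your argument is a valid replacement for the citation, and arguably cleaner than a direct double-centralizer verification since both isomorphisms drop out of the single fact that $e_d$ is a full idempotent.
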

	\begin{rem}
		Theorem \ref{thm p} still hold when we replace the complex number field $\mathbb{C}$ with the ring $\mathcal{A}$.
	\end{rem}
	
	The space $MS=\mathcal{A}_{G'}(X\times X)$ is a convolution algebra, where the convolution is defined as follows.
	$$
	h_1\ast h_2(f, \omega, f', \omega')=\sum\limits_{f'', \omega''}h_1(f, \omega, f'', \omega'')h_2(f'', \omega'', f', \omega').
	$$
	Similarly, $MH=\mathcal{A}_{G'}(Y\times Y)$ is also a convolution algebra.
	We see that $MV=\mathcal{A}_{G'}(X\times Y)$ is equipped with a left $MS$-action and a right $MH$-action by the convolution product. Let $\Lambda_{n,d}=\{\underline{a}=(a_1, \cdots, a_n)\in \mathbb{N}^n | \sum^{n}_{i=1}a_i=d\}$ denote the set of compositions of $d$ of length $n$. 
	
	\begin{prop}\label{prop 6.2.1}
		We have the following double centralizer property for $n \geq d$.
		\begin{align*}
			\End_{MH}(MV)\cong MS,\  \End_{MS}(MV)\cong MH.
		\end{align*}
	\end{prop}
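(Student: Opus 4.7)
The plan is to apply Theorem \ref{thm p} with the group $G'=V\rtimes G$ and the two $G'$-sets $N:=X=\mathscr{X}\times V$ and $M:=Y=\mathscr{Y}\times V$. Under this substitution, the ambient objects become $A=MS$, $B=MH$, and $C=MV$, so the two isomorphisms $\End_B(C)\cong A$ and $\End_A(C)\cong B$ furnished by the theorem are precisely the asserted double centralizer property. The task therefore reduces to verifying the three hypotheses of Theorem \ref{thm p} for this choice, once its conclusion has been extended from $\mathbb{C}$-valued to $\mathcal{A}$-valued functions as indicated by the remark following the theorem.

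For condition (1), I decompose $X=\bigsqcup_{\lambda\in\Lambda_{n,d}}X_\lambda$ with $X_\lambda:=\mathscr{X}_\lambda\times V$, where $\mathscr{X}_\lambda$ denotes the set of $n$-step partial flags $(V_0\subset\cdots\subset V_n)$ with $\dim V_i/V_{i-1}=\lambda_i$. This decomposition is $G'$-stable because the $V$-factor of $G'$ acts trivially on the flag component while the $G$-factor preserves flag types. For condition (2), I define $\phi_\lambda\colon Y\to X_\lambda$ by $\phi_\lambda(f,\omega)=(\pi_\lambda(f),\omega)$, where $\pi_\lambda$ sends a complete flag $(F_0\subset\cdots\subset F_d)$ to the partial flag $(F_0\subset F_{\lambda_1}\subset F_{\lambda_1+\lambda_2}\subset\cdots\subset F_d)$ of type $\lambda$. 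This map is manifestly $G'$-equivariant and surjective, and its fiber above $(f',\omega')$ is in bijection with the set of complete refinements of the partial flag $f'$; such refinements factor through the graded pieces of $f'$, so their count is a product of $q$-factorials depending only on $\lambda$. For condition (3), the hypothesis $n\geq d$ allows me to take $\lambda^{\star}=(1,\ldots,1,0,\ldots,0)\in\Lambda_{n,d}$; every graded piece of a type-$\lambda^{\star}$ partial flag has dimension $0$ or $1$, so it admits a unique refinement, and $\phi_{\lambda^{\star}}$ is a bijection.

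I expect the main technical issue to lie not in the verification above, which is straightforward, but in making rigorous the passage from $\mathbb{C}$ coefficients (as in \cite{P}) to the Laurent polynomial ring $\mathcal{A}=\mathbb{Z}[v,v^{-1}]$. This is a routine but careful check: at each specialization $v\mapsto\sqrt{q}$ the argument of \cite{P} applies verbatim, and since all structure constants of the convolution algebras $MS$, $MH$, $MV$ are polynomials in $q$ by the $G'$-orbit analysis of Section 3, the conclusion assembles into an isomorphism over $\mathcal{A}$. Once Proposition \ref{prop 6.2.1} is in hand, the double centralizer statement for $\mathcal{MS}_{n,d}$ and $\mathcal{MH}$ invoked in Theorem \ref{thm 2.3.3} follows by transporting the isomorphisms through the algebra identifications $\mathcal{MS}_{n,d}\cong MS$ and $\mathcal{MH}\cong MH$, which arise from using the translation action of $V\subset G'$ to normalize one of the vector components to zero and thereby reducing the $G'$-orbit structure on pairs in $(\mathscr{X}\times V)^2$ to the $G$-orbit structure on $\mathscr{X}\times\mathscr{X}\times V$.
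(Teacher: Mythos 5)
Your proposal is correct and follows essentially the same route as the paper: both apply Theorem \ref{thm p} to $N=X$, $M=Y$ with the decomposition of $X$ by flag type over $\Lambda_{n,d}$, the refinement maps $\phi_{\underline{a}}\times \mathrm{id}_V$, and the type $(1,\ldots,1,0,\ldots,0)$ giving the required bijection when $n\geq d$. Your added remarks on the constant cardinality of the fibers and on passing from $\mathbb{C}$ to $\mathcal{A}$ only make explicit what the paper leaves implicit.
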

	\begin{proof}
		We only need to clarify that $X$ and $Y$ satisfy the conditions outlined in Theorem \ref{thm p} as the $G'$-set.
		
		$(1)$ Since $\mathscr{X}$ has a decomposition 
		$$\mathscr{X}=\bigsqcup\limits_{ \underline{a} \in \Lambda_{n, d}} \mathscr{X}_{\underline{a}},$$ 
		where $\mathscr{X}_{\underline{a}} = \{f\in \mathscr{X} | \dim (V_{i}/V_{i-1})=a_i\}$, we have a decomposition for $X$,
		$$
		X=\bigsqcup\limits_{\underline{n}\in \Lambda_{n, d}} \mathscr{X}_{\underline{a}}\times V.
		$$
		
		$(2)$ We have a canonical surjective map of $G$-equivariant $\phi_{\underline{a}}:\mathscr{Y} \to \mathscr{X}_{\underline{a}}$. 
		Therefore we have a surjective $G'$-equivariant map 
		\begin{align*}
			\phi_{\underline{a}}\times id_{V} : Y \to \mathscr{X}_{\underline{a}}\times V.
		\end{align*}
		
		$(3)$ Since $n\geq d$, there is an isomorphism $\phi_{\underline{d}}$, where $\underline{d}=(\underbrace{1, 1, \cdots, 1}_{d}, 0, \cdots, 0)\in \Lambda_{n, d}$. Thus $\phi_{\underline{d}}\times id_{V}$ is an isomorphism.
		Then the proposition follows.
	\end{proof}
	Viewing $V$ as a normal subgroup of $G'$ by embedding. We can define a $G'$-action on $\mathscr{X}$, $\mathscr{Y}$ and $V$ through the isomorphism $G'/V\to G$, $\overline{(\omega,g)} \mapsto g$.
	
	Let $G'$ diagonally acts on $\mathscr{X}\times \mathscr{X} \times V$, $\mathscr{Y}\times \mathscr{Y} \times V$, $\mathscr{X}\times \mathscr{Y} \times V$. 
	There is a surjecitve $G'$-equivarient map 
	\begin{align*}
		\Phi:X\times X & \to \mathscr{X} \times \mathscr{X} \times V\\
		(f, \omega, f_1, \omega_1)&\mapsto (f, f_1, \omega_1-\omega).
	\end{align*}
	Similarly, we have surjective $G'$-equivarient maps $\Phi':Y\times Y\to \mathscr{Y}\times \mathscr{Y}\times V$ and $\tilde{\Phi}:X\times Y\to \mathscr{X}\times \mathscr{Y}\times V$.

	Employ the convolution product of $\mathcal{MS}_{n,d}$, the space $\mathcal{A}_{G'}(\mathscr{X}\times \mathscr{X}\times V)$ admits a convolution algebra stucture and $\mathcal{A}_{G'}(\mathscr{X}\times \mathscr{X}\times V)\cong \mathcal{MS}_{n,d}$.
	Similarly, we have $\mathcal{A}_{G'}(\mathscr{Y}\times \mathscr{Y}\times V)\cong \mathcal{MH}$ and $\mathcal{A}_{G'}(\mathscr{X}\times \mathscr{Y}\times V)\cong \mathcal{MV}$.
	\begin{prop}\label{prop 6.2.3}
		We have 
		\begin{align*}
			MS \cong \mathcal{MS}_{n, d},\ MH \cong \mathcal{MH}, \ MV \cong \mathcal{MV}. 
		\end{align*} 
	\end{prop}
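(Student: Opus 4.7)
The proof will follow from a single observation applied three times: the group $V\subset G'$ acts freely on each of the product spaces $X\times X$, $Y\times Y$, $X\times Y$ by the diagonal translation $(\omega,1)\cdot(f,\omega_1,f',\omega_2)=(f,\omega_1+\omega,f',\omega_2+\omega)$, and the three surjections $\Phi$, $\Phi'$, $\tilde\Phi$ are precisely the quotient maps for these $V$-actions. Concretely, two tuples $(f,\omega_1,f_1,\omega_2)$ and $(f',\omega_1',f_1',\omega_2')$ have the same image under $\Phi$ if and only if $f=f'$, $f_1=f_1'$, and $\omega_2-\omega_1=\omega_2'-\omega_1'$, which is equivalent to $\omega:=\omega_1'-\omega_1=\omega_2'-\omega_2$ giving a translation in $V$.

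Given this, the plan is to show that pullback along these quotient maps gives the three required isomorphisms. For the first, define $\Phi^{*}:\mathcal{A}_{G}(\mathscr{X}\times\mathscr{X}\times V)\to\mathcal{A}_{G'}(X\times X)$ by $(\Phi^{*}f)(f_1,\omega_1,f_2,\omega_2)=f(f_1,f_2,\omega_2-\omega_1)$. First I would check that $\Phi^{*}f$ is $G'$-invariant: invariance under $V\subset G'$ is built into the definition (only $\omega_2-\omega_1$ appears), and invariance under $G\subset G'$ follows from the $G$-invariance of $f$ together with the equivariance computation $\Phi((\omega,g)\cdot\bullet)=g\cdot\Phi(\bullet)$ already noted in the excerpt. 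Next, $\Phi^{*}$ is a bijection: injectivity is immediate since $\Phi$ is surjective, and surjectivity follows because any $h\in\mathcal{A}_{G'}(X\times X)$ is $V$-invariant, hence factors through the quotient $V\backslash(X\times X)\xrightarrow{\sim}\mathscr{X}\times\mathscr{X}\times V$, and the resulting function is $G$-invariant by the quotient description of the $G'$-action.

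The algebra compatibility is a direct substitution: for $f,g\in\mathcal{MS}_{n,d}$,
\begin{align*}
(\Phi^{*}f\ast\Phi^{*}g)(f_1,\omega_1,f_2,\omega_2)
&=\sum_{f_3,\omega_3}f(f_1,f_3,\omega_3-\omega_1)\,g(f_3,f_2,\omega_2-\omega_3)\\
&=\sum_{f_3,\mu}f(f_1,f_3,\mu)\,g(f_3,f_2,(\omega_2-\omega_1)-\mu)\\
&=(f\ast g)(f_1,f_2,\omega_2-\omega_1)=\Phi^{*}(f\ast g)(f_1,\omega_1,f_2,\omega_2),
\end{align*}
where the change of variables $\mu=\omega_3-\omega_1$ reindexes the sum without changing its value (this uses only that $V$ is a group, so summing over $\omega_3\in V$ is the same as summing over $\mu\in V$). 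This gives $MS\cong\mathcal{MS}_{n,d}$, and the identical argument applied to $\Phi'$ yields $MH\cong\mathcal{MH}$.

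For the bimodule isomorphism $MV\cong\mathcal{MV}$, I would pull back along $\tilde\Phi$ in the same way and observe that the analogous substitution shows $\tilde\Phi^{*}$ intertwines the left $MS$-action with the left $\mathcal{MS}_{n,d}$-action and the right $MH$-action with the right $\mathcal{MH}$-action; the computations are formally identical to the one above, with only the roles of the flag varieties changed. No step is a genuine obstacle: the whole argument reduces to the elementary fact that $V$-invariant functions on $X\times X$ are the same data as functions on $\mathscr{X}\times\mathscr{X}\times V$, and the convolution on the latter is defined precisely so as to match the standard convolution on the former after this identification. The most delicate bookkeeping is merely making sure the change-of-variable step in the convolution calculation is carried out symmetrically for all three cases so that both the left and right actions on $MV$ go across.
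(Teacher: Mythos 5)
Your proposal is correct, and it follows the same overall strategy as the paper: both proofs produce the isomorphisms by pulling back along the surjective $G'$-equivariant maps $\Phi$, $\Phi'$, $\tilde\Phi$. The difference lies in how the two key points are verified. For bijectivity of $\Phi^{*}$, the paper only argues injectivity explicitly (from surjectivity of $\Phi$) and then asserts the linear isomorphism; your identification of $\Phi$ as the quotient map for the free diagonal $V$-action, so that $V$-invariant functions on $X\times X$ are literally functions on $\mathscr{X}\times\mathscr{X}\times V$, supplies the surjectivity cleanly and is a genuine improvement in transparency. For multiplicativity, the paper sets up the convolution in pushforward--pullback form, exhibits a cartesian square relating $p_{13}$ and $q_{13}$, and invokes base change $\Phi^{*}q_{13!}=p_{13!}\hat\Phi^{*}$; your direct change of variables $\mu=\omega_3-\omega_1$ in the convolution sum proves exactly the same identity in an elementary way. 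The paper's formulation is the one that generalizes to sheaf-theoretic settings, while yours is shorter and self-contained at the level of functions on finite sets; for the purposes of this proposition both are equally valid, and your treatment of $MV$ as a bimodule isomorphism (checking both the left and right intertwining by the same substitution) matches what the paper needs for Proposition \ref{prop 6.1.7}.
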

	\begin{proof}
		Since $\Phi$ is a surjective $G'$-quivariant, we define a linear map 
		\begin{align*}
			\Phi^{\ast}: \mathcal{MS}_{n, d} &\to MS\\
			h & \mapsto \Phi^{\ast}h,
		\end{align*}
		where $\Phi^{\ast}$ is the pullback of $\Phi$, i.e., $\Phi^{\ast}h(f, \omega, f', \omega')=h(\Phi(f, \omega, f', \omega'))$. Since $\Phi$ is a surjective map , the linaer map $\Phi^{*}$ is injective. 
		Then we have $MS \cong \mathcal{MS}_{n, d}$ as linear spaces. 
		
		It remains to prove that $\Psi^{\ast}$ is an algebra homomorphism. 
		We have the following diagram,
		\begin{center}
			\begin{tikzcd}[sep=tiny]
				& X\times X\times X \arrow[ld, "p_{12}"'] \arrow[rd, "p_{23}"] \arrow[dd, "\hat{\Phi}", shift left] \arrow[rr, "p_{13}"]       &                                                    & X\times X \arrow[dd, "\Phi", shift left=5] \\
				X\times X \arrow[dd, "\Phi"]          &                                                                                                                                 & X\times X \arrow[d, "\Phi", no head, shift left=3] &                                            \\
				& \mathscr{X}\times \mathscr{X}\times \mathscr{X}\times V\times V \arrow[ld, "q_{124}"'] \arrow[rd, "q_{235}"] \arrow[rr, "q_{13}"] & {} \arrow[d, shift left=3]                         & \mathscr{X}\times \mathscr{X}\times V      \\
				\mathscr{X}\times \mathscr{X}\times V &                                                                                                                                 & \mathscr{X}\times \mathscr{X}\times V              &                                
			\end{tikzcd}
		\end{center}
		where 
		\begin{align*}
			&q_{13}(f_1,f_2,f_3,v_1,v_2)=(f_1,f_3,v_1+v_2)\\
			&\hat{\Phi}(f_1, v_1, f_2, v_2, f_3,v_3)=(f_1, f_2, f_3, v_2-v_1, v_3-v_2).
		\end{align*}
		and $p_{ij},q_{ijk}$ are the obvious projection to the $(i,j)$, $(i,j,k)$ components, respectively.
		For $h_1, h_2\in \mathcal{MS}_{n, d}$, the convolution product $h_1\ast h_2=q_{13!}(p^{*}_{124}h_1\otimes p^{*}_{235}h_2)$, where $q_{13!}$ is the pushforward along the $q_{13}$ and $p^{*}_{124}$ and $p^{*}_{235}$ are the pullbacks along $p_{124}$ and $p_{235}$. Similarly, for any $h'_1, h'_2\in MS$, $h'_1\ast h'_2=p_{13!}(p^{*}_{12}h'_1\otimes p^{*}_{23}h'_2)$.
		Let 
		$$
		Z=\{(a, b)\ | \ a \in X\times X,  b \in \mathscr{X}\times \mathscr{X}\times \mathscr{X}\times V\times V, q_{13}(b)=\Phi(a)\}.
		$$
		Suppose $a=(f_1,v_1,f_3,v_3)$ and $b=(f'_1,f'_2,f'_3,v'_1,v'_2)$, the condition $q_{13}(b)=\Phi(a)$ implies $f_1=f'_1, f_2=f'_2$ and $v_3-v_1=v'_1+v'_2$, so we can construct an element $(f_1,v_1,f'_2,v'_1+v_1,f_3,v_3)\in X\times X\times X$. Then we define a bijection 
		$$\phi:X\times X\times X\to Z, (f_1,v_1,f_2,v_2,f_3,v_3)\mapsto (p_{13}(f_1,v_1,f_2,v_2,f_3,v_3), \hat{\Phi}(f_1,v_1,f_2,v_2,f_3,v_3)).$$
		Therefore we have the following cartesian square.
		\begin{center}
			\begin{tikzcd}
				X\times X\times X \arrow[rr, "p_{13}"] \arrow[d, "\hat{\Phi}"]                      &  & X\times X \arrow[d, "\Phi"]           \\
				\mathscr{X}\times \mathscr{X}\times \mathscr{X}\times V\times V \arrow[rr, "q_{13}"] &  & \mathscr{X}\times \mathscr{X}\times V.
			\end{tikzcd}
		\end{center}
		Then for any $h_1,h_2\in \mathcal{MS}_{n, d}$, we have
		\begin{align*}
			\Phi^{\ast}q_{13!}(q_{12}^{*}h_1\otimes q_{23}^{*}h_2)& = p_{13!}\hat{\Phi}^{*}(q_{124}^{*}h_1\otimes q_{235}^{*}h_2)\\
			& = p_{13!}(\hat{\Phi}^{*}q_{124}^{*}h_1\otimes \hat{\Phi}^{*}q_{235}^{*}h_2)\\
			& = p_{13!}(p_{12}^{*}\Phi^{*}h_1 \otimes p_{23}^{*}\Phi^{*}h_2).
		\end{align*}
		Hence $\Phi^*$ is an algebra isomorphsim.
		
		The proofs for the algebra isomorphism $\Phi'^{\ast}:\mathcal{MH} \to MH$ and the linear space isomorphism $\tilde{\Phi}^{\ast}:\mathcal{MV} \to MV$ are similar. Then the proposition follows.
	\end{proof}
	The isomorphism $\tilde{\Phi}^{\ast}:\mathcal{MV} \to MV$ induce an isomorphism $\End_{\mathcal{A}}{\mathcal{MV}}\to \End_{\mathcal{A}}{MV}$ by sending $\varphi$ to $\tilde{\Phi}^{*}\varphi(\tilde{\Phi}^*)^{-1}$.
	\begin{prop}\label{prop 6.1.7}
		We have the following commute diagram.
		\begin{center}
			\begin{tikzcd}
				\mathcal{MS}_{n,d} \arrow[r,left] \arrow[d, "\Phi^{*}"'] & \End_{\mathcal{A}}{\mathcal{MV}} \arrow[d, "\varphi"'] & \mathcal{MH} \arrow[l, right] \arrow[d, "\Phi'^{*}"]   \\
				MS \arrow[r, left]          & \End_{\mathcal{A}}{MV}                  & MH \arrow[l, right]         
			\end{tikzcd}
		\end{center}
	\end{prop}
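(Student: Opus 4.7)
The plan is to reduce the commutativity of both squares to direct bimodule compatibility statements for the pullback maps $\Phi^{*}$, $\Phi'^{*}$, and $\tilde\Phi^{*}$. Concretely, unwinding the definition of $\varphi$ shows that the left square commutes iff
$$\tilde\Phi^{*}(h\ast \xi)=\Phi^{*}(h)\ast \tilde\Phi^{*}(\xi)\quad \text{for all } h\in\mathcal{MS}_{n,d},\ \xi\in \mathcal{MV},$$
and the right square commutes iff $\tilde\Phi^{*}(\xi\ast \tau)=\tilde\Phi^{*}(\xi)\ast \Phi'^{*}(\tau)$ for all $\xi\in\mathcal{MV}$, $\tau\in\mathcal{MH}$. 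Both identities are bimodule versions of the algebra compatibility already verified for $\Phi^{*}$ in the proof of Proposition \ref{prop 6.2.3}, so the plan is to mimic that cartesian-square argument with one factor replaced by $\mathscr{Y}$ (resp.\ $Y$).

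For the left square I would introduce projection diagrams
$$
X\times X\times Y\xrightarrow{p_{12},\ p_{23},\ p_{13}}X\times X,\ X\times Y,\ X\times Y
$$
and
$$
\mathscr{X}\times\mathscr{X}\times\mathscr{Y}\times V\times V\xrightarrow{q_{124},\ q_{235},\ q_{13}}\mathscr{X}\times\mathscr{X}\times V,\ \mathscr{X}\times\mathscr{Y}\times V,\ \mathscr{X}\times\mathscr{Y}\times V,
$$
where $q_{13}$ sends $(f_{1},f_{2},f_{3},v_{1},v_{2})\mapsto (f_{1},f_{3},v_{1}+v_{2})$ and the middle vertical map is $\hat\Phi(f_{1},v_{1},f_{2},v_{2},f_{3},v_{3})=(f_{1},f_{2},f_{3},v_{2}-v_{1},v_{3}-v_{2})$. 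The key step is to exhibit, exactly as in Proposition \ref{prop 6.2.3}, a bijection between $X\times X\times Y$ and the set $\{(a,b)\mid a\in X\times Y,\ b\in \mathscr{X}\times\mathscr{X}\times\mathscr{Y}\times V\times V,\ q_{13}(b)=\tilde\Phi(a)\}$, which witnesses the outer rectangle as a cartesian square. Applying the base change / projection formula to pushforward along $p_{13}$ and $q_{13}$ yields
$$\tilde\Phi^{*}q_{13!}(q_{124}^{*}h\otimes q_{235}^{*}\xi)=p_{13!}(p_{12}^{*}\Phi^{*}h\otimes p_{23}^{*}\tilde\Phi^{*}\xi),$$
which is precisely the desired identity once one identifies convolution with the formula $h\ast \xi=q_{13!}(q_{124}^{*}h\otimes q_{235}^{*}\xi)$.

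For the right square the argument is symmetric: replace the triple $X\times X\times Y$ by $X\times Y\times Y$ and $\mathscr{X}\times\mathscr{X}\times\mathscr{Y}\times V\times V$ by $\mathscr{X}\times\mathscr{Y}\times\mathscr{Y}\times V\times V$, with $\Phi'$ playing the role previously played by $\Phi$. The same cartesian-square / base-change argument then gives $\tilde\Phi^{*}(\xi\ast\tau)=\tilde\Phi^{*}(\xi)\ast \Phi'^{*}(\tau)$.

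The only substantive point in the argument is the construction of the bijections that turn the rectangles into cartesian squares, i.e.\ checking that the difference-of-vectors shift encoded in $\Phi$, $\Phi'$, $\tilde\Phi$ is compatible with the convolution-sum variable. This is the same bookkeeping already done in Proposition \ref{prop 6.2.3} (change of variable $w''\mapsto w''-w_{1}$ in the convolution sum), and I anticipate no new obstacles beyond carefully tracking which of the three vectors $v_{1},v_{2},v_{3}$ belongs to which factor; once the cartesian squares are in place, the conclusion follows by base change without further computation.
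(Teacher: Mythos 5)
Your reduction of the commutativity of the left square to the bimodule identity $\tilde\Phi^{*}(h\ast\xi)=\Phi^{*}(h)\ast\tilde\Phi^{*}(\xi)$ (and symmetrically for the right square) is exactly the content the paper verifies, so your proposal is correct. The only difference is in how that identity is established: the paper proves it by a short direct pointwise computation, expanding $\tilde{\Phi}^{*}\varphi_{h_1}(\tilde{\Phi}^{*})^{-1}(h_2)(f,\omega,f',\omega')$ as a convolution sum over $(f'',\mu)\in X$ and using the shift $\omega''\mapsto\omega''-\omega$ together with $G'$-invariance to match it with $\Phi^{*}(h_1)\ast h_2$, whereas you route the same bookkeeping through the cartesian-square/base-change formalism of Proposition \ref{prop 6.2.3} with one factor replaced by $\mathscr{Y}$ (resp.\ $Y$). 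Both arguments rest on the same observation, namely that the difference-of-vectors maps $\Phi,\Phi',\tilde\Phi$ are compatible with the convolution variable; your version is slightly heavier machinery but makes the parallel with Proposition \ref{prop 6.2.3} explicit and handles both squares uniformly, while the paper's pointwise calculation is more elementary and proves the right square only by symmetry. Your claimed bijection witnessing the cartesian square does hold (given $a=(f_1,v_1,f_3,v_3)$ and $b$ with $q_{13}(b)=\tilde\Phi(a)$ one reconstructs the middle point as $(f_2',v_1'+v_1)$), so there is no gap.
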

	
	\begin{proof}
		We only prove that the left square is commute, and the right one is similar.
		For any $h_1\in \mathcal{MS}_{n, d}$, $h_2\in \mathcal{MV}$ and $(f,\omega, f', \omega')\in X\times Y$, let $\varphi_{h_1}$ denote the image of $h_1$ in $\End_{\mathcal{A}}{\mathcal{MV}}$, we have
		\begin{align*}
			&\tilde{\Phi}^{*}\varphi_{h_1}(\tilde{\Phi}^{*})^{-1}(h_2)(f, \omega, f', \omega')\\
			& = \tilde{\Phi}^{*}(h_1\ast (\tilde{\Phi}^{*})^{-1}(h_2))(f, \omega, f', \omega')\\
			&= h_1\ast (\tilde{\Phi}^{*})^{-1}(h_2)(f, f', \omega'-\omega)\\
			&=\sum\limits_{(f'', \mu)\in X}h_1(f, f'', \mu)(\tilde{\Phi}^{*})^{-1}(h_2)(f'', f', \omega'-\omega-\mu)\\
			&= \sum\limits_{(f'', \mu)\in X}\Phi^{*}(h_1)(f, 0, f'', \mu)h_2(f'',\mu, f', \omega'-\omega)\\
			&=\Phi^{*}(h_1)\ast (h_2)(f, 0, f', \omega'-\omega)=\Phi^{*}(h_1)\ast (h_2)(f, \omega, f', \omega').
		\end{align*}
	\end{proof}
	Proposition \ref{prop 6.2.3} and Proposition \ref{prop 6.1.7} imply the following theorem.
	\begin{thm}\label{thm 6.2.8}
		The $\mathcal{MS}_{n, d}$-action in Proposition \ref{prop 4.3.1} and the $\mathcal{MH}$-action in  Proposition \ref{prop 4.4.1} 
		on $\mathcal{MV}$ form a double centralizer for $n \geq d$, i.e.,  
		\begin{align*}
			\End_{\mathcal{MS}_{n, d}}(\mathcal{MV}) \cong \mathcal{MH}, \ \End_{\mathcal{MH}}(\mathcal{MV}) \cong \mathcal{MS}_{n, d}.
		\end{align*}
	\end{thm}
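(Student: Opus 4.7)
The plan is to deduce Theorem \ref{thm 6.2.8} by transporting the abstract double centralizer property already established for the intermediate $V\rtimes G$-invariant function algebras $MS$, $MH$, $MV$ over to the geometric algebras $\mathcal{MS}_{n,d}$, $\mathcal{MH}$, $\mathcal{MV}$ via the algebra/linear isomorphisms built in Proposition \ref{prop 6.2.3}. The key input is Proposition \ref{prop 6.2.1}, which gives $\End_{MH}(MV)\cong MS$ and $\End_{MS}(MV)\cong MH$ for $n\geq d$; this reduces everything to a bookkeeping argument about compatibility of actions.

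First, I would invoke Proposition \ref{prop 6.2.3} to obtain the algebra isomorphisms $\Phi^{*}\colon \mathcal{MS}_{n,d}\xrightarrow{\sim} MS$ and $\Phi'^{*}\colon \mathcal{MH}\xrightarrow{\sim} MH$, together with the $\mathcal{A}$-linear isomorphism $\tilde{\Phi}^{*}\colon \mathcal{MV}\xrightarrow{\sim} MV$. The latter induces a canonical isomorphism of endomorphism rings $\End_{\mathcal{A}}(\mathcal{MV})\cong \End_{\mathcal{A}}(MV)$ by conjugation $\varphi\mapsto \tilde{\Phi}^{*}\circ\varphi\circ(\tilde{\Phi}^{*})^{-1}$. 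Next, I would apply Proposition \ref{prop 6.1.7}: its commutative diagram says precisely that, under these isomorphisms, the left $\mathcal{MS}_{n,d}$-action and right $\mathcal{MH}$-action on $\mathcal{MV}$ correspond to the left $MS$-action and right $MH$-action on $MV$. Consequently, the subalgebras of $\End_{\mathcal{A}}(\mathcal{MV})$ generated by these actions correspond, under conjugation by $\tilde{\Phi}^{*}$, to the subalgebras of $\End_{\mathcal{A}}(MV)$ generated by the $MS$- and $MH$-actions.

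Finally, combining these identifications with Proposition \ref{prop 6.2.1}, which supplies the hypothesis $n\geq d$ needed to invoke the $G'$-set condition $(3)$ of Theorem \ref{thm p} via the isomorphism $\phi_{\underline{d}}\times\mrm{id}_V$, one obtains the chain
\begin{equation*}
\End_{\mathcal{MH}}(\mathcal{MV})\cong \End_{MH}(MV)\cong MS\cong \mathcal{MS}_{n,d},
\end{equation*}
and similarly
\begin{equation*}
\End_{\mathcal{MS}_{n,d}}(\mathcal{MV})\cong \End_{MS}(MV)\cong MH\cong \mathcal{MH}.
\end{equation*}
To close the argument I would also note that Lemma \ref{lem 2.3.1} and the right-action lemma (together with Propositions \ref{prop 4.3.1}, \ref{prop 4.4.1}, \ref{prop 4.2.5}, \ref{prop 4.2.6}) identify the actions coming from the abstract generators of $\mathbf{MU}$ and $\mathbf{MH}$ with the geometric actions of $\mathcal{MS}_{n,d}$ and $\mathcal{MH}$, which is what the statement of Theorem \ref{thm 2.3.3} ultimately requires.

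The main obstacle is verifying that the diagram in Proposition \ref{prop 6.1.7} commutes for both sides simultaneously, in particular that the pullback $\tilde{\Phi}^{*}$ intertwines the two convolution structures on the space of $G$-invariant functions on $\mathscr{X}\times V$ versus the space of $V\rtimes G$-invariant functions on $X\times X$; this hinges on the cartesian-square argument sketched in the proof of Proposition \ref{prop 6.2.3}, where one has to track the shift $\omega'-\omega$ carefully across the three factors in the convolution. Once that compatibility is in hand, the theorem is a formal consequence, with no further use of the explicit multiplication formulas from Section 4.
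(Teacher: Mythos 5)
Your proposal is correct and follows essentially the same route as the paper: the authors deduce Theorem \ref{thm 6.2.8} exactly by combining the double centralizer property for $MS$, $MH$, $MV$ from Proposition \ref{prop 6.2.1} with the isomorphisms of Proposition \ref{prop 6.2.3} and the action-compatibility diagram of Proposition \ref{prop 6.1.7}. Your version merely spells out the transport-of-structure chain more explicitly than the paper's one-line proof does.
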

	
\end{document}